\definecolor{sap}{RGB}{129,36,51}
\def\ni{\noindent}
\numberwithin{equation}{chapter}\newtheorem{theorem}{Theorem}[chapter]
\newtheorem{corollary}[theorem]{Corollary}\newtheorem{lemma}[theorem]{Lemma}
\newtheorem{proposition}[theorem]{Proposition}\theoremstyle{remark}
\newtheorem{remark}{Remark}[chapter]
\newtheorem*{notation}{Notation}\theoremstyle{definition}
\newtheorem{definition}[theorem]{Definition}
\newcommand{\bra}[1]{\langle #1 \rangle}
\newcommand{\p}{\widetilde{p}}
\newcommand{\q}{\widetilde{q}}
\newcommand{\s}{\widetilde{s}}
\newcommand{\R}{\widetilde{r}}
\newcommand{\Lqqtilde}{L^{q}_{|x|}L^{\widetilde{q}}_{\theta}}
\newcommand{\Lpptilde}{L^{p}_{|x|}L^{\widetilde{p}}_{\theta}}
\newcommand{\Lpptildealphazero}{L^{p_{0}}_{|x|^{\alpha_{0} p_{0}}d|x|}L^{\widetilde{p}_{0}}_{\theta}}
\newcommand{\Lqqtildebetazero}{L^{q_{0}}_{|x|^{\beta_{0} q_{0}}d|x|}L^{\widetilde{q}_{0}}_{\theta}}
\newcommand{\Lpptildealphauno}{L^{p_{1}}_{|x|^{\alpha_{1} p_{1}}d|x|}L^{\widetilde{p}_{1}}_{\theta}}
\newcommand{\Lqqtildebetauno}{L^{q_{1}}_{|x|^{\beta_{1} q_{1}}d|x|}L^{\widetilde{q}_{1}}_{\theta}}
\newcommand{\Lpptildealphaxi}{L^{p_{\xi}}_{|x|^{\alpha_{\xi} p_{\xi}}d|x|}L^{\widetilde{p}_{\xi}}_{\theta}}
\newcommand{\Lqqtildebetaxi}{L^{q_{\xi}}_{|x|^{\beta_{\xi} q_{\xi}}d|x|}L^{\widetilde{q}_{\xi}}_{\theta}}
\newcommand{\Lrq}{L^{r}_{t}L^{q}_{x}}
\newcommand{\Lsp}{L^{s}_{t}L^{p}_{x}}
\newcommand{\Lrqqtilde}{L^{r}_{t}L^{q}_{x}L^{\widetilde{q}}_{\theta}}
\newcommand{\Lspptilde}{L^{s}_{t}L^{p}_{x}L^{\widetilde{p}}_{\theta}}
\newcommand{\Rn}{\mathbb{R}^{n}}
\newcommand{\Rpiu}{\mathbb{R}^{+}}
\title[Angular Integrability and Navier-Stokes equation]
{Tesi phd}
\date{\today}    
\begin{document}

\pagestyle{plain}

\begin{titlepage}
\begin{center}

\vspace{0.7cm}
{\large {\LARGE S}CUOLA DI {\LARGE D}OTTORATO {\LARGE "V}ITO {\LARGE
V}OLTERRA{\LARGE"}}\\
\vspace{0.2cm}
{\large{\LARGE D}OTTORATO DI {\LARGE R}ICERCA IN {\LARGE M}ATEMATICA --
{\LARGE XXV} CICLO}\\
\vspace{6cm}
{\textbf{\Huge Inequalities with angular integrability and applications}}

\end{center}
\vspace{6cm}
\large \textbf{Dottorando}\hfill \textbf{Relatore}\\
\Large Renato Luc\`{a} \hfill Prof. Piero D'Ancona\\
\vspace{3cm}
\begin{center}
\textit{\normalsize{\Large A}NNO {\Large A}CCADEMICO {\Large2012--2013}}
\end{center}
\end{titlepage}

\tableofcontents

\chapter*{Introduction}
We study the improvements due to the angular regularity in the context of Sobolev embeddings and PDEs. It is well known that many fundamental inequalities in mathematical analysis get improvements under some additional symmetry assumptions. Such improvements are related to the geometric nature of the space and in particular to the action of a certain group of symmetry. 
This is not surprising because a symmetric function on a differentiable manifold can be considered as a function defined on lower dimensional manifold on wich stronger estimates are often available. Then such improved estimates can be extended on the whole manifold by the action of a certain group. In particular we work in a very simple setting by considering radially symmetric functions defined on $\Rn$. Such functions are indeed defined on $\Rpiu$ and $SO(n)$ acts to go back to $\Rn$.

\ni For instance the Hardy-Littelwood (\cite{Vilela01-a}, \cite{HidanoKurokawa08-a}, \cite{Rubin83-a}, \cite{DenapoliDrelichmanDuran11-a}), Caffarelli-Kohn-Nirenberg (\cite{DenapoliDrelichmanDuran10-a}) and Strichartz (\cite{DanconaCacciafesta11-a}, \cite{MachiharaNakamuraNakanishi05-a}, \cite{Sterbenz05-a}) inequality on $\Rn$ get improvements if one restricts to consider just radially symmetric functions. A natural question is if and how this phenomenon occurs if we replace the symmetry hypotesis with merely higher integrability in the angular variables. We show that improvements can be obtained by working with the norms:
\begin{equation*}
\begin{array}{lcl}
  \|f\|_{L^{p}_{|x|}L^{\p}_{\theta}}&=&
  \left(
    \int_{0}^{+\infty}
    \|f(\rho\ \cdot \ )\|^{p}_{L^{\p}(\mathbb{S}^{n-1})}
    \rho^{n-1}d \rho
  \right)^{\frac1p}, \\
  \|f\|_{L^{\infty}_{|x|}L^{\p}_{\theta}}&=&
  \sup_{\rho>0}\|f(\rho\ \cdot \ )\|_{L^{\p}(\mathbb{S}^{n-1})};
\end{array}
\end{equation*}

\ni we observe that such results interpolate beetween the improved versions for radially symmetric functions and the classical ones. 

\ni We develop widely this technology in the first chapter. The main results proved are extensions of Hardy-Littelwood-Sobolev (theorem \ref{the:Our1Thm}, corollary \ref{cor:nonhom}) and Caffarelli-Kohn-Nirenberg inequality (theorem \ref{the:Our2Thm}). Another interesting aspect is that the inequalities on which we focus are fundamental tools in the study of PDEs. For instance the Caffarelli-Kohn-Nirenberg inequality is really important in the study of regularity of the Navier-Stokes equation's solutions, because it provides a priori estimates through interpolation of quantities related to the energy dissipation. So it is expectable that the technology developed can lead to succesfully results also in this areas. We focus basically on the small data theory and on regularity criteria. We actually extended the criteria in \cite{YongZhou} and we made a conjecture on a possible extension of the small data result in \cite{CKN}. 

\ni As well known the well posedness of the Cauchy Problem for the Navier-Stokes equation
\begin{equation*}
\left \{
\begin{array}{rcccl}
\partial_{t}u + (u \cdot \nabla) u +\nabla p  & = & \Delta u & \quad \mbox{in}& \quad \Rpiu \times \mathbb{R}^{n}  \\
\nabla \cdot u & = & 0 & \quad \mbox{in} & \quad \Rpiu \times \mathbb{R}^{n} \\
u & = & u_{0} & \quad \mbox{in}& \quad \{0\} \times \mathbb{R}^{n}, 
\end{array}\right.
\end{equation*}
is a big mathematical challenge, and only partial results have been obtained.
Leray has proved global existence of weak solutions for $L^{2}$ initial data in his pioneering work \cite{Ler}. On the other hand the uniqeness of Leray's solutions is still open, as it is the propagation of regularity of the initial datum $u_{0}$. The well posedness theory is well developed for short times, or if one restricts to small $u_{0}$. In this scenario is useful to estabilish at least regularity or uniqeness criteria. We mean to find some a priori assumptions on the solutions under which the regularity (or the uniqeness) is guaranteed. We focus on the regularity in space variables, being the time regularity a different and more difficult problem. This is actually expectable by a physical viewpoint, because of the assumption of incompressibility of the fluid; see \cite{Ser}. It turns out that the a priori assumptions requested in order to get regularity are basically boundedness assumptions on $u, \nabla u$, or $\nabla \times u$. 

\ni As mentioned we refer basically to the criteria in \cite{YongZhou}, in which the main novelty is to consider boundedness in weighted $L^{p}$ spaces with weights $|x|^{\alpha}$. More precisely the author works with solutions $u:[0,T]\times \Rn \rightarrow \Rn$, equipped with the norms:
\begin{equation}\label{IntrodScaling}
\||x|^{\alpha}u\|_{L^{s}_{T}L^{p}_{x}}, \qquad \frac{2}{s}+\frac{n}{p}=1-\alpha,
\end{equation}      
where, of course, the indexes relations follow by scaling considerations. Under boundedness assumptions the regularity in the segment $(0,T)\times \{ 0 \}$ is achieved, more precisely u is $C^{\infty}$ in the space variables. So the introduction of weights allows to get local regularity criteria, where we mean only in a neighborhood of the origin. At first we show how for some choices of the indexes the criteria in \cite{YongZhou} are indeed global (the regularity is achieved in $(0,T)\times \Rn$). Then we get improvements by working with spaces with different integrability in radial and angular directions; so instead of the norms (\ref{IntrodScaling}) we use:     
\begin{equation}\nonumber
  \| |x|^{\alpha} u \|_{L^{s}_{T}L^{p}_{|x|}L^{\p}_{\theta}} = 
   \left( \int_{0}^{T} \left(
    \int_{0}^{+\infty}
    \|u(t,  \rho \ \cdot \ )\|^{p}_{L^{\p}(\mathbb{S}^{n-1})}
     \ \rho^{\alpha p + n-1} \ d \rho  
  \right)^{\frac{s}{p}} \ dt \right)^{\frac{1}{s}}, 
\end{equation}
with $\frac{2}{s}+\frac{n}{p}=1-\alpha$. In this setting we get global regularity if sufficiently high values of $\p$ are considered. We observe, as expectable, two different behaviour in the ranges $\alpha <0$ and $\alpha > 0$; we show regularity in the case $\p \geq \p_{G}$ where
$$
\p_{G}= \frac{(n-1)p}{\alpha p +n -1};
$$  
and of course:
\begin{equation*}
p < \p_{G}, \qquad \mbox{if} \quad \alpha <0, 
\end{equation*}
\begin{equation*}
\p_{G} < p, \qquad \mbox{if} \quad \alpha > 0.
\end{equation*}
A similar analysis has been performed about the well posedness with small data in mixed angular-radial weighted spaces equipped with the norms:
\begin{equation}
  \||x|^{\alpha} u_{0} \|_{L^{p}_{|x|}L^{\p}_{\theta}} =    
   \left(
    \int_{0}^{+\infty}
    \|u_{0}(  \rho \ \cdot \ )\|^{p}_{L^{\p}(\mathbb{S}^{n-1})}
    \rho^{\alpha p + n-1} \ d \rho
  \right)^{\frac1p},
\end{equation}
with again the critical scaling relationship $\alpha = 1 + \frac{n}{p}$. Here we find out the critical value
\begin{equation}\nonumber
\p_{G} = \frac{(n-1)p}{p-1},
\end{equation}
and the well posedness is achieved for small data, with $\p \geq \p_{G}$.
Actually we have closely looked at the following heuristic: the weights $|x|^{\alpha}$, $\alpha<0$ localize, in some sense, the norms of the data (or of the solutions) near to the origin. In such a way local results are still available, but a loss of informations far from the origin occurs. These informations can be recovered every times by a suitbale amount of angular integrability. 

\ni On the other hand the local (in the sense of localized near to the origin) results have an intrinsic interest, and we also look at this problem. In theorem \ref{OurYZTheoremLoc} we prove local regularity for bounded solutions in 
$$
\| |x|^{\alpha} u \|_{L^{s}_{T}L^{p}_{|x|}L^{\p}_{\theta}}, 
\qquad \mbox{with} \quad \p \leq p. 
$$
This improves in a different direction the results in \ref{YZTheorem}. We get local regularity under the assumption of a sufficiently high angular regularity, i.e for $\p \geq \p_{L}$ where 

\begin{equation*}
\p_{L} = \left \{
\begin{array}{lcr}
 \frac{2(n-1)p}{(2 \alpha +1)p +2(n-1)}  &  \mbox{if} & -\frac{1}{2} \leq \alpha < 0  \\
 && \\
 \frac{2(n-1)p}{p +2(n-1)}  &  \mbox{if} & 0 < \alpha < 1. 
\end{array}\right.
\end{equation*}
The main technical tools we use consist in decay estimates for convolutions with the heat and Oseen kernels in the context of weighted $L^{p}_{|x|}L^{\p}_{\theta}$ spaces. Estimates in weighted spaces have been considered in literature, but the information provided by the angular integrability leads to a really satisfactory admissibility range for the weights. The precise relation between the weights and the angular integrability is basically contained in the relation (\ref{eq:condDL}) in the corollary \ref{cor:nonhom}.

\ni The small data theory in the context of weighted $L^{p}_{|x|}L^{\p}_{\theta}$ spaces with $\p < p$ is more delicate and we just make a conjecture about a possible improvement of theorem \ref{CKNSmallData}, in which the authors show regularity of the Leray's solutions in the interior of the space-time parabola:
$$
\Pi = \left\{ (t,x) \quad \mbox{s.t.} \quad t >  \frac{|x|^{2}}{\varepsilon_{0} - \varepsilon}  \right\},
$$
for a sufficiently small $\varepsilon_{0} > 0$ and data with $\||x|^{-1/2} \cdot\|_{L^{2}_{x}} = \varepsilon < \varepsilon_{0}$. The conjecture in section \ref{smallDataMixed} is made in order to cover the gap between this localized result and the classical well posedness results (in particular we refer to theorem \ref{WeighKato} that's a particular case of the Koch-Tataru theorem \ref{TatTheo}).  

\begin{remark}
Of course by translations all the results are still valid if the norms and the weights are centered in a point $\bar{x} \neq 0$. So if we consider
 \begin{equation*}
 \begin{array}{lcl}
  \|  f \|_{L^{p}_{|x-\bar{x}|}L^{\p}_{\theta}}&=&
  \left(
    \int_{0}^{+\infty}
    \|f( \bar{x} + \rho \theta )\|^{p}_{L^{\p}(\mathbb{S}^{n-1})}
    \rho^{n-1}d \rho
  \right)^{\frac1p}, \\
  \| |x-\bar{x}|^{\alpha} u \|_{L^{s}_{T}L^{p}_{|x-\bar{x}|}L^{\p}_{\theta}} &=& 
   \left( \int_{0}^{T} \left(
    \int_{0}^{+\infty}
    \|u( \bar{x} + \rho \theta )\|^{p}_{L^{\p}(\mathbb{S}^{n-1})}
     \ \rho^{\alpha p + n-1} \ d \rho  
  \right)^{\frac{s}{p}} \ dt \right)^{\frac{1}{s}}, \\
  \||x-\bar{x}|^{\alpha} u_{0} \|_{L^{p}_{|x-\bar{x}|}L^{\p}_{\theta}} &=&    
   \left(
    \int_{0}^{+\infty}
    \|u_{0}( \bar{x} + \rho \theta )\|^{p}_{L^{\p}(\mathbb{S}^{n-1})}
    \rho^{\alpha p + n-1} \ d \rho
  \right)^{\frac1p},
\end{array}
\end{equation*}  
and so on.
\end{remark}
\ni The content of the Chapter \ref{SectInequality} is taken from \cite{DL}.

\chapter{Classical inequalities with angular integrability}\label{SectInequality}

\noindent The goal of this section is to extend some classical estimates in the context of $L^{p}$ spaces to a setting in which the role of the angular and radial integrability is well distinguished. In order of explaining our purpose we start by a well known estimate of Walter Strauss \cite{Strauss77-a} that proves    
\begin{equation}\label{eq:strauss}
  |x|^{\frac{n-1}{2}}|u(x)|\le 
  C\|\nabla u\|_{L^{2}},\qquad|x|\ge1,
\end{equation}
for radial functions
$u\in \dot H^{1}(\mathbb{R}^{n})$, $n\ge2$,
This is an example of a well known general phenomenon:
under suitable assumptions of symmetry, notably
radial symmetry, classical estimates and embeddings of spaces
admit substantial improvements. In the case of \eqref{eq:strauss},
a control on the $H^{1}$ norm of $u$ gives a pointwise
bound and decay of $u$. Both are false in the general case.
Radial and more general symmetric estimates have been extensively
investigated, in view of their relevance for applications,
especially to differential equations.

\noindent This phenomenon is quite natural; indeed, symmetric functions
can be regarded as functions defined on lower dimensional manifolds,
on which stronger estimates are available, then extended by the action of
some group of symmetries. Radial functions are essentially functions
on $\mathbb{R}^{+}$, while the norms on $\mathbb{R}^{n}$ are recovered by the action of $SO(n)$ that introduces
suitable dimensional weights connected to the volume form.

\noindent In view of the gap between the symmetric and the non symmetric
case, an
interesting question arises: is it possible to quantify the
defect of symmetry of functions and prove more general
estimates which encompass all cases, and in particular
reduce to radial estimates when applied to radial functions?
Heuristically, one should be
able to improve on the general case by introducing some
measure of the distance from the maximizers of the
inequality, which typically have the greatest symmetry.

\noindent The aim of this paper is to give a partial 
positive answer to this question, through the use of
the following type of mixed radial-angular norms:
\begin{equation*}
\begin{array}{lcl}
  \|f\|_{L^{p}_{|x|}L^{\p}_{\theta}}&=&
  \left(
    \int_{0}^{+\infty}
    \|f(\rho\ \cdot\ )\|^{p}_{L^{\p}(\mathbb{S}^{n-1})}
    \rho^{n-1}d \rho
  \right)^{\frac1p}, \\
  \|f\|_{L^{\infty}_{|x|}L^{\p}_{\theta}}&=&
  \sup_{\rho>0}\|f(\rho\ \cdot\ )\|_{L^{\p}(\mathbb{S}^{n-1})}.
\end{array}
\end{equation*}
When the context is clear we shall write simply $L^{p}L^{\p}$.
For $p=\p$ the norms reduce to the usual $L^{p}$ norms
\begin{equation*}
  \|u\|_{L^{p}_{|x|}L^{p}_{\theta}}\equiv
  \|u\|_{L^{p}(\mathbb{R}^{n})},
\end{equation*}
while for radial functions the value of $\p$ is irrelevant:
\begin{equation*}
  \text{$u$ radial}\quad\implies\quad 
  \|u\|_{L^{p}L^{\p}}\simeq \|u\|_{L^{p}(\mathbb{R}^{n})}
  \quad \forall p,\p\in[1,\infty].
\end{equation*}
Notice also that the norms are increasing in $\p$.
The idea of distinguishing radial and angular directions 
is not new and
has proved successful in the context of
Strichartz estimates and dispersive equations
(see \cite{MachiharaNakamuraNakanishi05-a},
\cite{Sterbenz05-a},
\cite{DanconaCacciafesta11-a}; see also
\cite{ChoOzawa09-a}). To give a flavour of the results which can
be obtained, Strauss' estimate \eqref{eq:strauss} can be extended
as follows:
\begin{equation*}
  |x|^{\frac np-\sigma}|u(x)|\lesssim
  \||D|^{\sigma}u\|_{L^{p}L^{\p}},\qquad
  \frac {n-1}\p+\frac1p<\sigma <\frac np
\end{equation*}
for arbitrary non radial functions $u$ and all $1<p<\infty$,
$1\le\p\le \infty$.

\begin{remark}
Of course by translations all the results we will prove hold with the norm
\begin{equation*}
\begin{array}{lcl}
  \|f\|_{L^{p}_{|x-\bar{x}|}L^{\p}_{\theta}}&=&
  \left(
    \int_{0}^{+\infty}
    \|f(  \bar{x} + \rho \theta )\|^{p}_{L^{\p}(\mathbb{S}^{n-1})}
    \rho^{n-1}d \rho
  \right)^{\frac1p}, \\
  \|f\|_{L^{\infty}_{|x-\bar{x}|}L^{\p}_{\theta}}&=&
  \sup_{\rho>0}\|f(  \bar{x} + \rho \theta )\|_{L^{\p}(\mathbb{S}^{n-1})};
\end{array}
\end{equation*}
\end{remark}

\section{The Stein-Weiss inequality}

\noindent A central role in our approach will be
played by the fractional integrals
$$
(T_{\gamma}\phi)(x)=\int_{\mathbb{R}^{n}}
  \frac{\phi(y)}{|x-y|^{\gamma}}dy, \qquad 0<\gamma<n.
$$
Weighted $L^{p}$ estimates for $T_{\gamma}$
are a fundamental problem of harmonic analysis, with a wide
range of applications. Starting from the
classical one dimensional case studied by Hardy and Littlewood,
an exhaustive analysis has been made of the admissible classes of
weights and ranges of indices
(see \cite{Stein93-a} and the references therein).
In the special case of power weights the optimal result is
due to Stein and Weiss:

\begin{theorem}[\cite{SteinWeiss58-b}]\label{SteinWeissThm}
Let $n\geq 1$ and $1< p\le q<\infty$.
Assume $\alpha,\beta,\gamma$ satisfy the set of conditions
($1=1/p+1/p'$)
\begin{equation}\label{eq:condSW}
\begin{split}
  &\beta<\frac nq,\quad \alpha<\frac{n}{p'},\quad 0<\gamma<n
    \\
  &\alpha+\beta+\gamma=n+\frac nq-\frac np
    \\
  &\alpha+\beta\ge0.
\end{split}
\end{equation}
Then the following inequality holds 
\begin{equation}\label{eq:stw}
  \||x|^{-\beta}T_{\gamma}\phi\|_{L^{q}}\le 
  C(\alpha,\beta,p,q)\cdot
  \||x|^{\alpha}\phi\|_{L^{p}}.
\end{equation}
\end{theorem}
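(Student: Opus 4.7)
The plan is to split the integral defining $T_{\gamma}\phi(x)$ into three pieces based on the relative sizes of $|x|$ and $|y|$: write $T_{\gamma}\phi = I_{1}\phi + I_{2}\phi + I_{3}\phi$, where $I_{1},I_{2},I_{3}$ integrate over $D_{1}=\{|y|\leq|x|/2\}$, $D_{2}=\{|x|/2<|y|<2|x|\}$, and $D_{3}=\{|y|\geq2|x|\}$, respectively. On the off-diagonal sets $D_{1}$ and $D_{3}$, the triangle inequality yields $|x-y|\sim\max(|x|,|y|)$, so the singular kernel $|x-y|^{-\gamma}$ collapses to a pure power of the larger radius; on the diagonal set $D_{2}$ we have $|y|\sim|x|$, and the two separate weights $|x|^{-\beta}$ and $|y|^{-\alpha}$ bundle harmlessly into a single factor $|x|^{-(\alpha+\beta)}$ thanks to the assumption $\alpha+\beta\geq 0$.

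For $I_{1}$, the pointwise bound $|x|^{-\beta}I_{1}\phi(x)\lesssim|x|^{-\beta-\gamma}\int_{|y|\leq|x|/2}|\phi(y)|\,dy$ reduces, after passing to polar coordinates, to a classical weighted one-dimensional Hardy inequality of the form
\[\left(\int_{0}^{\infty} r^{a}\left(\int_{0}^{r} f(s)\,ds\right)^{q} dr\right)^{1/q}\leq C\left(\int_{0}^{\infty} r^{b} f(r)^{p}\,dr\right)^{1/p},\]
with exponents $a,b$ dictated by the scaling identity $\alpha+\beta+\gamma=n+n/q-n/p$. A direct computation shows that this Hardy inequality lies in its range of boundedness precisely when $\alpha<n/p'$. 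The treatment of $I_{3}$ is symmetric under the formal swap $x\leftrightarrow y$, using the dual Hardy inequality (integration from $r$ to $\infty$), and produces the condition $\beta<n/q$.

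For the diagonal piece $I_{2}$, setting $\psi(y)=|y|^{\alpha}\phi(y)$ and using $|y|\sim|x|$ to replace $|y|^{-\alpha}|x|^{-\beta}$ by $|x|^{-(\alpha+\beta)}$, one decomposes dyadically into annuli $A_{k}=\{|x|\sim 2^{k}\}$. On each annulus, $I_{2}$ is essentially the fractional integral $T_{\gamma}$ applied to $\psi\cdot\mathbf{1}_{A_{k}}$, and the unweighted Hardy-Littlewood-Sobolev inequality, available since $1<p\leq q<\infty$, provides the desired bound on each scale. The scaling identity ensures that the dyadic pieces add up to a geometric series in the right norm; almost-orthogonality of the annuli and the overlap of integration regions give the global $L^{q}$ control.

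The main obstacle is the diagonal piece $I_{2}$: this is where Hardy-Littlewood-Sobolev is invoked, forcing the restriction $p\leq q$, and where the hypothesis $\alpha+\beta\geq0$ is genuinely used to ensure that the merged weight $|x|^{-(\alpha+\beta)}$ does not blow up. The off-diagonal estimates, once the correct weighted Hardy inequality is identified, are more routine; the strict upper bounds $\alpha<n/p'$ and $\beta<n/q$ emerge as the sharp thresholds beyond which those Hardy operators cease to be bounded. A final subtlety is tracking constants through the three decompositions so that the dependence on $\alpha,\beta,p,q$ is the claimed $C(\alpha,\beta,p,q)$.
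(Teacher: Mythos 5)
First, note that the paper does not actually prove Theorem \ref{SteinWeissThm}: it is quoted from Stein--Weiss and used as a benchmark, and the only proof supplied is for the generalization, Theorem \ref{the:Our1Thm}, whose method is quite different from yours. There the convolution is written in polar coordinates, the spherical part is handled by Young's inequality on the rotation group $SO(n)$, and the radial part is recognized as a convolution on the multiplicative group $(\mathbb{R}^{+},\cdot)$ with Haar measure $d\rho/\rho$, to which the weak Young--Marcinkiewicz inequality is applied. That route is what makes the mixed $L^{p}_{|x|}L^{\p}_{\theta}$ norms accessible; your classical three-region decomposition (off-diagonal pieces via weighted one-dimensional Hardy inequalities, diagonal piece via dyadic annuli and Hardy--Littlewood--Sobolev) is the standard proof of the pure-$L^{p}$ statement and is perfectly legitimate for it, though it does not obviously extend to the angular refinement. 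Your identification of $\alpha<n/p'$ and $\beta<n/q$ as the exact thresholds for the two Hardy operators is correct, as is the restriction $p\le q$ coming from the Muckenhoupt--Bradley condition and from the $\ell^{p}\hookrightarrow\ell^{q}$ summation of the annuli.

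That said, your treatment of the diagonal piece $I_{2}$ has a genuine gap as written. The unweighted Hardy--Littlewood--Sobolev inequality does not map $L^{p}$ to $L^{q}$ for arbitrary $1<p\le q<\infty$: it maps $L^{p}$ to $L^{q_{0}}$ with $\frac{1}{q_{0}}=\frac1p-1+\frac{\gamma}{n}$, and the scaling relation in \eqref{eq:condSW} gives $\frac{n}{q}-\frac{n}{q_{0}}=\alpha+\beta$, so $q=q_{0}$ only when $\alpha+\beta=0$. The correct mechanism on the annulus $A_{k}=\{|x|\sim 2^{k}\}$ is: apply HLS into $L^{q_{0}}$, then use H\"older's inequality on the finite-measure annulus to descend from $L^{q_{0}}$ to $L^{q}$, which costs a factor $|A_{k}|^{1/q-1/q_{0}}\sim 2^{k(\alpha+\beta)}$ that exactly cancels the weight $2^{-k(\alpha+\beta)}$. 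This is where $\alpha+\beta\ge0$ is really used --- it guarantees $q\le q_{0}$ so that H\"older goes in the admissible direction --- and not, as you write, to keep the merged weight $|x|^{-(\alpha+\beta)}$ from blowing up (for $\alpha+\beta>0$ that weight does blow up at the origin). With this correction, and bounded overlap of the annuli to sum the pieces, your argument closes.
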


\noindent Conditions in
the first line of \eqref{eq:condSW} are necessary to ensure
integrability,
while the necessity of 
the condition on the second line is due to scaling.
On the other hand, the sharpness of
$\alpha+\beta\ge0$ is less obvious and follows from the results
of \cite{SawyerWheeden92-a}.

\noindent In the radial case the last condition
can be relaxed and $\alpha+\beta$ is allowed to assume negative
values. Radial improvements were noticed in
\cite{Vilela01-a}, 
\cite{HidanoKurokawa08-a}, and the sharp result was obtained
by Rubin \cite{Rubin83-a} and more
recently by De Napoli, Dreichman and Dur\'an:

\begin{theorem}[\cite{Rubin83-a},\cite{DenapoliDrelichmanDuran09-a}]
\label{DeNapoli1Thm}
  Let $n,p,q,\alpha,\beta,\gamma$ be as in the statement
  of Theorem \eqref{SteinWeissThm} but with the condition
  $\alpha+\beta\ge0$ relaxed to
  \begin{equation}\label{eq:condDDD}
    \alpha+\beta\ge(n-1)\left(\frac1q-\frac1p\right).
  \end{equation}
  Then estimate \eqref{eq:stw} is valid for all
  radial functions $\phi=\phi(|x|)$.
\end{theorem}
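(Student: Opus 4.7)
The plan is to reduce the inequality to a weighted one-dimensional convolution estimate via radial symmetry, and then apply Young's inequality on $\mathbb{R}$ after a logarithmic substitution. Since $\phi$ is radial, $T_\gamma\phi$ is radial too, and passing to polar coordinates $y=\rho\omega$ gives
$$
T_\gamma\phi(x) = \int_0^\infty \phi(\rho)\,\rho^{n-1}\,K(|x|,\rho)\,d\rho,
\qquad
K(r,\rho) := \int_{\mathbb{S}^{n-1}} |re_1 - \rho\omega|^{-\gamma}\,d\omega.
$$
By scaling, $K(r,\rho) = r^{-\gamma}\tilde K(\rho/r)$ with $\tilde K(t) := \int_{\mathbb{S}^{n-1}}|e_1 - t\omega|^{-\gamma}d\omega$. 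A direct analysis of this spherical integral yields $\tilde K(t)\sim 1$ as $t\to 0^+$, $\tilde K(t)\sim t^{-\gamma}$ as $t\to +\infty$, and near $t=1$ the bound $\tilde K(t)\lesssim |1-t|^{n-1-\gamma}$ when $\gamma > n-1$ (boundedness otherwise). The crucial point is that the $(n-1)$-dimensional spherical averaging produces a strictly weaker singularity than the pointwise $|1-t|^{-\gamma}$, and it is precisely this gain that drives the improvement over \eqref{eq:stw}.

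Rewriting the target estimate in polar coordinates and performing the logarithmic substitution $r=e^u$, $\rho=e^v$, set $F(v) := e^{(\alpha+n/p)v}\phi(e^v)$ and $G(u) := e^{(n/q-\beta)u}T_\gamma\phi(e^u)$, so that $\|F\|_{L^p(\mathbb{R})}$ and $\|G\|_{L^q(\mathbb{R})}$ coincide (up to surface-measure constants) with the two norms in the statement. Using the scaling identity $\alpha+\beta+\gamma = n+n/q-n/p$, a direct computation collapses the overall exponential prefactor and exhibits $G = F * H$ as a genuine convolution on $\mathbb{R}$, with kernel
$$
H(w) = e^{(\alpha-n/p')w}\,\tilde K(e^{-w}).
$$

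Now apply Young's inequality $\|F*H\|_{L^q}\leq \|F\|_{L^p}\|H\|_{L^\sigma}$ with $1/\sigma = 1 + 1/q - 1/p \in (0,1]$. Each hypothesis of the theorem corresponds to an integrability condition on $H$: the condition $\alpha < n/p'$ gives $L^\sigma$ decay as $w\to+\infty$ (where $\tilde K(e^{-w})\to\mathrm{const}$); the condition $\beta < n/q$ gives $L^\sigma$ decay as $w\to-\infty$ (where $\tilde K(e^{-w})\sim e^{\gamma w}$, and one uses the scaling identity to convert $\alpha-n/p'+\gamma>0$ into $\beta<n/q$); and the local $L^\sigma$ condition at $w=0$, namely $(\gamma-(n-1))_+\sigma<1$, becomes, once the scaling identity is invoked again, exactly the relaxed bound $\alpha+\beta > (n-1)(1/q-1/p)$.

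The main obstacle is the sharp near-diagonal analysis of $\tilde K$ at $t=1$, where one integrates $|e_1 - t\omega|^{-\gamma}$ against the surface density $\sin^{n-2}\theta\,d\theta$ and must verify both the presence and the optimality of the gain of $n-1$ in the singularity exponent. A secondary technical point is the boundary case $\alpha+\beta = (n-1)(1/q-1/p)$: plain Young's inequality must then be replaced by its weak-type refinement (since the corresponding $H$ belongs only to weak $L^\sigma$), or handled via interpolation. In the degenerate case $p=q$ no improvement is possible, since $\sigma=\infty$ forces Young to demand a uniform bound on $H$, which recovers precisely the classical Stein--Weiss condition $\alpha+\beta\ge 0$.
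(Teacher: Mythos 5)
Your proof is correct and is essentially the argument the paper gives for the more general Theorem \ref{the:Our1Thm}, specialized to radial data: your logarithmic substitution $r=e^{u}$ is precisely the paper's convolution on the multiplicative group $(\mathbb{R}^{+},\cdot)$ with Haar measure $d\rho/\rho$, your near-diagonal bound $\tilde K(t)\lesssim |1-t|^{\,n-1-\gamma}$ is the content of Lemma \ref{lem:singint}, and the weak-Young (Young--Marcinkiewicz) refinement at the borderline $\alpha+\beta=(n-1)(1/q-1/p)$ is exactly what the paper invokes. One small slip in your closing remark: when $p=q$ your own formula gives $\sigma=1$ (not $\sigma=\infty$), so the local integrability of $H$ at $w=0$ is automatic from $\gamma<n$; the correct reason no improvement occurs there is that the scaling relation forces $\alpha+\beta=n-\gamma>0$, so the relaxed and classical conditions already coincide.
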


\noindent Using the $L^{p}_{|x|}L^{\p}_{\theta}$ norms we are able prove
the following general result which extends both theorems:

\begin{theorem}\label{the:Our1Thm}
  Let $n \geq 2$ and $1<p\le q<\infty$, $1\le\p\le\q\le\infty$. Assume
  $\alpha,\beta,\gamma$ satisfy the set of conditions
  \begin{equation}\label{eq:cDL}
  \begin{split}
    &\beta<\frac nq,\quad \alpha<\frac{n}{p'},\quad 0<\gamma<n
      \\
    &\alpha+\beta+\gamma=n+\frac nq-\frac np
      \\
    &\alpha+\beta\ge(n-1)
      \left(\frac1q-\frac1p+\frac{1}{\p}-\frac{1}{\q}\right).
  \end{split}
  \end{equation}
  Then the following estimate holds: 
  \begin{equation}\label{oHLS}
    \||x|^{-\beta}T_{\gamma} \phi\|
        _{L^{q}_{|x|}L^{\q }_{\theta}} 
    \le C
    \| |x|^{\alpha} \phi\|_{L^{p}_{|x|}L^{\p }_{\theta}}.
  \end{equation}
  The range of admissible $p,q$ indices 
  can be relaxed to $1\le p\le q\le \infty$ in two cases:
  \begin{enumerate}\setlength{\itemindent}{-0pt}
  \renewcommand{\labelenumi}{\textit{(\roman{enumi})}}
    \item when the third inequality in \eqref{eq:cDL} is strict, or
    \item when the Fourier transform $\widehat{\phi}$ has support
    contained in an annulus $c_{1}R\le|\xi|\le c_{2} R$
    ($c_{2}\ge c_{1}>0$, $R>0$);
    in this case \eqref{oHLS} holds with a constant independent 
    of $R$.
  \end{enumerate}
\end{theorem}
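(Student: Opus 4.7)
The plan is to reduce \eqref{oHLS} to a one-dimensional weighted radial inequality by carefully handling the angular integration, in a way that recovers the sharp scaling savings from both the radial and angular directions. Writing $x=\rho\theta$ and $y=\sigma\omega$ with $\rho,\sigma>0$, $\theta,\omega\in\mathbb{S}^{n-1}$, the kernel $|\rho\theta-\sigma\omega|^{-\gamma}$ depends on $\omega$ only through $\theta\cdot\omega$, making the $\omega$-integration a zonal convolution on $\mathbb{S}^{n-1}$. After applying Minkowski's integral inequality in $\sigma$ to pull $\|\cdot\|_{L^{\q}_\theta}$ inside the radial integral, I would invoke Young's inequality on the sphere with exponent $\R$ defined by $1+\tfrac1\q=\tfrac1{\p}+\tfrac1{\R}$, obtaining
\begin{equation*}
\|T_\gamma\phi(\rho\,\cdot\,)\|_{L^{\q}_\theta}\lesssim\int_0^\infty\sigma^{n-1}\,K(\rho,\sigma)\,\|\phi(\sigma\,\cdot\,)\|_{L^{\p}_\omega}\,d\sigma,\qquad K(\rho,\sigma):=\bigl\|\,|\rho\theta-\sigma\,\cdot\,|^{-\gamma}\bigr\|_{L^{\R}(\mathbb{S}^{n-1})}.
\end{equation*}

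The next step is to estimate $K(\rho,\sigma)$. Parametrising $\omega$ by the angle $\varphi$ to $\theta$ yields $|\rho\theta-\sigma\omega|^2\simeq(\rho-\sigma)^2+\rho\sigma\varphi^2$, so that away from the diagonal $\sigma\simeq\rho$ the kernel is essentially constant in $\omega$ and $K(\rho,\sigma)\lesssim \max(\rho,\sigma)^{-\gamma}$, whereas near the diagonal one studies $\int_0^\pi \varphi^{n-2}\bigl(\rho\sigma\varphi^2+(\rho-\sigma)^2\bigr)^{-\gamma\R/2}d\varphi$ via a dyadic decomposition on the sphere and on the radial annulus, exploiting the joint radial--angular scaling of the kernel in the region $|\rho-\sigma|\simeq\rho\varphi$.

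Plugging these bounds back in leaves a one-dimensional weighted convolution inequality for the scalar $g(\sigma):=\|\phi(\sigma\,\cdot\,)\|_{L^{\p}_\omega}$ with weights $\sigma^{\alpha}$, $\rho^{-\beta}$, which I would verify by reducing to a classical one-dimensional Stein--Weiss inequality on $(0,\infty)$. Splitting the $\sigma$-integral into the regions $\sigma\le\rho/2$, $\rho/2\le\sigma\le 2\rho$, and $\sigma\ge 2\rho$: the two far pieces reduce to classical one-dimensional Hardy inequalities, whose admissibility is exactly $\beta<n/q$ and $\alpha<n/p'$, while the near piece produces a one-dimensional Stein--Weiss contribution whose balance condition, combined with the scaling identity $\alpha+\beta+\gamma=n+n/q-n/p$, yields precisely the third condition in \eqref{eq:cDL}. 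The endpoint extensions \textit{(i)} and \textit{(ii)} then follow from standard refinements: strict inequality allows Marcinkiewicz/weak-type interpolation at the borderline to reach $p=1$ and $q=\infty$, and the frequency-annulus hypothesis supplies the needed scale invariance by Bernstein-type estimates uniform in $R$.

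The main obstacle I anticipate is precisely the near-diagonal refinement: the naive bound $K(\rho,\sigma)\lesssim \max(\rho,\sigma)^{-\gamma}|\mathbb{S}^{n-1}|^{1/\R}$ based only on the sphere-integrability $\gamma\R<n-1$ produces a condition strictly stronger than $\alpha+\beta\ge(n-1)(1/q-1/p+1/\p-1/\q)$ by an amount $1+1/q-1/p\ge0$; recovering the sharp critical line demands a simultaneous dyadic decomposition in $\rho,\sigma$ and on the sphere, and at the borderline $\gamma\R=n-1$ the spherical Young step fails logarithmically and must be replaced by an atomic or Littlewood--Paley-type argument that absorbs the loss against the radial Stein--Weiss gain.
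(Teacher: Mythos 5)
Your core argument for the main range $1<p\le q<\infty$ is essentially the paper's proof in different clothing: your zonal spherical convolution plus Young with $1+1/\q=1/\R+1/\p$ is the paper's Young inequality on $SO(n)$, your kernel $K(\rho,\sigma)$ is (after scaling out $\sigma$) the quantity $I_{\gamma\R}(\rho e)^{1/\R}$ of Lemma \ref{lem:singint}, and your ``one-dimensional Stein--Weiss on $(0,\infty)$'' is the weak Young inequality on the multiplicative group with Haar measure $d\rho/\rho$. You have also correctly identified the crux: the naive bound $K\lesssim\max(\rho,\sigma)^{-\gamma}$ loses exactly $1+1/q-1/p$ on the critical line, and only the sharp near-diagonal behaviour $|\rho-\sigma|^{\,n-1-\gamma\R}$ (for $\gamma\R>n-1$) recovers the third condition in \eqref{eq:cDL}. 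Two small corrections, though. At the borderline $\gamma\R=n-1$ nothing fails: the kernel is $\simeq(|\log|\rho-1||+1)^{1/\R}$ near the diagonal, which is locally integrable in $d\rho/\rho$, so no atomic or Littlewood--Paley device is needed. And the mechanism for case \textit{(i)} is not weak-type interpolation at the endpoints (one cannot interpolate \emph{to} $p=1$ or $q=\infty$ from weak bounds in the open range); it is simply that strictness of the third inequality puts the radial kernel in strong $L^{r}(d\rho/\rho)$ rather than weak $L^{r,\infty}$, so the ordinary Young inequality, valid for all $1\le r,p,q\le\infty$, applies.

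The genuine gap is case \textit{(ii)}, which you dispose of with ``Bernstein-type estimates uniform in $R$''; this does not prove the claim. The point of \textit{(ii)} is that with $\widehat\phi$ supported in an annulus one keeps the full range $1\le p\le q\le\infty$ \emph{even when the third condition holds with equality}, which is exactly where the homogeneous-kernel argument breaks at the endpoints. The paper's route is: rescale to $R=1$, write $\phi=\psi*\phi$ with $\widehat\psi$ a bump equal to $1$ on the annulus, so $T_\gamma\phi=(T_\gamma\psi)*\phi$ with $T_\gamma\psi$ Schwartz, hence $|T_\gamma\phi|\le C_\mu\langle x\rangle^{-\mu}*|\phi|$ for arbitrarily large $\mu$. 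One must then prove the weighted $L^{p}_{|x|}L^{\p}_\theta\to L^{q}_{|x|}L^{\q}_\theta$ bound for convolution with $\langle x\rangle^{-\mu}$ (Lemma \ref{lem:xmu}); this requires its own sharp estimate of $\int_{\mathbb{S}^{n-1}}\langle x-\rho\theta\rangle^{-\nu}dS(\theta)$ and a multi-region analysis, and the endpoint gain comes precisely from the fact that the scaling identity is replaced by the open inequality $\mu>-\alpha-\beta+n(1+1/q-1/p)$, so every region closes with strict integrability. None of this is a Bernstein estimate, and your proposal as written does not supply it.
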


\begin{remark}
Notice that:
\begin{enumerate}\setlength{\itemindent}{-0pt}
\renewcommand{\labelenumi}{(\alph{enumi})}
  \item with the choices $q=\q $ and $p=\p $ (i.e.~in the usual 
  $L^{p}$ norms)
  Theorem \ref{the:Our1Thm} reduces to Theorem \ref{SteinWeissThm};
  \item if $\phi$ is radially symmetric, with the choice $\q =\p $,
  Theorem \ref{the:Our1Thm} reduces to Theorem \ref{DeNapoli1Thm}.
  Indeed, if $\phi$ is radially symmetric then
  $T_{\gamma}\phi$ is radially symmetric too, so that all choices
  for $\q,\p $ are equivalent;
  \item obviously, the same estimate is true for general 
  operators $T_{F}$ with nonradial kernels $F(x)$ satisfying
  \begin{equation*}
    T_{F}\phi(x)=\int F(x-y) \phi(y)dy,
    \qquad |F|\le C|x|^{-\gamma}.
  \end{equation*}
\end{enumerate}
\end{remark}

\noindent The proof of Theorem \ref{the:Our1Thm} is based on
two successive applications of Young's inequality 
for convolutions on suitable
Lie groups: first we use the strong inequality on the rotation
group $SO(n)$; then we use a Young inequality in the radial
variable, which in some cases must be replaced by 
the weak Young-Marcinkewicz
inequality on the multiplicative group $(\mathbb{R}^{+},\cdot)$
with the Haar measure $d\rho/\rho$. The convenient idea of using
convolution in the measure $d\rho/\rho$ was introduced in
\cite{DenapoliDrelichmanDuran09-a}.  

\begin{remark}\label{rem:nonhomogeneous}
  The operator $T_{\gamma}$ is a convolution with the homogenous
  kernel $|x|^{-\gamma}$. Consider instead
  the convolution with a nonhomogeneous kernel
  \begin{equation*}
    S_{\gamma}\phi(x)=\int \frac{\phi(y)}{\bra{x-y}^{\gamma}}dy.
  \end{equation*}
  By the obvious pointwise bound
  \begin{equation*}
    |S_{\gamma}\phi(x)|\le T_{\gamma}|\phi|(x)
  \end{equation*}
  it is clear that $S_{\gamma}$ satisfies the same estimates as
  $T_{\gamma}$. However the scaling invariance of the estimate is
  broken, and indeed something more can be proved, thanks to the
  smoothness of the kernel (see Lemma \ref{lem:xmu}):
\end{remark}

\begin{corollary}\label{cor:nonhom}
  Let $n \geq 2$ and $1\le p\le q\le\infty$, 
  $1\le\p\le\q\le\infty$. Assume
  $\alpha,\beta,\gamma$ satisfy the set of conditions
  \begin{equation}\label{eq:condDL}
    \beta<\frac nq,\qquad \alpha<\frac{n}{p'},\qquad
    \alpha+\beta \ge (n-1)
      \left(\frac1q-\frac1p+\frac{1}{\p}-\frac{1}{\q}\right),
  \end{equation}
  \begin{equation}\label{eq:condabg}
    \alpha+\beta+\gamma>n\left(1+\frac1q-\frac1p\right).
  \end{equation}
  Then the following estimate holds: 
  \begin{equation}\label{ourHLS}
    \||x|^{-\beta}S_{\gamma} \phi\|
        _{L^{q}_{|x|}L^{\q }_{\theta}} 
    \le C
    \| |x|^{\alpha} \phi\|_{L^{p}_{|x|}L^{\p }_{\theta}}.
  \end{equation}
\end{corollary}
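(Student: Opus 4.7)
The plan is to reduce Corollary \ref{cor:nonhom} to Theorem \ref{the:Our1Thm} by exploiting the strict inequality \eqref{eq:condabg}, which introduces slack in the Stein--Weiss scaling relation. The nonhomogeneous kernel $\langle x-y\rangle^{-\gamma}$ is bounded near the origin and decays like $|x-y|^{-\gamma}$ at infinity, so it can be pointwise dominated by a homogeneous kernel of strictly smaller exponent to which Theorem \ref{the:Our1Thm} applies directly.

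Concretely, I would set $\gamma_{0} := n + n/q - n/p - \alpha - \beta$, the scaling exponent of the homogeneous Stein--Weiss estimate corresponding to the given indices $(\alpha,\beta,p,q)$. Condition \eqref{eq:condabg} is precisely $\gamma_{0} < \gamma$, while the first two inequalities of \eqref{eq:condDL} combined with $p \le q$ force $\gamma_{0} \in (0,n)$, so $\gamma_{0}$ is admissible in Theorem \ref{the:Our1Thm}; the remaining third inequality of \eqref{eq:cDL} is identical to the third inequality of \eqref{eq:condDL}. One then checks pointwise
\begin{equation*}
\langle x-y\rangle^{-\gamma} \;\lesssim\; \min\bigl(1,\, |x-y|^{-\gamma}\bigr) \;\lesssim\; |x-y|^{-\gamma_{0}},
\end{equation*}
since on $|x-y|\le 1$ the right-hand side is at least $1$, while on $|x-y| > 1$ one has $|x-y|^{-\gamma} \le |x-y|^{-\gamma_{0}}$ because $\gamma_{0} < \gamma$. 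Therefore $|S_{\gamma}\phi(x)| \lesssim T_{\gamma_{0}}|\phi|(x)$, and the desired estimate \eqref{ourHLS} follows by applying \eqref{oHLS} with $\gamma$ replaced by $\gamma_{0}$.

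The main difficulty is the wider range of Lebesgue indices permitted by the corollary, namely $1 \le p \le q \le \infty$, as against $1 < p \le q < \infty$ in Theorem \ref{the:Our1Thm}, together with the possibility that the chosen $\gamma_{0}$ degenerates to an endpoint of $(0,n)$. For the extended $(p,q)$ range I expect the smoothness of $\langle x-y\rangle^{-\gamma}$ to permit direct arguments at the endpoints: the case $p=1$, $q=\infty$ is immediate since the kernel is bounded, and intermediate endpoints follow by interpolation. For a degenerate $\gamma_{0}$ one would instead split the kernel as $\langle x-y\rangle^{-\gamma}\mathbf{1}_{|x-y|\le 1} + \langle x-y\rangle^{-\gamma}\mathbf{1}_{|x-y| > 1}$, apply Theorem \ref{the:Our1Thm} to the local piece with an auxiliary interior exponent $\tilde\gamma \in (0,n)$, and estimate the tail piece by a Young-type inequality in the mixed radial--angular norms, which is available precisely because \eqref{eq:condabg} places the tail kernel in an absolutely integrable class of the relevant $L^{r}_{|x|}L^{\tilde r}_{\theta}$ space.
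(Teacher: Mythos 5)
Your reduction to Theorem \ref{the:Our1Thm} via the pointwise bound $\langle x-y\rangle^{-\gamma}\lesssim |x-y|^{-\gamma_{0}}$, with $\gamma_{0}=n+\frac nq-\frac np-\alpha-\beta$, is correct and efficient wherever that theorem applies, i.e.\ for $1<p\le q<\infty$, or in the full range when the third inequality in \eqref{eq:condDL} is strict; this is essentially the trivial domination $|S_{\gamma}\phi|\le T_{\gamma}|\phi|$ already noted in Remark \ref{rem:nonhomogeneous}. The gap lies exactly in the cases that are the added content of the corollary: $p=1$ or $q=\infty$ (or $p=q$) with \emph{equality} in the third condition of \eqref{eq:condDL}, and the borderline $\gamma_{0}=n$ (which occurs when $p=q$, $\p=\q$, $\alpha+\beta=0$; the first two conditions only give $\gamma_{0}\in(0,n]$, not $(0,n)$). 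At these endpoints the homogeneous estimate \eqref{oHLS} is not available without strictness because it rests on the weak-type Young inequality, and your domination replaces $\langle\cdot\rangle^{-\gamma}$ by the \emph{critical} homogeneous kernel, thereby discarding precisely the extra information (boundedness near the diagonal, decay $\gamma>\gamma_{0}$ at infinity) that rescues them; they cannot be recovered downstream. The proposed patches do not close this: the case $p=1$, $q=\infty$ is not immediate from boundedness of the kernel, since the weights $|x|^{-\beta}$ (with $\beta<0$, hence growing) and $|x|^{\alpha}$ must still be played against the kernel's decay; the interpolation claim is unsupported (one would first need the extreme cases on the equality face of \eqref{eq:condDL}, plus a Stein--Weiss-type interpolation with change of power weights in mixed radial--angular norms); and the ``auxiliary interior exponent $\tilde\gamma\in(0,n)$'' for the local piece is inadmissible, because the scaling relation in \eqref{eq:cDL} pins $\tilde\gamma=\gamma_{0}$ once $\alpha,\beta,p,q$ are fixed.

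For comparison, the paper does not reduce to Theorem \ref{the:Our1Thm} at all: the corollary is Lemma \ref{lem:xmu}, whose proof reruns the convolution argument directly for the kernel $\langle x\rangle^{-\mu}$, using Young's inequality on $SO(n)$, the sharp estimates of Lemma \ref{lem:singint2} for $J_{\nu}(x,\rho)=\int_{\mathbb{S}^{n-1}}\langle x-\rho\theta\rangle^{-\nu}dS(\theta)$, and a region-by-region radial analysis ($\sigma\le1$, $\rho\le1$, $\sigma\lesssim\rho/2$, $\rho\lesssim\sigma/2$, $\sigma\simeq\rho$) closed with strong H\"older and Young inequalities. It is this non-scale-invariant analysis, exploiting the kernel separately near and far from the diagonal, that converts the strict inequality \eqref{eq:condabg} into the full range $1\le p\le q\le\infty$ with a non-strict third condition. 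To salvage your route you would need to carry out a splitting of that type (bounded local piece plus integrable tail, each estimated with the correct weights) rather than a single domination; as written, the proposal proves the corollary only on the interior of its stated parameter range.
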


\ni The first result we need is an explicit estimate of 
the angular part of the fractional integral
$T_{\gamma}\phi$. Notice that a similar analysis
in the radial case was done in
\cite{DenapoliDrelichmanDuran09-a} (see Lemma 4.2 there).
The following estimates are sharp:

\begin{lemma}\label{lem:singint}
  Let $n\ge2$, $\nu>0$, and write
  $\bra{x}=(1+|x|^{2})^{1/2}$.
  Then the integral
  \begin{equation*}
    I_{\nu}(x)=\int_{\mathbb{S}^{n-1}}|x-y|^{-\nu }dS(y)
    \qquad x\in \mathbb{R}^{n}
  \end{equation*}
  satisfies
  \begin{equation}\label{eq:stima0}
    |I_{\nu}(x)|\simeq\bra{x}^{-\nu }\qquad
    \text{for}\quad|x|\ge2,
  \end{equation}
  while for $|x|\le2$ we have
  \begin{equation}\label{stimaI}
    |I_{\nu}(x)| \simeq
    \left\{ 
    \begin{array}{cc}
      1& 
            \mbox{if} \ \ \nu <n-1 \\
      |\log{||x|-1|}| + 1& 
            \mbox{if} \ \ \nu =n-1 \\
     ||x|-1|^{n-1- \nu }  & 
             \mbox{if} \ \ \nu  > n-1.
    \end{array} 
    \right. 
  \end{equation}
\end{lemma}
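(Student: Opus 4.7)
The plan is to exploit the rotational symmetry of $I_\nu$, which depends only on $r=|x|$, to reduce the problem to a one-dimensional integral and then perform a careful asymptotic analysis of that integral. Fixing an axis in the direction of $x$ and using spherical coordinates on $\mathbb{S}^{n-1}$ with polar angle $\theta$, the law of cosines gives $|x-y|^{2}=1-2r\cos\theta+r^{2}$, and the surface measure factorizes through $\sin^{n-2}\theta\,d\theta$ times the measure on $\mathbb{S}^{n-2}$. After the substitution $u=\cos\theta$ one obtains
\begin{equation*}
  I_{\nu}(x)=c_{n}\int_{-1}^{1}(1-2ru+r^{2})^{-\nu/2}(1-u^{2})^{(n-3)/2}\,du,
\end{equation*}
and all the analysis is reduced to this explicit integral.

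For $|x|\ge2$ the claim is straightforward: for any $y\in\mathbb{S}^{n-1}$ the triangle inequality gives $|x|-1\le|x-y|\le|x|+1$, so $|x-y|\simeq|x|\simeq\bra{x}$, the integrand is comparable to $|x|^{-\nu}$ uniformly in $y$, and integration over the sphere yields $I_{\nu}(x)\simeq\bra{x}^{-\nu}$. This disposes of \eqref{eq:stima0}.

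For $|x|\le2$ the integrand in the reduced formula is smooth except possibly when $1-2ru+r^{2}=0$, which occurs only at $u=1$ with $r=1$. First I split the integral as $\int_{-1}^{1}=\int_{-1}^{0}+\int_{0}^{1}$; on $[-1,0]$ the expression $1-2ru+r^{2}$ is bounded below by a positive constant for $r\in[0,2]$, so this part contributes an $O(1)$ quantity, uniformly in $r$. For the remaining piece I use the identity $1-2ru+r^{2}=(r-1)^{2}+2r(1-u)$ and substitute $t=1-u$, giving
\begin{equation*}
  \int_{0}^{1}\bigl((r-1)^{2}+2rt\bigr)^{-\nu/2}\bigl(t(2-t)\bigr)^{(n-3)/2}\,dt,
\end{equation*}
whose behaviour near $t=0$ is the same as $\int_{0}^{\delta}((r-1)^{2}+2rt)^{-\nu/2}t^{(n-3)/2}\,dt$ for a small fixed $\delta$, the remaining tail being uniformly $O(1)$ again.

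The heart of the argument is the rescaling $t=(r-1)^{2}s/(2r)$ in this last integral, assuming $r$ is close to but distinct from $1$. After extracting the prefactor this gives a constant times $|r-1|^{n-1-\nu}\int_{0}^{M}(1+s)^{-\nu/2}s^{(n-3)/2}\,ds$ with $M\simeq|r-1|^{-2}\to\infty$ as $r\to1$. The three cases of \eqref{stimaI} now fall out from the large-$M$ behaviour of this integral: if $\nu<n-1$ the integral grows like $M^{(n-1-\nu)/2}$, which exactly cancels the prefactor and produces an $O(1)$ contribution; if $\nu=n-1$ the integral grows logarithmically in $M$, giving $|\log|r-1||+O(1)$; and if $\nu>n-1$ the integral converges to a finite constant, leaving the prefactor $|r-1|^{n-1-\nu}$. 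Matching lower bounds come from restricting the integral to $s\in[0,1]$ in the first two cases and to $s\ge 1$ in the third, so each estimate is sharp. The only real delicacy is keeping the constants uniform as $r$ ranges over a small neighbourhood of $1$ and tracking that $M\to\infty$ fast enough to justify replacing the truncated integral by its asymptotic value; this bookkeeping is the main technical obstacle.
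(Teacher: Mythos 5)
Your argument is correct and reaches the same three regimes by the same underlying strategy — reduce to a one-dimensional radial integral and rescale by the distance of $|x|$ to $1$ — but the execution differs from the paper's in a way worth noting. The paper changes variables to $\sigma=|x-y|$, expresses the angular Jacobian through Heron's formula for the area of the triangle $0,x,y$, and is then forced to treat four regimes of $|x|$ separately, disposing of $\tfrac12\le|x|\le1$ by the inversion $|x|\mapsto1/|x|$. You instead parametrize by $u=\cos\theta$ and use the identity $1-2ru+r^{2}=(r-1)^{2}+2r(1-u)$, which is symmetric in the two sides of $r=1$; this lets you handle all of $|x|\le2$ in one computation and avoid the inversion step, at the price of nothing — your rescaling $t=(r-1)^{2}s/(2r)$ plays exactly the role of the paper's substitution $\sigma\to\sigma(|x|-1)$, and the trichotomy falls out of the convergence/divergence of $\int^{M}(1+s)^{-\nu/2}s^{(n-3)/2}\,ds$ just as it does from the paper's integral $\int_{2}^{1+1/(|x|-1)}\sigma^{n-2-\nu}\,d\sigma$. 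One small correction to your last paragraph: the matching lower bounds are not all obtained by restricting to $s\in[0,1]$. For $\nu<n-1$ that restriction only yields $\gtrsim|r-1|^{n-1-\nu}$, which vanishes as $r\to1$; the lower bound $\gtrsim1$ in that case is already supplied by the nonsingular portion of the integral (your $u\in[-1,0]$ piece, which is bounded below since $1-2ru+r^{2}\le(1+r)^{2}\le9$ there). For $\nu=n-1$ the logarithm must come from $s\in[1,M]$, where the integrand is $\simeq s^{-1}$, not from $s\in[0,1]$, which only gives a constant. With that bookkeeping fixed the proof is complete.
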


\begin{proof}
We consider four different regimes according to the size of
$|x|$. We write for brevity $I$ instead of $I_{\nu}$.

\subsection*{First case: $|x|\geq 2$} 
For $x$ large and $|y|=1$
we have $|x-y| \simeq |x|$, 
hence $|I(x)| \simeq |x|^{-\nu } 
\simeq \langle x \rangle^{-\nu }$.
This proves \eqref{eq:stima0}.

\subsection*{Second case: $0 \leq |x| \leq \frac{1}{2}$} 
Clearly we have $|x-y| \simeq 1$ when $|y|=1$, 
and this implies 
$|I(x)| \simeq 1 \simeq \langle x \rangle^{-\nu }$.
This is equivalent to \eqref{stimaI} when $|x|\le1/2$.

\subsection*{Third case: $1 \leq |x| \leq 2$} 
This is the bulk of the computation since it contains the
singular part of the integral, as $|x|\to1$.
We write the integral in polar coordinates using the
spherical angles $(\theta_{1},\theta_{2},...,\theta_{n-1})$ 
on $\mathbb{S}^{n-1}$, oriented in such a way that
$\theta_{1}$ is the angle between $x$ and $y$.
Using the notation $\sigma=|x-y|$,
by the symmetry of $I(x)$ in $(\theta_{2},...,\theta_{n-1})$ we have
$$
  |I(x)| \simeq \int_{0}^{\pi}
  \sigma^{-\nu }(\sin{\theta_{1}})^{n-2}d\theta_{1}.
$$
In order to rewrite the integral using $\sigma$ as a new
variable, we compute 
$$
  2\sigma d\sigma = d(|x-y|^{2})=d(|x|+1 -2|x| \cos{\theta_{1}})
  =2|x|\sin{\theta_{1}}d\theta_{1}
$$
so we have
$$
  (\sin{\theta_{1}})^{n-2}d\theta_{1} = 
  \frac{\sigma (\sin{\theta_{1}})^{n-3}}{|x|}d\sigma
$$
and, noticing that $0\le|x|-1\le|x-y|=\sigma\le|x|+1$,
$$
  |I(x)| \simeq \int_{|x|-1}^{|x|+1}
  \sigma^{1-\nu }\frac{(\sin{\theta_{1}})^{n-3}}{|x|}d\sigma.
$$
Now let $A$ be the area of the triangle with vertices
$0,x$ andd $y$: we have $2A=|x|\sin{\theta_{1}}$ so that
$$
  |I(x)| \simeq  |x|^{2-n}\int_{|x|-1}^{|x|+1}
  \sigma^{1-\nu }A^{n-3}d \sigma.
$$
Recalling Heron's formula for the area of a triangle as a function
of the length of its sides we obtain
$$
  |I(x)| \simeq |x|^{2-n}\int_{|x|-1}^{|x|+1}
  \sigma^{1-\nu }
  \Bigl[(|x|+\sigma +1)(|x|+\sigma -1)
  (|x|+1 -\sigma)(\sigma +1 -|x|)\Bigr]^{\frac{n-3}{2}}d\sigma.
$$
Notice that this formula is correct for all dimensions $n\ge2$.

Now we split the integral as $I \simeq I_{1} + I_{2}$
with
$$
  I_{1}(x)= |x|^{2-n}\int_{|x|-1}^{|x|}
  \sigma^{1-\nu }
  \Bigl[
  (|x|+\sigma +1)
  (|x|+\sigma -1)
  (|x|+1 -\sigma)
  (\sigma +1 -|x|)
  \Bigr]^{\frac{n-3}{2}}
  d\sigma
$$
and
$$
  I_{2}(x) = |x|^{2-n}\int_{|x|}^{|x|+1}
  \sigma^{1-\nu }
  \Bigl[
  (|x|+\sigma +1)
  (|x|+\sigma -1)
  (|x|+1 -\sigma)
  (\sigma +1 -|x|)
  \Bigr]^{\frac{n-3}{2}}
  d\sigma.
$$
In the second integral $I_{2}$, recalling that $1\le|x|\le2$,
we have
\begin{equation*}
  |x|\simeq
  \sigma \simeq 
  |x|+\sigma+1 \simeq
  |x|+\sigma-1 \simeq
  \sigma+1-|x| \simeq
  1
\end{equation*}
so that
\begin{equation*}
  I_{2}\simeq
  \int_{|x|}^{|x|+1}(|x|+1-\sigma)^{\frac{n-3}{2}}d \sigma
  = \int_{0}^{1}(1-\sigma)^{\frac{n-3}{2}}d \sigma
  \simeq 1.
\end{equation*}
In the first integral $I_{1}$, using that
$1 \leq |x| \leq 2$ and $|x|-1 \leq \sigma \leq |x|$,
we see that 
\begin{equation*}
  |x|\simeq
  (|x|+\sigma +1)\simeq
  (|x|+1 -\sigma)\simeq
  1;
\end{equation*}
moreover,
\begin{equation*}
  1\le \frac{|x|+\sigma-1}{\sigma}\le 2
  \quad\text{so that}\quad |x|+\sigma-1 \simeq \sigma
\end{equation*}
and we have
$$
  I_{1}(x) \simeq \int_{|x|-1}^{|x|}
  \sigma^{1-\nu  + \frac{n-3}{2}}
  (\sigma+1 -|x|)^{\frac{n-2}{2}}d\sigma
$$
or, after the change of variable $\sigma\to\sigma(|x|-1)$,
$$
  I_{1}(x)\simeq(|x|-1)^{n-1-\nu   }
  \int_{1}^{1+\frac{1}{|x|-1}}
  (\sigma -1)^{\frac{n-3}{2}}
  \sigma ^{\frac{n-1}{2}-\nu }d\sigma .
$$
Now split the last integral as $A+B$ where
$$
   A= (|x|-1)^{n-1-\nu }\int_{1}^{2}
   (\sigma -1)^{\frac{n-3}{2}}\sigma ^{\frac{n-1}{2}-\nu }d\sigma 
$$
and
$$
  B=(|x|-1)^{n-1-\nu }
  \int_{2}^{1+\frac{1}{|x|-1}}(\sigma -1)^{\frac{n-3}{2}}
  \sigma ^{\frac{n-1}{2}-\nu }d\sigma;
$$
we have immediately
$$  
  A \simeq (|x|-1)^{n-1 - \nu }
$$
while, keeping into account that $\sigma  \simeq \sigma -1$ 
for $\sigma$ in $(2,1+\frac{1}{|x|-1})$,
$$
  B=(|x|-1)^{n-1-\nu }
  \int_{2}^{1+\frac{1}{|x|-1}}
  \sigma ^{n-2-\nu }d\sigma
$$
which gives
\begin{equation}\label{andamentoB}
  B\simeq 
    \left\{ \begin{array}{cc}
     1& \mbox{if} \ \ \nu <n-1 \\
     |\log{||x|-1|}| + 1& \mbox{if} \ \ \nu =n-1 \\
     ||x|-1|^{n-1- \nu }  & \mbox{if} \ \ \nu  > n-1 
  \end{array} \right.
\end{equation}

\subsection*{Fourth case: $\frac{1}{2} \leq |x| \leq 1$} 
Using the change of variable $|x'|=1/|x|$, we see that
$|I(x)|\simeq |I(1/|x'|)|$, 
thus the fourth case follows immediately from the third one,
and this concludes the proof of the Lemma.
\end{proof}

\ni We shall also need the following estimate which is
proved in a similar way:

\begin{lemma}\label{lem:singint2}
  Let $n\ge2$, $\nu>0$. Then the integral
  \begin{equation*}
    J_{\nu}(x,\rho)=\int_{\mathbb{S}^{n-1}}
    \bra{x-\rho\theta}^{-\nu}dS(\theta)
    \qquad x\in \mathbb{R}^{n},\ \rho\ge0
  \end{equation*}
  satisfies:
  \begin{equation}\label{eq:stimab0}
    |J_{\nu}(x,\rho)|\simeq\bra{x}^{-\nu }\qquad
    \text{for $\rho\le1$ or $|x|\ge 2\rho$},
  \end{equation}
  \begin{equation}\label{eq:stimac0}
    |J_{\nu}(x,\rho)|\simeq\bra{\rho}^{-\nu }\qquad
    \text{for $|x|\le1$ or $\rho\ge 2|x|$},
  \end{equation}
  while in the remaining case, i.e.~when $|x|\ge1$ and
  $\rho\ge1$ and $2^{-1}|x|\le\rho\le2|x|$,
  \begin{equation}\label{eq:stimabI}
    |J_{\nu}(x,\rho)| \simeq
    \left\{ 
    \begin{array}{cc}
      \bra{\rho}^{-\nu}& 
            \mbox{if} \ \ \nu <n-1 \\
      \bra{\rho}^{-\nu}
          \log\left(\frac{2\bra{\rho}}{\bra{|x|-\rho}}\right)
      & 
            \mbox{if} \ \ \nu =n-1 \\
      \bra{\rho}^{1-n}\bra{|x|-\rho}^{n-1-\nu}  & 
             \mbox{if} \ \ \nu  > n-1.
    \end{array} 
    \right. 
  \end{equation}
  As a consequence, one has
  $J_{\nu}\lesssim \bra{\rho+|x|}^{-\nu}$ when $\nu<n-1$ and
  $J_{\nu}\lesssim \bra{\rho+|x|}^{-\nu}\log(2\bra{\rho}+|x|)$ 
  when $\nu=n-1$.
\end{lemma}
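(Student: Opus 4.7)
The plan is to adapt the method of Lemma~\ref{lem:singint}. The relevant geometric object is the triangle with vertices $0$, $x$, $\rho\theta$, with sides $|x|,\rho,\sigma:=|x-\rho\theta|$ where $\sigma\in[||x|-\rho|,|x|+\rho]$. Writing $\mathbb{S}^{n-1}$ in spherical coordinates $(\theta_1,\dots,\theta_{n-1})$ with $\theta_1$ the angle between $\theta$ and $x$, and using Heron's formula for the triangle area $A$ (so that $2A=|x|\rho\sin\theta_1$ and $\sigma\,d\sigma=|x|\rho\sin\theta_1\,d\theta_1$), the same manipulations as in the third case of Lemma~\ref{lem:singint} give
$$J_\nu(x,\rho)\simeq (|x|\rho)^{2-n}\int_{||x|-\rho|}^{|x|+\rho}(1+\sigma^2)^{-\nu/2}\,\sigma\,A^{n-3}\,d\sigma,$$
with $16A^2=((|x|+\rho)^2-\sigma^2)(\sigma^2-(|x|-\rho)^2)$.

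The two ``regular'' regimes \eqref{eq:stimab0} and \eqref{eq:stimac0} can be handled by pointwise estimation, bypassing the formula above. When $\rho\le 1$ or $|x|\ge 2\rho$, a direct case check (separating $|x|\le 1$ from $|x|\ge 1$ when $\rho\le 1$) gives $\bra{x-\rho\theta}\simeq\bra{x}$ uniformly in $\theta\in\mathbb{S}^{n-1}$, hence $J_\nu(x,\rho)\simeq\bra{x}^{-\nu}$; the symmetric argument, exchanging the roles of $x$ and $\rho\theta$, yields \eqref{eq:stimac0}.

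The delicate regime \eqref{eq:stimabI}, where $|x|\simeq\rho\ge 1$, is where the above formula is essential. Assuming WLOG $\rho\ge |x|$ and setting $\tau=\rho-|x|$, the factorization simplifies to $16A^2\simeq\rho(|x|+\rho-\sigma)(\sigma+\tau)(\sigma-\tau)$, isolating the vanishings at $\sigma=|x|+\rho$ and $\sigma=\tau$. I would split the integral at $\sigma=\rho$: on the upper piece $[\rho,|x|+\rho]$ the substitution $\sigma=|x|+\rho-t$ reduces the computation to that of $I_2$ in the third case of Lemma~\ref{lem:singint} and yields a contribution of order $\bra{\rho}^{-\nu}$. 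On the lower piece $[\tau,\rho]$, after an analogous rescaling centered at $\sigma=\tau$ and matched to the scale $\bra{\tau}$, the integrand decomposes into a bounded-interval contribution of size $\bra{\rho}^{-\nu}$ and a tail governed by $\sigma^{n-2-\nu}$ over an interval of length $\simeq\bra{\rho}/\bra{\tau}$, exactly mirroring the $B$ computation in \eqref{andamentoB}; this produces the three sub-cases $\nu<n-1$, $\nu=n-1$, $\nu>n-1$, with the logarithmic factor and the singular factor $\bra{|x|-\rho}^{n-1-\nu}$.

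The final consequence ($J_\nu\lesssim\bra{\rho+|x|}^{-\nu}$ for $\nu<n-1$, with the logarithmic variant at $\nu=n-1$) follows by combining the three regimes: in \eqref{eq:stimabI} we have $\bra{\rho}\simeq\bra{\rho+|x|}$, while in the other two regimes $\max(\bra{x},\bra{\rho})^{-\nu}\simeq\bra{\rho+|x|}^{-\nu}$ and the putative logarithm is trivially absorbed. The main technical obstacle is the bookkeeping in the singular regime: because $\sigma$ ranges over a potentially large interval and the weight $(1+\sigma^2)^{-\nu/2}$ switches behaviour across $\sigma=1$, one must carefully track the interplay between the weight and the endpoint singularities at $\sigma=\tau$ and $\sigma=|x|+\rho$ in order to reproduce the correct prefactor $\bra{\rho}^{-\nu}$ (for $\nu<n-1$) or $\bra{\rho}^{1-n}$ (for $\nu\geq n-1$) in each sub-case.
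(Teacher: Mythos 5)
Your proposal is correct and follows essentially the same route as the paper: the same polar--coordinate reduction via Heron's formula, the same pointwise equivalence $\bra{x-\rho\theta}\simeq\bra{x}$ (resp.\ $\simeq\bra{\rho}$) in the non-degenerate regimes, and in the regime $|x|\simeq\rho\ge1$ the same splitting of the $\sigma$-integral into an upper piece of size $\bra{\rho}^{-\nu}$ and a lower piece reducing to the $\int\sigma^{n-2}/(1+\sigma^{\nu})\,d\sigma$ computation that produces the three sub-cases. The only difference is bookkeeping (you keep $\sigma=|x-\rho\theta|$ unrescaled and split at $\sigma=\rho$, while the paper normalizes the sphere to radius $1$ and splits at $\sigma=|x|$), which does not change the substance.
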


\begin{proof}
  The proof is similar to the proof of Lemma \ref{lem:singint};
  we sketch the main steps. Estimates \eqref{eq:stimab0} and
  \eqref{eq:stimac0} are obvious, thus we focus on \eqref{eq:stimabI}.
  Write $r=|x|$, so that we are in the region $1/2\le r/\rho\le2$;
  we shall consider in detail the case
  \begin{equation*}
    1\le \frac{r}{\rho}\le 2,
  \end{equation*}
  the remaining region being similar.
  Using the same coordinates as before, the integral is reduced to
  \begin{equation*}
    J_{\nu}(|x|,\rho)=
    |x|^{2-n}\int_{|x|-1}^{|x|+1}\bra{\rho \sigma}^{-\nu}
    A^{n-3}\sigma\ d \sigma
  \end{equation*}
  where $A$ is given by Heron's formula
  \begin{equation*}
    A(|x|,\sigma)^{2}=
    (|x|+\sigma+1)
    (|x|+\sigma-1)
    (|x|+1-\sigma)
    (\sigma+1-|x|).
  \end{equation*}
  We split the integral on the intervals $|x|\le\sigma\le|x|+1$ 
  and $|x|-1\le\sigma\le|x|$. The first piece gives 
  \begin{equation*}
    I_{1}\simeq 
     \bra{\rho}^{-\nu}
      \int_{|x|}^{|x|+1}(|x|+1-\sigma)^{\frac{n-3}{2}}d \sigma
  \end{equation*}
  and by the change of variable $\sigma\to \sigma(|x|+1)$
  we obtain
  \begin{equation*}
    I_{1}(|x|,\rho)\simeq \bra{\rho}^{-\nu}.
  \end{equation*}
  For the second integral on $|x|-1\le\sigma\le|x|$, noticing that
  \begin{equation*}
    1\le \frac{|x|+\sigma-1}{\sigma}\le2
  \end{equation*}
  we have
  \begin{equation*}
  \begin{split}
    I_{2}\simeq &
    \int_{|x|-1}^{|x|}
      \bra{\rho \sigma}^{-\nu} \sigma^{\frac{n-1}{2}}
      (\sigma+1-|x|)^{\frac{n-3}{2}}d \sigma
    \\
    = &
    (|x|-1)^{n-1}
    \int_{1}^{\frac{|x|}{|x|-1}}
    \bra{(r-\rho)\sigma}^{-\nu}\sigma^{\frac{n-1}{2}}
    (\sigma-1)^{\frac{n-3}{2}}d \sigma
  \end{split}
  \end{equation*}
  via the change of variables $\sigma\to \sigma(|x|-1)$
  which gives $\rho \sigma\to(r-\rho)\sigma$.
  The part of the integral bewteen 1 and 2 produces
  \begin{equation*}
    \simeq
    (|x|-1)^{n-1}\bra{r-\rho}^{-\nu}=
    \rho^{1-n}(r-\rho)^{n-1}\bra{r-\rho}^{-\nu}
  \end{equation*}
  while the remaining part between 2 and $|x|/(|x|-1)$ gives
  \begin{equation*}
  \begin{split}
    \simeq &
    (|x|-1)^{n-1}
    \int_{2}^{\frac{r}{r-\rho}}
    \bra{(r-\rho)\sigma}^{-\nu} 
    \sigma^{n-2}d \sigma
    \\
    = &
    \rho^{1-n}
    \int_{2(r-\rho)}^{r}\bra{\sigma}^{-\nu}\sigma^{n-2}d \sigma
    \\
    \simeq\ &
    \rho^{1-n}
    \int_{2(r-\rho)}^{r}
    \frac{\sigma^{n-2}}{1+\sigma^{\nu}}d \sigma
  \end{split}
  \end{equation*}
  which can be computed explicitly. Summing up we obtain
  \eqref{eq:stimabI}.
\end{proof}

\ni We are ready for the main part of the proof.
By the isomorphism 
\begin{equation*}
  \mathbb{S}^{n-1}\simeq SO(n)/SO(n-1)
\end{equation*}
we can represent integrals on $\mathbb{S}^{n-1}$ in the form
\begin{equation*}
  \int_{\mathbb{S}^{n-1}}g(y)dS(y)= c_{n}
  \int_{SO(n)}g(Ae)dA,\qquad n\ge2
\end{equation*}
where $dA$ is the left Haar measure on
$SO(n)$, and $e\in\mathbb{S}^{n-1}$ is a fixed arbitrary
unit vector. Thus, via polar coordinates, 
a convolution integral can be written as follows
(apart from inessential constants depending only on the
space dimension $n$):
\begin{equation*}
\begin{split}
  F*\phi(x)=
  \int_{\mathbb{R}^{n}}F(x-y)\phi(y)dy
  &=
  \int_{0}^{\infty}
  \int_{\mathbb{S}^{n-1}}
     F(x-\rho \omega)\phi(\rho \omega)dS_{\omega}\rho^{n-1}d \rho
    \\
  &\simeq
  \int_{0}^{\infty}
  \int_{SO(n)}F(x-\rho Be)\phi(\rho Be)dB\rho^{n-1}d \rho
\end{split}
\end{equation*}
Hence the $L^{\q}$ norm of the convolution
on the sphere can be written as
\begin{equation*}
\begin{split}
  \|F*\phi(|x|\theta)\|_{L^{\q}_{\theta}(\mathbb{S}^{n-1})}
  &\simeq
  \|F*\phi(|x|Ae)\|_{L^{\q}_{A}(SO(n))}
    \\
  &\le 
  \int_{0}^{\infty}
  \left\|
    \int_{SO(n)}F(|x|Ae-\rho Be)\phi(\rho Be)dB
  \right\|_{L^{\q}_{A}(SO(n))}
  \rho^{n-1}d \rho
\end{split}
\end{equation*}
where $e$ is any fixed unit vector. By the change of variables
$B\to AB^{-1}$ in the inner integral
(and the invariance of the measure) 
this is equivalent to
\begin{equation*}
  =\int_{0}^{\infty}
  \left\|
    \int_{SO(n)}F(AB^{-1}(|x|Be-\rho e))\phi(\rho AB^{-1}e)dB
  \right\|_{L^{\q}_{A}(SO(n))}
  \rho^{n-1}d \rho
\end{equation*}
If $F$ satisfies
\begin{equation}\label{eq:rad}
  |F(x)|\le C f(|x|)
\end{equation}
for a radial function $f$, we can write
\begin{equation*}
  |F(AB^{-1}(|x|Be-\rho e))|\le 
  C f\left(\bigl||x|Be-\rho e\bigr|\right)
\end{equation*}
and we notice that the integral
\begin{equation*}
  \int_{SO(n)}f\left(\bigl||x|Be-\rho e\bigr|\right)
     |\phi(\rho AB^{-1}e)|dB=
  g*h(A)
\end{equation*}
is a convolution on $SO(n)$ of the functions
\begin{equation*}
  g(A)=f\left(\bigl||x|Ae-\rho e\bigr|\right),\qquad
  h(A)=|\phi(\rho Ae)|.
\end{equation*}
We can thus apply the Young's inequality on $SO(n)$
(see e.g.~Theorem 1.2.12 in 
\cite{Grafakos08-a}) and we obtain, for any
\begin{equation*}
  \q,\R,\p \in[1,+\infty]
  \quad\text{with}\quad
  1+\frac{1}{\q}=\frac1\R+\frac1\p,
\end{equation*}
the estimate
\begin{equation}\label{eq:firstest}
  \|F*\phi(|x|\theta)\|_{L^{\q}_{\theta}(\mathbb{S}^{n-1})}
  \lesssim 
  \int_{0}^{\infty}
  \|f(||x|e-\rho\theta|)\|_{L^{\R}_{\theta}(\mathbb{S}^{n-1})}
  \|\phi(\rho\theta)\|_{L^{\p}_{\theta}(\mathbb{S}^{n-1})}
  \rho^{n-1}d \rho
\end{equation}
where we switched back to the coordinates of $\mathbb{S}^{n-1}$.
Notice that the conditions on the indices imply in particular
\begin{equation*}
  \q\ge\p.
\end{equation*}
Specializing $f$ to the choice
\begin{equation*}
  f(|x|)=|x|^{-\gamma}
\end{equation*}
we get
\begin{equation*}%
  \|F*\phi(|x|\theta)\|_{L^{\q}_{\theta}}
  \lesssim 
  \int_{0}^{\infty}
  \rho^{-\gamma}
  \||\rho^{-1}|x|e-\theta|^{-\gamma}\|_{L^{\R}_{\theta}}
  \|\phi(\rho\theta)\|_{L^{\p}_{\theta}}
  \rho^{n-1}d \rho
\end{equation*}
which can be written in the form
\begin{equation*}
  =
  |x|^{n-\alpha-\frac np-\gamma}
  \int_{0}^{\infty}
  \left(
  \frac{|x|}{\rho}
  \right)^{\alpha+\frac np-n+\gamma}
  \||\rho^{-1}|x|e-\theta|^{-\gamma}\|_{L^{\R}_{\theta}}
  \rho^{\alpha+\frac np}
  \|\phi(\rho\theta)\|_{L^{\p}_{\theta}}
  \frac{d\rho}{\rho}
\end{equation*}
or equivalently, recalling \eqref{eq:condSW},
\begin{equation*}
  =
  |x|^{\beta-\frac nq}
  \int_{0}^{\infty}
  \left(
  \frac{|x|}{\rho}
  \right)^{-\beta+\frac nq}
  \||\rho^{-1}|x|e-\theta|^{-\gamma}\|_{L^{\R}_{\theta}}
  \rho^{\alpha+\frac np}
  \|\phi(\rho\theta)\|_{L^{\p}_{\theta}}
  \frac{d\rho}{\rho}
\end{equation*}
Following \cite{DenapoliDrelichmanDuran09-a},
we recognize that the
last integral is a convolution
in the multiplicative group $(\mathbb{R},\cdot)$ with the Haar measure
$d \rho/\rho$, which implies
\begin{equation*}
  |x|^{-\beta+\frac nq}
  \|F*\phi(|x|\theta)\|_{L^{\q}_{\theta}}
  \lesssim
  g_{1}*h_{1}(|x|),
\end{equation*}
with
\begin{equation*}
  g_{1}(\rho)=\rho^{-\beta+\frac nq}
  \||\rho e-\theta|^{-\gamma}\|_{L^{\R}_{\theta}},\qquad
  h_{1}(\rho)=
  \rho^{\alpha+\frac np}
  \|\phi(\rho\theta)\|_{L^{\p}_{\theta}}.
\end{equation*}
By the weak Young's inequality in the measure $d\rho/\rho$
(Theorem 1.4.24 in \cite{Grafakos08-a})
we obtain
\begin{equation*}
\begin{split}
  \||x|^{-\beta}F*\phi\|_{L^{q}L^{\q}}\equiv
  &\left\||x|^{-\beta+\frac nq}
     \|F*\phi(|x|\theta)\|_{L^{\q}_{\theta}}
  \right\|_{L^{q}(\rho^{-1}d\rho)}
    \\
  \lesssim &
  \|h_{1}\|_{L^{p}(\rho^{-1}d\rho)}
  \|g_{1}\|_{L^{r,\infty}(\rho^{-1}d\rho)}
\end{split}
\end{equation*}
that is to say
\begin{equation}\label{eq:almostfin}
  \||x|^{-\beta}F*\phi\|_{L^{q}L^{\q}}
  \lesssim
  \|\phi\|_{L^{p}L^{\p}}
  \left\|
    \rho^{-\beta+\frac nq}\||\rho e-\theta|^{-\gamma}\|
       _{L^{\R}_{\theta}}
  \right\|_{L^{r,\infty}(\rho^{-1}d\rho)}.
\end{equation}
provided
\begin{equation*}
  q,r,p\in(1,+\infty)\qquad
  1+\frac1q=\frac1r+\frac1p.
\end{equation*}
In particular this implies
\begin{equation}\label{eq:qp}
  q>p.
\end{equation}
In order to achieve the proof, it remains to check that the
last norm in \eqref{eq:almostfin} is finite.
Notice that, when $\R<\infty$,
\begin{equation*}
  \||\rho e-\theta|^{-\gamma}\|_{L^{\R}_{\theta}}=
  I_{\gamma \R}(\rho e)^{\frac{1}{\R}}
\end{equation*}
where $I_{\nu}$ was defined and estimated in Lemma \ref{lem:singint}.
On the other hand, when $\R=\infty$ one has directly
\begin{equation}\label{eq:rinf}
  \R=\infty \quad\implies\quad
  \||\rho e-\theta|^{-\gamma}\|_{L^{\R}_{\theta}}\simeq
  |\rho-1|^{-\gamma}.
\end{equation}

\ni Using cutoffs, we split the $L^{r,\infty}$ norm in three regions
$0\le \rho\le 1/2$, $\rho\ge2$ and $1/2\le \rho\le2$.

\ni In the region $0\le\rho\le1/2$, recalling 
\eqref{eq:stima0}-\eqref{stimaI} or \eqref{eq:rinf},
we have
\begin{equation*}
  I_{\gamma \R}(\rho e)^{\frac{1}{\R}}\simeq 1
  \quad\implies\quad
  \rho^{-\beta+\frac nq}I_{\gamma \R}(\rho e)^{\frac{1}{\R}}
  \in L^{1}(0,1/2; d\rho/\rho)
\end{equation*}
since by assumption $\beta<n/q$; thus the contribution of this
part to the $L^{r,\infty}(d\rho/\rho)$ norm is finite.

In the region $\rho\ge2$ we have
\begin{equation*}
  I_{\gamma \R}(\rho e)^{\frac{1}{\R}}\simeq 
    \rho^{-\gamma}
  \quad\implies\quad 
  \rho^{-\beta+\frac nq}I_{\gamma \R}(\rho e)^{\frac{1}{\R}} \simeq
  \rho^{-\beta-\gamma+\frac nq} 
  \in L^{1}(2,\infty; d\rho/\rho)
\end{equation*}
since the condition
\begin{equation*}
  -\beta-\gamma+\frac nq<0 \quad\iff \quad
  \alpha<\frac{n}{p'}
\end{equation*}
is satisfied by \eqref{eq:condDL}, and again the
contribution to the $L^{r,\infty}$ norm is finite.

For the third region $1/2\le \rho\le2$, by estimate
\eqref{stimaI}, we see that in the case
$\gamma\R\le n-1$ one has again, for some $\sigma\ge0$,
\begin{equation*}
  I_{\gamma \R}(\rho e)^{\frac{1}{\R}}\simeq 
    |\log||\rho|-1|^{\sigma}
  \quad\implies\quad 
  \rho^{-\beta+\frac nq}I_{\gamma \R}(\rho e)^{\frac{1}{\R}}
  \in L^{1}(1/2,2; d\rho/\rho)
\end{equation*}
On the other hand, in the case $\gamma\R>n-1$ (which includes
the choice $\R=\infty$), we see that
\begin{equation*}
  \rho^{-\beta+\frac nq}I_{\gamma \R}(\rho e)^{\frac{1}{\R}}\simeq
  |\rho-1|^{\frac{n-1}{\R}-\gamma}
  \in L^{r,\infty}(1/2,2; d\rho/\rho)\quad
  \iff \frac{n-1}{\R}-\gamma\ge-\frac1r.
\end{equation*}
Recalling the relation between $q,r,p$ (resp.~$\q,\R,\p$) the last
condition is equivalent to
\begin{equation*}
  -\gamma
  \ge 
  (n-1)
  \left(
    \frac1q-\frac1p-\frac1\q+\frac1\p
  \right)-\frac nq+\frac np-n
\end{equation*}
which is precisely the third of conditions \eqref{eq:condDL}.

\ni The weak Young inequality can be used in \eqref{eq:almostfin}
only in the range $q,r,p\in(1,+\infty)$, which forces
\begin{equation*}
  1<p<q<\infty.
\end{equation*}
To cover the cases
\begin{equation*}
  1\le p<q\le\infty
\end{equation*}
we use instead the strong Young inequality: we can write
\begin{equation}\label{eq:almostfin2}
  \||x|^{-\beta}F*\phi\|_{L^{q}L^{\q}}
  \lesssim
  \|\phi\|_{L^{p}L^{\p}}
  \left\|
    \rho^{-\beta+\frac nq}\||\rho e-\theta|^{-\gamma}\|
       _{L^{\R}_{\theta}}
  \right\|_{L^{r}(\rho^{-1}d\rho)}
\end{equation}
for the full range 
$q,r,p\in[1,+\infty]$. The previous arguments
are still valid apart from the last step which must be replaced by
\begin{equation*}
  \rho^{-\beta+\frac nq}I_{\gamma \R}(\rho e)^{\frac{1}{\R}}\simeq
  |\rho-1|^{\frac{n-1}{\R}-\gamma}
  \in L^{r}(1/2,2; d\rho/\rho)\quad
  \iff \frac{n-1}{\R}-\gamma>-\frac1r
\end{equation*}
and this implies that the inequality in the last condition
\eqref{eq:condDL} must be strict. 

\ni The case
\begin{equation*}
  1<p=q<\infty
\end{equation*}
has already been covered. Indeed, in this case
the scaling condition \eqref{eq:condDL} implies
\begin{equation*}
  \alpha+\beta+\gamma=n
  \quad\implies\quad
  \alpha+\beta>0
\end{equation*}
since $\gamma<n$. Thus when $\p=\q$ the last inequality in
\eqref{eq:condDL} is strict and we can apply the second part of
the proof; the cases $\p\le\q$ follow from the case $\p=\q$.

\ni To complete the proof, it remains to consider the case (ii) where
we assume that the support of the Fourier transform
$\widehat{\phi}$ is contained in an annular region of size $R$.
By scaling invariance of the inequality, it is sufficient to
consider the case $R=1$. Now let $\psi(x)$ be such that
$\widehat{\psi}\in C^{\infty}_{c}$ and precisely
\begin{equation*}
  \widehat{\psi}(\xi)=1 \quad\text{for $c_{1}'\le|\xi|\le c_{2}'$},
  \qquad
  \widehat{\psi}(\xi)=0 \quad\text{for $|\xi|>2c_{1}'$ and 
       $|\xi|<\frac12 c_{2}'$},
\end{equation*}
for some constants $c'_{2}>c_{2}\ge c_{1}>c'_{1}>0$. This implies
\begin{equation*}
  \phi =\mathsf{F}^{-1}(\widehat{\psi}\widehat{\phi})
  =\psi*\phi
\end{equation*}
and we can write
\begin{equation*}
  T_{\gamma}\phi=|x|^{-\gamma}*\psi*\phi=
  (T_{\gamma}\psi)*\phi.
\end{equation*}
Since $T_{\gamma}\psi=c\mathsf{F}^{-1}
(|\xi|^{\gamma-n}\widehat{\psi}(\xi))$ is a Schwartz class function,
we arrive at the estimates
\begin{equation}\label{eq:allN}
  |T_{\gamma}\phi(x)|\le C_{\mu,\gamma}\bra{x}^{-\mu}*|\phi|\qquad
  \forall \mu\ge1.
\end{equation}
Here we can take $\mu$ arbitrarily large. Thus the proof of case
(ii) is concluded by applying the following Lemma:

\begin{lemma}\label{lem:xmu}
  Let $n\ge2$.
  Assume $1\le p\le q\le \infty$,
  $1\le \p\le \q\le \infty$ and $\alpha,\beta, \mu$ satisfy
  \begin{equation}\label{eq:condabmu}
    \beta<\frac nq,\qquad
    \alpha<\frac{n}{p'},\qquad
    \alpha+\beta\ge(n-1)
    \left(\frac1q-\frac1p+\frac1\p-\frac1\q\right),
  \end{equation}
  \begin{equation}\label{eq:condmu}
    \mu>
    -\alpha-\beta+n\left(1+\frac1q-\frac1p\right).
  \end{equation}
  Then the following estimate holds:
  \begin{equation}\label{eq:estmu}
    \||x|^{-\beta}\bra{x}^{-\mu}*\phi\|
       _{L^{q}_{|x|}L^{\q}_{\theta}}\lesssim
    \|\phi\|_{L^{p}_{|x|}L^{\p}_{\theta}}.
  \end{equation}
\end{lemma}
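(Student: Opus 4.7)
The plan is to mimic the proof of Theorem \ref{the:Our1Thm}, replacing the homogeneous kernel $|x|^{-\gamma}$ by $\bra{x}^{-\mu}$, Lemma \ref{lem:singint} by Lemma \ref{lem:singint2}, and exploiting the non-homogeneity to dispense with the scaling identity (this is what produces the strict inequality in \eqref{eq:condmu}).

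First, I would apply Young's inequality on $SO(n)$ exactly as in the proof of Theorem \ref{the:Our1Thm}: writing the convolution in polar coordinates, identifying $\mathbb{S}^{n-1}\simeq SO(n)/SO(n-1)$, and making the change of variable $B\to AB^{-1}$, one reduces to
\begin{equation*}
\|\bra{x}^{-\mu}*\phi(|x|\theta)\|_{L^{\q}_\theta}
\lesssim \int_0^\infty \|\bra{|x|e-\rho\theta}^{-\mu}\|_{L^{\R}_\theta}\,\|\phi(\rho\theta)\|_{L^{\p}_\theta}\,\rho^{n-1}d\rho,
\end{equation*}
where $\R\in[1,\infty]$ is defined by $1+1/\q=1/\R+1/\p$ and the inner angular factor equals $J_{\mu\R}(|x|e,\rho)^{1/\R}$ in the notation of Lemma \ref{lem:singint2}.

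Next, I would split the $(|x|,\rho)$ quadrant into the regions prescribed by Lemma \ref{lem:singint2}: four ``non-singular'' regions ($\rho\le1$, $|x|\le1$, $\rho\ge 2|x|$, $|x|\ge 2\rho$) and the ``singular shell'' $|x|\simeq\rho\gtrsim 1$. In the non-singular regions, estimates \eqref{eq:stimab0}--\eqref{eq:stimac0} give essentially a product kernel $\bra{\max(|x|,\rho)}^{-\mu}$, and the radial integral reduces to a Schur-type operator with decoupled weights. The conditions $\beta<n/q$ and $\alpha<n/p'$ handle integrability at the origin, while the strict bound \eqref{eq:condmu} (precisely $\mu$ larger than the scaling-critical value $n+n/q-n/p-\alpha-\beta$) ensures convergence at infinity, with strong Young's inequality in the radial variable closing the estimate for the full range $1\le p\le q\le\infty$.

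The delicate part, and the main obstacle, is the singular shell. Assuming the generic subcase $\mu\R>n-1$ (the borderline $\mu\R=n-1$ and subcritical $\mu\R<n-1$ subcases only introduce logarithmic or subdecaying factors that are absorbed by the strictness in \eqref{eq:condmu}), estimate \eqref{eq:stimabI} gives $J_{\mu\R}(|x|e,\rho)^{1/\R}\simeq\bra{\rho}^{(1-n)/\R}\bra{|x|-\rho}^{(n-1)/\R-\mu}$. After factoring out common powers of $\rho\simeq|x|$, the integral becomes a convolution in the additive variable $\tau=|x|-\rho$ with kernel $\bra{\tau}^{(n-1)/\R-\mu}$. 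A direct computation of indices shows that the combination of the third inequality in \eqref{eq:condabmu} with the strict inequality \eqref{eq:condmu} is equivalent to $\mu>(n-1)/\R+1/r$ strictly, where $1+1/q=1/r+1/p$; hence the kernel belongs to $L^r(d\tau)$ (rather than merely to $L^{r,\infty}$ as in the homogeneous case), and the strong Young inequality in the radial variable delivers the desired estimate. The careful bookkeeping of the weights $|x|^{-\beta}$, $\rho^{\alpha}$, and of the powers of $\rho\simeq|x|$ emerging from Lemma \ref{lem:singint2} is the most technical point, but it is precisely this matching that shows \eqref{eq:condabmu}--\eqref{eq:condmu} to be the sharp hypotheses.
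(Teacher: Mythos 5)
Your outline reproduces the paper's strategy step for step (Young's inequality on $SO(n)$, Lemma \ref{lem:singint2}, off-diagonal regions by H\"older, diagonal shell $\rho\simeq\sigma\gtrsim1$ as the crux), but the index claim on which your treatment of the diagonal shell rests is false, and the argument does not close there. With $1+1/\q=1/\R+1/\p$ and $1+1/q=1/r+1/p$, a direct computation gives
\begin{equation*}
\frac{n-1}{\R}+\frac1r
=\Bigl[-\alpha-\beta+n\Bigl(1+\frac1q-\frac1p\Bigr)\Bigr]+\epsilon,
\qquad
\epsilon:=\alpha+\beta-(n-1)\Bigl(\frac1q-\frac1p+\frac1\p-\frac1\q\Bigr),
\end{equation*}
and the third inequality in \eqref{eq:condabmu} says precisely $\epsilon\ge0$. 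Hence \eqref{eq:condmu} only guarantees $\mu>(n-1)/\R+1/r-\epsilon$: whenever $\epsilon>0$ the hypotheses admit values of $\mu$ with $\mu\le(n-1)/\R+1/r$, for which $\bra{\tau}^{(n-1)/\R-\mu}\notin L^{r}(d\tau)$ and the strong Young inequality $L^{r}*L^{p}\to L^{q}$ you invoke is simply unavailable. The information you lose is exactly the power of $\sigma\simeq\rho$ that you ``factor out'': carrying the weights $\sigma^{-\beta+(n-1)/q}$, $\rho^{(n-1)/p'-\alpha}$ and the factor $\bra{\rho}^{(1-n)/\R}$ from \eqref{eq:stimabI} through the computation leaves a prefactor $\sigma^{-\epsilon}$ in front of the convolution, decaying on $\sigma\gtrsim1$. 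The correct closing move is H\"older followed by Young,
\begin{equation*}
\bigl\|\sigma^{-\epsilon}(K*P)\bigr\|_{L^{q}(\sigma\gtrsim1)}\lesssim
\|\sigma^{-\epsilon}\|_{L^{q_{0}}(\sigma\ge1)}\,
\|K\|_{L^{q_{1}}}\,\|P\|_{L^{p}},\qquad
\frac1q=\frac1{q_{0}}+\frac1{q_{1}}+\frac1p-1,
\end{equation*}
with $K(\tau)=\bra{\tau}^{(n-1)/\R-\mu}$; a viable $q_{0}$ exists iff the window $(n-1)/\R+1/r-\mu<1/q_{0}<\epsilon$ is nonempty, which is exactly condition \eqref{eq:condmu}. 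Your version is the special case $q_{0}=\infty$ and is valid only when $\epsilon=0$, i.e.\ when the third condition in \eqref{eq:condabmu} holds with equality.

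Two lesser points. You should begin by reducing to $0<\mu\le n$ (harmless, since $\bra{x}^{-\mu}$ decreases as $\mu$ grows and the right-hand side of \eqref{eq:condmu} never exceeds $n$); without this the case analysis of Lemma \ref{lem:singint2} on the diagonal is not the one you quote. Also, in the regime $\mu\R<n-1$ the diagonal shell carries no singular convolution at all, since there $J_{\mu\R}^{1/\R}\lesssim\bra{\rho}^{-\mu}$; it is dispatched by a single H\"older step over $[\sigma/2,2\sigma]$, again landing on \eqref{eq:condmu}. The off-diagonal regions are handled as you describe.
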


\begin{proof}
  Notice that, by \eqref{eq:condabmu},
  the right hand side in \eqref{eq:condmu} is always strictly
  positive and never larger than 
  $n-1$, thus it is sufficient to prove the lemma for $\mu$
  in the range
  \begin{equation*}
    0<\mu\le n.
  \end{equation*}
  By \eqref{eq:firstest} we have, for all $\p,\q,\R\in[1,+\infty]$ with
  $1+1/\q=1/\R+1/\p$,
  \begin{equation}\label{eq:quant}
    \|\bra{\cdot}^{-\mu}*|\phi|(|x|\theta)\|
       _{L^{\q}_{\theta}(\mathbb{S}^{n-1})}\lesssim
    \int_{0}^{\infty}
       J_{\mu\R}(|x|,\rho)^{\frac{1}{\R}}
    \|\phi(\rho \theta)\|
       _{L^{\p}_{\theta}(\mathbb{S}^{n-1})}\rho^{n-1} d\rho.
  \end{equation}
  Notice that when $\R=\infty$ we have
  \begin{equation*}
    \|\bra{|x|e-\rho \theta}^{-\mu}\|_{L^{\infty}_{\theta}}\lesssim
    \bra{|x|-\rho}^{-\mu}.
  \end{equation*}
  We write for brevity
  \begin{equation*}
    Q(|x|)\equiv
    |x|^{-\beta+\frac{n-1}{q}}
    \|\bra{\cdot}^{-\mu}*|\phi|(|x|\theta)\|
       _{L^{\q}_{\theta}},\qquad
    P(\rho)=\rho^{\alpha+\frac{n-1}{p}}
      \|\phi(\rho \theta)\|
         _{L^{\p}_{\theta}}
  \end{equation*}
  \begin{equation*}
    J(|x|,\rho)=J_{\mu\R}^{\frac{1}{\R}}(x,\rho)
    \quad\text{(resp. $\bra{|x|-\rho}^{-\mu}$ if $\R=\infty$).}
  \end{equation*}
  Thus \eqref{eq:quant} becomes
  \begin{equation}\label{eq:quantt}
    Q(\sigma)\lesssim
    \sigma^{-\beta+\frac{n-1}{q}}
    \int_{0}^{\infty}J(\sigma,\rho)
    \rho^{\frac{n-1}{p'}-\alpha}
    P(\rho)
    d\rho
  \end{equation}
  and the estimate to be proved \eqref{eq:estmu}
  can be written as
  \begin{equation}\label{eq:thesis}
    \|Q\|_{L^{q}(0,+\infty)}\lesssim
    \|P\|_{L^{p}(0,+\infty)}
  \end{equation}
  Recall that the integrals of the form $J(\sigma,\rho)$ have
  been estimated in Lemma \ref{lem:singint2}.
  
\ni  We split $Q$ into the sum of several terms corresponding
  to different regions of $\rho,\sigma$.
  In the region $\sigma\le1$ we have 
  $J(\sigma,\rho)\lesssim \bra{\rho}^{-\mu}$
  so that
  \begin{equation}\label{eq:quant2}
    Q_{1}(\sigma)\lesssim
    \sigma^{-\beta+\frac{n-1}{q}}
    \int_{0}^{\infty}\bra{\rho}^{-\mu}\rho^{\frac{n-1}{p'}-\alpha}
    P(\rho)
    d \rho
  \end{equation}
  Thus we see that in this region \eqref{eq:thesis} follows simply from
  H\"older's inequality and the fact that $\alpha<n/p'$
  and $\beta<n/q$. Similarly, it is easy to handle the part of the
  integral with $\rho\le1$ since we have then
  $J(\sigma,\rho)\lesssim \bra{\sigma}^{-\mu}$. Thus in the
  following we can restrict to $\sigma \gtrsim 1,\rho \gtrsim 1$.

  \ni When $1 \lesssim \sigma \le \rho/2$ we have again 
  $J(\sigma,\rho)\lesssim \bra{\rho}^{-\mu}$ 
  and \eqref{eq:quantt} becomes
  \begin{equation}\label{eq:quant3}
    Q_{2}(\sigma)\lesssim
    \sigma^{-\beta+\frac{n-1}{q}}
    \int_{\sigma}^{\infty}\bra{\rho}^{-\mu}\rho^{\frac{n-1}{p'}-\alpha}
    P(\rho)
    d \rho
  \end{equation}
  If we assume
  \begin{equation}\label{eq:conda}
    \mu>\frac{n}{p'}-\alpha
  \end{equation}
  we can apply H\"older's inequality and we get
  \begin{equation*}
    Q_{3}(\sigma)\lesssim
    \sigma^{-\beta+\frac{n-1}{q}}
    \sigma^{\frac{n}{p'}-\mu-\alpha}\|P\|_{L^{p}}.
  \end{equation*}
  Now the right hand side is in $L^{q}(\sigma\ge1)$ provided
  \begin{equation}\label{eq:condb}
    \mu>\frac{n}{p'}-\alpha+\frac nq-\beta \equiv
    -\alpha-\beta+n\left(1+\frac1q-\frac1p\right)
  \end{equation}
  and we see that \eqref{eq:condb} implies \eqref{eq:conda}
  since $\beta<n/q$ by assumption.

  When $1 \lesssim \rho \le \sigma/2$ we have 
  $J(\sigma,\rho)\lesssim \bra{\sigma}^{-\mu}$ 
  and \eqref{eq:quantt}
  becomes
  \begin{equation}\label{eq:quant3-4}
    Q_{4}(\sigma)\lesssim
    \sigma^{-\beta+\frac{n-1}{q}}\sigma^{-\mu}
    \int_{0}^{\sigma}\rho^{\frac{n-1}{p'}-\alpha}
    P(\rho)
    d \rho
  \end{equation}
  and by H\"older's inequality we have as before
  \begin{equation*}
    \lesssim
    \sigma^{-\beta+\frac{n-1}{q}}
    \sigma^{\frac{n}{p'}-\mu-\alpha}
    \|P\|_{L^{p}}
  \end{equation*}
  so that \eqref{eq:condb} is again sufficient to obtain 
  \eqref{eq:thesis}.

\ni  Finally, let $\sigma \gtrsim 1$, $\rho \gtrsim 1$ and
  $2^{-1}\sigma\le \rho\le 2 \sigma$.
  In this region we must treat differently the values of
  $\mu\R$ larger or smaller than $n-1$, and the case
  $\R=\infty$ is considered at the end.
  Assume that $n-1<\mu\R\le n$; then
  $J(\sigma,\rho)\lesssim \bra{\rho}^{1-n}
  \bra{\sigma-\rho}^{\frac{n-1}{\R}-\mu}$,
  and using the relations
  \begin{equation*}
    \sigma \simeq \rho,\qquad
    \frac1\R=1+\frac1\p-\frac1\q
  \end{equation*}
  we see that \eqref{eq:quantt} reduces to
  \begin{equation}\label{eq:quant4}
    Q_{5}(\sigma)\lesssim
    \sigma^{-\alpha-\beta+(n-1)
      \left(
        \frac1q-\frac1p+\frac1\p-\frac1\q
      \right)}
    \int_{\sigma/2}^{2\sigma}
    \bra{\sigma-\rho}^{\frac{n-1}{\R}-\mu}
    P(\rho)
    d \rho.
  \end{equation}
  The last integral is (bounded by)
  a convolution of $P(\rho)$ with the function
  $\bra{\rho}^{\frac{n-1}{\R}-\mu}$.
  In order to estimate the $L^{q}(\sigma\ge1)$ norm of $Q_{5}$,
  we use first H\"older's then Young's inequality:
  \begin{equation*}
    \|Q_{5}\|_{L^{q}}\lesssim
    \|\bra{\sigma}^{-\epsilon}\|_{L^{q_{0}}}
    \|\bra{\rho}^{\frac{n-1}{\R}-\mu}\|_{L^{q_{1}}}
    \|P\|_{L^{p}}
  \end{equation*}
  where
  \begin{equation*}
    \epsilon=-\alpha-\beta+(n-1)
      \left(
        \frac1q-\frac1p+\frac1\p-\frac1\q
      \right),\qquad
    \frac1q=\frac{1}{q_{0}}+\frac{1}{q_{1}}+\frac1p-1.
  \end{equation*}
  By assumption we have $\epsilon\ge0$.
  When $\epsilon>0$,
  in order for the norms to be finite we need
  \begin{equation*}
    \epsilon q_{0}> 1,\qquad
    \frac{n-1}{\R}-\mu<-\frac{1}{q_{1}}
  \end{equation*}
  which can be rewritten
  \begin{equation*}
    (n-1)\left(
      1+\frac1\q-\frac1\p
    \right)-\mu +1+\frac1q-\frac1p
    < \frac1{q_{0}}
    <\epsilon
  \end{equation*}
  and we see that we can find a suitable $q_{0}$ provided
  the first side is strictly smaller than the last side; this
  condition is precisely equivalent to \eqref{eq:condb} again
  (recall also that $n-1<\mu\le n$).
  The argument works also in the case $\epsilon=0$ by choosing
  $q_{0}=\infty$.

\ni  If on the other hand $0<\mu<n-1$, we have
  $J_{\mu\R}^{\frac1\R}\lesssim \bra{\rho}^{-\mu}$ also
  in this region, so that
  \begin{equation*}
    Q_{5}(\sigma)\lesssim
    \sigma^{-\beta+\frac{n-1}{q}}
    \sigma^{\frac{n-1}{p'}-\alpha-\mu}
    \int_{\sigma/2}^{\sigma}
    P(\rho)
    d \rho
  \end{equation*}
  by $\sigma \simeq \rho$. H\"older's inequality gives
  \begin{equation*}
    Q_{5}(\sigma)\lesssim
    \sigma^{-\beta+\frac{n-1}{q}}
    \sigma^{\frac{n-1}{p'}-\alpha-\mu}
    \sigma^{\frac1{p'}}\|P\|_{L^{p}}
  \end{equation*}
  which leads to exactly the same computations as above and in the
  end to \eqref{eq:condb}. The case $\mu=n-1$ introduces a
  logarithmic term which does not change the integrability properties
  used here.  
  
\ni  It remains the last region when $\R=\infty$ so that
  $J(\sigma,\rho)=\bra{\sigma-\rho}^{-\mu}$
  and $1/\p-1/\q=1$. Then
  \begin{equation*}
    Q_{5}(\sigma)\lesssim 
    \sigma^{-\beta+\frac{n-1}{q}}
    \int_{\sigma/2}^{2 \sigma}\bra{\sigma-\rho}^{-\mu}
    \rho^{\frac{n-1}{p'}-\alpha}
    P(\rho)d \rho
  \end{equation*}
  which is identical with \eqref{eq:quant4} with $\R=\infty$,
  thus the same computations apply and the proof is concluded.
\end{proof}

\section{Weighted Sobolev embeddings}\label{SobEmb}

\noindent In this section we write estimate (\ref{oHLS}) in the form of a Sobolev embedding. In this way we get also critical estimates in Besov Spaces.

\noindent Recalling the pointwise bound
\begin{equation}\label{eq:derivT}
  |u(x)|\le C T_{\lambda}(||D|^{n-\lambda}u|),\qquad
  0<\lambda<n
\end{equation}
where $|D|^{\sigma}=(-\Delta)^{\frac s2}$, we see that an immediate
consequence of \eqref{ourHLS} is the weighted Sobolev
inequality
\begin{equation}\label{eq:weightS}
  \||x|^{-\beta}u\|_{L^{q}L^{\q}}\lesssim
  \||x|^{\alpha}|D|^{\sigma}u\|_{L^{p}L^{\p}}
\end{equation}
provided $1<p\le q<\infty$, $1\le\p\le\q\le\infty$ and
\begin{equation}\label{eq:condDLsob}
\begin{split}
  &\beta<\frac nq,\quad \alpha<\frac{n}{p'},\quad 0<\sigma<n
    \\
  &\alpha+\beta=\sigma+\frac nq-\frac np
    \\
  &\alpha+\beta\ge(n-1)
    \left(\frac1q-\frac1p+\frac{1}{\p}-\frac{1}{\q}\right).
\end{split}
\end{equation}
As usual, if the last condition is strict we can take $p,q$
in the full range $1\le p\le q\le \infty$. For instance, this
implies the inequality
\begin{equation}\label{eq:infsob}
  |x|^{-\beta}|u(x)|\lesssim
  \||x|^{\alpha}|D|^{\sigma}u\|_{L^{p}L^{\p}}
\end{equation}
provided $1\le p\le \infty$ and
\begin{equation*}
  \begin{split}
    &\beta<0,\quad \alpha<\frac{n}{p'},\quad 0<\sigma<n
      \\
    &\alpha+\beta=\sigma-\frac np
      \\
    &\alpha+\beta>(n-1)
      \left(\frac{1}{\p}-\frac1p\right).
  \end{split}
\end{equation*}
If we choose $\alpha=0$ we have in particular
for $p\in(1,\infty)$, $\p\in[1,\infty]$
\begin{equation}\label{eq:partsob}
  |x|^{\frac np-\sigma}|u(x)|\lesssim
  \||D|^{\sigma}u\|_{L^{p}L^{\p}},\qquad
  \frac {n-1}\p+\frac1p<\sigma <\frac np.
\end{equation}
This extends
to the non radial case the radial inequalities in
\cite{Strauss77-a},
\cite{Ni82-a},
\cite{ChoOzawa09-a}
(see also \cite{ChoNakanishi10-a})
and many others; notice that in the radial case we can choose
$\p=\infty$ to obtain the largest possible range.
When $\sigma$ is an integer we can replace
the fractional operator $|D|^{\sigma}$ with usual derivatives;
see Corollary \ref{cor:integers} below for a similar argument.

\noindent By similar techniques it is possible to derive nonhomogeneous
estimates in terms of norms of type $\|\bra{D}^{\sigma}u\|_{L^{p}}$;
we omit the details.


\subsection*{Critical estimates in Besov spaces}\label{sub:besov}  

\noindent Case (ii) in Theorem \ref{the:Our1Thm} is suitable for
applications to spaces defined via Fourier decompositions,
in particular Besov spaces. We recall the standard
machinery:
fix a $C^{\infty}_{c}$ radial function $\psi_{0}(\xi)$
equal to 1 for $|\xi|<1$ and vanishing for $|\xi|>2$,
define a Littlewood-Paley partition of unity via
$\phi_{0}(\xi)=\psi(\xi)-\psi(\xi/2)$,
$\phi_{j}(\xi)=\phi_{0}(2^{-j}\xi)$,
and decompose $u$ as $u=\sum_{j\in \mathbb{Z}}u_{j}$
where $u_{j}=\phi_{j}(D)u=\mathbb{F}^{-1}\phi_{j}(\xi)\mathbb{F}u$.
Then the homogeneous Besov norm $\dot B^{s}_{p,1}$ is defined as
\begin{equation}\label{eq:besovn}
  \|u\|_{\dot B^{s}_{p,1}}=
  \sum_{j\in \mathbb{Z}}2^{js}\|u_{j}\|_{L^{p}}.
\end{equation}
We can apply Theorem \ref{the:Our1Thm}-(ii)
to each component $u_{j}$ 
in the full range of indices $1\le p\le q\le \infty$,
with a constant independent of $j$.
By the standard trick
$\widetilde{u}_{j}=u_{j-1}+u_{j}+u_{j+1}$, 
$u_{j}=\phi_{j}(D)\widetilde{u}_{j}$ we obtain the estimate
\begin{equation}\label{eq:besovest}
  \||x|^{-\beta}T_{\gamma} u\|
      _{L^{q}_{|x|}L^{\q }_{\theta}} 
  \le C
  \sum_{j\in \mathbb{Z}}
  \| |x|^{\alpha} \widetilde{u}_{j}\|_{L^{p}_{|x|}L^{\p }_{\theta}}
\end{equation}
for the full range $1\le p\le q\le \infty$, $1\le \p\le \q\le \infty$,
with $\alpha,\beta,\gamma$ satisfying \eqref{eq:condDLsob}.
The right hand side can be interpreted as a weighted norm of
Besov type with different radial and angular integrability;
this kind of spaces were already considered in
\cite{ChoNakanishi10-a}.
In the special case $\alpha=0$, $p=\p>1$ we obtain a standard
Besov norm \eqref{eq:besovn} and hence the estimate 
(with the optimal choice $\q=\p=p$)
reduces to
\begin{equation}\label{eq:besovtrue}
  \||x|^{-\beta}T_{\gamma} u\|
      _{L^{q}_{|x|}L^{p }_{\theta}} 
  \le C
  \|u\|_{\dot B^{0}_{p,1}}.
\end{equation}
This estimate is weaker than \eqref{ourHLS} when the
third condition in \eqref{eq:condDL} is strict, but
in the case of equality it gives a new estimate:
recalling \eqref{eq:derivT}, we have proved the following

\begin{corollary}\label{cor:besov}
  For all $1< p\le q\le \infty$ we have
  \begin{equation}\label{eq:truesob}
    \||x|^{\frac {n-1}p-\frac {n-1}q} u\|
      _{L^{q}_{|x|}L^{p }_{\theta}} \le C
    \|u\|_{\dot B^{\frac1p-\frac1q}_{p,1}}.
  \end{equation}
\end{corollary}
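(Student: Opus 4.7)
My plan is to deduce \eqref{eq:truesob} from the Besov-type bound \eqref{eq:besovtrue} together with the Riesz representation \eqref{eq:derivT}, by writing any admissible $u$ as (essentially) $T_\gamma$ applied to $|D|^\sigma u$, for a judicious choice of exponents.

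First I would set $\sigma = 1/p - 1/q$ and $\gamma = n-\sigma$, both in $(0,n)$ provided $p<q$ (the endpoint $p=q$ being the trivial embedding $\dot B^0_{p,1}\hookrightarrow L^p$). Invoking \eqref{eq:derivT} with $\lambda=\gamma$ yields the pointwise bound
\begin{equation*}
  |u(x)| \lesssim T_\gamma\bigl( ||D|^\sigma u| \bigr)(x),
\end{equation*}
which reduces matters to estimating $\||x|^{-\beta} T_\gamma v\|_{L^q_{|x|} L^p_\theta}$ for $v = |D|^\sigma u$ and $\beta = (n-1)(1/q-1/p)$.

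Next I would apply \eqref{eq:besovtrue} with $\alpha=0$, $\p=\q=p$, and the values of $\beta$ and $\gamma$ just fixed, after verifying that conditions \eqref{eq:cDL} are met: the integrability constraints $\beta<n/q$ and $\alpha<n/p'$ are immediate (the latter is precisely where the hypothesis $p>1$ enters), the scaling identity $\alpha+\beta+\gamma = n + n/q - n/p$ is a direct algebraic check, and the third condition reduces to the equality $(n-1)(1/q-1/p) = (n-1)(1/q-1/p)$. This equality case is exactly the situation in which the Besov refinement \eqref{eq:besovtrue}, rather than \eqref{oHLS}, is the appropriate tool; the fact that case (ii) of Theorem \ref{the:Our1Thm} produces a constant independent of the Littlewood-Paley index is what legitimises the summation over dyadic blocks hidden in \eqref{eq:besovest}.

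Finally, using the standard characterisation $\||D|^\sigma u\|_{\dot B^0_{p,1}} \simeq \|u\|_{\dot B^\sigma_{p,1}}$, the chain of estimates delivers precisely \eqref{eq:truesob}. I do not foresee a real obstacle: the genuine novelty — controlling $T_\gamma$ by a Besov norm in the equality regime of the third condition of \eqref{eq:cDL} — has already been packaged into \eqref{eq:besovtrue}, so what remains is specialisation and bookkeeping of indices.
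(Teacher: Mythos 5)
Your argument is correct and coincides with the paper's own derivation: the corollary is obtained exactly by combining the pointwise bound \eqref{eq:derivT} (with $\lambda=\gamma=n-\sigma$, $\sigma=1/p-1/q$) with the Besov estimate \eqref{eq:besovtrue} in the equality case $\beta=(n-1)(1/q-1/p)$ of the third condition, the constant's independence of the Littlewood--Paley index in Theorem \ref{the:Our1Thm}-(ii) being what allows the dyadic summation. The index bookkeeping you carry out is exactly what the paper leaves implicit.
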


\noindent If we restrict \eqref{eq:truesob} to radial functions and $q=\infty$, we
obtain the well known radial pointwise estimate
\begin{equation}\label{eq:radialbes}
  |x|^{\frac {n-1}p}|u|\le C
  \|u\|_{\dot B^{1/p}_{p,1}}\qquad
  1< p<\infty
\end{equation}
(see \cite{ChoOzawa09-a}, \cite{SickelSkrzypczak00-a}).

\section{Caffarelli-Kohn-Nirenberg weighted interpolation inequalities}

\ni In this section we use the technology outlined before toghether with interpolation. In this way we can extend to $L^{p}L^{\p}$ setting also the Caffarelli-Kohn-Nirenberg inequalities. Such inequalities are fundamental tools in mathematical analysis and in PDEs theory. In particular are really useful in the context of Navier-Stokes equation because provide a priori estimate for weak solutions by interpolation of quantities related to the energy dissipation. Let's start by the family of inequalities on $\mathbb{R}^{n}$, $n\ge1$
\begin{equation}\label{eq:CKN}
  \||x|^{-\gamma}u\|_{L^{r}}\le C
  \||x|^{-\alpha}\nabla u\|^{a}_{L^{p}}
  \||x|^{-\beta}u\|^{1-a}_{L^{q}}.
\end{equation}
for the range of parameters
\begin{equation}\label{eq:rangeCKN}
  n\ge1,\qquad
  1\le p<\infty,\qquad
  1\le q<\infty,\qquad
  0<r<\infty,\qquad
  0< a\le1.
\end{equation}
Some conditions are immediately seen to be necessary for the validity
of \eqref{eq:CKN}:
to ensure local integrability we need
\begin{equation}\label{eq:intCKN}
  \gamma<\frac nr\qquad
  \alpha<\frac np\qquad
  \beta<\frac nq
\end{equation}
and by scaling invariance we need to assume
\begin{equation}\label{eq:scaCKN}
  \gamma-\frac nr=
  a\left(\alpha+1-\frac np\right)+
  (1-a)\left(\beta-\frac nq\right).
\end{equation}
In \cite{CaffarelliKohnNirenberg84-a} the following remarkable
result was proved, which improves and extends a number of earlier
estimates including weighted Sobolev and Hardy inequalities:

\begin{theorem}[\cite{CaffarelliKohnNirenberg84-a}]\label{the:CKN}
  Consider the inequalities \eqref{eq:CKN} 
  in the range of parameters given by
  \eqref{eq:intCKN}, \eqref{eq:rangeCKN}, \eqref{eq:scaCKN}.
  Denote with $\mathbf\Delta$ the quantity
  \begin{equation}\label{eq:Delta}
    \mathbf\Delta=\gamma-a \alpha-(1-a)\beta \equiv
    a +n
      \left(
        \frac1r-\frac{1-a}{q}-\frac ap
      \right)
  \end{equation}
  (the identity in \eqref{eq:Delta} is a reformulation of
  the scaling relation \eqref{eq:scaCKN}).
  Then the inequalities \eqref{eq:CKN} are true if and only if
  both the following conditions are satisfied:
  \begin{enumerate}\setlength{\itemindent}{-0pt}
  \renewcommand{\labelenumi}{\textit{(\roman{enumi})}}
    \item $\mathbf\Delta\ge0$
    \item $\mathbf\Delta\le a$ when
    $\gamma-n/r=\alpha+1-n/p$.
  \end{enumerate}
\end{theorem}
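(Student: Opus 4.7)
The plan is to split the proof into a \emph{sufficiency} part, where the inequality is established by combining the Stein--Weiss estimate (Theorem \ref{SteinWeissThm}) with a Hölder interpolation, and a \emph{necessity} part, where conditions (i) and (ii) are forced by testing against carefully chosen families of functions.

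For sufficiency, I would first treat the endpoint $a=1$, which reduces to the weighted Sobolev inequality $\||x|^{-\gamma}u\|_{L^r}\le C\||x|^{-\alpha}\nabla u\|_{L^p}$ on the scaling line $\gamma=\alpha+1-n/p+n/r$. Starting from the Riesz-type representation $u(x)=c_n\int\frac{x-y}{|x-y|^{n}}\cdot\nabla u(y)\,dy$, one has the pointwise bound $|u(x)|\lesssim T_{n-1}|\nabla u|(x)$, and Theorem \ref{SteinWeissThm} applied with $\gamma_{\mathrm{SW}}=n-1$ and weights $(-\alpha,\gamma)$ yields the estimate exactly when $\gamma-\alpha\ge 0$, which is $\mathbf\Delta\ge 0$ in this case. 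For general $0<a<1$, I would introduce an intermediate exponent pair $(r_1,\alpha_1)$ determined by $\gamma=a\alpha_1+(1-a)\beta$ and $1/r=a/r_1+(1-a)/q$, so that Hölder's inequality gives
\begin{equation*}
  \||x|^{-\gamma}u\|_{L^r}\le \||x|^{-\alpha_1}u\|_{L^{r_1}}^{a}\,\||x|^{-\beta}u\|_{L^q}^{1-a},
\end{equation*}
and the first factor is then controlled by the $a=1$ Sobolev estimate already proved. A short computation using \eqref{eq:scaCKN} shows that the required Sobolev scaling $\alpha_1-n/r_1=\alpha+1-n/p$ is automatic, and the $\mathbf\Delta\ge 0$ hypothesis for the intermediate step reduces algebraically to the original $\mathbf\Delta\ge 0$.

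For necessity, the scaling relation \eqref{eq:scaCKN} and the local integrability conditions \eqref{eq:intCKN} follow from dilation invariance and from testing with bumps supported near the origin. To force $\mathbf\Delta\ge 0$, I would test with translated bumps $u(x)=\phi(x-x_0)$ of fixed profile concentrated near a far point $x_0$: the three weighted norms then behave as $|x_0|^{-\gamma}$, $|x_0|^{-\alpha}$, $|x_0|^{-\beta}$ respectively, so sending $|x_0|\to\infty$ yields $\gamma\ge a\alpha+(1-a)\beta$. To force $\mathbf\Delta\le a$ on the critical line $\gamma-n/r=\alpha+1-n/p$, I would use a one-parameter family of dilations $u_\lambda(x)=\phi(\lambda x)$ rescaled to leave the weighted gradient norm invariant, and read off the required inequality from the asymptotics of the remaining two norms.

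The main obstacle will be the borderline case $\gamma-n/r=\alpha+1-n/p$ with $a<1$, where the Hölder interpolation strategy above degenerates: the intermediate exponent $\alpha_1$ hits the Stein--Weiss endpoint $n/r_1$, so the integrability constraint $\alpha_1<n/r_1$ fails and the reduction to the $a=1$ case cannot be applied directly. On this line the remaining freedom in the inequality is encoded exactly by $\mathbf\Delta\le a$, and recovering it will likely require either a dyadic decomposition of $\mathbb{R}^n$ separating the regions $|x|\le 1$ and $|x|>1$ (applying Hölder on one side and the away-from-origin Sobolev embedding on the other), or a direct interpolation argument between two endpoint Stein--Weiss inequalities. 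The $|x_0|\to\infty$ test functions from the necessity step also recur here to show sharpness of $\mathbf\Delta\le a$.
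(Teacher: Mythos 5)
Your sufficiency strategy (H\"older interpolation down to an intermediate weighted Sobolev estimate, itself obtained from the pointwise bound $|u|\lesssim T_{n-1}|\nabla u|$ plus Stein--Weiss) is exactly the route this paper takes to prove its generalization, Theorem \ref{the:Our2Thm}; the paper does not reprove Theorem \ref{the:CKN} itself but cites \cite{CaffarelliKohnNirenberg84-a}, commenting on necessity only in Remark \ref{rem:original}. The essential gap in your plan is that this route cannot give the full ``if'' direction. The H\"older step forces the intermediate exponent $s$ (your $r_1$) to satisfy $\frac1s=\frac1a\left(\frac1r-\frac{1-a}{q}\right)$, and Theorem \ref{SteinWeissThm} requires the source exponent not to exceed the target, $p\le s$; unwinding this condition gives precisely $\mathbf\Delta\le a$. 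Off the critical line $\gamma-n/r=\alpha+1-n/p$ the theorem permits $\mathbf\Delta>a$, where $s<p$ and the fractional-integration machinery is unavailable; that range needs the direct arguments of \cite{CaffarelliKohnNirenberg84-a}. This is exactly why Remark \ref{rem:comparCKN} says Corollary \ref{cor:integers} recovers the original CKN inequality only ``for $\mathbf\Delta\le a$''. Relatedly, your diagnosis of the borderline difficulty is misplaced: on the critical line one computes $\delta-n/s=\beta-n/q$ for the intermediate weight $\delta$, so the Stein--Weiss integrability condition $\delta<n/s$ is equivalent to the assumed $\beta<n/q$ and does not degenerate; the binding constraint there is again $p\le s$, i.e.\ $\mathbf\Delta\le a$, which is allowed on that line.

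The second genuine gap is your necessity argument for (ii). By the scaling relation \eqref{eq:scaCKN} all three weighted norms of $u_\lambda(x)=\phi(\lambda x)$ scale by the same power of $\lambda$, so both sides of \eqref{eq:CKN} transform identically and dilations of a fixed profile extract no condition at all. The correct test functions are the ones named in Remark \ref{rem:original}: the spikes $|x|^{\gamma-n/r}\log|x|^{-1}$ truncated near $x=0$. On the critical line the pure power $|x|^{\gamma-n/r}$ makes $\||x|^{-\gamma}u\|_{L^r}$ and $\||x|^{-\alpha}\nabla u\|_{L^p}$ simultaneously borderline, and the logarithmic factor weights the three norms differently, which is what forces $\mathbf\Delta\le a$. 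Your translated-bump argument for the necessity of (i) is correct and matches the paper's remark that (i) follows from uniformity under translations.
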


\begin{remark}\label{rem:original}
  Notice that in the original formulation of 
  \cite{CaffarelliKohnNirenberg84-a} also the case $a=0$
  was considered, but with the introduction of an additional parameter
  forcing $\beta=\gamma$ when $a=0$. Thus the case $a=0$
  becomes trivial
  in the original formulation; however, at least for $r>1$,
  a much larger range $0\le \gamma-\beta<n$ can be obtained
  by a direct application of the Hardy-Littlewood-Sobolev inequality,
  so strictly speaking the additional requirement $\beta=\gamma$ is not
  necessary. We think the formulation adopted here is cleaner.
  
  \noindent On the other hand, the necessity of (i) follows from
  the uniformity of the estimate w.r.to translations, while the
  necessity of (ii) is proved by testing the inequality on
  the spikes $|x|^{\gamma-n/r}\log|x|^{-1}$ truncated near $x=0$.
\end{remark}

\noindent In \cite{DenapoliDrelichmanDuran09-a} 
the authors prove the
following radial improvement
of Theorem \ref{the:CKN}:

\begin{theorem}[\cite{DenapoliDrelichmanDuran09-a}]
\label{the:CKNDDD}
  Let $n\ge2$,
  let $\alpha,\beta,\gamma,r,p,q,a$ be in the range determined by
  \eqref{eq:intCKN}, \eqref{eq:rangeCKN}, \eqref{eq:scaCKN},
  define $\mathbf\Delta$ as in \eqref{eq:Delta}, and assume that
  \begin{equation}\label{eq:CKNDDD}
    a\left(1-\frac np\right)\le \mathbf\Delta\le a,\qquad
    \alpha<\frac np-1,
  \end{equation}
  the first inequality being strict
  when $p=1$. Then estimate \eqref{eq:CKN} is true for
  all radial functions $u\in C^{\infty}_{c}(\mathbb{R}^{n})$.
\end{theorem}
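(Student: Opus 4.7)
The plan is to reduce the weighted interpolation inequality to the endpoint case $a=1$ via H\"older's inequality applied to the two weighted factors on the right-hand side, and then to deduce that endpoint from the radial Stein--Weiss inequality of Theorem \ref{DeNapoli1Thm} applied to the Newton--potential representation of a radial function in terms of its gradient.

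\textbf{Step 1 (H\"older reduction).} Introduce auxiliary indices
\begin{equation*}
\gamma_{1}=\frac{\gamma-(1-a)\beta}{a},\qquad \frac{1}{r_{1}}=\frac{1}{a}\left(\frac{1}{r}-\frac{1-a}{q}\right),
\end{equation*}
so that $\gamma=a\gamma_{1}+(1-a)\beta$ and $1/r=a/r_{1}+(1-a)/q$. H\"older's inequality then gives at once
\begin{equation*}
\||x|^{-\gamma}u\|_{L^{r}}\le\||x|^{-\gamma_{1}}u\|_{L^{r_{1}}}^{a}\,\||x|^{-\beta}u\|_{L^{q}}^{1-a}.
\end{equation*}
Using the second identity in \eqref{eq:Delta} one computes $1/r_{1}=(\mathbf\Delta-a)/(na)+1/p$, so the two-sided hypothesis $a(1-n/p)\le \mathbf\Delta\le a$ translates exactly into $p\le r_{1}\le\infty$ (making the H\"older step admissible). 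Moreover the CKN scaling \eqref{eq:scaCKN} collapses to $\gamma_{1}-n/r_{1}=\alpha+1-n/p$, the natural scaling of an order-one weighted Sobolev embedding. It therefore suffices to prove the endpoint inequality
\begin{equation*}
\||x|^{-\gamma_{1}}u\|_{L^{r_{1}}}\le C\,\||x|^{-\alpha}\nabla u\|_{L^{p}}\qquad(\star)
\end{equation*}
for radial $u\in C^{\infty}_{c}(\mathbb{R}^{n})$.

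\textbf{Step 2 (endpoint via radial Stein--Weiss).} For $u\in C^{\infty}_{c}(\mathbb{R}^{n})$ the Newton--potential representation
\begin{equation*}
u(x)=c_{n}\int_{\mathbb{R}^{n}}\frac{(y-x)\cdot\nabla u(y)}{|x-y|^{n}}\,dy
\end{equation*}
yields the pointwise bound $|u(x)|\lesssim T_{n-1}(|\nabla u|)(x)$. Since $u$ is radial, so is $|\nabla u|$, and $(\star)$ follows from Theorem \ref{DeNapoli1Thm} applied with weights $(\alpha_{\mathrm{SW}},\beta_{\mathrm{SW}})=(-\alpha,\gamma_{1})$, order $\gamma_{\mathrm{SW}}=n-1$, and integrability indices $(p,r_{1})$. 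The hypotheses match up: the scaling is produced by Step 1; the integrability bound $\beta_{\mathrm{SW}}=\gamma_{1}<n/r_{1}$ is, via that scaling, precisely the remaining assumption $\alpha<n/p-1$ in \eqref{eq:CKNDDD}; and the relaxed radial sign condition $-\alpha+\gamma_{1}\ge(n-1)(1/r_{1}-1/p)$ reduces, using the scaling once more, to $1+1/r_{1}-1/p\ge 0$, which is automatic because $r_{1}\ge p\ge 1$.

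\textbf{Main obstacle.} Theorem \ref{DeNapoli1Thm} is formulated only for $1<p\le q<\infty$, so the borderline $p=1$ of \eqref{eq:CKN} is not covered directly. The hypothesis in \eqref{eq:CKNDDD} that the first inequality be \emph{strict} when $p=1$ is precisely the slack needed to run the argument in its strong form, by replacing the weak Young--Marcinkiewicz step in the proof of the radial Stein--Weiss estimate with the ordinary Young inequality on the multiplicative group $(\mathbb{R}^{+},\cdot)$, along the lines of case (i) of Theorem \ref{the:Our1Thm}; alternatively one can approximate by values $p>1$ and pass to the limit. This borderline case is the principal technical subtlety; away from it the proof is a routine combination of H\"older with the already available radial Stein--Weiss inequality.
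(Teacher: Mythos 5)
Your reduction is essentially the same one the paper uses for its general Theorem \ref{the:Our2Thm}: H\"older's inequality to peel off the $L^{q}$ factor, then a first-order weighted Sobolev (Stein--Weiss) estimate for the remaining endpoint. The index bookkeeping in Step 1 is correct, and in the range $\frac np-n<\alpha<\frac np-1$ Step 2 does close the argument. However, there is a genuine gap: Theorem \ref{the:CKNDDD} imposes \emph{no lower bound} on $\alpha$, while your application of Theorem \ref{DeNapoli1Thm} with $\alpha_{\mathrm{SW}}=-\alpha$ requires $\alpha_{\mathrm{SW}}<n/p'$, i.e. $\alpha>\frac np-n$. You never verify this, and it can fail within the hypotheses: take $n=3$, $p=r=2$, $a=1$, $\alpha=-100$, $\gamma=-99$; then $\mathbf\Delta=1=a$, $\alpha<n/p-1$, the scaling holds, yet $-\alpha=100\gg n/p'=3/2$. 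The failure is not cosmetic. The condition $\alpha_{\mathrm{SW}}<n/p'$ is necessary for the Stein--Weiss estimate applied to a general nonnegative density (otherwise $T_{n-1}\phi$ need not even be finite), so the chain $|u|\lesssim T_{n-1}(|\nabla u|)$ followed by Theorem \ref{DeNapoli1Thm} cannot yield $(\star)$ in that regime; one must exploit the extra structure of $\nabla u$ versus a generic density. This is exactly what the paper's proof of Corollary \ref{cor:integers} supplies: apply the inequality to $|x|^{k}u$ and prove $\||x|^{-\alpha}D^{\sigma}(|x|^{k}u)\|_{L^{p}L^{\p}}\lesssim\||x|^{k-\alpha}D^{\sigma}u\|_{L^{p}L^{\p}}$ (estimates \eqref{eq:interm}--\eqref{eq:interm4}, proved by a one-dimensional integration by parts along rays), which shifts all three weights by $k$ and removes the lower bound on $\alpha$. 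Your proof needs this supplementary step, or an equivalent fundamental-theorem-of-calculus argument, to cover the full stated range.

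A smaller point: when $\mathbf\Delta=a(1-n/p)$ with $p>1$ your reduction gives $r_{1}=\infty$, which Theorem \ref{DeNapoli1Thm} as stated (it inherits $q<\infty$ from Theorem \ref{SteinWeissThm}) does not literally cover; you need the extended range in Theorem \ref{the:Our1Thm}(i), which is available here because your third Stein--Weiss condition is strict for $p>1$. Your discussion of the borderline $p=1$ is otherwise correct.
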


\noindent We somewhat simplified the statement of Theorem 1.1 in
\cite{DenapoliDrelichmanDuran09-a}, and in particular conditions
(1.8)-(1.10) in that paper are equivalent to \eqref{eq:CKNDDD} 
here, as it is
readily seen. Notice that 
the condition $\mathbf\Delta\le a$ forces 
$r$ to be larger than 1.

\noindent Using the $L^{p}L^{\p}$ norms we can extend both
Theorems \ref{the:CKN} and \ref{the:CKNDDD}. For greater
generality we prove an estimate with fractional derivatives
\begin{equation*}
  |D|^{\sigma}=(-\Delta)^{\frac \sigma2},\qquad \sigma>0.
\end{equation*}
Our result is the following:

\begin{theorem}\label{the:Our2Thm}
  Let $n\ge2$, $r,\R,p,\p,q,\q\in[1,+\infty)$, $0<a\le1$,
  $0<\sigma<n$ with
  \begin{equation}\label{eq:Ico}
    \gamma<\frac nr,\qquad
    \beta<\frac nq,\qquad
    \frac np-n<\alpha<\frac np-\sigma
  \end{equation}
  satisfying the scaling condition
  \begin{equation}\label{eq:scalingCKN}
    \gamma-\frac nr=
    a\left(\alpha+\sigma-\frac nr\right)+
    (1-a)\left(\beta-\frac nq\right).
  \end{equation}
  Define the quantities
  \begin{equation}\label{eq:Deltas}
    \mathbf\Delta=a \sigma+n
      \left(
        \frac1r-\frac{1-a}{q}-\frac ap
      \right),\qquad
    \widetilde{\mathbf\Delta}=a \sigma+n
      \left(
        \frac1\R-\frac{1-a}{\q}-\frac a\p
      \right).
  \end{equation}
  and assume further that
  \begin{equation}\label{eq:IIco}
    \mathbf\Delta+(n-1)\widetilde{\mathbf\Delta}\ge0,
  \end{equation}
  \begin{equation}\label{eq:IIIco}
    1<p,\qquad
    a\left(\sigma-\frac np\right)<\mathbf\Delta\le a \sigma, \qquad 
    a\left(\sigma-\frac n\p\right)\le \widetilde{\mathbf\Delta}\le a \sigma.
  \end{equation}
  Then the following interpolation inequality holds:
  \begin{equation}\label{eq:CKNnostra}
    \||x|^{-\gamma}u\|_{L^{r}_{|x|}L^{\R}_{\theta}}\le C
    \||x|^{-\alpha}|D|^{\sigma} u\|^{a}_{L^{p}_{|x|}L^{\p}_{\theta}}
    \||x|^{-\beta}u\|^{1-a}_{L^{q}_{|x|}L^{\q}_{\theta}}.
  \end{equation}
  If one assumes strict inequality in \eqref{eq:IIco}, then
  the inequalities in \eqref{eq:IIIco} can be relaxed to 
  non strict inequalities.
\end{theorem}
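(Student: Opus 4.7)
The strategy is to reduce \eqref{eq:CKNnostra} to the weighted mixed-norm Sobolev embedding \eqref{eq:weightS}--\eqref{eq:condDLsob} of Section~\ref{SobEmb}, via a H\"older interpolation in both the radial and angular variables. Heuristically, one inserts an intermediate weighted mixed-norm quantity that is controlled by $\||x|^{-\alpha}|D|^{\sigma}u\|_{L^{p}_{|x|}L^{\p}_{\theta}}$ through the Sobolev embedding, while being interpolable with $\||x|^{-\beta}u\|_{L^{q}_{|x|}L^{\q}_{\theta}}$ via H\"older to recover the left-hand side of \eqref{eq:CKNnostra}.

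Concretely, define the intermediate weight exponent $\delta$ and intermediate integrability indices $s,\s$ by
\begin{equation*}
\delta=\frac{\gamma-(1-a)\beta}{a},\qquad
\frac{1}{s}=\frac{1}{a}\Bigl(\frac{1}{r}-\frac{1-a}{q}\Bigr),\qquad
\frac{1}{\s}=\frac{1}{a}\Bigl(\frac{1}{\R}-\frac{1-a}{\q}\Bigr).
\end{equation*}
The pointwise identity $|x|^{-\gamma}|u|=(|x|^{-\delta}|u|)^{a}(|x|^{-\beta}|u|)^{1-a}$, combined with the generalized H\"older inequality applied first on $\mathbb{S}^{n-1}$ (using $1/\R=a/\s+(1-a)/\q$) and then against $\rho^{n-1}\,d\rho$ (using $1/r=a/s+(1-a)/q$), yields
\begin{equation*}
\||x|^{-\gamma}u\|_{L^{r}_{|x|}L^{\R}_{\theta}}
\le \||x|^{-\delta}u\|^{a}_{L^{s}_{|x|}L^{\s}_{\theta}}\,
     \||x|^{-\beta}u\|^{1-a}_{L^{q}_{|x|}L^{\q}_{\theta}}.
\end{equation*}
Thus the theorem reduces to proving the weighted Sobolev estimate
\begin{equation*}
\||x|^{-\delta}u\|_{L^{s}_{|x|}L^{\s}_{\theta}}
\lesssim \||x|^{-\alpha}|D|^{\sigma}u\|_{L^{p}_{|x|}L^{\p}_{\theta}},
\end{equation*}
which is an instance of \eqref{eq:weightS} (with the sign convention $\alpha\mapsto-\alpha$ on the right-hand side).

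The core of the argument is verifying that \eqref{eq:Ico}--\eqref{eq:IIIco} translate into the hypotheses \eqref{eq:condDLsob} for the indices $(s,\s,\delta)$ above. A direct algebraic manipulation using \eqref{eq:scalingCKN} and the definitions of $\delta,s,\s$ gives the identities
\begin{equation*}
-\alpha+\delta=\frac{\mathbf\Delta}{a}=\sigma+\frac{n}{s}-\frac{n}{p},\qquad
\frac{1}{s}-\frac{1}{p}=\frac{\mathbf\Delta-a\sigma}{an},\qquad
\frac{1}{\s}-\frac{1}{\p}=\frac{\widetilde{\mathbf\Delta}-a\sigma}{an},
\end{equation*}
from which the scaling relation in \eqref{eq:condDLsob} and the integrability conditions $-\alpha<n/p'$, $\delta<n/s$ follow at once from \eqref{eq:Ico}. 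The decisive step is the angular condition: the identities above combine to
\begin{equation*}
(n-1)\Bigl(\frac{1}{s}-\frac{1}{p}+\frac{1}{\p}-\frac{1}{\s}\Bigr)
=\frac{n-1}{an}\bigl(\mathbf\Delta-\widetilde{\mathbf\Delta}\bigr),
\end{equation*}
so that $-\alpha+\delta\ge(n-1)(1/s-1/p+1/\p-1/\s)$ is equivalent to $n\mathbf\Delta\ge(n-1)(\mathbf\Delta-\widetilde{\mathbf\Delta})$, i.e.\ to $\mathbf\Delta+(n-1)\widetilde{\mathbf\Delta}\ge 0$, which is precisely \eqref{eq:IIco}.

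Finally, \eqref{eq:IIIco} encodes the admissible index ranges: $a(\sigma-n/p)<\mathbf\Delta\le a\sigma$ translates to $1<p\le s<\infty$, and $a(\sigma-n/\p)\le\widetilde{\mathbf\Delta}\le a\sigma$ to $\p\le\s\le\infty$, which are the ranges for which Theorem~\ref{the:Our1Thm} delivers \eqref{eq:weightS}. The marginal case $\mathbf\Delta=a\sigma$ (so $s=p$) is covered by the $p=q$ portion of Theorem~\ref{the:Our1Thm}, while the relaxation to non-strict inequality $\mathbf\Delta=a(\sigma-n/p)$ (i.e.\ $s=\infty$) is precisely what requires the strict version of \eqref{eq:IIco}, via case~(i) of Theorem~\ref{the:Our1Thm}. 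The main obstacle throughout is recognizing the algebraic identity displayed above, which explains why the seemingly opaque combination $\mathbf\Delta+(n-1)\widetilde{\mathbf\Delta}$ is the sharp condition; once this identity is in hand, the remainder of the proof is routine bookkeeping.
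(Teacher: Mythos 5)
Your proposal is correct and follows essentially the same route as the paper: the same Hölder interpolation through the intermediate triple $(\delta,s,\s)$ defined by $\gamma=a\delta+(1-a)\beta$, $1/r=a/s+(1-a)/q$, $1/\R=a/\s+(1-a)/\q$, followed by the weighted Sobolev embedding derived from Theorem \ref{the:Our1Thm}, and the same algebraic translation of the hypotheses into $\mathbf\Delta$, $\widetilde{\mathbf\Delta}$ (including the key identity showing the angular condition \eqref{eq:tot7} is exactly $\mathbf\Delta+(n-1)\widetilde{\mathbf\Delta}\ge0$). The bookkeeping of the index ranges and of the role of strictness in \eqref{eq:IIco} also matches the paper's argument.
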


\noindent When $\sigma$ is an integer, the condition on $\alpha$
from below can be dropped, and a slightly stronger estimate 
can be proved.
We introduce the notation
\begin{equation*}
  \||x|^{-\alpha}D^{\sigma} u\|_{L^{p}L^{\p}}=
  \sum_{|\nu|=\sigma}
  \||x|^{-\alpha}D^{\nu} u\|_{L^{p}L^{\p}},\qquad
  \nu=(\nu_{1},\dots,\nu_{n})\in \mathbb{N}^{n}.
\end{equation*}
Then we have:

\begin{corollary}\label{cor:integers}
  Assume $\sigma=1,\dots,n-1$ is an integer. Then
  the following estimate holds
  \begin{equation}\label{eq:CKNnostraint}
    \||x|^{-\gamma}u\|_{L^{r}_{|x|}L^{\R}_{\theta}}\le C
    \||x|^{-\alpha}D^{\sigma} u\|^{a}_{L^{p}_{|x|}L^{\p}_{\theta}}
    \||x|^{-\beta}u\|^{1-a}_{L^{q}_{|x|}L^{\q}_{\theta}}.
  \end{equation}
  provided the parameters satisfy the same conditions as in the
  previous theorem, with the exception of the condition
  $\alpha>-n+n/p$
  which is not necessary.
\end{corollary}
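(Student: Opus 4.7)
The plan is to rerun the proof of Theorem~\ref{the:Our2Thm} using a classical integral representation of $u$ in terms of the pure partials $D^{\sigma}u$, in place of any argument that passes through the Riesz potential $|D|^{\sigma}u$. Because the convolution kernels involved are homogeneous of degree $\sigma-n$ and smooth away from the origin, no $A_{p}$-weight or Riesz-transform machinery enters the argument, and the restriction $\alpha>n/p-n$ of Theorem~\ref{the:Our2Thm}---which is precisely the $A_{p}$-lower bound for the weight $|x|^{-\alpha p}$---becomes unnecessary.

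\noindent\textbf{Step 1: pointwise bound.} For every integer $1\le\sigma\le n-1$ and every $u\in C^{\infty}_{c}(\Rn)$, I would first establish
\begin{equation*}
  |u(x)|\;\lesssim\;T_{n-\sigma}(|D^{\sigma}u|)(x).
\end{equation*}
For even $\sigma=2m$ this is immediate from the polyharmonic representation $u=c_{m}|x|^{2m-n}\ast(-\Delta)^{m}u$ together with the pointwise bound $|(-\Delta)^{m}u|\lesssim|D^{2m}u|$. For odd $\sigma=2m+1$ I would compose the two: starting from $u=c_{m}|x|^{2m-n}\ast(-\Delta)^{m}u$, apply the classical first-order identity
\begin{equation*}
  v(x)=-\omega_{n-1}^{-1}\int_{\Rn}\frac{(x-y)\cdot\nabla v(y)}{|x-y|^{n}}\,dy
\end{equation*}
to $v=(-\Delta)^{m}u$ and use $|\nabla(-\Delta)^{m}u|\lesssim|D^{2m+1}u|$; the resulting cross convolution $|x|^{2m-n}\ast(x/|x|^{n})$ is homogeneous of degree $\sigma-n$ by scaling, which yields the claim.

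\noindent\textbf{Steps 2 and 3: Sobolev, then interpolation.} Applying Theorem~\ref{the:Our1Thm} with $\gamma=n-\sigma$ to the pointwise bound above delivers the weighted mixed-norm Sobolev embedding
\begin{equation*}
  \||x|^{-\beta}u\|_{L^{q}_{|x|}L^{\q}_{\theta}}\;\lesssim\;\||x|^{-\alpha}D^{\sigma}u\|_{L^{p}_{|x|}L^{\p}_{\theta}},
\end{equation*}
which is the case $a=1$ of the target inequality. The indicial conditions of Theorem~\ref{the:Our1Thm} only impose an upper bound on $\alpha$ and a lower bound on the sum $\alpha+\beta$: no lower bound on $\alpha$ alone is required. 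For $0<a<1$ one then closes the CKN bookkeeping by the same H\"older interpolation as in Theorem~\ref{the:Our2Thm}: choose intermediate exponents $(\alpha^{*},p^{*},\p^{*})$ so that
\begin{equation*}
  \||x|^{-\gamma}u\|_{L^{r}_{|x|}L^{\R}_{\theta}}\;\le\;\||x|^{-\alpha^{*}}u\|^{a}_{L^{p^{*}}_{|x|}L^{\p^{*}}_{\theta}}\;\||x|^{-\beta}u\|^{1-a}_{L^{q}_{|x|}L^{\q}_{\theta}}
\end{equation*}
by H\"older applied separately in the radial and the angular variable, and apply the Sobolev estimate above to the first factor. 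The scaling identity \eqref{eq:scalingCKN} together with the $\mathbf\Delta$, $\widetilde{\mathbf\Delta}$ constraints \eqref{eq:IIco}--\eqref{eq:IIIco} is precisely what guarantees an admissible choice of $(\alpha^{*},p^{*},\p^{*})$, and none of this bookkeeping invokes a lower bound on $\alpha$.

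\noindent\textbf{Main obstacle.} The real technical weight lies in the interpolation bookkeeping: balancing two simultaneous H\"older splittings (radial and angular) against the scaling identity and the $\mathbf\Delta$-constraints requires a careful parameter chase. However this is essentially the bookkeeping of Theorem~\ref{the:Our2Thm} and should transfer verbatim, so the substantive novelty in the proof of the corollary is entirely concentrated in Step~1. The subtlest point there is the odd-$\sigma$ case, where one must verify that the nested convolution of the Newtonian kernel $|x|^{2m-n}$ with the first-order kernel $x/|x|^{n}$ is indeed controlled pointwise by $|x|^{\sigma-n}$; a standard homogeneity/scaling argument resolves this.
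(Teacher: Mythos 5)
Your Step 1 and the Sobolev-plus-H\"older scheme that follows reproduce what the paper does for the base case of the corollary (the paper likewise replaces \eqref{eq:fractest} by the pointwise bound \eqref{eq:fractest2} with pure derivatives), but as set up your argument cannot remove the condition $\alpha>-n+n/p$, and removing that condition is the entire content of the corollary beyond Theorem \ref{the:Our2Thm}. The lower bound on $\alpha$ is not an artifact of $A_p$-weight or Riesz-transform machinery: in the proof of Theorem \ref{the:Our2Thm} it is exactly the Stein--Weiss condition ``$\alpha<n/p'$'' from \eqref{eq:cDL} after the sign flip (``notice that we are using $-\alpha$ instead of $\alpha$''), and that condition is a \emph{necessary} local-integrability requirement for the fractional integral $T_{n-\sigma}$ itself: if $-\alpha\ge n/p'$ one can choose data concentrated at the origin with finite weighted $L^p$ norm but $T_{n-\sigma}\phi\equiv+\infty$. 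Since your Step 2 passes through precisely the bound $|u|\lesssim T_{n-\sigma}(|D^{\sigma}u|)$ followed by Theorem \ref{the:Our1Thm}, it inherits the constraint $-\alpha<n/p'$, i.e.\ $\alpha>n/p-n$, verbatim. Your observation that Theorem \ref{the:Our1Thm} imposes ``no lower bound on $\alpha$ alone'' is correct only in that theorem's sign convention; translated to the corollary's convention it \emph{is} the lower bound you are claiming to discard.

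The missing ingredient is the mechanism by which the paper actually drops that condition, and it is the one place where integrality of $\sigma$ does real work beyond the pointwise bound. One applies the already-proved inequality to $|x|^{k}u$, observes that the shift $(\gamma,\alpha,\beta)\mapsto(\gamma-k,\alpha-k,\beta-k)$ preserves every hypothesis except the lower bound on $\alpha$ (so $k$ can be taken large enough to reach any $\alpha$), and then proves the commutator estimate \eqref{eq:interm}, namely $\||x|^{-\alpha}D^{\sigma}(|x|^{k}u)\|_{L^{p}L^{\p}}\lesssim\||x|^{k-\alpha}D^{\sigma}u\|_{L^{p}L^{\p}}$, via Leibniz' rule and an induction resting on the Hardy-type inequality \eqref{eq:interm4}, $\||x|^{\delta}u\|_{L^{p}L^{\p}}\lesssim\||x|^{1+\delta}\nabla u\|_{L^{p}L^{\p}}$ for $\delta>1-n/p$, proved by radial integration by parts together with a Minkowski-type reduction of the angular norm. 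Without this weight-shifting step your argument proves only the $D^{\sigma}$-version of \eqref{eq:CKNnostra} under the \emph{same} restriction $\alpha>-n+n/p$ as the theorem, not the corollary as stated.
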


\begin{remark}\label{rem:comparCKN}
  If $\sigma=1$, Corollary \ref{cor:integers} contains both the
  original result of \cite{CaffarelliKohnNirenberg84-a}
  (for $\mathbf\Delta\le a$)
  and the radial improvement of \cite{DenapoliDrelichmanDuran09-a}.
  
  \noindent Indeed,
  if we choose $p=\p$, $q=\q$, $r=\R$ in Corollary \ref{cor:integers}
  we get of course $\mathbf\Delta=\widetilde{\mathbf\Delta}$, and selecting
  $\sigma=1$ we reobtain the original inequality 
  \eqref{eq:CKN} in the range $0\le \mathbf\Delta\le a$.
  
  \noindent On the other hand, if $u$ is a radial function, estimate 
  \eqref{eq:CKNnostraint} does not
  depend on the choice of $\p,\q,\R$ and we can let $\widetilde{\mathbf\Delta}$
  assume an arbitrary value in the range \eqref{eq:IIIco}.
  Thus if $\mathbf\Delta>a(\sigma-n/p)$ we can choose $\widetilde{\mathbf\Delta}=0$,
  while if $\mathbf\Delta=a(\sigma-n/p)$ we can choose
  $\widetilde{\mathbf\Delta}=\epsilon>0$ arbitrarily small, recovering
  the results of Theorem \ref{the:CKNDDD}.
\end{remark}  

\noindent Again we can get theorem \ref{the:Our2Thm} as a consequence of \ref{the:Our1Thm}.

\begin{proof}
 We begin by taking
$0<a\le1$, and indices $r,\R,s,\s,q,\q\in[1,+\infty]$ such that
\begin{equation}\label{eq:rsq}
  \frac1r=\frac as+\frac{1-a}q,\qquad
  \frac1\R=\frac a\s+\frac{1-a}\q.
\end{equation}
Then by two applications of H\"older's inequality we obtain
the interpolation inequality
\begin{equation}\label{eq:holder}
\begin{split}
  \||x|^{-\gamma}u\|_{L^{r}L^{\R}}
  =&
  \|(|x|^{-\delta}u)^{a}(|x|^{-\beta}u)^{1-a}\|_{L^{r}L^{\R}}
    \\
  \le&
  \|(|x|^{-\delta}u)^{a}\|_{L^{s/a}L^{\s/a}}
  \|(|x|^{-\beta}u)^{1-a}\|_{L^{q/(1-a)}L^{\q/(1-a)}}
    \\
  =&
  \||x|^{-\delta}u\|^{a}_{L^{s}L^{\s}}
  \||x|^{-\beta}u\|^{1-a}_{L^{q}L^{\q}}
\end{split}
\end{equation}
provided the exponents $\gamma,\delta,\beta$ are related by
\begin{equation}\label{eq:expon}
  \gamma=a \delta+(1-a)\beta.
\end{equation}
Now the main step of the proof. By Theorem \eqref{the:Our1Thm}
we know that
\begin{equation*}
  \||x|^{-\delta}T_{\lambda}u\|_{L^{s}L^{\s}}\lesssim
  \||x|^{-\alpha}u\|_{L^{p}L^{\p}}
\end{equation*}
under suitable conditions on the indices. 
Now using the well known estimate
\begin{equation}\label{eq:fractest}
  |u(x)|\le C_{\lambda,n}
    T_{\lambda}\left(
      \left|
        |D|^{n-\lambda}u
      \right|
    \right)
\end{equation}
the previous inequality can be equivalently written
\begin{equation*}
  \||x|^{-\delta}u\|_{L^{s}L^{\s}}\lesssim
  \||x|^{-\alpha}|D|^{\sigma}u\|_{L^{p}L^{\p}},\qquad
  \sigma=n-\lambda
\end{equation*}
which together with \eqref{eq:holder} gives
\begin{equation}\label{eq:final}
  \||x|^{-\gamma}u\|_{L^{r}L^{\R}}\lesssim
  \||x|^{-\alpha}|D|^{\sigma}u\|_{L^{p}L^{\p}}^{a}
  \||x|^{-\beta}u\|^{1-a}_{L^{q}L^{\q}}.
\end{equation}
The conditions on the indices are those given by \eqref{eq:rsq},
\eqref{eq:expon}, plus those listed in the statement of
Theorem \eqref{the:Our1Thm} (notice that we are using $-\alpha$
instead of $\alpha$). The complete list is the following:
\begin{equation}\label{eq:tot1}
  r,s,q,\R,\s,\q\in[1,+\infty],\qquad 
  a<0\le1,\qquad
  0<\sigma<n,
\end{equation}
\begin{equation}\label{eq:tot2}
  \frac1r=\frac as+\frac{1-a}q,\qquad
  \frac1\R=\frac a\s+\frac{1-a}\q.
\end{equation}
\begin{equation}\label{eq:tot3}
  1<s\le p<\infty,\qquad
  1\le\s\le\p\le \infty,
\end{equation}
\begin{equation}\label{eq:tot4}
  \gamma<\frac nr,\qquad  \beta<\frac nq,\qquad
  -\alpha<\frac n{p'},\qquad
  \delta<\frac ns,
\end{equation}
\begin{equation}\label{eq:tot5}
  \gamma=a \delta+(1-a)\beta,
\end{equation}
\begin{equation}\label{eq:tot6}
  -\alpha+\delta+n-\sigma=n+\frac ns-\frac np,
\end{equation}
\begin{equation}\label{eq:tot7}
  -\alpha+\delta\ge
  (n-1)\left(\frac1s-\frac1p+\frac1\p-\frac1\s\right).
\end{equation}
Recall also that, when the last inequality \eqref{eq:tot7}
is strict, we can allow the full range
\begin{equation*}
  1\le s\le p\le \infty.
\end{equation*}
Our final task is to rewrite this set of conditions in a
compact form, eliminating the redundant parameters
$\delta,s,\s$. Define the two quantities
\begin{equation*}
  \mathbf\Delta=a \sigma+n
    \left(
      \frac1r-\frac{1-a}{q}-\frac ap
    \right),\qquad
  \widetilde{\mathbf\Delta}=a \sigma+n
    \left(
      \frac1\R-\frac{1-a}{\q}-\frac a\p
    \right).
\end{equation*}
Then \eqref{eq:tot2} are equivalent to
\begin{equation}\label{eq:tot2b}
  \mathbf\Delta=a \left(\sigma+\frac ns-\frac np\right),\qquad
  \widetilde{\mathbf\Delta}=a \left(\sigma+\frac n\s-\frac n\p\right)
\end{equation}
while \eqref{eq:tot6} is equivalent to
\begin{equation}\label{eq:tot6b}
  \delta=\alpha+\frac{\mathbf\Delta}{a}
\end{equation}
and we can use \eqref{eq:tot2b}, \eqref{eq:tot6b} to replace
$\delta,s,\s$ in the remaining relations. Condition \eqref{eq:tot5}
becomes
\begin{equation}\label{eq:tot5b}
  \mathbf\Delta=\gamma-a \alpha-(1-a)\beta,
\end{equation}
which is precisely the scaling condition,
while \eqref{eq:tot7} becomes
\begin{equation}\label{eq:tot7b}
  \mathbf\Delta+(n-1)\widetilde{\mathbf\Delta}\ge0.
\end{equation}
The last inequality in \eqref{eq:tot4}, $\delta<n/s$, can be written
\begin{equation*}%
  \alpha<\frac np-\sigma
\end{equation*}
so that \eqref{eq:tot4} is replaced by
\begin{equation}\label{eq:tot4b}
  \gamma<\frac nr,\qquad
  \beta<\frac nq,\qquad
  \frac np-n<\alpha<\frac np-\sigma.
\end{equation}
Finally, conditions \eqref{eq:tot3} translate to
\begin{equation}\label{eq:tot3b}
  1<p,\qquad
  a\left(\sigma-\frac np\right)<\mathbf\Delta\le a \sigma, \qquad 
  a\left(\sigma-\frac n\p\right)\le \widetilde{\mathbf\Delta}\le a \sigma.
\end{equation}
When the inequality in \eqref{eq:tot7b} is strict, the last
condition can be relaxed to
\begin{equation}\label{eq:tot3b2}
  1\le p,\qquad
  a\left(\sigma-\frac np\right)\le\mathbf\Delta\le a \sigma, \qquad 
  a\left(\sigma-\frac n\p\right)\le \widetilde{\mathbf\Delta}\le a \sigma.
\end{equation}

\ni We pass now to the proof of Corollary \ref{cor:integers}.
Assume now $\sigma$ is integer, and the inequality
\begin{equation*}
  \||x|^{-\gamma}u\|_{L^{r}L^{\R}}\le C
  \||x|^{-\alpha}|D|^{\sigma} u\|^{a}_{L^{p}L^{\p}}
  \||x|^{-\beta}u\|^{1-a}_{L^{q}L^{\q}}
\end{equation*}
is true for a certain choice of the parameters as in the theorem,
so that in particular
\begin{equation*}
  \alpha< \frac np-\sigma<\frac np.
\end{equation*}
Then we shall prove that also the following inequalities are true
\begin{equation}\label{eq:goalk}
  \||x|^{k-\gamma}u\|_{L^{r}L^{\R}}\le C
  \||x|^{k-\alpha}D^{\sigma} u\|^{a}_{L^{p}L^{\p}}
  \||x|^{k-\beta}u\|^{1-a}_{L^{q}L^{\q}}
\end{equation}
for all integers $k\ge0$, where we are using the
shorthand notation
\begin{equation*}
  \||x|^{k-\alpha}D^{\sigma} u\|_{L^{p}L^{\p}}=
  \sum_{|\nu|=\sigma}
  \||x|^{k-\alpha}D^{\nu} u\|_{L^{p}L^{\p}},\qquad
  (\nu=(\nu_{1},\dots,\nu_{n})\in \mathbb{N}^{n}).
\end{equation*}
This in particular implies that the condition on $\alpha$ from
below can be dropped when $\sigma$ is integer.

\ni When $k=0$, \eqref{eq:goalk} is obtained just by
replacing $|D|^{\sigma}$ with $D^{\sigma}$ in the 
original inequality. 
The proof of this estimate is identical to the previous one;
the only modification is to use, instead of \eqref{eq:fractest},
the stronger pointwise bound
\begin{equation}\label{eq:fractest2}
  |u(x)|\le C_{\lambda,n}
    T_{\lambda}\left(
      \left|
        D^{n-\lambda}u
      \right|
    \right)
\end{equation}
which is valid for all $\lambda=1,\dots,n-1$.

\ni Now if we apply \eqref{eq:goalk} (with $k=0$) to a function of the
form $|x|^{k}u$ for some $k\ge1$, we obtain
\begin{equation*}
  \||x|^{k-\gamma}u\|_{L^{r}L^{\R}}\le C
  \||x|^{-\alpha}D^{\sigma}(|x|^{k}u)\|^{a}_{L^{p}L^{\p}}
  \||x|^{k-\beta}u\|^{1-a}_{L^{q}L^{\q}}
\end{equation*}
and to conclude the proof we see that it is sufficient to prove the
inequality
\begin{equation}\label{eq:interm}
  \||x|^{-\alpha}D^{\sigma}(|x|^{k}u)\|_{L^{p}L^{\p}}\lesssim
  \||x|^{k-\alpha}D^{\sigma} u\|_{L^{p}L^{\p}}
\end{equation}
for all $\alpha<n/p$, $1\le p,\p<\infty$,
and integers $\sigma=1,\dots,n-1$, $k\ge1$. Notice indeed that all
the conditions on the parameters 
(apart from $\alpha>-n+n/p$)
are unchanged if we decrease
$\gamma,\alpha,\beta$ by the same quantity.

\ni By induction on $k$ (and writing $\delta=-\alpha$),
we are reduced to prove that
for all $p,\p\in[1,\infty)$ and $1\le\sigma\le n-1$
\begin{equation}\label{eq:interm2}
  \||x|^{\delta}D^{\sigma}(|x|u)\|_{L^{p}L^{\p}}\lesssim
  \||x|^{1+\delta}D^{\sigma} u\|_{L^{p}L^{\p}},\qquad
  \delta>\sigma-\frac np.
\end{equation}
Using Leibnitz' rule we reduce further to
\begin{equation}\label{eq:interm3}
  \||x|^{1+\delta-\ell}u\|_{L^{p}L^{\p}}\lesssim
  \||x|^{1+\delta}D^{\ell} u\|_{L^{p}L^{\p}},\qquad
  \delta>\ell-\frac np
\end{equation}
for $\ell=1,\dots,n-1$, and by induction on $\ell$
this is implied by
\begin{equation}\label{eq:interm4}
  \||x|^{\delta}u\|_{L^{p}L^{\p}}\lesssim
  \||x|^{1+\delta}\nabla u\|_{L^{p}L^{\p}},\qquad
  \delta>1-\frac np.
\end{equation}
In order to prove \eqref{eq:interm4}, consider first
the radial case.
When $u=\phi(|x|)$ is a radial (smooth compactly supported)
function, we have
\begin{equation*}
  \||x|^{\delta}u\|_{L^{p}L^{\p}}^{p}\simeq
  \int_{0}^{\infty}\rho^{\delta p+n-1}|\phi(\rho)|^{p}d\rho.
\end{equation*}
Integrating by parts we get
\begin{equation*}
\begin{split}
  =&-\frac{p}{\delta p+n}\int_{0}^{\infty}
    \rho^{\delta p+n}|\phi|^{p-1}|\phi(\rho)|'d\rho
    \\
  \lesssim &
  \int_{0}^{\infty}(\rho^{\delta p+n-1}|\phi|^{p})^{\frac{p-1}{p}}
  (\rho^{\delta p+p+n-1}|\phi'|^{p})^{\frac1p}d\rho
    \\
  \simeq &
  \||x|^{\delta}u\|^{\frac{p-1}{p}}_{L^{p}L^{\p}}
  \||x|^{1+\delta}\nabla u\|_{L^{p}L^{\p}}
\end{split}
\end{equation*}
which implies \eqref{eq:interm4} in the radial case. If $u$ is not
radial, define
\begin{equation*}
  \phi(\rho)=\|u(\rho \theta)\|_{L^{\p}_{\theta}(\mathbb{S}^{n-1})}
  =
  \left(
    \int_{\mathbb{S}^{n-1}}
    |u(\rho \theta)|^{\p}dS_{\theta}
  \right)^{\frac1\p}
\end{equation*}
so that
\begin{equation*}
  \||x|^{\delta}u\|_{L^{p}L^{\p}}\simeq
  \left(\int_{0}^{\infty}\rho^{\delta p+n-1}|\phi(\rho)|^{p}d\rho
  \right)
  ^{\frac1p}.
\end{equation*}
The proof in the radial case implies
\begin{equation*}
  \||x|^{\delta}u\|_{L^{p}L^{\p}}\le
  \||x|^{\delta+1}\phi'(|x|)\|_{L^{p}};
\end{equation*}
moreover we have
\begin{equation*}
\begin{split}
  |\phi'(\rho)|\lesssim &\ 
  \phi^{1-\p}
  \int_{\mathbb{S}^{n-1}}
  |u(\rho \theta)|^{\p-1}|\theta \cdot \nabla u|\ 
  dS_{\theta}
    \\
  \le &\ 
  \phi^{1-\p}
  \left(\int_{\mathbb{S}}|u|^{\p}\right)^{\frac{\p-1}\p}
  \left(\int_{\mathbb{S}}|\nabla u|^{\p}\right)^{\frac1\p}
  =\|\nabla u(\rho \theta)\|_{L^{\p}_{\theta}(\mathbb{S}^{n-1})}
\end{split}
\end{equation*}
and in conclusion we obtain
\begin{equation*}
  \||x|^{\delta}u\|_{L^{p}L^{\p}}\le
  \||x|^{\delta+1}\nabla u\|_{L^{p}L^{\p}}
\end{equation*}
as claimed.
\end{proof}

\section{Strichartz estimates for the wave equation}

\ni As a last example, we
mention an application of our result to Strichartz
estimates for the wave equation; a more detailed
analysis will be conducted elsewhere.
The wave flow $e^{it|D|}$ on $\mathbb{R}^{n}$, $n\ge2$,
satisfies the estimates,
which are usually called \emph{Strichartz estimates}:
\begin{equation}\label{eq:strich}
  \||D|^{\frac nr+\frac1p-\frac n2}e^{it|D|}f\|_{L^{p}_{t}L^{r}_{x}}
  \lesssim \|f\|_{L^{2}}
\end{equation}
provided the indices $p,r$ satisfy
\begin{equation}\label{eq:admWE}
  p\in[2,\infty],\qquad
  0<\frac1r\le\frac12-\frac{2}{(n-1)p}.
\end{equation}
Here the $L^{p}_{t}L^{r}_{x}$ norms are defined as
\begin{equation*}
  \|u(t,x)\|_{L^{p}_{t}L^{r}_{x}}=
  \left\|
     \|u(t,\cdot)\|_{L^{r}_{x}}
  \right\|_{L^{p}_{t}}.
\end{equation*}
In their most general version, the estimates were proved in 
\cite{GinibreVelo95-b},
\cite{KeelTao98-a}. Notice that in \eqref{eq:strich} we included
the extension of the estimates which can be obtained via
Sobolev embedding on $\mathbb{R}^{n}$. 

\ni If the initial value $f$ is a radial function, the estimates admit
an improvement in the sense that conditions \eqref{eq:admWE}
can be relaxed to
\begin{equation}\label{eq:admradial}
  p\in[2,\infty],\qquad
  0<\frac1r<\frac12-\frac{1}{(n-1)p}.
\end{equation}
This phenomenon is connected with the finite speed of
propagation for the wave equation and is usually deduced 
using the space-time decay properties of the equation.
For a thourough discussion and a comprehensive history
of such estimates see 
e.g.~\cite{JiangWangYu10-a} and the references therein.

\ni A different set of estimates are the \emph{smoothing estimates},
also known as Morawetz-type or weak dispersion estimates.
These appear in a large number of versions;
a particularly sharp one is the following, from
\cite{FangWang08-a}:
\begin{equation}\label{eq:smooWE}
  \||x|^{-\zeta}|D|^{\frac12-\zeta}
     e^{it|D|}f\|_{L^{2}_{t}L^{2}_{x}}
  \lesssim \|\Lambda^{\frac12-\zeta} f\|_{L^{2}},\qquad
  \frac12<\zeta <\frac n2.
\end{equation}
Here the operator
\begin{equation*}
  \Lambda=(1-\Delta_{\mathbb{S}^{n-1}})^{1/2}
\end{equation*}
is a function of the Laplace-Beltrami operator on the
sphere and acts only on angular variables, thus we see
that the flow improves the angular regularity.
Morawetz-type estimates are conceptually simpler than
\eqref{eq:strich}, being related to more basic properties
of the operators; indeed $L^{2}$ estimates of this type
can be proved 
for quite large classes of equations
via multiplier methods.

\ni Corresponding estimates are known for the Schr\"odinger flow
$e^{it \Delta}$, and
M.C.~Vilela \cite{Vilela01-a} noticed that in the radial case
they can be used to
deduce Strichartz estimates via the radial Sobolev embedding. 
Following a similar idea for the wave flow, 
in combination with our precised 
estimates \eqref{eq:weightS}, gives an even better result,
which strengthens the standard Strichartz estimates
\eqref{eq:strich}-\eqref{eq:admWE} in
terms of the mixed $L_{|x|}^{p}L_{\theta}^{\p}$ norms.
Indeed, a special case of \eqref{eq:weightS} gives, 
for arbitrary functions $g(x)$,
\begin{equation}\label{eq:special}
  \|g\|_{L^{q}_{|x|}L^{\q}_{\theta}}\lesssim
  \||x|^{\alpha}|D|^{\alpha+\frac n2-\frac nq}g\|_{L^{2}},\qquad
  q,\q\in[2,\infty),\qquad
  \frac n2>\alpha\ge(n-1)\left(\frac1q-\frac1\q\right)
\end{equation}
with the exclusion of the case $\alpha=0$, $q=\q=2$. Then by
\eqref{eq:special} and \eqref{eq:smooWE}
we obtain the precised Strichartz estimates
\begin{equation}\label{eq:prestrich}
  \||x|^{-\delta}|D|^{\frac nq+\frac12-\frac n2-\delta}e^{it|D|}f\|
      _{L^{2}_{t}L^{q}_{|x|}L^{\q}_{\theta}}
  \lesssim \|\Lambda^{-\epsilon} f\|_{L^{2}}
\end{equation}
provided
\begin{equation}\label{eq:condpre}
  q,\q\in[2,+\infty),\qquad
  \delta<\frac nq,\qquad
  0<\epsilon<\frac{n-1}{2},\qquad
  0<\frac1q<\frac1\q-\frac{1}{2(n-1)}
\end{equation}
and
\begin{equation}\label{eq:condpre2}
  \epsilon\le \delta+(n-1)\left(\frac1\q-\frac{1}{2(n-1)}-\frac1q\right).
\end{equation}

\ni We will not exploit the consequence of this particular section in the thesis. The results contained can be consider just as an application of our point of view to a different class of problems. We hope to came back on Strichartz estimates in $L^{p}L^{\p}$ spaces in future works.

\chapter{Introduction to the regularity problem for the Navier-Stokes equation}\label{chap2}

\ni In this chapter we introduce the Cauchy problem for the Navier-Stokes approximation of the fluid motion in the whole space, that's
\begin{equation}\label{CauchyNS}
\left \{
\begin{array}{rcccl}
\partial_{t}u + (u \cdot \nabla) u +\nabla p -\Delta u & = & 0 & \quad \mbox{in}& \quad [0,T)\times \mathbb{R}^{n}  \\
\nabla \cdot u & = & 0 & \quad \mbox{in} & \quad [0,T)\times \mathbb{R}^{n} \\
u & = & u_{0} & \quad \mbox{in}& \quad \{0\} \times \mathbb{R}^{n}. 
\end{array}\right.
\end{equation}
Here $u = (u_{1},u_{2},u_{3})$ is the velocity field, $p$ is the pressure and the viscosity have been set to one. No external forces is working.
The first equation is the Newton law while the second guarantees the incompressibility of the fluid. To require incompressibility also at time $t=0$ have to be considered just initial data $u_{0}$ such that $\nabla \cdot u_{0}=0$. So is useful to define the space
\begin{equation}
L^{2}_{\sigma}(\mathbb{R}^{n}) = \overline{ \left\{ u_{0} \in C^{\infty}_{0}(\mathbb{R}^{n})\quad \mbox{s.t.} \quad \int_{\mathbb{R}^{n}} |u_{0}|^{2} \ dx < +\infty, \ \nabla \cdot u_{0} =0 \right\} }. 
\end{equation}
We will use the same notation for the norm of scalar, vector or tensor quantities, the meaning will be clear by the situation; for instance we will use $\|p\|_{L^{2}(\mathbb{R}^{n})}=\int_{\mathbb{R}^{n}}|p| \ dx$, $\|u\|_{L^{2}(\mathbb{R}^{n})} = \int_{\mathbb{R}^{n}}\sum_{i=1}^{3}u_{i}^{2} \ dx$, $\|\nabla u\|_{L^{2}(\mathbb{R}^{n})} = \int_{\mathbb{R}^{n}}\sum_{i,j=1}^{3}(\partial_{i}u_{j})^{2} \ dx$. We will use also the notation $u \in L^{2}(\mathbb{R}^{n})$ instead of $u \in ({L^{2}(\mathbb{R}^{n})})^{3}$, and so on.
The well posedness of (\ref{CauchyNS}) is a well-known mathematical challenge and just partial results have been obtained. The main question is: if we consider an initial datum $u_{0}$ in the Schwartz class, there exists a unique global solution of the problem (\ref{CauchyNS})?

\ni In this chapter we will give an excursus on classical theorems starting by the pioneering work of Leray, Hopf, Serrin and Kato \cite{Ler,Hopf,Ser}. This classical results have been improved in many different directions and in the thesis we will focus basically on the weighted norm approach, which also has a wide reference literature, see \cite{YongZhou}, \cite{Kukavica}, et al. We will also briefly focus on \cite{Tat,CKN}. The first is particulary relevant because seems to be a sharp version for the existence with small data. The second is a celebrated landmark for the regularity theory.

\section{Equivalence between the differential and integral formulation}

\ni In this secion we give the integral formulation of problem (\ref{CauchyNS}). This formulation is very useful in order to study both local (in time) well posedness and global well posedness with small data. In such a case, starting by the integral formulation is immediate to look at the equation (\ref{CauchyNS}) as a perturbed heat equation, and fixed point techniques are available. Of course this is irrelevant when we look for global solutions with large initial data. We follow basically \cite{Lem}, and we omit, almost completely, the proofs to get a compact presentation. Anyway all the proofs are classical and can be easily found in literature.
Let come back on the system:
$$
\left \{
\begin{array}{rcl}
\partial_{t}u + (u \cdot \nabla) u +\nabla p  & = \Delta u & 0   \\
\nabla \cdot u & = & 0  \\
u & = & u_{0}. 
\end{array}\right.
$$
or in components $(i=1,...,n)$:
$$
\left \{
\begin{array}{rcl}
\partial_{t}u_{i} + \sum_{j=1}^{n}u_{j} \partial_{j} u_{i} +\partial_{i} p  & = &  \sum_{j=1}^{n}\partial_{jj}u_{i}   \\
\sum_{i=1}^{n}\partial_{i} u_{i} & = & 0.
\end{array}\right.
$$
By taking the divergence of the first equation and using $\nabla \cdot u =0$ we get:
\begin{eqnarray}\label{PressureRie}
0&=&  \sum_{i=1}^{n} \partial_{i} \sum_{j=1}^{n}u_{j}\partial_{j}u_{i} + \Delta p \\ 
&=& \sum_{i,j=1}^{n}\partial_{i}\partial_{j}(u_{i}u_{j}) + \Delta p,
\end{eqnarray}
so $p$ can, at least formally, be recovered by $u$ througt:
\begin{equation}
p= - \frac{\sum_{i,j=1}^{n}\partial_{i}\partial_{j}(u_{i}u_{j})}{\Delta}.
\end{equation} 
By using his relation the system can be written as:
\begin{equation}\label{CauchyNSInt}
\left \{
\begin{array}{rclcl}
 u & = & e^{t\Delta}u_{0} + \int_{0}^{t}e^{(t-s)\Delta}\mathbb{P} \nabla \cdot (u \otimes u) ds &\quad \mbox{in} \quad & [0,T)\times \mathbb{R}^{n}  \\
\nabla \cdot u & = & 0 &\quad \mbox{in} \quad & [0,T)\times \mathbb{R}^{n}. 
\end{array}\right.
\end{equation}
where $(u\otimes u)_{i,j} = u_{i}u_{j}$ and $\mathbb{P}$ is formally:
\begin{equation}\label{LerProj}
\mathbb{P}f = f - \nabla \frac{1}{\Delta}\nabla \cdot f,
\end{equation}
or in components
$$
(\mathbb{P} f)_{i} = f_{i} - \frac{ \partial_{i}\partial_{j}}{\Delta} f_{j}.
$$
The operator $\mathbb{P}$, that's a really useful tool in the study of the Navier-Stokes problem, has been introduced by Leray in \cite{Ler}. It is a projection operator on the subspace of the divergence free vector fields. It is infact easy to show that $\mathbb{P}f=f \Leftrightarrow \nabla \cdot f =0$.
In order to give precise definition of the formal computation above at first we have to make sense to the operator $\mathbb{P}$. This is easy if we restrict to $f \in L^{2}(\mathbb{R}^{n})$, in this case $\mathbb{P}f$ is defind by:
$$
\mathbb{P} f = f - (R \otimes R) f
$$
or in components:
$$
(\mathbb{P}f)_{i} = f_{i} - \sum_{j}R_{i}R_{j}f_{j},
$$
where $R_{j}$ is the Riesz transform in the direction $j$ defined by the symbol $i\frac{\xi_{j}}{|\xi|}$. This is a simple way to define $\mathbb{P}$, even if it can be defined on larger Banach spaces as a Calderon-Zygmund operator; details can be found in \cite{Lem}. Anyway we are interested basically in the operator $\mathbb{P}(\nabla \cdot f)$, that appears in the integral formulation (\ref{CauchyNSInt}); it is defined through components by:
$$
(\mathbb{P}\nabla \cdot (u \otimes u))_{i} = \partial_{j} (u \otimes u)_{i,j}
- \frac{1}{\Delta} \partial_{i}\partial_{j}\partial_{k} (u \otimes u)_{j,k}.
$$
In such a case the differentiation allows to extend the definition to a larger class of Banach spaces. To give a precise definition (again in \cite{Lem}) we need the auxiliary spaces:

\begin{definition}

Let define the dual spaces $WL^{\infty}(\mathbb{R}^{n}), L^{1}_{uloc}(\mathbb{R}^{n})$:
\begin{itemize}
\item the space $WL^{\infty}(\mathbb{R}^{n})$ is the Banach space of the Lebesgue measurable functions $\phi$ on $\mathbb{R}^{n}$ such that
\begin{equation}\label{aaa}
\sum_{k \in \mathbb{Z}^{n}} \sup_{x \in \{ k+[0,1]^{n} \}} | \phi(x)| < +\infty, 
\end{equation}
equipped with the norm (\ref{aaa});
\item the space $L^{1}_{uloc}(\mathbb{R}^{n})$ is the space of locally intgrable functions equipped with the norm:
$$
L^{1}_{uloc}(\mathbb{R}^{n}) = \sup_{[0,1]^{n}} \| \mathbbm{1}_{[0,1]^{n}} f \|_{L^{1}(\Rn)}. 
$$
\end{itemize}
\end{definition}
\ni It holds the following: 
\begin{lemma}[\cite{Lem}]\label{OseenDef}
The operator $\frac{1}{\Delta}\partial_{i}\partial_{j}\partial_{k}$ is a convolution operator with a kernel $T_{i,j,k}$ such that the following decomposition holds:
$$
T_{i,j,k}=\alpha_{i,j,k}+\partial_{i}\partial_{j}\beta_{k}
$$
where $\alpha_{i,j,k} \in WL^{\infty}(\mathbb{R}^{n})$ and $\beta_{k} \in L^{1}_{loc}(\mathbb{R}^{n})$.
\end{lemma}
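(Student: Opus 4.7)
The plan is to identify $T_{ijk}$ explicitly via the fundamental solution of the Laplacian and then decompose it by a smooth cutoff that separates the singular core at the origin from the tail at infinity. Arguing for $n\geq 3$ (the case $n=2$ is analogous with a logarithmic fundamental solution), $\Delta^{-1}$ is convolution with $E(x)=-c_n|x|^{2-n}$, so $\Delta^{-1}\partial_k$ is convolution with the locally integrable kernel $G_k(x):=c_n'\,x_k/|x|^n$. Since the Fourier multipliers agree (both equal $i\xi_i\xi_j\xi_k/|\xi|^2$), one has the distributional identity $T_{ijk}=\partial_i\partial_j G_k$ on $\mathbb{R}^n$.

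Next I would pick a radial cutoff $\chi\in C^\infty_c(\mathbb{R}^n)$ with $\chi\equiv 1$ on $\{|x|\le1\}$ and $\chi\equiv 0$ on $\{|x|\ge 2\}$, and set
\begin{equation*}
\beta_k := \chi\,G_k, \qquad \alpha_{ijk} := T_{ijk}-\partial_i\partial_j\beta_k = \partial_i\partial_j\bigl((1-\chi)\,G_k\bigr).
\end{equation*}
Since $\chi G_k$ is compactly supported and equals an $L^1_{loc}$ function, $\beta_k\in L^1_{loc}(\mathbb{R}^n)$ (in fact $\beta_k\in L^1$). The remaining task is to verify $\alpha_{ijk}\in WL^\infty(\mathbb{R}^n)$. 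But $(1-\chi)G_k$ is supported in $\{|x|\ge 1\}$, where $G_k$ is $C^\infty$, so Leibniz' rule applies classically and
\begin{equation*}
\alpha_{ijk} = (1-\chi)\,\partial_i\partial_j G_k - (\partial_i\chi)\partial_j G_k - (\partial_j\chi)\partial_i G_k - (\partial_i\partial_j\chi)\,G_k.
\end{equation*}

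The three lower-order summands are $C^\infty$ and supported in the annulus $\{1\le|x|\le 2\}$, so they are bounded with compact support and lie in $WL^\infty$. For the main term, $\partial_i\partial_j G_k$ is $C^\infty$ away from $0$ and positively homogeneous of degree $-(n+1)$, so $|(1-\chi)(x)\,\partial_i\partial_j G_k(x)|\lesssim (1+|x|)^{-n-1}$; summing this bound over unit cubes $m+[0,1]^n$, $m\in\mathbb{Z}^n$, yields the convergent series $\sum_{m}(1+|m|)^{-n-1}<\infty$, which is precisely the $WL^\infty$ norm. The only subtle point, and in my view the main obstacle, is that the distributional identity $T_{ijk}=\partial_i\partial_j G_k$ near $0$ has to be interpreted correctly, since second derivatives of the merely $L^1_{loc}$ function $G_k$ produce a principal-value distribution plus a possible Dirac-type correction at the origin. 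The splitting above is designed precisely so that this singular behaviour is absorbed into the convolution with $\beta_k$, while $\alpha_{ijk}$ coincides with a genuine smooth function outside the annulus $\{1\le|x|\le 2\}$ with the required decay at infinity.
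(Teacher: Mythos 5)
Your argument is correct. Note that the paper does not actually prove this lemma --- it is quoted verbatim from Lemari\'e-Rieusset's book \cite{Lem} --- and your cutoff decomposition ($\beta_{k}=\chi G_{k}$ absorbing the singular second derivatives at the origin, $\alpha_{i,j,k}=\partial_{i}\partial_{j}\bigl((1-\chi)G_{k}\bigr)$ computed classically since $(1-\chi)G_{k}$ is globally $C^{\infty}$, with the homogeneity $-(n+1)$ of $\partial_{i}\partial_{j}G_{k}$ giving the summable decay $(1+|x|)^{-n-1}$ over unit cubes) is precisely the standard proof given there, including your correct identification of the delicate point, namely that $\partial_{i}\partial_{j}G_{k}$ is only a principal-value-plus-delta distribution near the origin and must be kept under the $\partial_{i}\partial_{j}$ acting on the $L^{1}$ piece.
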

\ni By lemma (\ref{OseenDef}) and inclusions 
$$
L^{1}_{loc} * L^{1}_{uloc} \subset L^{1}_{uloc}, \qquad WL^{\infty}*L^{\infty},
$$
it turns out that $\mathbb{P}(\nabla \ \cdot)$ can be defined on the space $(L^{1}_{uloc}(\Rn))^{n \times n}$. Now we focus on some properties of convolutions with the Oseen Kernel, so we consider:
$$
\frac{1}{\Delta}\partial_{i}\partial_{j}e^{t\Delta}.
$$
It holds the following:
\begin{lemma}[\cite{Lem}]\label{OseenKernelTheorem}
Let $1\leq i,j \leq n$. The operator $\frac{1}{\Delta}\partial_{i}\partial_{j}e^{t\Delta}$ is a convolution operator $O_{i,j}(t)*f_{j}$, with $O_{i,j}(t) \in (C^{\infty}(\Rn))^{n\times n}$, the homogeneity:
$$
O_{i,j}(t) = \frac{1}{t^{\frac{n}{2}}}O_{i,j}\left( \frac{x}{\sqrt{t}} \right),
$$
and the decay:
$$
(1+|x|)^{n+|\eta|}\partial^{\eta}O_{i,j} \in (L^{\infty}(\Rn))^{n\times n},
$$
for each multi-index $\eta$.
\end{lemma}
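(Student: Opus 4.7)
The plan is to work on the Fourier side. The multiplier of the operator $\frac{1}{\Delta}\partial_{i}\partial_{j}e^{t\Delta}$ is
\begin{equation*}
  m_{t}(\xi) = -\frac{\xi_{i}\xi_{j}}{|\xi|^{2}}\,e^{-t|\xi|^{2}},
\end{equation*}
which is bounded by $1$ and, for $t>0$, rapidly decaying at infinity thanks to the Gaussian factor. Hence $m_{t}\in L^{1}(\Rn)$ and one defines $O_{i,j}(t,\cdot) := \mathcal{F}^{-1}m_{t}$. Since $\xi^{\eta}m_{t} \in L^{1}(\Rn)$ for every multi-index $\eta$, differentiation under the integral sign gives $\partial^{\eta}O_{i,j}(t,\cdot) \in C_{0}(\Rn)$, so in particular $O_{i,j}(t,\cdot)\in C^{\infty}(\Rn)$ with all derivatives bounded.

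The homogeneity identity is immediate from the substitution $\eta=\sqrt{t}\,\xi$ in the inverse Fourier integral: using that $-\xi_{i}\xi_{j}/|\xi|^{2}$ is homogeneous of degree zero one obtains
\begin{equation*}
  O_{i,j}(t,x) = t^{-n/2}\, O_{i,j}\bigl(1, x/\sqrt{t}\bigr).
\end{equation*}
Thanks to this scaling, the required decay $(1+|x|)^{n+|\eta|}\partial^{\eta}O_{i,j} \in L^{\infty}$ reduces to the unit-time case, i.e.\ to proving $|\partial^{\eta}O_{i,j}(1,x)|\lesssim (1+|x|)^{-n-|\eta|}$ (boundedness near the origin being already covered by Step 1).

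For the decay at $t=1$ I would use the representation $O_{i,j}(1,\cdot) = -R_{i}R_{j}G$, where $G(x)=(4\pi)^{-n/2}e^{-|x|^{2}/4}$ is the unit-time heat kernel and $R_{i}=\partial_{i}(-\Delta)^{-1/2}$ are the Riesz transforms; equivalently, $O_{i,j}(1,x) = -\partial_{i}\partial_{j}V(x)$ where $-\Delta V = G$. For $n\ge 3$ one has the explicit formula $V(x) = c_{n}(|\cdot|^{-(n-2)}*G)(x)$. Since $G\in\mathcal{S}(\Rn)$, a standard asymptotic expansion for $|x|\to\infty$ gives $V(x) = c|x|^{-(n-2)}+O(|x|^{-n})$, and differentiating $\alpha$ times produces decay of order $|x|^{-(n-2+|\alpha|)}$; applied with $\alpha=\eta+(e_{i}+e_{j})$ this yields $|\partial^{\eta}O_{i,j}(1,x)|\lesssim |x|^{-n-|\eta|}$ at infinity. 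The case $n=2$ is handled in the same way via the logarithmic potential.

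The only non-routine step is the last one, namely the asymptotic control of the Newton-potential convolution uniformly in the order of differentiation, making sure that each additional derivative buys exactly one more power of $|x|^{-1}$. A coordinate-free alternative is to cut the multiplier as $m_{1}=\chi m_{1}+(1-\chi)m_{1}$ where $\chi$ is a smooth cutoff near $\xi=0$: the high-frequency summand is Schwartz and its inverse Fourier transform is Schwartz, while the low-frequency summand is treated by integration by parts in $\xi$, exploiting that derivatives of $\xi_{i}\xi_{j}/|\xi|^{2}$ of order $k$ are homogeneous of degree $-k$, hence locally integrable near the origin exactly in the range $k<n$ which is what the decay exponent $n+|\eta|$ demands.
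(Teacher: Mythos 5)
The paper itself gives no proof of this lemma: it is quoted verbatim from Lemari\'e-Rieusset \cite{Lem}, and the author states explicitly at the start of the section that proofs of these classical facts are omitted. So there is no in-text argument to compare against; your proposal has to stand on its own, and it essentially does. The Fourier-side setup, the scaling identity $m_{t}(\xi)=m_{1}(\sqrt{t}\,\xi)$ reducing everything to $t=1$, and the representation $O_{i,j}(1,\cdot)=c\,\partial_{i}\partial_{j}(E*G)$ with $E$ the Newtonian potential are all correct, and the moment expansion does deliver the sharp rate: since $\int\partial^{\alpha}G=0$ for $|\alpha|\ge1$, the first surviving term in the expansion of $E*\partial^{\alpha}G$ is $c_{\alpha}\,\partial^{\alpha}E(x)$ with $\alpha=\eta+e_{i}+e_{j}$, which is homogeneous of degree $-(n-2)-|\eta|-2=-n-|\eta|$; this is exactly why each derivative buys one power and why the decay cannot be improved beyond $|x|^{-n}$ (the multiplier is genuinely singular at $\xi=0$). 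One caution on your ``coordinate-free alternative'': with $|\gamma|=n+|\eta|$ integrations by parts, $\partial^{\gamma}\bigl(\xi^{\eta}\xi_{i}\xi_{j}|\xi|^{-2}\chi\bigr)$ has homogeneity exactly $-n$ near the origin, which is \emph{not} locally integrable, so the naive $x^{\gamma}K=c\,\mathcal{F}^{-1}[\partial^{\gamma}(\cdots)]$ argument only reaches order $n+|\eta|-1$ (or loses a logarithm at the critical order); to recover the clean power $|x|^{-n-|\eta|}$ by this route you need a dyadic decomposition of the multiplier near $\xi=0$ and summation of the pieces. Since you offer that only as an alternative and your primary argument is sound, this is a caveat rather than a gap.
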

\ni This is the main technical tool we need in order to study the properties of 
$ e^{t\Delta}\mathbb{P}(\nabla \ \cdot \ ) $ that acts on the tensor $u\otimes u$ through
$$
(e^{t\Delta} \mathbb{P}(\nabla \cdot (u \otimes u)))_{i}=
e^{t\Delta} \partial_{j}(u \otimes u)_{i,j} 
- e^{t\Delta}\frac{1}{\Delta} \partial_{i}\partial_{j}\partial_{k} (u \otimes u)_{j,k}.
$$ 
It holds the following:
\begin{proposition}[\cite{Lem}]\label{OseenDecayFinale}
Let $1\leq i,j,k \leq n$. The operator $e^{t\Delta} \mathbb{P} (\nabla \ \cdot \ )$ is a convolution operator $K_{i,j,k}(t)*f_{j,k}$, with $K_{i,j,k}(t) \in (C^{\infty}(\Rn))^{n\times n}$, the homogeneity:
$$
K_{i,j,k}(t) = \frac{1}{t^{\frac{n+1}{2}}}K_{i,j,k}\left( \frac{x}{\sqrt{t}} \right),
$$
and the decay:
$$
(1+|x|)^{n+ 1 +|\eta|}\partial^{\eta}K_{i,j,k} \in (L^{\infty}(\Rn))^{n\times n},
$$
for each multi-index $\eta$.
\end{proposition}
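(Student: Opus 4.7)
The plan is to split the operator into its two natural pieces coming from the definition of $\mathbb{P}$, and to identify each piece as a convolution against a kernel already understood: the heat kernel for the first, and the Oseen kernel of Lemma \ref{OseenKernelTheorem} for the second.

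First I would expand the operator componentwise. Using $(\mathbb{P}g)_{i}=g_{i}-\partial_{i}\partial_{l}\Delta^{-1}g_{l}$ applied to $g=\nabla\cdot f$ gives
\[
  \bigl(e^{t\Delta}\mathbb{P}\nabla\cdot f\bigr)_{i}
  = e^{t\Delta}\partial_{j}f_{i,j}
  - e^{t\Delta}\frac{\partial_{i}\partial_{j}\partial_{k}}{\Delta}f_{j,k}.
\]
The first summand equals $\partial_{j}G(t)*f_{i,j}$, where $G(t,x)=(4\pi t)^{-n/2}e^{-|x|^{2}/4t}$ is the heat kernel, and the second equals $\partial_{k}O_{i,j}(t)*f_{j,k}$ by direct application of Lemma \ref{OseenKernelTheorem}. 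Rewriting the first term as $\delta_{ij}\partial_{k}G(t)*f_{j,k}$ (to match the index layout $K_{i,j,k}*f_{j,k}$), the natural candidate for the kernel is
\[
  K_{i,j,k}(t,x) \;=\; \delta_{ij}\,\partial_{k}G(t,x)\;-\;\partial_{k}O_{i,j}(t,x).
\]

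I would then verify the three properties in turn. Smoothness is immediate, since $G(t,\cdot)$ is $C^{\infty}$ for $t>0$ and so is $O_{i,j}(t,\cdot)$ by the previous lemma. For the homogeneity, differentiating the parabolic self-similarity $G(t,x)=t^{-n/2}G(1,x/\sqrt{t})$ and the analogous scaling of $O_{i,j}$ given by Lemma \ref{OseenKernelTheorem} shows that each additional spatial derivative pulls out a factor $t^{-1/2}$, so that
\[
  K_{i,j,k}(t,x)\;=\;t^{-\frac{n+1}{2}}\,K_{i,j,k}\bigl(1,\tfrac{x}{\sqrt{t}}\bigr).
\]
For the decay, the Gaussian piece is harmless since every $\partial^{\eta}G$ decays faster than any polynomial in $x$. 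For the Oseen piece I would apply the bound $(1+|x|)^{n+|\mu|}\partial^{\mu}O_{i,j}\in L^{\infty}$ of Lemma \ref{OseenKernelTheorem} with the multi-index $\mu=\eta+e_{k}$, which immediately gives $(1+|x|)^{n+1+|\eta|}\,\partial^{\eta}(\partial_{k}O_{i,j})\in L^{\infty}$, exactly as required.

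There is no genuine obstacle here: all of the analytic work has already been absorbed into Lemma \ref{OseenKernelTheorem}, and the present proposition is essentially a bookkeeping statement recording that taking one further spatial derivative costs an extra factor $t^{-1/2}$ in the scaling and one extra power of decay in $|x|$. The only mild care needed is the index relabelling that turns $e^{t\Delta}\partial_{j}f_{i,j}$ into a $K_{i,j,k}*f_{j,k}$ convolution via the $\delta_{ij}$ trick above.
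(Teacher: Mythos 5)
Your argument is correct. Note that the paper itself does not prove this proposition --- it is quoted from \cite{Lem} --- but the displayed identity for $(e^{t\Delta}\mathbb{P}(\nabla\cdot(u\otimes u)))_{i}$ immediately preceding the statement is exactly your decomposition, so your reduction to the heat kernel plus Lemma \ref{OseenKernelTheorem} (taking $\mu=\eta+e_{k}$ for the decay and observing that the extra derivative costs a factor $t^{-1/2}$ in the self-similar scaling) is precisely the intended derivation, with the caveat that all the genuine analytic content remains in Lemma \ref{OseenKernelTheorem}, which the paper likewise cites without proof.
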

\ni We finish by stating an useful equivalence result:
\begin{theorem}[\cite{Lem}]\label{DiffvsInt}
Let $u \in \cap_{s<T} \left( L^{2}_{t}L^{2}_{uloc,x}(0,s) \times \Rn \right)$. Then the following are equivalent:
\begin{enumerate}
\item $u$ is a weak solution of:
\begin{equation}
\left \{
\begin{array}{rcccl}
\partial_{t}u + \mathbb{P} \nabla \cdot (u \otimes u)  & = & \Delta u & \quad \mbox{in}& \quad [0,T)\times \mathbb{R}^{n}  \\
\nabla \cdot u & = & 0 & \quad \mbox{in} & \quad [0,T)\times \mathbb{R}^{n} \\
u & = & u_{0} & \quad \mbox{in}& \quad \{0\} \times \mathbb{R}^{n}. 
\end{array}\right.
\end{equation}
\item $u$ solves the integral problem:
\begin{equation}
\left \{
\begin{array}{rclcl}
 u & = & e^{t\Delta}u_{0} + \int_{0}^{t}e^{(t-s)\Delta}\mathbb{P} \nabla \cdot (u \otimes u) ds &\quad \mbox{in} \quad & [0,T)\times \mathbb{R}^{n}  \\
\nabla \cdot u & = & 0 &\quad \mbox{in} \quad & [0,T)\times \mathbb{R}^{n}. 
\end{array}\right.
\end{equation}
\end{enumerate}
\end{theorem}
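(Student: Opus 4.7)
The plan is to verify the equivalence by realizing the integral formulation as Duhamel's principle for the forced heat equation $\partial_{t}u-\Delta u = -\mathbb{P}\nabla\cdot(u\otimes u)$, and then using the pointwise decay estimates of the Oseen kernel from Proposition \ref{OseenDecayFinale} to make sense of the convolutions in the low-integrability class $L^{2}_{t}L^{2}_{uloc,x}$. Before either direction, I would first verify that for a.e.\ $s \in (0,T)$ one has $(u\otimes u)(s,\cdot)\in L^{1}_{uloc}(\mathbb{R}^{n})$ by Cauchy--Schwarz on unit cubes, so that $\mathbb{P}\nabla\cdot(u\otimes u)(s)$ is well-defined by Lemma \ref{OseenDef}. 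The bound $(1+|x|)^{n+1}|K_{i,j,k}(t,\cdot)|\in L^{\infty}$ in Proposition \ref{OseenDecayFinale} then shows that $e^{(t-s)\Delta}\mathbb{P}\nabla\cdot(u\otimes u)(s)$ is a locally integrable function of $(s,x)$ on $(0,t)\times\mathbb{R}^{n}$, so the Duhamel integral in (2) is absolutely convergent in $L^{1}_{loc}$.

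For the implication (1)$\Rightarrow$(2), I would set
\begin{equation*}
v(t,x)=e^{t\Delta}u_{0}(x)+\int_{0}^{t}e^{(t-s)\Delta}\mathbb{P}\nabla\cdot(u\otimes u)(s,x)\,ds
\end{equation*}
and show that the difference $w=u-v$ is a distributional solution of the homogeneous heat equation with zero initial datum in the class $L^{2}_{t}L^{2}_{uloc,x}$. Concretely, one tests (1) against $\varphi(x)\chi(t)$ with $\varphi\in C^{\infty}_{c}(\mathbb{R}^{n})$ and $\chi\in C^{\infty}_{c}([0,T))$, transfers the derivatives onto the heat kernel, and uses the representation of $\mathbb{P}\nabla\cdot$ as convolution with the kernel decomposed in Lemma \ref{OseenDef} to justify Fubini on the double integral in $(s,t)$. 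Uniqueness for the heat equation in $L^{2}_{uloc}$ (which follows from the Gaussian bounds on the heat kernel and the mild growth of $L^{2}_{uloc}$ functions) then gives $w\equiv 0$.

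For the implication (2)$\Rightarrow$(1), I would differentiate the integral identity distributionally in $t$. The smoothing properties of the heat semigroup give $\partial_{t}e^{t\Delta}u_{0}=\Delta e^{t\Delta}u_{0}$, while for the Duhamel term one writes it as $\int_{0}^{t}K(t-s,\cdot)*(u\otimes u)(s)\,ds$ with $K$ the Oseen kernel; using the self-similar scaling $K(t,x)=t^{-(n+1)/2}K(1,x/\sqrt{t})$ and differentiating under the integral (justified by the $(1+|x|)^{n+1+|\eta|}$ weighted bounds on $\partial^{\eta}K$) produces the boundary contribution $-\mathbb{P}\nabla\cdot(u\otimes u)(t)$ plus $\Delta$ applied to the rest. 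Summing yields $\partial_{t}u=\Delta u-\mathbb{P}\nabla\cdot(u\otimes u)$ in $\mathcal{D}'$, and the incompressibility $\nabla\cdot u=0$ is inherited from the fact that both $e^{t\Delta}u_{0}$ and the range of $\mathbb{P}$ are divergence-free.

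The main obstacle is the low integrability hypothesis $L^{2}_{uloc,x}$: solutions need not decay at infinity, so the usual arguments based on density of Schwartz functions or integration by parts without boundary terms are not directly available. The entire proof hinges on the fact, established in Lemma \ref{OseenDef} and Proposition \ref{OseenDecayFinale}, that the singular operators $\mathbb{P}\nabla\cdot$ and $e^{t\Delta}\mathbb{P}\nabla\cdot$ split into pieces acting on $WL^{\infty}$ and convolution with rapidly decaying smooth kernels; this decomposition is exactly what legitimizes Fubini and the differentiation under the integral sign in the rough $L^{1}_{uloc}$ setting, and therefore it is the technical heart of the argument.
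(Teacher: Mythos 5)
The paper offers no proof of this statement: it is quoted verbatim from \cite{Lem}, and the section explicitly announces that all proofs there are omitted and deferred to the literature. So there is nothing in the paper to compare against line by line; what you have written is essentially a reconstruction of the standard argument from \cite{Lem}, and your overall architecture is the right one: make sense of $\mathbb{P}\nabla\cdot(u\otimes u)$ on $L^{1}_{uloc}$ via Lemma \ref{OseenDef}, use the decay of the Oseen kernel in Proposition \ref{OseenDecayFinale} to get absolute convergence of the Duhamel integral, reduce $(1)\Rightarrow(2)$ to a uniqueness statement for the heat equation in $L^{2}_{t}L^{2}_{uloc,x}$, and obtain $(2)\Rightarrow(1)$ by distributional differentiation of the integral identity. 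You also correctly identify the kernel decomposition as the technical heart in the non-decaying setting.

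The one place where your sketch leans on an assertion rather than an argument is the uniqueness lemma: ``uniqueness for the heat equation in $L^{2}_{uloc}$ follows from the Gaussian bounds on the heat kernel and the mild growth of $L^{2}_{uloc}$ functions'' is not a proof, and the Tychonoff example shows that some growth restriction is genuinely doing work here. The actual argument in \cite{Lem} observes that $L^{2}_{uloc}\subset\mathcal{S}'$, so $w=u-v$ is a tempered-distribution-valued solution of $\partial_{t}w=\Delta w$ with vanishing trace; taking the spatial Fourier transform forces $\widehat{w}(t,\cdot)$ to be supported at the origin, hence $w(t,\cdot)$ is a polynomial in $x$, and the uniform local $L^{2}$ bound together with the zero initial datum kills it. This is a short but indispensable step, and it is the only point at which your proposal would need to be filled in before it could be called complete; everything else is the standard route.
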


\section{The Leray-Hopf solutions}

\ni The modern theory of Navier-Stokes equation starts with the Leray's work \cite{Ler} in which global existence of weak solutions of (\ref{CauchyNS}) for $L^{2}$ initial data is proved. Such weak solutions have also physical meaning because they respond to the energy dissipation. On the other hand the existence theorem is by compactness and neither regularity nor uniqeness have been proved in the general case. In this section we will briefly sketch the ideas of the Leay's theory. We start by definition of weak solution in the context of \cite{Ler}
 \begin{definition}[Leray's solutions]
The pair $(u,p)$ is a weak Leray solution of the Navier-Stokes system (\ref{CauchyNS}) in $[0,T) \times \mathbb{R}^{n}$ if the following holds:

\begin{enumerate}
  \item Exist some constants $E_{0}, E_{1}$ such that:
      \begin{equation} \label{EnergyBoundness}
      \int_{\mathbb{R}^{n}} |u(t, \cdot)|^{2} \ dx \leq E_{0},
      \end{equation}      
      for almost every $t \in (0,T)$, and 
      \begin{equation}\label{EnergyBoundness2}
      \int_{0}^{T}\int_{\mathbb{R}^{n}} |\nabla u |^{2} \ dxdt \leq E_{1};
      \end{equation}
   \item (u,p) satisfy (\ref{CauchyNS}) in the sense of distributions in $[0,T) \times \mathbb{R}^{n}$, that's
  \begin{equation}\label{DistribSense}
  \int_{0}^{T} \int_{\Rn} (\partial_{t} \phi + (u \cdot \nabla) \phi) u \ dxdt + 
  \int_{\Rn} u_{0} \phi (x,0) \ dx  = \int_{0}^{T} \int_{\Rn} (\nabla \phi \cdot \nabla) u \ dxdt, \end{equation}   
   for each $\phi \in C^{\infty}_{c}([0,T) \times \Rn)$ with $\nabla \cdot \phi =0$ and
   \begin{equation}\label{DistrSense2}
   \int_{0}^{T} \int_{\Rn} u \cdot \nabla \phi \ dxdt=0, \quad
     \int_{0}^{T} \int_{\Rn}  p \Delta \phi + \sum_{i,j=1}^{n} u_{i}u_{j} \partial_{i}\partial_{j} \phi =0,
   \end{equation}
for each $\phi \in C^{\infty}_{c}([0,T) \times \Rn)$.
   \item $u$ satisfy the energy inequality: 
       \begin{equation} \label{Energy} 
       \int_{\mathbb{R}^{n}} |u(t, \cdot)|^{2} + 2\int_{0}^{t}\int_{\mathbb{R}^{n}} |\nabla u |^{2} \ dxdt  \leq \int_{\mathbb{R}^{n}} |u_{0}|^{2} 
       \end{equation}
       for each $t \in (0,T)$.
\end{enumerate}
\end{definition}
\ni Condition ($\ref{Energy}$) is expression of the dissipation of kinetic energy (the first term of the sum) caused by the frictions (the second term). It can be justified by multiplication of (\ref{CauchyNS}) with $2\phi u$ and integration by parts.

\ni It is well know that Leray weak solutions are weakly continuous (see \cite{Temam}), i.e.
\begin{equation}\label{WC}
\lim_{t \rightarrow s} \int_{\mathbb{R}^{n}} u(t,x)w(x) \ dx = \int_{\mathbb{R}^{n}} u(s,x) w(x) \ dx
\end{equation}
for all $w \in L^{2}(\mathbb{R}^{n})$, 
and so, if $u_{0} \in L^{2}(\mathbb{R}^{n}) $ then
\begin{equation}\label{WCin0}
\lim_{t\rightarrow 0} \int_{\mathbb{R}^{n}} u(t,x)w(x) \ dx = \int_{\mathbb{R}^{n}} u_{0}(x) w(x) \ dx,
\end{equation}
for all $w \in L^{2}(\mathbb{R}^{n})$. This is how $u$ attend its initial datum. In \cite{Ler} is proved the existence of a weak solution $u \in \Rpiu \times \mathbb{R}^{n}$ of (\ref{CauchyNS}) for every $u_{0} \in L^{2}_{\sigma}(\mathbb{R}^{n})$. If we set the problem in $[0,T]\times \Omega$, $\Omega \subset \mathbb{R}^{n}$ open and bounded, and we require zero Dirichlet condition on $[0,T] \times \partial \Omega$, an analogous result is due to Hopf \cite{Hopf}.
Let then state the precise Leray's result:
\begin{theorem}[\cite{Ler}]\label{Leray'sTheorem}
Let $u_{0} \in L^{2}_{\sigma}(\mathbb{R}^{n})$. There exist a weak solution $u \in L^{\infty}(\mathbb{R}^{+}; L^{2}_{\sigma}(\mathbb{R}^{n})) \cap L^{2}(\mathbb{R}^{+}; \dot{H}^{1}(\mathbb{R}^{n}))$ of the Cauchy problem (\ref{CauchyNS}) in $\mathbb{R}^{+} \times \mathbb{R}^{n}$. Then $u$ weakly attend its initial datum, i.e. 
$$
\lim_{t \rightarrow 0} \int_{\Rn} (u(t,x)-u_{0,x})w(x) \ dx =0, \qquad \forall w \in L^{2}(\Rn).
$$
Moreover the energy 
inequality holds:
\begin{equation}\label{energy*}
\int_{\mathbb{R}^{n}} |u(t, \cdot)|^{2} + 2\int_{0}^{T}\int_{\mathbb{R}^{n}} |\nabla u |^{2} \ dxdt  \leq \int_{\mathbb{R}^{n}} |u_{0}|^{2}, \qquad \forall t \in \mathbb{R}^{+}. 
\end{equation}  
\end{theorem}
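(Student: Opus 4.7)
The plan is to construct $u$ by a regularization-and-compactness argument in the spirit of Leray's original work. First I would introduce an approximate family $\{u_\epsilon\}$, either by Galerkin truncation (projecting onto the span of the first $N$ eigenfunctions of the Stokes operator, adapted to $\mathbb{R}^{n}$ via a suitable basis of $L^{2}_{\sigma}$) or, more in the spirit of \cite{Ler}, by mollifying the transport term and solving
\begin{equation*}
  \partial_{t}u_{\epsilon}+((J_{\epsilon}u_{\epsilon})\cdot \nabla)u_{\epsilon}+\nabla p_{\epsilon}=\Delta u_{\epsilon},\qquad \nabla\cdot u_{\epsilon}=0,
\end{equation*}
where $J_{\epsilon}$ is a standard mollifier. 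Because the advection velocity is smoothed, the modified system is a semilinear perturbation of the heat equation whose global solvability in $L^{\infty}_{t}L^{2}_{x}\cap L^{2}_{t}\dot H^{1}_{x}$ follows by a fixed point argument (using $e^{t\Delta}$ and the Leray projector $\mathbb{P}$ as in Theorem \ref{DiffvsInt}).

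Next I would derive the key energy identity for the approximations: testing the equation against $u_{\epsilon}$ and exploiting $\nabla\cdot(J_{\epsilon}u_{\epsilon})=0$ (so that $\int ((J_{\epsilon}u_{\epsilon})\cdot\nabla)u_{\epsilon}\cdot u_{\epsilon}=0$) and $\int \nabla p_{\epsilon}\cdot u_{\epsilon}=0$, I obtain
\begin{equation*}
  \tfrac{1}{2}\tfrac{d}{dt}\|u_{\epsilon}(t)\|_{L^{2}}^{2}+\|\nabla u_{\epsilon}(t)\|_{L^{2}}^{2}=0,
\end{equation*}
giving the uniform bound $\|u_{\epsilon}\|_{L^{\infty}_{t}L^{2}}^{2}+2\|\nabla u_{\epsilon}\|_{L^{2}_{t}L^{2}}^{2}\le\|u_{0}\|_{L^{2}}^{2}$. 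From this and the equation I would also bound $\partial_{t}u_{\epsilon}$ in $L^{4/3}_{t}H^{-s}_{x,\mathrm{loc}}$ for some $s>n/2$, using that $\mathbb{P}\nabla\cdot(u_{\epsilon}\otimes u_{\epsilon})$ is controlled by $\|u_{\epsilon}\|_{L^{4}_{t}L^{4}_{x}}^{2}$, a norm available by interpolation between $L^{\infty}_{t}L^{2}_{x}$ and $L^{2}_{t}L^{2n/(n-2)}_{x}$ (Sobolev embedding applied to $\nabla u_{\epsilon}$).

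The main obstacle is passing to the limit in the quadratic term $u_{\epsilon}\otimes u_{\epsilon}$, which is not weakly continuous. Here I would invoke the Aubin--Lions lemma: the uniform bound on $u_{\epsilon}$ in $L^{2}_{t,\mathrm{loc}}H^{1}_{\mathrm{loc}}$ together with the bound on $\partial_{t}u_{\epsilon}$ in $L^{4/3}_{t,\mathrm{loc}}H^{-s}_{\mathrm{loc}}$ yields precompactness in $L^{2}_{t,\mathrm{loc}}L^{2}_{\mathrm{loc}}$. Extracting a subsequence (still denoted $u_{\epsilon}$) converging strongly in $L^{2}_{\mathrm{loc}}$ and weakly$^{*}$ in $L^{\infty}_{t}L^{2}_{x}\cap L^{2}_{t}\dot H^{1}_{x}$, the product $u_{\epsilon}\otimes u_{\epsilon}$ converges in $L^{1}_{\mathrm{loc}}$ to $u\otimes u$, which is enough to pass to the limit in the weak formulation \eqref{DistribSense}--\eqref{DistrSense2}. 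The pressure $p$ is then recovered via \eqref{PressureRie}, i.e.~$p=-\sum_{i,j}R_{i}R_{j}(u_{i}u_{j})$, where $R_{i}$ are Riesz transforms.

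Finally, the qualitative properties follow from standard arguments. Weak continuity of $t\mapsto u(t,\cdot)$ into $L^{2}_{\sigma}$, and hence attainment of $u_{0}$ in the weak sense \eqref{WC}--\eqref{WCin0}, is obtained because $u\in C_{w}([0,\infty);L^{2})$ once one knows $u\in L^{\infty}_{t}L^{2}_{x}$ and $\partial_{t}u$ lies in a negative Sobolev space (so that duality pairings $\langle u(t),w\rangle$ are continuous in $t$ for $w$ in a dense class, and then for all $w\in L^{2}$ by the uniform bound). The energy inequality \eqref{energy*} is not an equality because of the limit procedure: it follows from weak lower semicontinuity of $\|\cdot\|_{L^{2}}$ and $\|\nabla\cdot\|_{L^{2}_{t,x}}$ applied to the approximate energy equality, which only preserves $\le$ in the limit. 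This last point, together with justifying Aubin--Lions in the whole space $\mathbb{R}^{n}$ (which I would handle by a diagonal argument on an exhaustion by balls), is the technical heart of the proof.
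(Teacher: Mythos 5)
Your proposal follows essentially the same route as the paper's (sketched) proof: regularize by mollifying the advecting velocity, solve the regularized system globally via Picard iteration on the integral formulation, derive the uniform energy bound by testing against $u_{\epsilon}$, and recover $u$ by compactness, with the energy inequality surviving the limit by weak lower semicontinuity. You correctly fill in the compactness and limit-passage details (time-derivative bounds, Aubin--Lions, weak $L^{2}$-continuity) that the paper explicitly omits; the only small slip is that $L^{4}_{t}L^{4}_{x}$ is not in the energy class for $n\ge3$ (one should use, e.g., $L^{8/3}_{t}L^{4}_{x}$ when $n=3$), which does not affect the argument.
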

\begin{remark}
The choice $u_{0}\in L^{2}_{\sigma}$ is of course the more natural and phisically relevant for the problem. In such a way all initial data with bounded energy are covered. Anyway, as noticed, this generality leads to a poor wellposendess theory in which neither uniqeness nor persistence of regularity are guaranteed. Instead, as will seen in the next section, well posedness can be achieved if we restrict to small (in a suitable sense) initial data.
\end{remark}
\begin{proof}
We just sketched the proof. Details can be found widely in literature. A possible way to get the Leray's theorem is consider the family of regularized systems:
\begin{equation}\label{CauchyNSepsilon}
\left \{
\begin{array}{rcccl}
\partial_{t}u^{\varepsilon} + (u^{\varepsilon}*\rho^{\varepsilon} \cdot \nabla) u^{\varepsilon} +\nabla p  & = & \Delta u^{\varepsilon} & \quad \mbox{in}& \quad \Rpiu \times \mathbb{R}^{n}  \\
\nabla \cdot u^{\varepsilon} & = & 0 & \quad \mbox{in} & \quad \Rpiu \times \mathbb{R}^{n} \\
u^{\varepsilon} & = & u_{0} & \quad \mbox{in}& \quad \{0\} \times \mathbb{R}^{n}, 
\end{array}\right.
\end{equation}
where $\rho^{\varepsilon}$ is a standard mollifier of size $\varepsilon$, that's:
$$
\rho \in C^{\infty}_{c}(\Rn), \qquad \rho^{\varepsilon}=\frac{1}{\varepsilon^{n}}\rho(\frac{x}{\varepsilon}).
$$ 
Now if $u_{0}\in L^{2}_{\sigma}(\Rn)$ exists for each $\varepsilon$ a unique global (and smooth in space) solution $u^{\varepsilon}$ of problem (\ref{CauchyNSepsilon}). Furthermore the functions $u^{\varepsilon}$ satisfies the energy inequality:
\begin{equation}\label{EnergYBoundEpsilon}
\int_{\mathbb{R}^{n}} |u^{\varepsilon}(t, \cdot)|^{2} + 2 \int_{0}^{t}\int_{\mathbb{R}^{n}} |\nabla u^{\varepsilon} |^{2} \ dxdt  \leq \int_{\mathbb{R}^{n}} |u_{0}|^{2}, \quad \forall t > 0,  
\end{equation}
uniformly in $\varepsilon$. This follows simply by taking the scalar product of equation (\ref{CauchyNSepsilon}) with $u$ and by integrating by parts\footnote{By integration by parts we get $\int_{0}^{t}\int_{\Rn} \sum_{i,j=1}^{n} (u^{\varepsilon}*\rho^{\varepsilon})  (\partial_{j} u_{i}^{\varepsilon}) u^{\varepsilon}_{i}, \int_{0}^{t}\int_{\Rn}  \sum_{i=1}^{n}(\partial_{i} p)  u_{i}^{\varepsilon} =0$.}:
\begin{eqnarray}\nonumber
0 &=&\int_{0}^{t}\int_{\Rn} \partial_{t}u^{\varepsilon}\cdot u^{\varepsilon} + (u_{j}^{\varepsilon}*\rho^{\varepsilon} \cdot \nabla) u^{\varepsilon} \cdot u^{\varepsilon} +\nabla p \cdot u^{\varepsilon} -\Delta u^{\varepsilon} \cdot u^{\varepsilon} \ dxdt \\ \nonumber
&=& \int_{0}^{t}\int_{\Rn} \sum_{i=1}^{n}(\partial_{t}u_{i}^{\varepsilon}) u_{i}^{\varepsilon} +
\sum_{i,j=1}^{n} (u^{\varepsilon}*\rho^{\varepsilon})  (\partial_{j} u_{i}^{\varepsilon}) u^{\varepsilon}_{i} \\ \nonumber
&+& \sum_{i=1}^{n}(\partial_{i} p)  u_{i}^{\varepsilon} -(\sum_{i,j=1}^{n}(\partial_{j}\partial_{j} u_{i}^{\varepsilon})  u_{i}^{\varepsilon} \ dxdt \\ \nonumber
&=& \int_{0}^{t}\int_{\Rn} \frac{1}{2}\partial_{t}(\sum_{i=1}^{n} u_{i}^{\varepsilon})^{2} \ dxdt + \int_{0}^{t}\int_{\Rn} 
\sum_{i,j=1}^{n}(\partial_{j} u_{i})^{2} \ dxdt \\ \nonumber
&=& \int_{0}^{t}\int_{\Rn} \frac{1}{2}|u^{\varepsilon}|^{2} \ dxdt + \int_{0}^{t}\int_{\Rn} 
|\nabla u|^{2} \ dxdt. \nonumber
\end{eqnarray}
\ni Equation (\ref{EnergYBoundEpsilon}) allows to recover $u$ by compactness from the sequence $u^{\varepsilon}$. The solutions $u^{\varepsilon}$ are built up as a Picard sequences related to the integral formulation of (\ref{CauchyNSepsilon}), i.e.
\begin{equation}\label{CauchyNSIntEpsilon}\nonumber
\left \{
\begin{array}{rclcl}
 u^{\varepsilon} & = & e^{t\Delta}u_{0} + \int_{0}^{t}e^{(t-s)\Delta}\mathbb{P} \nabla \cdot ((u^{\varepsilon}*\rho^{\varepsilon}) \otimes u^{\varepsilon}) ds &\quad \mbox{in} \quad & \Rpiu \times \mathbb{R}^{n}  \\
\nabla \cdot u^{\varepsilon} & = & 0 &\quad \mbox{in} \quad & \Rpiu \times \mathbb{R}^{n}. 
\end{array}\right.
\end{equation}
So $u^{\varepsilon}$ is the limit of:
\begin{equation}\label{PicardForLerayEpsilon}
\begin{array}{rcl}
u^{\varepsilon}_{1} & = &e^{t\Delta}u_{0} \\
u^{\varepsilon}_{2} & = & e^{t \Delta}u_{0} + \int_{0}^{t}e^{(t-s)\Delta}\mathbb{P}\nabla \cdot ((u^{\varepsilon}_{1}*\rho^{\varepsilon}) \otimes u^{\varepsilon}_{1})(s)ds  \\
u^{\varepsilon}_{n} &=& e^{t\Delta}u_{0} + \int_{0}^{t}e^{(t-s)\Delta}\mathbb{P}\nabla \cdot ((u^{\varepsilon}_{n-1}*\rho^{\varepsilon}) \otimes u^{\varepsilon}_{n-1})(s)ds.
 \end{array}
\end{equation}
\end{proof}
\ni Notice finally that     
$$
u \in \cap_{s<T} \left( L^{2}_{t}L^{2}_{uloc,x}(0,s) \times \Rn \right), \quad \forall t \in \Rpiu,
$$
because of (\ref{energy*}); so by theorem \ref{DiffvsInt} we have the integral representation:
$$
u  =  e^{t\Delta}u_{0} + \int_{0}^{t}e^{(t-s)\Delta}\mathbb{P} \nabla \cdot (u \otimes u) ds.
$$

\section{Regularity criteria}

\ni As seen in theorem (\ref{Leray'sTheorem}) weak global solutions of problem (\ref{CauchyNS}) are always available for initial data $u_{0}$ with bounded energy ($u_{0} \in L^{2}_{\sigma}(\Rn)$). It is not known, on the other hand, if such solutions are unique, neither if they preserve the regularity of $u_{0}$. In this section we focus on partial regularity criterions. The philosophy of such a criterions is that we can infer about the uniqeness or regularity of weak solutions by the knowledge of some a priori information on the solution itself. In general a partial regularity criterion is an assertion of the following type:

\

\ni {\em Let $u$ a Leray's solution of (\ref{CauchyNS}). Let furthermore assume some additional a priori properties (tipically these are boundedness conditions) about $u$, then $u$ is the unique Leray solution of (\ref{CauchyNS}). Furthermore $u$ is $C^{\infty}$ in the space variables at each time $t>0$.}

\

\ni The first result of this kind goes back to Serrin \cite{Ser}. The author consider a little different setting. He works with weak solutions in open regions $\Omega \subseteq \mathbb{R}^{+} \times \Rn$, that is a couple $(u,p)$ $u$ such that:
\begin{equation}\label{WeakSolutionsInOmega}
  \int \int (\partial_{t} \phi + (u \cdot \nabla) \phi) u -  (\nabla \phi \cdot \nabla) u = 0, \end{equation}   
   for each $\phi \in C^{\infty}_{c}(\Omega)$ with $\nabla \cdot \phi =0$ and
   \begin{equation}\label{WeakSolutionsInOmegaDiv}
   \int \int u \cdot \nabla \phi = 0, \quad
     \int \int  p \Delta \phi + \sum_{i,j=1}^{n} u_{i}u_{j} \partial_{i}\partial_{j} \phi =0,
   \end{equation}
for each $\phi \in C^{\infty}_{c}(\Omega)$.
Conditions (\ref{WeakSolutionsInOmega}, \ref{WeakSolutionsInOmegaDiv}) are formally justified by taking the scalar product of the equations with the vector field $\phi$ and then by integrating by parts. 
\begin{remark}
Let $\Omega = (0,T) \times \Omega'$ where $\Omega'$ is an open subset of $\Rn$. Let then $\Psi$ a scalar function such that:
$$
\Delta \Psi =0 \quad \mbox{in} \quad \mathbb{R}^{n},
$$
and $a:(0,T)\rightarrow \mathbb{R}$ is integrable in $(0,T)$. Then it's easy to check that:
$$
u(t,x)= a(t)\Psi(x)
$$
is a weak solutions of (\ref{WeakSolutionsInOmega}).
\end{remark}
\ni The remark shows that regularity in space and time have to be analyzed in different ways\footnote{This is physically expectable by the incompressibility of fluid. A little change of fluid velocity localized in space causes instantly a global change of fluid velocity, so the time derivatives of the velocity are expected to be more singular.}. While it is reasonable that little a priori assumptions on $u$ are sufficient to get space regularity, stronger assumptions should be necessary in order to get time regularity. In this spirit the following holds:
\begin{theorem}[\cite{Ser}, \cite{Struwe}, \cite{Sohr}]\label{SerrinRegularity}
Let $u$ a weak solution of the Navier-Stokes equation in the open space-time region $\Omega$. Then define:
$$
\Omega_{t}=\{ t \} \times \Rn \cap \Omega.
$$
If $u \in L^{\infty}_{t}L^{2}_{\Omega_{t}}$, $\nabla u \in L^{2}_{t}L^{2}_{\Omega_{t}}$ and furthermore:
\begin{equation}\label{SerrinScaling}
\int_{0}^{+\infty} \| u \|^{s}_{L^{p}_{x}(\Omega_{t})}(t) \ dt < +\infty, \quad \frac{2}{s} + \frac{n}{p} \leq 1;
\end{equation}
then $u \in C^{\infty}$ in the space variables at each times $t$ such that $\Omega_{t} \neq 0$. Assume in addition that:
\begin{equation}\label{SerrinTimeDerivatives}
\partial_{t} u \in L^{p}_{t}L^{2}_{\Omega_{t}}, \qquad p \geq 1.
\end{equation}
Then $\partial_{x_{i}}u(t,x)$ are absolute continuous in time and exists a differentiable function $p(t,x)$ such that:
$$
\partial_{t} u - \Delta u + ( u \cdot \nabla ) u = - \nabla p
$$
almost everywhere in $\Omega$.
\end{theorem}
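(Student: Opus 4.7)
The plan is a bootstrap argument based on energy estimates, with a preliminary localization step. First I would pick $\phi\in C^\infty_c(\Omega)$ equal to $1$ on a slightly smaller subcylinder $\Omega'\subset\subset\Omega$ and study $v=\phi u$, which satisfies a perturbed Navier--Stokes system on $(0,T)\times\mathbb{R}^n$ with extra terms supported on $\mathrm{supp}\,\nabla\phi$ and controlled by the global energy bounds $u\in L^\infty_tL^2\cap L^2_t\dot H^1$. This reduces the local statement to a global one in which the Serrin hypothesis is inherited by $v$.

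The core step is a higher-order energy estimate: test the $v$-equation (rigorously via difference quotients, preserving the divergence-free constraint) against $-\Delta v$. Integration by parts yields, modulo controlled remainders,
\begin{equation*}
\frac{1}{2}\frac{d}{dt}\|\nabla v\|_{L^2}^2+\|\Delta v\|_{L^2}^2
\;\le\;\Bigl|\int (v\cdot\nabla)v\cdot\Delta v\,dx\Bigr|+\mathcal{R}.
\end{equation*}
The nonlinear term is bounded by H\"older as $\|v\|_{L^p}\|\nabla v\|_{L^{2p/(p-2)}}\|\Delta v\|_{L^2}$, then Gagliardo--Nirenberg gives $\|\nabla v\|_{L^{2p/(p-2)}}\lesssim\|\Delta v\|_{L^2}^{n/p}\|\nabla v\|_{L^2}^{1-n/p}$, and Young's inequality with exponents matched to the Serrin relation $\tfrac{2}{s}+\tfrac{n}{p}=1$ produces $\eta\|\Delta v\|_{L^2}^2$ (absorbed on the left) plus $C\|v\|_{L^p}^s\|\nabla v\|_{L^2}^2$. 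Gronwall, using the hypothesis $\int\|u\|_{L^p}^s\,dt<\infty$, then yields $v\in L^\infty_tH^1\cap L^2_tH^2$.

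The bootstrap now proceeds by differentiating the equation in space: once $u\in L^\infty_{t,\mathrm{loc}}H^1$, Sobolev embedding upgrades $u$ to exponents well beyond the Serrin threshold, so the above estimate can be iterated on $D^\alpha v$ (now legitimately, since $u$ is smooth enough to justify all manipulations), producing $u\in L^\infty_{t,\mathrm{loc}}H^k$ for every $k$, hence $u\in C^\infty$ in the spatial variables on each $\Omega_t$. For the last assertion, with the additional hypothesis $\partial_tu\in L^p_tL^2$ one recovers the pressure from the Poisson equation $-\Delta p=\sum_{i,j}\partial_i\partial_j(u_iu_j)$ (well defined by the spatial smoothness just obtained), and every term of the momentum equation has a pointwise a.e.\ meaning, giving the stated identity.

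The main obstacle is the critical endpoint $\tfrac{2}{s}+\tfrac{n}{p}=1$: the Gronwall exponent $\int_0^T\|u\|_{L^p}^s\,dt$ may be large, so to close the estimate I would exploit absolute continuity of the integral to split $(0,T)$ into finitely many subintervals on which $\int\|u\|_{L^p}^s\,dt<\varepsilon_0$, with $\varepsilon_0$ small enough that the nonlinear coefficient is absorbed by the parabolic term, and iterate across subintervals. A secondary technical point, requiring the difference-quotient formalism, is the rigorous justification of testing the weak formulation against $-\Delta u$ while preserving incompressibility.
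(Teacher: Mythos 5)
The paper does not actually prove this theorem: it is quoted verbatim from Serrin, Struwe and Sohr, so your proposal can only be measured against the classical proofs in those references. Your a priori estimate chain (H\"older, then Gagliardo--Nirenberg on $\|\nabla v\|_{L^{2p/(p-2)}}$, then Young matched to $\tfrac2s+\tfrac np=1$, then Gronwall, with the absolute-continuity splitting of $(0,T)$ to handle the critical exponent) is the standard and correct argument for the \emph{global} Cauchy-problem version of the Serrin criterion, where $u$ is divergence-free on all of $\Rn$ and the pressure term $\int\nabla p\cdot\Delta u\,dx$ vanishes by incompressibility.

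For the \emph{interior} statement of the theorem, however, there is a genuine gap, and it is exactly the point the classical proofs are organized around: the pressure. Once you localize, $v=\phi u$ is no longer divergence-free, the test field $-\Delta v$ is not admissible in the weak formulation (\ref{WeakSolutionsInOmega}) (which only allows divergence-free $\phi$), and the term $\int\nabla p\cdot\Delta(\phi u)\,dx$ does not vanish. The hypotheses give you no control on $p$: it is only constrained by the distributional relation $-\Delta p=\sum\partial_i\partial_j(u_iu_j)$ on $\Omega$, which determines it only up to harmonic functions and comes with no integrability, so it is \emph{not} one of the ``remainders supported on $\mathrm{supp}\,\nabla\phi$ controlled by the energy bounds.'' This is why Serrin does not run an energy estimate on $u$ at all: he takes the curl, which kills $\nabla p$, and treats the vorticity equation as a linear parabolic equation whose drift lies in the Serrin class, obtaining interior boundedness and then smoothness of $\omega$ by a representation-formula/iteration argument, recovering $u$ locally from $\omega$ plus a harmonic correction; Struwe and Sohr handle the endpoint $\tfrac2s+\tfrac np=1$ with local energy and Moser-type iterations that track the pressure explicitly. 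A second, related gap is rigor: for a merely weak interior solution you cannot justify testing against $-\Delta v$ by difference quotients plus weak--strong uniqueness, because on an open region $\Omega$ there is no initial-boundary data with which to compare $u$ to a constructed strong solution. Your scheme would close if you first passed to the vorticity formulation (or assumed $p\in L^{(n+2)/2}_{loc}$ as in the suitable-solution framework), but as written the pressure term is unaccounted for and the argument does not go through.
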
  
\ni Actually Serrin has proved the theorem just in the case $\frac{2}{s} + \frac{n}{p} < 1$. The endpoint case has been fixed in \cite{Sohr}, \cite{Struwe}.
The critical relation $\frac{2}{s} + \frac{n}{p}=1$ follows by requiring $L^{s}_{t}L^{p}_{x}$ invariance under the scaling:
$$
\lambda \rightarrow \lambda u (\lambda^{2}t, \lambda x).
$$  
The regularity and the uniqeness of weak solutions are strictly related. A good example is the fact that under condition (\ref{SerrinScaling}) in $(0,T)\times \Rn$ also uniqeness is easily achieved. This is again due to Serrin (see \cite{Lem}).
\begin{lemma}[J.Serrin]\label{SerrinUniqness}
Let $u_{0} \in L^{2}_{\sigma}(\Rn)$ and $u$ a Leray solution of (\ref{CauchyNS}). If furthermore
\begin{equation}
\int_{0}^{T} \| u \|^{s}_{L^{p}_{x}(\Rn)}(t) \ dt < +\infty, \quad p \in (n,+\infty], \quad
\frac{2}{s} + \frac{n}{p} =1,
\end{equation}
then $u$ is unique in $[0,T)$.
\end{lemma}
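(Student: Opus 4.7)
Let $u$ and $v$ be two Leray solutions on $[0,T)\times\mathbb{R}^{n}$ sharing the same datum $u_{0}$, and assume that $u$ satisfies the Serrin bound $\int_{0}^{T}\|u(t)\|_{L^{p}_{x}}^{s}\,dt<+\infty$ with $2/s+n/p=1$ and $p\in(n,+\infty]$. The plan is to derive a differential inequality for $w=u-v$ in $L^{2}$ and close the argument with Gronwall.

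The main preparatory step is to produce the energy inequality
\begin{equation*}
   \tfrac{1}{2}\|w(t)\|_{L^{2}}^{2}+\int_{0}^{t}\|\nabla w\|_{L^{2}}^{2}\,ds
   \le -\int_{0}^{t}\!\!\int_{\mathbb{R}^{n}}(w\cdot\nabla)u\cdot w\,dx\,ds
\end{equation*}
for a.e.\ $t\in[0,T)$. Since weak solutions only satisfy an energy \emph{inequality}, one cannot simply take the difference of equations and test against $w$; the standard device is to combine the energy inequalities for $u$ and for $v$ with the mixed identity obtained by inserting $v$ (resp.\ $u$) as a test function in the weak formulation for $u$ (resp.\ $v$). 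The Serrin assumption on $u$ guarantees enough regularity (in particular $u\in L^{s}_{t}L^{p}_{x}$ with $s\ge2$) to legitimate this mutual testing through the usual Friedrichs mollification/time regularization of $u$. This is the most delicate step, as it requires a careful handling of the time-continuity of $\int u\cdot v\,dx$ and of the cancellation of the pressure term (which uses $\nabla\cdot w=0$).

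Once the displayed inequality is in place, one integrates by parts (using $\nabla\cdot w=0$) to rewrite the right hand side as $\int (w\cdot\nabla)w\cdot u\,dx$ and estimates
\begin{equation*}
   \Big|\int(w\cdot\nabla)w\cdot u\,dx\Big|
   \le \|u\|_{L^{p}}\|\nabla w\|_{L^{2}}\|w\|_{L^{\frac{2p}{p-2}}}.
\end{equation*}
The Gagliardo--Nirenberg interpolation
$\|w\|_{L^{2p/(p-2)}}\lesssim \|w\|_{L^{2}}^{1-n/p}\|\nabla w\|_{L^{2}}^{n/p}$,
available since $p>n$, and Young's inequality with exponent $\tfrac{2p}{p+n}$ on the factor $\|\nabla w\|_{L^{2}}^{1+n/p}$ absorb half of $\|\nabla w\|_{L^{2}}^{2}$ into the left hand side and leave, after the arithmetic $(1-n/p)\cdot\frac{2p}{p-n}=2$ and $\frac{2p}{p-n}=s$,
\begin{equation*}
   \|w(t)\|_{L^{2}}^{2}
   \le C\int_{0}^{t}\|u(s)\|_{L^{p}}^{s}\,\|w(s)\|_{L^{2}}^{2}\,ds.
\end{equation*}

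Finally, since $\|u\|_{L^{p}}^{s}\in L^{1}(0,T)$ by hypothesis and $w(0)=0$, Gronwall's lemma forces $w\equiv0$ on $[0,T)$, proving $u=v$. The endpoint $p=+\infty$ ($s=2$) is handled in the same way, the interpolation step being replaced by the direct bound $\|u\|_{L^{\infty}}\|w\|_{L^{2}}\|\nabla w\|_{L^{2}}\le\varepsilon\|\nabla w\|_{L^{2}}^{2}+C\|u\|_{L^{\infty}}^{2}\|w\|_{L^{2}}^{2}$. The excluded endpoint $p=n$ is precisely where Young's inequality fails, as $1+n/p=2$, confirming the sharpness of the range.
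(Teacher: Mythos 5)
The paper states this lemma without proof, simply citing Serrin and Lemari\'e-Rieusset's book, so there is no in-paper argument to compare against; your sketch is precisely the standard weak--strong uniqueness proof found in those references (energy inequality for the difference via mutual testing, H\"older--Gagliardo--Nirenberg--Young on the trilinear term with the exponent arithmetic $(1-n/p)\cdot\frac{2p}{p-n}=2$ and $\frac{2p}{p-n}=s$, then Gronwall). The computation checks out, and you correctly flag the genuinely delicate point --- justifying the cross-term identity by mollification using the $L^{s}_{t}L^{p}_{x}$ regularity of $u$ --- so the proposal is correct and essentially the canonical argument.
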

\ni Another fundamental regularity result is due to Caffarelli, Kohn and Nirenberg \cite{CKN}. We will again state it without the proof, that's quite difficult, and can be found in the original paper or, in a simplified version, in \cite{Lin}. The authors give a local regularity criterion for system (\ref{CauchyNS}). A local criterion is an assertion of the following type:

\

\ni {\em Let $u$ a Leray solution of (\ref{CauchyNS}) and $(s,y)$ a fixed point in the space-time $\mathbb{R}^{+}\times \Rn$. If $u$ satisfies some a priori boundedness condition near (in a sense to be specified) the point $(s,y)$, then $u$ is $C^{\infty}$ in the space variables in the point $(s,y)$.}

\
  
\ni To state the criterion in \cite{CKN} we need some preliminaries:
\begin{definition}
The parabolic cylinder $Q(s,y,r)$ with top centered  in $(s,y)$ is the set:
$$
Q(s,y,r) = B(y,r) \times (s,s-r^{2}),
$$
where $B(y,r)$ is the $n$-dimensional ball of radius $r$ centred in $y$. The set $Q$ is important in the study of parial regularity of (\ref{CauchyNS}) because is invariant under the scaling: $(s,y) \rightarrow (\lambda^{2}s,\lambda x)$.
\end{definition}
\ni We need also consider a different definition of weak solution than the Leray's one. 
\begin{definition}[Suitable solutions, \cite{CKN}]\label{SuitSolDef}
Let $\Omega$ a open subset of $\mathbb{R}^{+}\times \Rn$ and:
$$
\Omega_{t} = \{ t \} \times \Rn \cap \Omega.
$$
The pair $(u,p)$ is a suitable weak solution of the Navier-Stokes equation if:
\begin{enumerate}
  \item $p \in L^{\frac{n+2}{4}}(\Omega)$;
  \item Exist some constants $E_{0}, E_{1}$ such that:
      \begin{equation} \label{GenEnergyBoundness}
      \int_{\Omega_{t}} |u(t, \cdot)|^{2} \ dx \leq E_{0},
      \end{equation}      
      for almost every $t$ such that $\Omega_{t}\neq 0$, and 
      \begin{equation}\label{GenEnergyBoundness2}
      \int\int_{\Omega} |\nabla u |^{2} \ dxdt \leq E_{1};
      \end{equation}
   \item (u,p) satisfy (\ref{CauchyNS}) in the sense of distributions in $\Omega$;
   \item A generalized version of the energy inequality holds: 
       \begin{equation} \label{GenEnergy} 
       2\int\int |\nabla u|^{2} \phi \leq \int\int |u|^{2}(\phi_{t} + \Delta \phi) +
       (|u|^{2} +2p)u \cdot \nabla \phi 
       \end{equation}
       for each function $\phi \in C^{\infty}_{c}(\Omega)$.
       
\end{enumerate}
\end{definition}
\ni The inequality (\ref{GenEnergy}) can be formally justified by taking the scalar product of the equation (\ref{CauchyNS}) against the vector field $\phi u$ and then by integrating by parts.
It is straightforward to check that Leray solutions are also suitable solutions. Furthermore definition the (\ref{SuitSolDef}) is meaningful, infact:
\begin{theorem}[\cite{CKN}]
Let $u_{0}\in L^{2}_{\sigma}(\Omega)$, with $\Omega$ an open subset of $\Rn$. Then there exist a pair: 
$$(u,p): (\Rpiu \times \Omega, \Rpiu \times \Omega) \rightarrow (\Rn, \mathbb{R}),$$ 
such that $(u,p)$ is a suitable solution of the Navier-Stokes equation in $\Rpiu \times \Omega$. Furthermore $u$ attains $u_{0}$ as initial datum in the following sense:  	
$$
\int_{\Omega}u(t,x) \phi(x) \ dx \rightarrow \int_{\Omega} u_{0}(x)\phi(x) \ dx, \qquad
\mbox{as} \quad t\rightarrow 0,
$$
for each $\mathbb{\phi} \in L^{2}_{\Omega}$.
\end{theorem}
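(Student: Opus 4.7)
The strategy is to obtain $(u,p)$ as a ``suitable'' limit of the regularized approximations $u^{\varepsilon}$ introduced in the proof of Theorem \ref{Leray'sTheorem}, adding to the Leray construction the two features that distinguish a suitable weak solution: the pressure integrability (item 1 of Definition \ref{SuitSolDef}) and the generalized local energy inequality (\ref{GenEnergy}).

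First I would record the approximations and their bounds. For each $\varepsilon>0$, the smooth global solution $u^{\varepsilon}$ of (\ref{CauchyNSepsilon}) satisfies the $\varepsilon$-uniform energy inequality (\ref{EnergYBoundEpsilon}). Recover the pressure from (\ref{PressureRie}) (with $u^{\varepsilon}*\rho^{\varepsilon}$ in place of one factor of $u$), so that $p^{\varepsilon} = -\Delta^{-1}\sum_{i,j}\partial_{i}\partial_{j}\bigl((u^{\varepsilon}*\rho^{\varepsilon})_{i}u^{\varepsilon}_{j}\bigr)$; Calder\'on--Zygmund theory then gives $\|p^{\varepsilon}\|_{L^{r}_{x}}\lesssim \|u^{\varepsilon}\|_{L^{2r}_{x}}^{2}$, and interpolating the $L^{\infty}_{t}L^{2}_{x}$ bound with the $L^{2}_{t}L^{2n/(n-2)}_{x}$ bound coming from the Sobolev embedding of $\dot H^{1}_{x}$ at the scaling-critical exponent produces a uniform bound in $L^{(n+2)/4}_{t,x}$, which is exactly the integrability required in Definition \ref{SuitSolDef}(1).

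Second, I would set up the compactness. The uniform bounds give, along a subsequence, $u^{\varepsilon_{k}}\rightharpoonup u$ weakly in $L^{2}_{t}\dot H^{1}_{x}$ and weakly-$\ast$ in $L^{\infty}_{t}L^{2}_{x}$, and $p^{\varepsilon_{k}}\rightharpoonup p$ weakly in $L^{(n+2)/4}_{t,x}$. Reading the equation, $\partial_{t}u^{\varepsilon}$ is controlled in a sufficiently negative Sobolev space, so Aubin--Lions upgrades the convergence of $u^{\varepsilon_{k}}$ to strong in $L^{2}_{\mathrm{loc}}$, and then, by interpolation with the uniform $\dot H^{1}_{x}$ control, to strong in $L^{q}_{\mathrm{loc}}$ for $q<2n/(n-2)$ (in particular in $L^{3}_{\mathrm{loc}}$). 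This is sufficient to pass to the limit in the distributional formulation of (\ref{CauchyNS}) and in the global energy inequality (\ref{Energy}), giving items (2) and (3) of Definition \ref{SuitSolDef}.

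Third -- the key step -- I would derive the generalized energy inequality. Since $u^{\varepsilon}$ is smooth, testing (\ref{CauchyNSepsilon}) against $\phi u^{\varepsilon}$ with $\phi\in C^{\infty}_{c}(\Omega)$, $\phi\geq 0$, and integrating by parts yields the equality
$$
2\iint|\nabla u^{\varepsilon}|^{2}\phi = \iint |u^{\varepsilon}|^{2}(\partial_{t}\phi+\Delta\phi) + \bigl((|u^{\varepsilon}|^{2})*\rho^{\varepsilon}+2p^{\varepsilon}\bigr)\,u^{\varepsilon}\cdot\nabla\phi.
$$
Pass to the limit $\varepsilon_{k}\to 0$: weak lower semicontinuity of the convex functional $w\mapsto \iint |\nabla w|^{2}\phi$ handles the left-hand side (producing the inequality); strong $L^{3}_{\mathrm{loc}}$ convergence of $u^{\varepsilon_{k}}$ together with the standard properties of the mollifier handles the cubic term; and the pressure-velocity term is handled by pairing the weak $L^{(n+2)/4}$ convergence of $p^{\varepsilon_{k}}$ with strong convergence of $u^{\varepsilon_{k}}$ in the conjugate space on compact sets (guaranteed since $(n+2)/4$ is sub-critical with respect to the Sobolev exponent). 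The attainment of the initial datum in the weak $L^{2}$ sense follows, as in Theorem \ref{Leray'sTheorem}, from weak continuity of $u(t,\cdot)$ in $L^{2}$ (a consequence of the energy bound together with control of $\partial_{t}u$ from the equation) and from the fact that each $u^{\varepsilon}$ attains $u_{0}$.

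The main obstacle is precisely the pressure-velocity term in step three: weak convergence of $p^{\varepsilon_{k}}$ alone is insufficient, and one must invest in sharp compactness for $u^{\varepsilon_{k}}$ on compact subsets -- combining Aubin--Lions with the Sobolev interpolation mentioned above -- to obtain strong convergence in a Lebesgue space large enough to compensate against $p^{\varepsilon_{k}}$. Everything else is a fairly standard adaptation of the Leray scheme, with bookkeeping of the mollifier $\rho^{\varepsilon}$.
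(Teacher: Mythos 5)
The paper does not prove this statement at all: it is quoted from \cite{CKN} without argument, so there is no in-paper proof to compare against. Your architecture --- Leray's spatial mollification as in the proof of Theorem \ref{Leray'sTheorem}, uniform energy bounds, a Calder\'on--Zygmund pressure estimate, Aubin--Lions compactness, and passage to the limit in the localized energy identity with lower semicontinuity on the gradient term --- is the standard modern construction and is essentially viable; note only that it differs from the original \cite{CKN} scheme, which uses a \emph{retarded} mollification in time of the convective velocity precisely so that the approximate local energy identity passes to the limit cleanly, and that the approximate identity for the Leray-mollified system carries the cubic term $|u^{\varepsilon}|^{2}(u^{\varepsilon}*\rho^{\varepsilon})\cdot\nabla\phi$ rather than $(|u^{\varepsilon}|^{2}*\rho^{\varepsilon})\,u^{\varepsilon}\cdot\nabla\phi$ (harmless, since both limits coincide under strong $L^{3}_{\mathrm{loc}}$ convergence).

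There is, however, a concrete gap in the step you yourself single out as the main obstacle. For $n=3$ the energy bounds place $u^{\varepsilon}$ uniformly in $L^{10/3}_{t,x}$ (interpolation of $L^{\infty}_{t}L^{2}_{x}$ with $L^{2}_{t}L^{6}_{x}$), so Aubin--Lions plus interpolation yields strong local convergence of $u^{\varepsilon_{k}}$ in $L^{q}_{t,x}$ only for $q<10/3$, not for all $q<2n/(n-2)=6$ as you claim. The conjugate exponent of $(n+2)/4=5/4$ is $5$, and strong $L^{5}_{t,x,\mathrm{loc}}$ convergence is unavailable; worse, since $\tfrac45+\tfrac{3}{10}>1$, a pressure known only in $L^{5/4}$ paired with a velocity in $L^{10/3}$ does not even make $p\,u\cdot\nabla\phi$ locally integrable. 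So ``weak $L^{(n+2)/4}$ convergence of $p^{\varepsilon_{k}}$ against strong convergence of $u^{\varepsilon_{k}}$ in the conjugate space'' cannot close. The repair is to use what your own Calder\'on--Zygmund step actually produces, namely a uniform bound for $p^{\varepsilon}$ in $L^{(n+2)/n}_{t,x}=L^{5/3}$ (strictly better than the $L^{5/4}$ demanded by Definition \ref{SuitSolDef}), whose conjugate $5/2$ lies below $10/3$; then the pairing converges. Two further caveats: the representation $p=-\Delta^{-1}\sum_{i,j}\partial_{i}\partial_{j}(u_{i}u_{j})$ and the boundedness of $\mathbb{P}$ are only available for $\Omega=\Rn$, whereas the statement allows arbitrary open $\Omega$, for which interior pressure estimates need a separate local decomposition; and the exponent bookkeeping above is specific to $n=3$ (for $n\ge5$ the interpolated pressure integrability $(n+2)/n$ falls below $(n+2)/4$ and the scheme genuinely degrades), which is the case \cite{CKN} actually treats.
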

Now we are ready to state the fundamental local regularity criterion:
\begin{lemma}[\cite{CKN}]\label{CKNLemma}
Let $n\geq 3$. Let then $\Omega$ an open subset of $\Rpiu\times\Rn$ and the pair $(u,p)$ a suitable solution of the Navier-Stokes equation in $\Omega$. Let then $(s,y)$ some point in $\Omega$. There exist an absolute constant $\varepsilon$ such that if:
\begin{equation}\label{CKNINtegrability}
\limsup_{r\rightarrow 0} \frac{1}{r^{n-2}} \int \int_{Q^{*}(r,s,y)} |\nabla u|^{2} \leq \varepsilon,
\end{equation}
where $Q^{*}(r,s,y) = Q(r,s + \frac{1}{8}r^{2}, y)$. Then $u$ is regular ($C^{\infty}$ in the space variables) in a neighborhood of $(s,y)$.
\end{lemma}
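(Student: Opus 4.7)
The plan is to run the classical $\varepsilon$-regularity scheme: introduce dimensionless quantities on parabolic cylinders centered near $(s,y)$, prove a one-step decay (``excess improvement'') showing that smallness at scale $r$ propagates to scale $\theta r$ for some fixed $\theta \in (0,1)$, iterate to obtain local boundedness of $u$, and finally bootstrap via the Oseen kernel to reach $C^\infty$ regularity in the space variables.

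First I would introduce the scale-invariant quantities
\begin{equation*}
A(r) = \sup_{t \in (s-r^{2},s)} \frac{1}{r^{n-2}}\int_{B(y,r)}|u(t,\cdot)|^{2} \, dx, \qquad \delta(r) = \frac{1}{r^{n-2}}\iint_{Q(s,y,r)}|\nabla u|^{2} \, dx\, dt,
\end{equation*}
\begin{equation*}
G(r) = \frac{1}{r^{n-1}}\iint_{Q(s,y,r)}|u|^{3} \, dx\, dt, \qquad K(r) = \frac{1}{r^{n-1}}\iint_{Q(s,y,r)}|p|^{3/2} \, dx\, dt,
\end{equation*}
all invariant under the Navier-Stokes scaling $u_{\lambda}(t,x)=\lambda u(\lambda^{2}t,\lambda x)$, $p_{\lambda}=\lambda^{2}p(\lambda^{2}t,\lambda x)$; the hypothesis is precisely $\limsup_{r\to 0}\delta(r)\le \varepsilon$ on the shifted cylinder $Q^{*}$, the shift providing the slack between the top of $Q(s,y,r)$ and the smaller cylinders on which I want to propagate smallness.

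The core step is the decay estimate. I would plug a suitable parabolic cutoff $\phi \in C^{\infty}_{c}(Q^{*}(s,y,r))$ with $\phi \equiv 1$ on $Q(s,y,r/2)$ into the generalized local energy inequality \eqref{GenEnergy} from the definition of suitable solution. This bounds $A(r/2)+\delta(r/2)$ by a polynomial in $G(r)$ and $K(r)$. Gagliardo--Nirenberg interpolation and Sobolev embedding yield $G(r)\lesssim A(r)^{3/4}\delta(r)^{3/4}$ plus lower-order terms, while the pressure is controlled by splitting $p=p_{1}+p_{2}$ on a nested cylinder, where $p_{1}$ comes from the Newtonian potential of a cutoff of $u\otimes u$ (so Calder\'on--Zygmund gives $K_{1}(r)\lesssim G(r)$) and $p_{2}$ is harmonic, so that interior estimates for harmonic functions give $K_{2}(\theta r)\lesssim \theta^{n+2}K(r)$. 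Assembling these pieces into a Campanato-type inequality of the rough form
\begin{equation*}
A(\theta r)+\delta(\theta r) \le C_{1}\theta^{2}\bigl(A(r)+\delta(r)+K(r)^{2/3}\bigr) + C_{2}\theta^{-N}\bigl(A(r)+\delta(r)\bigr)^{3/2},
\end{equation*}
I first choose $\theta$ small to beat $C_{1}\theta^{2}$, then choose $\varepsilon$ so small that the superlinear remainder is absorbed, closing the induction on $r_{k}=\theta^{k}r_{0}$ and forcing $A(r_{k})+\delta(r_{k})\to 0$ geometrically. A Morrey-type reading of the decay then provides a scale-invariant $L^{\infty}$ bound on $u$ in a space-time neighbourhood of $(s,y)$.

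Once $u$ is locally bounded, spatial smoothness is obtained by the equivalence of Theorem \ref{DiffvsInt}: writing $u$ locally via $u=e^{t\Delta}u_{0}+\int_{0}^{t}e^{(t-\tau)\Delta}\mathbb{P}\nabla\cdot(u\otimes u)\,d\tau$ and using the decay estimate \ref{OseenDecayFinale} for the Oseen kernel, each iteration gains a spatial derivative on a slightly smaller cylinder, yielding $u\in C^{\infty}$ in the space variables in a neighbourhood of $(s,y)$. The main obstacle is the decay estimate, and within it the handling of the pressure: $p$ is a non-local function of $u$, and controlling the harmonic part $p_{2}$ uniformly in the cylinder scale requires a careful nested-cylinder decomposition rather than a single local Calder\'on--Zygmund bound. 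The remaining delicate issue is to keep precise track of the powers of $r$, $\theta$ and $\varepsilon$ so that a fixed contraction factor $\eta<1$ really emerges and the iteration can be closed.
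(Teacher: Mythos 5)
The thesis does not prove this lemma: it is quoted from \cite{CKN}, and the text explicitly refers the reader to the original paper or to the simplified proof in \cite{Lin}. Your outline is the standard Caffarelli--Kohn--Nirenberg/Lin scheme (scale-invariant quantities, generalized local energy inequality, interpolation, pressure splitting into a Calder\'on--Zygmund part and a harmonic part, iteration over cylinders $\theta^{k}r$, then a bootstrap for smoothness), so at the level of strategy it is the right one.

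There is, however, a structural problem in how you close the iteration. The hypothesis controls only the dissipation $\delta(r)$, and only in the weak form $\delta(r)\le 2\varepsilon$ for all $r$ below some $r_{*}$; it does not decay as $r\to 0$ and cannot be made to decay by iteration, so an inequality with $A+\delta$ on both sides cannot ``force $A(r_{k})+\delta(r_{k})\to 0$ geometrically.'' The correct architecture is two separate lemmas: (i) a one-scale $\varepsilon$-regularity statement --- if $G(r)+K(r)\le \varepsilon_{0}$ at a \emph{single} scale $r$, then $u$ is bounded on $Q(s,y,r/2)$ --- and this is where the Campanato-type decay iteration and the nested harmonic/Calder\'on--Zygmund pressure decomposition you describe actually live; and (ii) an argument showing that $\limsup_{r\to 0}\delta(r)\le\varepsilon$ implies the hypothesis of (i) at some small scale. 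Step (ii) uses the energy inequality \eqref{GenEnergy} on the shifted cylinder $Q^{*}$ (the $\frac{1}{8}r^{2}$ shift is precisely what allows the cutoff to control $\sup_{t}\int_{B(y,r)}|u|^{2}$ over the unshifted cylinder) to show that $A(r_{k})$ stays \emph{bounded} along $r_{k}=\theta^{k}r_{0}$, and then the interpolation $G(r)\lesssim A(r)^{3/4}\delta(r)^{3/4}+\dots$ together with $\delta\le\varepsilon$ to make $G$, and hence $K$, small at some scale. Finally, the conclusion of (i) is local boundedness, not $C^{\infty}$; the upgrade to spatial smoothness then follows from Serrin's criterion (Theorem \ref{SerrinRegularity}, applicable since a locally bounded $u$ satisfies \eqref{SerrinScaling}) or from the Oseen-kernel bootstrap you propose. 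With the iteration reorganized in this two-step form, your sketch matches the proof in \cite{Lin}.
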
 
\ni In the following we will use a little different formulation of the lemma, that's more convenient in order to work with weighted spaces:
\begin{lemma}[\cite{CKN},\cite{YongZhou}]\label{WeightedCKNLemma}
Let $u_{0} \in L^{2}_{\sigma}(\Rn)$ and $u$ a Leray solution of (\ref{CauchyNS}). Let then 
\begin{equation}\label{Integrability}
\int_{0}^{T} \int_{\Rn}|x|^{2-n} |\nabla u|^{2} \ dt dx < +\infty;
\end{equation}
then $u$ is regular ($C^{\infty}$ in the space variables) in the segment $(0,T)\times \{ 0 \}$.  
\end{lemma}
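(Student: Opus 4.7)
The plan is to reduce everything to the localized regularity criterion in Lemma \ref{CKNLemma} applied at each point $(s,0)$ of the vertical segment, and use the weighted bound to produce the required smallness as $r\to 0$.

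First I would recall that a Leray solution of \eqref{CauchyNS} is also a suitable weak solution in the sense of Definition \ref{SuitSolDef}: the bounds \eqref{GenEnergyBoundness}, \eqref{GenEnergyBoundness2} follow from Theorem \ref{Leray'sTheorem}, the associated pressure lies in $L^{\frac{n+2}{4}}$ (a standard consequence of \eqref{PressureRie} via Calder\'on--Zygmund, as recorded in \cite{Lem}), and the local energy inequality \eqref{GenEnergy} follows by the usual mollification argument from the global one \eqref{energy*}. Hence Lemma \ref{CKNLemma} is available on any open subset of $(0,T)\times\Rn$.

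Next, fix $s\in(0,T)$ and consider the point $(s,0)$. I would choose $r>0$ small enough that the shifted cylinder $Q^{*}(r,s,0)=B(0,r)\times(s+\tfrac18 r^{2}-r^{2},s+\tfrac18 r^{2})$ is contained in $(0,T)\times\Rn$. Since $n\ge 3$, the exponent $2-n$ is non-positive, so on the spatial ball $B(0,r)$ one has $|x|\le r$ and therefore
\begin{equation*}
  r^{2-n}\le |x|^{2-n}\qquad\text{for all }x\in B(0,r).
\end{equation*}
Multiplying by $|\nabla u|^{2}$ and integrating on $Q^{*}(r,s,0)$ yields
\begin{equation*}
  \frac{1}{r^{n-2}}\iint_{Q^{*}(r,s,0)}|\nabla u|^{2}\,dx\,dt
  \le \iint_{Q^{*}(r,s,0)}|x|^{2-n}|\nabla u|^{2}\,dx\,dt.
\end{equation*}

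Now the hypothesis \eqref{Integrability} says exactly that $|x|^{2-n}|\nabla u|^{2}\in L^{1}((0,T)\times\Rn)$. The sets $Q^{*}(r,s,0)$ shrink to the single point $(s,0)$ as $r\to 0$ and have Lebesgue measure $O(r^{n+2})\to 0$, so by absolute continuity of the Lebesgue integral the right-hand side above tends to $0$. In particular, for any absolute constant $\varepsilon>0$ (and in particular the one furnished by Lemma \ref{CKNLemma}) one has
\begin{equation*}
  \limsup_{r\to 0}\frac{1}{r^{n-2}}\iint_{Q^{*}(r,s,0)}|\nabla u|^{2}\,dx\,dt\le\varepsilon,
\end{equation*}
so Lemma \ref{CKNLemma} gives that $u$ is $C^{\infty}$ in the space variables in a neighbourhood of $(s,0)$. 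Since $s\in(0,T)$ was arbitrary, this produces the regularity on the whole segment $(0,T)\times\{0\}$.

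I do not expect serious obstacles here: the argument is essentially the monotonicity $|x|^{2-n}\ge r^{2-n}$ on $B(0,r)$ combined with dominated/absolute continuity of the weighted integral. The only technical point worth checking carefully is that the Leray solution one starts with is indeed a suitable weak solution so that Lemma \ref{CKNLemma} may be invoked; this is classical but it is the one place where the argument genuinely uses more than the definition of a weak Leray solution.
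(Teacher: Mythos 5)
Your proof is correct and follows essentially the same route as the paper: bound $r^{2-n}$ by $|x|^{2-n}$ on the ball $B(0,r)$ (valid since $n\ge 3$), use absolute continuity of the integral in \eqref{Integrability} to make the rescaled Dirichlet energy on shrinking cylinders tend to zero, and invoke Lemma \ref{CKNLemma}. Your added care with the shifted cylinder $Q^{*}$ and with checking that a Leray solution is a suitable weak solution are refinements the paper leaves implicit, not a different argument.
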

\begin{proof}
Let $t \in (0.T)$, then
\begin{eqnarray}
& & \limsup_{r\rightarrow 0} \frac{1}{r^{n-2}}\int\int_{Q(r,t,0)} 
|\nabla u|^{2}(t,x) \ dxdt    \nonumber\\ 
&=& \limsup_{r\rightarrow 0} \frac{1}{r^{n-2}}\int_{t-r^{2}}^{t}\int_{B(x,r)} 
|\nabla u|^{2}(t,x) \ dxdt  \nonumber   \\
& \leq & \limsup_{r\rightarrow 0} \int_{t-r^{2}}^{t}\int_{B(x,r)} 
|x|^{2-n}|\nabla u|^{2}(t,x) \ dxdt \nonumber   \\
& \leq & \limsup_{r\rightarrow 0}  \int_{t-r^{2}}^{t}\int_{\Rn} 
|x|^{2-n}|\nabla u|^{2}(t,x) \ dxdt  =0, \nonumber 
\end{eqnarray}
where the continuity property of the integral (\ref{Integrability}) is used.
\end{proof}
\ni As suggested by this version of the Caffarelli-Kohn-Nirenberg lemma, a local boundedness condition, for instance (\ref{CKNINtegrability}), can be replaced by imposing boundedness in a suitable weighted $L^{p}$ space. We follow this point of view in the next of the thesis. A complete set of partial regularity criteria can be obtained, infact, by working with weighted norms, as shown in \cite{YongZhou}. Before stating this results we give some notations:
\begin{notation}
Let $\alpha \in \mathbb{R}$, $p,s \in [1,+\infty)$. Let then $f:\Rn \rightarrow \mathbb{R}$, $F: \Rpiu \times \Rn \rightarrow \mathbb{R}$. We will say:
\begin{itemize}
\item $f \in L^{p}_{|x|^{\alpha p}dx}$ if 
$$\Big( \int_{\Rn} |x|^{\alpha p} |f(x)|^{p} \ dx \Big)^{\frac{1}{p}} < +\infty,$$ 
and we denote this norm with $\|\cdot\|_{L^{p}_{|x|^{\alpha p}dx}}$, or with $\||x|^{\alpha}\cdot\|_{L^{p}_{x}}$;
\item $f \in L^{\infty}_{|x|^{\alpha}dx}$ if 
$$\sup_{x\in \Rn}  |x|^{\alpha} |f(x)| < +\infty,$$ 
and we denote this norm with $\|\cdot\|_{L^{\infty}_{|x|^{\alpha}dx}}$, or with $\||x|^{\alpha}\cdot\|_{L^{\infty}_{x}}$;
\item $F \in L^{s}_{t}L^{p}_{|x|^{\alpha p}dx}$ if 
$$\Big( \int_{\Rpiu}\Big| \int_{\Rn} |x|^{\alpha p} |F(t,x)|^{p} \ dx \Big|^{\frac{s}{p}} \ dt  \Big)^{\frac{1}{s}} < +\infty,$$ 
and we denote this norm with 
$\|\cdot\|_{L^{s}_{t}L^{p}_{|x|^{\alpha p}dx}}$, or with $\||x|^{\alpha}\cdot\|_{L^{s}_{t}L^{p}_{x}}$;
\item $F \in L^{s}_{T}L^{p}_{|x|^{\alpha p}dx}$ if 
$$\Big( \int_{0}^{T}\Big| \int_{\Rn} |x|^{\alpha p} |F(t,x)|^{p} \ dx \Big|^{\frac{s}{p}} \ dt  \Big)^{\frac{1}{s}} < +\infty,$$ 
and we denote this norm with $\|\cdot\|_{L^{s}_{T}L^{p}_{|x|^{\alpha p}dx}}$, or with $\||x|^{\alpha}\cdot\|_{L^{s}_{T}L^{p}_{x}}$.
\end{itemize}
\end{notation}
\ni We give similar definitions for $$L^{\infty}_{t}L^{p}_{|x|^{\alpha p}dx}, \quad L^{\infty}_{T}L^{p}_{|x|^{\alpha p}dx}, \quad L^{s}_{t}L^{\infty}_{|x|^{\alpha}dx}, \quad L^{s}_{T}L^{\infty}_{|x|^{\alpha}dx}.$$

\noindent As told partial regularity criterions in weighted Lebesgue spaces hold:
\begin{theorem}[\cite{YongZhou}]\label{YZTheorem}
Let $n \geq 3$ and $u_{0}$ a divergence free vector field such that $u_{0} \in H^{2}(\Rn)$ and:
\begin{equation}\label{Weightedu0bound}
\| |x|^{1-\frac{n}{2}} u_{0}\|_{L^{2}_{x}} < +\infty.
\end{equation}
If $u$ is a weak Leray's solution of (\ref{CauchyNS})  such that:
\begin{equation}\label{YZuBound1}
\||x|^{\alpha} u \|_{L^{s}_{T}L^{p}_{x}} < +\infty,
\end{equation}
with
\begin{equation}\label{YZCondition1}
\frac{2}{s}+ \frac{n}{p} = 1-\alpha, \quad \frac{2}{1-\alpha} < s < +\infty,
\quad \frac{n}{1-\alpha} < p < +\infty, \quad -1 \leq \alpha < +1;
\end{equation}
or
\begin{equation}\label{YZuBound2}
\||x|^{\alpha} u \|_{L^{2/(1-\alpha)}_{T}L^{\infty}_{x}} < +\infty, \quad -1 < \alpha < +1;
\end{equation}
or
\begin{equation}\label{YZuBound3}
\sup_{t \in (0,T)} \||x|^{\alpha} u \|_{L^{n/(1-\alpha)}_{x}} = \varepsilon, \quad -1 \leq \alpha \leq +1;
\end{equation}
with $\varepsilon$ sufficiently small; then actually $u$ is regular ($C^{\infty}$ in space variables) in the segment $(0,T) \times \{ 0 \}$.
\end{theorem}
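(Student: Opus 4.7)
The plan is to deduce regularity at the origin from the Caffarelli--Kohn--Nirenberg local criterion of Lemma \ref{WeightedCKNLemma}: under any of the three hypotheses I need to establish that
\begin{equation*}
  \int_{0}^{T}\!\!\int_{\mathbb{R}^{n}}|x|^{2-n}|\nabla u|^{2}\,dx\,dt<+\infty.
\end{equation*}
The initial-datum assumption $\||x|^{1-n/2}u_{0}\|_{L^{2}}<+\infty$ is precisely the $t=0$ version of the quantity $\int|x|^{2-n}|u|^{2}\,dx$, so a weighted energy method has a genuine chance of propagating this bound forward in time.

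The central step is a weighted energy identity. Working with the smooth Leray regularizations $u^{\varepsilon}$ introduced in the proof of Theorem \ref{Leray'sTheorem}, I would test the momentum equation against $\chi_{\eta}(x)|x|^{2-n}u^{\varepsilon}$, where $\chi_{\eta}$ is a smooth radial cutoff vanishing in $|x|<\eta$. After the standard integrations by parts and using $\nabla\cdot u^{\varepsilon}=0$, in the limit $\eta\to 0$ one arrives at
\begin{equation*}
  \tfrac{1}{2}\tfrac{d}{dt}\!\int|x|^{2-n}|u^{\varepsilon}|^{2}dx+\!\int|x|^{2-n}|\nabla u^{\varepsilon}|^{2}dx=\tfrac{n-2}{2}\!\int\frac{(x\cdot u^{\varepsilon})|u^{\varepsilon}|^{2}}{|x|^{n}}dx+(n-2)\!\int\frac{(x\cdot u^{\varepsilon})\,p^{\varepsilon}}{|x|^{n}}dx,
\end{equation*}
the distributional term from $\Delta|x|^{2-n}$ at the origin being cut off and then controlled by the weighted initial-datum bound.

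To estimate the right-hand side I would combine the weighted Hardy inequality $\int|x|^{-n}|u|^{2}dx\le C\int|x|^{2-n}|\nabla u|^{2}dx$ with a Caffarelli--Kohn--Nirenberg interpolation (Theorem \ref{the:CKN}). For the convective contribution one splits $|u|^{3}|x|^{1-n}$ into three factors: one carrying the hypothesis weight $|x|^{\alpha}u$ (measured in $L^{p}_{x}$ and then in $L^{s}_{t}$), one bounded by $|x|^{1-n/2}u$ in $L^{2}$, and one bounded by $|x|^{-n/2}u$ in $L^{2}$ (the last being dominated by $D(t)^{1/2}$ via Hardy, where $D(t)=\int|x|^{2-n}|\nabla u|^{2}dx$). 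The scaling relation $\frac{2}{s}+\frac{n}{p}=1-\alpha$ is exactly what allows this H\"older decomposition to close after time-integration, yielding a bound of the shape $C\||x|^{\alpha}u\|_{L^{p}_{x}}^{s}E(t)+\frac{1}{2}D(t)$ where $E(t)=\int|x|^{2-n}|u|^{2}dx$, and the last term is absorbed into the dissipation on the left. The pressure term is treated by the Calder\'on--Zygmund representation $p=R_{i}R_{j}(u_{i}u_{j})$ together with Muckenhoupt $A_{q}$-boundedness of Riesz transforms on power-weighted Lebesgue spaces, reducing it to a term of the same shape. A Gronwall step in the cases (\ref{YZuBound1})--(\ref{YZuBound2}), or direct absorption using that $\varepsilon$ is small in (\ref{YZuBound3}), then gives $\sup_{t\in(0,T)}E(t)+\int_{0}^{T}D(t)\,dt<+\infty$, and Lemma \ref{WeightedCKNLemma} concludes.

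The main obstacle will be ensuring that the weighted estimates (Hardy, CKN, and Muckenhoupt Riesz boundedness) all remain valid across the full admissible range of $\alpha$: the endpoints $\alpha=\pm 1$ are precisely where the power weights associated with the pressure term exit the $A_{q}$ class, which is reflected in the exclusion of the endpoints $s=2/(1-\alpha)$ and $s=+\infty$ in (\ref{YZCondition1}) and in the necessity of smallness in (\ref{YZuBound3}). A secondary technical nuisance is the rigorous passage to the limit in the cutoff $\chi_{\eta}$ and in the regularization $\varepsilon$; this requires careful tracking of the distributional contribution from $\Delta|x|^{2-n}$ at the origin, which is fortunately handled by the very hypothesis $\int|x|^{2-n}|u_{0}|^{2}dx<+\infty$ appearing in (\ref{Weightedu0bound}).
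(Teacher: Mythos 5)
The thesis states Theorem \ref{YZTheorem} without proof, importing it from \cite{YongZhou}; your blind proposal reconstructs essentially the argument of that reference — a weighted energy estimate obtained by testing against $|x|^{2-n}u$ (with cutoff and via the regularized solutions), control of the cubic and pressure terms through the weighted Hardy inequality, Caffarelli--Kohn--Nirenberg interpolation and $A_{q}$-weighted Riesz-transform bounds, a Gronwall/absorption step distinguishing the three hypotheses, and finally the localized criterion of Lemma \ref{WeightedCKNLemma} — so the approach is the right one and matches the source. Two small inaccuracies worth fixing: the three-factor H\"older split as literally written does not close (assigning $|x|^{\alpha}u$ to $L^{p}$ and the two remaining factors to $L^{2}$-based spaces makes the exponents sum to $\frac1p+1>1$, so the step genuinely requires the CKN interpolation between $\int|x|^{2-n}|u|^{2}$ and $\int|x|^{2-n}|\nabla u|^{2}$ that you invoke, not plain H\"older), and the distributional contribution of $\Delta|x|^{2-n}$ is a negative multiple of $\delta_{0}$, so after integrating by parts it enters the identity with a favourable sign and is simply discarded rather than being ``controlled by the weighted initial-datum bound.''
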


\ni Of course regularity in $(0,T) \times \bar{x}$ is achieved if the weights and the norms are centered in $\bar{x}$ instead of in the origin.
This is infact a slightly different formulation of the theorem. In the original one the author gets global regularity by requiring
$$
\sup_{\bar{x} \in \Rn}  \||x-\bar{x}|^{1-\frac{n}{2}} u_{0}\|_{L^{2}_{x}} < +\infty,
$$
and
$$
\sup_{\bar{x} \in Rn} \||x-\bar{x}|^{\alpha} u \|_{L^{s}_{T}L^{p}_{x}} < +\infty.
$$
Such a formulation is more useful for our purposes.
\ni This is a local regularity criterion. The weight $|x|^{\alpha}$ localizes the norms near to the origin and provides regularity just in the points $(0,T)\times \{ 0 \}$. We will show how it is actually the case just for negative values of $\alpha$. If $\alpha \geq 0$ then global regularity can be achieved. We will show that it is also the case if, by taking $\alpha < 0$, we assume a suitable amount of angular integrability of the solution.

\begin{remark}
Notice that:
\begin{enumerate}
\item The first equation in (\ref{YZCondition1}) is the critical relation coming out by requiring $L^{s}_{t}L^{p}_{|x|^{\alpha p}dx}$ invariance under the scaling:
$$
u^{\lambda}: u(t,x) \rightarrow \lambda u (\lambda^{2}t,\lambda x);
$$
\item The estimates (\ref{YZuBound2}), (\ref{YZuBound3}) are the endpoint version of (\ref{YZuBound1}), obtained by setting $(s,p)$ equal to $(2/(1-\alpha), \infty)$ and $(\infty, 3/1-\alpha )$ respectively. These are consistent with the scaling relation (\ref{YZCondition1});
\item Condition (\ref{YZuBound3}) in the case $\alpha = 1$ becomes a smallness condition on the norm $\||x|^{\alpha} u \|_{L^{\infty}_{T}L^{\infty}_{x}}$ , which implies that the possible behaviour of the strong solution
can be $|x|^{-1+\varepsilon}$ at the neighbourhood of the origin for small $\varepsilon$. This recovers one of the
main results proved in \cite{CKN}, \cite{Tian} for suitable weak solutions;
\item Of course can be set $T=+\infty$ to get regularity for all times;
\item The range of values for $\alpha$ does not depend on the dimension.
\end{enumerate}
\end{remark}
\ni Then an analogous of theorem (\ref{YZTheorem}) holds by assuming a priori boundedness of $\nabla u$ in weighted Lebesgue spaces:
\begin{theorem}[\cite{YongZhou}]\label{YZDerTheorem}
Let $n\geq 3$ and $u_{0}$ a divergence free vector field such that $u_{0} \in H^{2}(\Rn)$ and:
\begin{equation}\label{Weightedu0boundDer}
\||x|^{1-\frac{n}{2}} u_{0} \|_{L^{2}_{x}} < +\infty.
\end{equation}
If $u$ is a weak Leray solution of (\ref{CauchyNS}) such that:
\begin{equation}\label{YZDeruBound1}
\||x|^{\alpha} \nabla u \|_{L^{s}_{T}L^{p}_{x}} < +\infty,
\end{equation}
with
\begin{equation}\label{YZDerCondition1}
\frac{2}{s}+ \frac{n}{p} = 2-\alpha, \quad 1 < s < +\infty,
\quad \frac{n}{2 -\alpha} < p < +\infty, \quad -1 \leq \alpha < +2;
\end{equation}
or
\begin{equation}\label{YZDeruBound2}
\||x|^{\alpha} \nabla u \|_{L^{2/(2-\alpha)}_{T}L^{\infty}_{x}} < +\infty, \quad 0 < \alpha \leq +2;
\end{equation}
or
\begin{equation}\label{YZDeruBound3}
\sup_{t \in (0,T)} \||x|^{\alpha} \nabla u \|_{L^{n/(2-\alpha)}_{x}} = \varepsilon, \quad -1 \leq \alpha < +2;
\end{equation}
with $\varepsilon$ sufficiently small; then actually $u$ is regular ($C^{\infty}$ in space variables) on the segment $(0,T) \times \{ 0 \}$.
\end{theorem}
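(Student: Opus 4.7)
The plan is to reduce this theorem to the Caffarelli--Kohn--Nirenberg partial regularity criterion (Lemma \ref{WeightedCKNLemma}), by establishing that under the hypotheses one has
\begin{equation*}
\int_0^T \int_{\mathbb{R}^n} |x|^{2-n}|\nabla u|^2\,dx\,dt < +\infty,
\end{equation*}
which then yields $C^\infty$ regularity in the space variables on $(0,T)\times\{0\}$. The inputs for this bound are the weighted a priori hypothesis on $\nabla u$, the Leray energy bounds $u \in L^\infty_T L^2_x$ and $\nabla u \in L^2_T L^2_x$, and the initial-data assumption $\||x|^{1-n/2} u_0\|_{L^2}<\infty$.

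First I would split the space integral into the outer region $\{|x|\geq 1\}$ and the inner region $\{|x|\leq 1\}$. On the outer region, since $n\geq 3$ the weight satisfies $|x|^{2-n}\leq 1$ and the integral is controlled by the Leray energy. On the inner region the plan is to interpolate between the weighted assumption on $\nabla u$ and the energy: for a parameter $a \in (0,1]$ to be fixed, write
\begin{equation*}
|x|^{2-n}|\nabla u|^2 = |x|^{2-n-2a\alpha}\cdot (|x|^{\alpha}|\nabla u|)^{2a}\cdot |\nabla u|^{2(1-a)},
\end{equation*}
apply a three-factor H\"older inequality in $x$ with exponent $p/(2a)$ on the weighted factor and $1/(1-a)$ on the energy factor, and then close by H\"older in $t$ between $L^s_T$ and $L^2_T$. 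The exponents should be chosen so that the local integrability of the residual pure weight $|x|^{2-n-2a\alpha}$ on $\{|x|\leq 1\}$ aligns with the admissibility condition $p > n/(2-\alpha)$ of (\ref{YZDerCondition1}), and the scaling relation $2/s+n/p = 2-\alpha$ closes the time H\"older. The endpoint hypotheses (\ref{YZDeruBound2}) ($p=\infty$) and (\ref{YZDeruBound3}) ($s=\infty$) fit the same scheme with one H\"older exponent taken to $\infty$; in the case (\ref{YZDeruBound3}) the smallness of $\varepsilon$ lets one absorb a borderline term through a Gronwall-type bootstrap.

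The main obstacle, in my view, is that the naive two-factor interpolation between the weighted bound $\||x|^\alpha \nabla u\|_{L^s_T L^p_x}$ and the plain Leray energy $\|\nabla u\|_{L^2_T L^2_x}$ forces the time-H\"older compatibility $2a/s+(1-a)=1$, i.e.\ $s=2$ identically, which only covers a single slice of the admissible range. To recover the full range $s>1$ in (\ref{YZDerCondition1}) a third interpolating ingredient is needed, most naturally the $L^\infty_T L^2_x$ component of the Leray bound on $u$ (or, equivalently, a weighted Hardy inequality transferring that control to $|x|^{-1}u$), inserted as a further factor. Carrying out the ensuing multi-parameter bookkeeping, so that the local integrability of the weight near the origin, the Sobolev-type admissibility conditions on $(\alpha,s,p)$, and the scaling $2/s+n/p=2-\alpha$ remain simultaneously satisfied as $\alpha$ ranges over $[-1,2)$ and through the three subcases (\ref{YZDeruBound1})--(\ref{YZDeruBound3}), is the technical core of the argument; a final invocation of translation invariance then yields the statement at an arbitrary base point $\bar x\in\mathbb{R}^n$ in place of the origin.
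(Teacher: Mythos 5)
Your reduction target is the right one: this criterion is indeed proved by establishing $\int_0^T\int_{\Rn}|x|^{2-n}|\nabla u|^2\,dx\,dt<+\infty$ and invoking Lemma \ref{WeightedCKNLemma} (note the paper itself does not prove Theorem \ref{YZDerTheorem}; it quotes it from \cite{YongZhou}). But the mechanism you propose for obtaining that bound --- H\"older interpolation between the hypothesis $\||x|^{\alpha}\nabla u\|_{L^{s}_{T}L^{p}_{x}}$ and the Leray energy --- cannot work, and the obstruction is worse than the time-exponent mismatch you flag. Run the exponent bookkeeping: writing $|x|^{2-n}|\nabla u|^{2}=|x|^{2-n-2a\alpha}(|x|^{\alpha}|\nabla u|)^{2a}|\nabla u|^{2(1-a)}$ and applying H\"older with exponents $m$, $p/(2a)$, $1/(1-a)$ forces $\tfrac1m=a(1-\tfrac2p)$, and local integrability of $|x|^{(2-n-2a\alpha)m}$ near the origin requires $2-n>a\bigl(2\alpha-n+\tfrac{2n}{p}\bigr)$, which under the scaling relation $\tfrac2s+\tfrac np=2-\alpha$ becomes $2-n>a\bigl(4-n-\tfrac4s\bigr)$. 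For $n=3$ this demands $a>\tfrac{s}{4-s}$, hence $s<2$ and $a>1$ already at the endpoint; meanwhile the time H\"older demands $s\ge2$. The two requirements are incompatible for every admissible choice, and even at $s=2$, $a=1$ the residual weight comes out exactly $|x|^{-n}$, the borderline non-integrable power. This is not an accident of bookkeeping: both $\int\int|x|^{2-n}|\nabla u|^{2}$ and the hypothesis are invariant under $u\mapsto\lambda u(\lambda^{2}t,\lambda x)$, so no strictly locally integrable residual weight can survive a pointwise factorization. Bringing in $\|u\|_{L^{\infty}_{T}L^{2}_{x}}$ does not help here, since $|u|$ cannot appear as a factor in a pointwise decomposition of $|\nabla u|^{2}$, and Hardy's inequality runs in the wrong direction ($\nabla u$ controls $|x|^{-1}u$, not conversely).

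The missing idea is that $\int_0^T\int|x|^{2-n}|\nabla u|^{2}$ must be produced by the equation, not by the function spaces: one tests (\ref{CauchyNS}) against $|x|^{2-n}u$ (suitably regularized, e.g.\ with weight $(|x|^{2}+\epsilon)^{(2-n)/2}$), obtaining a weighted energy identity whose dissipation term is precisely the desired quantity, whose initial term is $\int|x|^{2-n}|u_{0}|^{2}$ --- this is why hypothesis (\ref{Weightedu0boundDer}) appears, and your sketch never uses it --- and whose convective and pressure terms are estimated using (\ref{YZDeruBound1})--(\ref{YZDeruBound3}) together with Hardy/Sobolev inequalities and closed by Gronwall (absorption in the small-$\varepsilon$ case (\ref{YZDeruBound3})). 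Your H\"older/interpolation machinery belongs inside that estimate, controlling the trilinear and pressure terms, not in a direct attack on the dissipation integral.
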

\ni We give again similar remarks.
\begin{remark}
Notice that:
\begin{enumerate}
\item The first equation in (\ref{YZDerCondition1}) is assumed again to ensure invariance with respect to the scaling:
$$
u^{\lambda}: u(t,x) \rightarrow \lambda u(\lambda^{2} t,\lambda x),
$$
infact:
\begin{eqnarray} 
\| |x|^{\alpha} \nabla u^{\lambda}\|_{\Lsp} &=& 
\| |x|^{\alpha} \lambda^{2}(\nabla u)(\lambda^{2}t,\lambda x)\|_{\Lsp} \nonumber \\
&=& \lambda^{2-\alpha-\frac{2}{s}-\frac{n}{p}}\| |x|^{\alpha}u\|_{\Lsp}; \nonumber
\end{eqnarray}
\item Conditions $1<s$, $0 < \alpha$ in (\ref{YZDerCondition1}),(\ref{YZDeruBound2}) seems to be artificial with respect to the more natural $\frac{2}{2-\alpha} \leq s$, $-1 \leq \alpha$, but they are necessay in order to work with $\Lsp$ spaces with $s,p \geq 1$;
\item By setting $\alpha =\frac{1}{2}$, $s=p=2$ a similar result to lemma (\ref{CKNLemma}) is achieved;
\item A global regularity result is again achieved by setting $T=+\infty$.
\item The range of values for $\alpha$ does not depend again on the dimension.
\end{enumerate}
\end{remark}

\section{Well posedness with small data}

\ni A different approach to the existence and uniqeness of the solutions of (\ref{CauchyNS}) consists in considering small initial data. Such a big restriction provides complete well posedness, i.e. regularity, uniqeness and decay of solutions. Solutions with small initial data have been deeply investigated since by \cite{Kat}, and in \cite{Tat} the sharp case seems to be covered. The key point of the small data theory is that the nonlinear term $(u\cdot \nabla)u$ is negligible with respect to the others, so the equation (\ref{CauchyNS}) can be interpreted as a perturbed heat equation. This point of view suggests to perform a fixed point algorithm around the Heat propagator. Let's start by the Duhamel's representation:
\begin{equation}\label{IntegralCauchyNS}
\left \{
\begin{array}{rcll}
u & = & e^{t \Delta}u_{0} + \int_{0}^{t}e^{(t-s)\Delta}\mathbb{P}\nabla \cdot (u \otimes u)(s)ds &  \mbox{in} \quad [0,T)\times \mathbb{R}^{n}  \\
\nabla \cdot u & = & 0  & \mbox{in} \quad [0,T) \times \mathbb{R}^{n}.
 \end{array}\right.
\end{equation}
Where $\mathbb{P}$ is the Leray projection as defined in (\ref{LerProj}). So the solution is the sum of the linear propagator $e^{t\Delta}$ and a bilinear term $B(u,u)= \int_{0}^{t}e^{(t-s)\Delta}\mathbb{P}\nabla \cdot (u \otimes u)(s)ds$. Then the Picard iteration can be performed:
\begin{equation}\label{PicardForKato}
\begin{array}{rcl}
u_{1} & = &e^{t\Delta}u_{0} \\
u_{2} & = & e^{t \Delta}u_{0} + \int_{0}^{t}e^{(t-s)\Delta}\mathbb{P}\nabla \cdot (u_{1} \otimes u_{1})(s)ds  \\
u_{n} &=& e^{t\Delta}u_{0} + \int_{0}^{t}e^{(t-s)\Delta}\mathbb{P}\nabla \cdot (u_{n-1} \otimes u_{n-1})(s)ds.
 \end{array}
\end{equation}
It is easy to show the following:
\begin{theorem}[\cite{Lem}]\label{Picard}
Let $X_{T}$ a Banach space of functions defined on $[0,T]\times \mathbb{R}^{n}$ such that the bilinear form:
$$
B(u,v) = \int_{0}^{t}e^{(t-s)\Delta}\mathbb{P}\nabla \cdot (u \otimes v)(s)ds
$$
is bounded by $X_{T} \times X_{T}$ to $X_{T}$. Let then $X_{0} \subset \mathbb{S}'(\mathbb{R}^{n})$ such that
$$
\|e^{t\Delta} f\|_{X_{T}} \leq C_{X_{0},X_{T}}\|f\|_{X_{0}} \qquad \forall t \in (0,T].
$$
Under this assumptions $u_{n}$ is a Cauchy sequence that converges to a solution of the integral problem (\ref{IntegralCauchyNS}).
\end{theorem}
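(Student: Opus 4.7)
The plan is to interpret the Picard iteration \eqref{PicardForKato} as a fixed point iteration for the map $\Phi(u) := e^{t\Delta}u_{0} + B(u,u)$ acting on the Banach space $X_{T}$, and to apply a standard contraction argument inside a closed ball of $X_{T}$, provided the initial datum is sufficiently small in $X_{0}$ (a smallness threshold that is implicit in the statement and that I would make explicit in the course of the proof). Denote by $\eta$ the operator norm of $B:X_{T}\times X_{T}\to X_{T}$ and set $\varepsilon := \|e^{t\Delta}u_{0}\|_{X_{T}}\le C_{X_{0},X_{T}}\|u_{0}\|_{X_{0}}$.

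First, I would verify by induction on $n$ that every iterate $u_{n}$ lies in a closed ball $\overline{B}_{R}=\{u\in X_{T}:\|u\|_{X_{T}}\le R\}$. Indeed, from the recursion one gets
\begin{equation*}
\|u_{n+1}\|_{X_{T}} \le \varepsilon + \eta\|u_{n}\|_{X_{T}}^{2},
\end{equation*}
so the ball is invariant as soon as $\varepsilon + \eta R^{2}\le R$. This inequality is solvable in $R$ exactly when $4\eta\varepsilon<1$, in which case one may take $R=(1-\sqrt{1-4\eta\varepsilon})/(2\eta)$. This is the smallness condition hidden in the hypotheses.

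Next, I would control the differences between consecutive iterates by writing
\begin{equation*}
u_{n+1}-u_{n}=B(u_{n}-u_{n-1},u_{n})+B(u_{n-1},u_{n}-u_{n-1}),
\end{equation*}
and combining the bilinear bound with $\|u_{n}\|_{X_{T}},\|u_{n-1}\|_{X_{T}}\le R$ to obtain
\begin{equation*}
\|u_{n+1}-u_{n}\|_{X_{T}}\le 2\eta R\,\|u_{n}-u_{n-1}\|_{X_{T}}.
\end{equation*}
Since $4\eta\varepsilon<1$ forces $2\eta R<1$, the sequence $\{u_{n}\}$ decays geometrically in the $X_{T}$-norm, hence is Cauchy and converges to some $u\in\overline{B}_{R}\subset X_{T}$.

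Finally I would pass to the limit in $u_{n+1}=e^{t\Delta}u_{0}+B(u_{n},u_{n})$: continuity of $B$ as a bounded bilinear map yields $B(u_{n},u_{n})\to B(u,u)$ in $X_{T}$, whence $u$ solves \eqref{IntegralCauchyNS}; the divergence-free condition is preserved along the iteration because $\mathbb{P}$ projects onto divergence-free fields and $e^{t\Delta}$ commutes with the divergence, and it survives the $X_{T}$-limit under any natural choice of $X_{T}$. The truly delicate point, and the only real obstacle, is not the abstract argument itself but the actual construction of a function space pair $(X_{0},X_{T})$ that simultaneously accommodates the bilinear estimate on $B$ (which involves the Oseen kernel of Proposition \ref{OseenDecayFinale}) and the linear heat estimate from $X_{0}$ into $X_{T}$; the content of the theorem is thus mainly a template reducing the well-posedness problem to two concrete estimates.
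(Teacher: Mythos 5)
Your proof is correct and is exactly the standard abstract Picard/contraction argument; the paper does not prove this lemma itself but cites \cite{Lem}, where the same scheme (invariant ball for $\varepsilon+\eta R^{2}\le R$, geometric decay of differences via the bilinear splitting $B(u_{n}-u_{n-1},u_{n})+B(u_{n-1},u_{n}-u_{n-1})$, passage to the limit by continuity of $B$) is used. You also rightly make explicit the smallness hypothesis $4\eta\varepsilon<1$, which the statement leaves implicit but which the paper acknowledges immediately afterwards when it speaks of a ``suitable smallness assumption on $\|u_{0}\|_{X_{0}}$''.
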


\ni Usually (see again \cite{Lem}) $X_{T}$ is called an admissible path space, while $X_{0}$ is called an adapted space. Furthermore under suitable smallness assumption on $\|u_{0}\|_{X_{0}}$  in theorem \ref{Picard} can be set $T=+\infty$ to get a global existence result.
Many adapted spaces have been considered in literature since by \cite{Kat} where $u_{0} \in L^{n}$. This result have been generalized to the homogeneous Sobolev space $\dot{H}^{\frac{n}{p}-1}$ in \cite{Kat2,Kat3}, to Morrey spaces in \cite{Morrey}, to Besov spaces in \cite{Gall} and of course a lot of alternative references are possible. The biggest space in which Picard iteration is possible seems to be $BMO^{-1}$, see \cite{Tat}. For instance the following continuous embeddings of adapted spaces holds:
$$
\dot{H}^{\frac{n}{p}-1} \subset L^{n} \subset
\dot{B}^{-1+\frac{n}{p}}_{p,p < \infty,\infty} \subset BMO^{-1}.
$$ 
Even if, as mentioned, several choices are possible, we will work basically with weighted $L^{p}$ spaces. 
\begin{remark}
Global well posedness with small data forces the adapted space $X_{0}$ to be invariant under the scaling $\lambda \rightarrow u_{0}^{\lambda}= \lambda u_{0}(\lambda x)$, i.e
$$
\|u_{0}^{\lambda}\|_{X_{0}} = \|u_{0}\|_{X_{0}}, \qquad \forall \lambda \in \mathbb{R}^{+}.
$$
This easily follows by the similarity propetry of equation (\ref{CauchyNS}). For instance, if we restrict to the $L^{p}$ case the right-scaling adapted space is $L^{n}(\mathbb{R}^{n})$. Let infact $u_{0} \in L^{p}(\mathbb{R}^{n})$ with $p > n$. Of course scaling
\begin{equation}
\begin{array}{rcl}
u^{\lambda}(t,x) & = & \lambda u(\lambda^{2}t, \lambda x) \qquad \lambda > 0 \\
u_{0}^{\lambda}(x) & = & \lambda u_{0}(\lambda x) 
\end{array}
\end{equation}
leads to a one-parameter family of solutions of (\ref{CauchyNS}). If furthermore a global well posedness result with small data is achieved, by setting $\lambda \rightarrow +\infty$ it follows well posedness with arbitrarily large initial data. The same is in the case $p<n$,  by setting $\lambda \rightarrow 0$.
\end{remark}

\ni The first result in small data teory goes back to \cite{Kat}:
\begin{theorem}\label{SmallKato}
Let $u_{0}$ a divergence free vector field on $\mathbb{R}^{n}$. Exists $\varepsilon >0$ such that if 
$$
\|u_{0}\|_{L^{n}(\mathbb{R}^{n})} < \varepsilon,
$$
then there is a unique solution $u : \mathbb{R}^{+}\times \mathbb{R}^{n} \rightarrow \Rn$ of the integral problem (\ref{CauchyNSInt}). Furthermore $u$ has the decay:
\begin{equation}\label{KatSolDecay}
\|u(t,\cdot)\|_{L^{q}(\mathbb{R}^{n})} \leq \frac{c_{q}}{t^{(1- \frac{n}{q})/2}}\|u_{0}\|_{L^{n}(\mathbb{R}^{n})}, \qquad t > 0;
\end{equation}
\begin{equation}\label{KatDerSolDecay}
\|\partial u(t,\cdot)\|_{L^{q}(\mathbb{R}^{n})} \leq \frac{c_{q}}{t^{(2- \frac{n}{q})/2}}\|u_{0}\|_{L^{n}(\mathbb{R}^{n})}, \qquad t > 0
\end{equation}
\end{theorem}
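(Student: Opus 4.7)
The plan is to apply the abstract Picard scheme of Theorem \ref{Picard} to the integral formulation \eqref{IntegralCauchyNS}, which is equivalent to the differential problem by Theorem \ref{DiffvsInt}. The first task is to choose the right admissible path space $X_{T}$ and check that $X_{0}=L^{n}_{\sigma}(\mathbb{R}^{n})$ is adapted to it. Following Kato, I would fix some $q>n$ (for instance $q=2n$) and set
\begin{equation*}
  \|u\|_{X_{T}}=\sup_{0<t<T}\|u(t,\cdot)\|_{L^{n}_{x}}
  +\sup_{0<t<T}t^{(1-n/q)/2}\|u(t,\cdot)\|_{L^{q}_{x}},
\end{equation*}
both suprema being taken finite. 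The linear estimate $\|e^{t\Delta}u_{0}\|_{X_{T}}\lesssim \|u_{0}\|_{L^{n}}$ is immediate from the standard heat kernel bound $\|e^{t\Delta}f\|_{L^{q}}\lesssim t^{-\frac n2(\frac1n-\frac1q)}\|f\|_{L^{n}}$ together with the contractivity in $L^{n}$.

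The main step is the bilinear estimate. By Proposition \ref{OseenDecayFinale} the operator $e^{(t-s)\Delta}\mathbb{P}(\nabla\cdot)$ is convolution with a kernel $K(t-s,\cdot)$ satisfying $K(\tau,x)=\tau^{-(n+1)/2}K_{1}(x/\sqrt\tau)$ with $K_{1}$ bounded by an integrable function; in particular
\begin{equation*}
  \|K(\tau,\cdot)\|_{L^{r}}\lesssim
  \tau^{-\frac12-\frac n2(1-\frac1r)},\qquad r\in[1,\infty].
\end{equation*}
Using Young's inequality with the exponent choice $1+\tfrac1q=\tfrac1r+\tfrac2q$ I would estimate, for $u,v\in X_{T}$,
\begin{equation*}
  \|B(u,v)(t)\|_{L^{q}}\lesssim
  \int_{0}^{t}(t-s)^{-\frac12-\frac n{2q}}
  \|u(s)\|_{L^{q}}\|v(s)\|_{L^{q}}\,ds
  \lesssim \|u\|_{X_{T}}\|v\|_{X_{T}}\cdot t^{-\frac12(1-\frac nq)}
\end{equation*}
since the $s$-integral $\int_{0}^{t}(t-s)^{-\frac12-\frac n{2q}}s^{-(1-\frac nq)}ds$ is a Beta integral computable to be a constant times $t^{-(1-n/q)/2}$ (the scaling is forced by the invariance of the problem and $q>n$ makes all exponents admissible). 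A completely analogous computation, using $\|u(s)\|_{L^{q}}\|v(s)\|_{L^{n}}\lesssim s^{-(1-n/q)/2}\|u\|_{X_{T}}\|v\|_{X_{T}}$, controls $\sup_{t}\|B(u,v)(t)\|_{L^{n}}$. Summing the two pieces gives
\begin{equation*}
  \|B(u,v)\|_{X_{T}}\le C_{0}\|u\|_{X_{T}}\|v\|_{X_{T}}
\end{equation*}
with $C_{0}$ independent of $T$.

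With the linear and bilinear bounds in hand, Theorem \ref{Picard} (or equivalently a standard contraction argument, valid as soon as $4C_{0}\|e^{t\Delta}u_{0}\|_{X_{T}}<1$) produces a unique fixed point in the ball $\{u:\|u\|_{X_{T}}\le 2\|e^{t\Delta}u_{0}\|_{X_{T}}\}$; the smallness hypothesis $\|u_{0}\|_{L^{n}}<\varepsilon$ for $\varepsilon$ small enough makes this work with $T=+\infty$. The decay \eqref{KatSolDecay} for $q$ equal to the fixed exponent above is built into the norm of $X_{T}$; to pass to a general $q\in[n,\infty]$ I would either enlarge $X_{T}$ by including several $q$-levels in the norm (and re-verifying the bilinear estimate by the same Beta integral), or, more elegantly, bootstrap from the fixed point solution: once $u\in X_{T}$ is known, apply $\|\cdot\|_{L^{q}}$ directly to $u=e^{t\Delta}u_{0}+B(u,u)$ using heat kernel and Oseen kernel decay, with no nonlinear fixed-point structure required. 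The derivative bound \eqref{KatDerSolDecay} is obtained by the same computation after moving one derivative onto the kernels $e^{t\Delta}$ and $K(t-s,\cdot)$, which costs an extra factor $t^{-1/2}$, respectively $(t-s)^{-1/2}$; the latter is again integrable against $s^{-(1-n/q)}$ at both endpoints.

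The main obstacle, and the only delicate ingredient, is the bilinear estimate: the admissible range of $q$ and the finiteness of the Beta integral depend on getting the scaling in $X_{T}$ exactly right. The choice $\alpha=(1-n/q)/2$ is forced by the requirement that $\|u^{\lambda}\|_{X_{T}}=\|u\|_{X_{T}}$ under $u\mapsto\lambda u(\lambda^{2}t,\lambda x)$, so there is no real freedom, but one must verify that the exponent $\tfrac12+\tfrac n{2q}$ arising from $K$ is strictly below $1$ (which forces $q>n$) and compatible with integrability at $s=0$ (which also forces $q>n$). Once $q$ is fixed in that range, everything fits together.
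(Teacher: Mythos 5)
Your proposal is a correct reconstruction of Kato's classical argument, but note that the paper does not actually prove Theorem \ref{SmallKato}: it is quoted from \cite{Kat} without proof. The closest thing to ``the paper's own proof'' is the argument given later for the weighted analogue, Theorem \ref{WeighKato}, and there the route is genuinely different from yours. You work in the time-pointwise framework, with the norm $\sup_{t}\|u(t)\|_{L^{n}}+\sup_{t}t^{(1-n/q)/2}\|u(t)\|_{L^{q}}$, and close the bilinear estimate by an explicit Beta integral $\int_{0}^{t}(t-s)^{-\frac12-\frac{n}{2q}}s^{-(1-\frac nq)}\,ds$; the decay \eqref{KatSolDecay} then comes for free as part of the norm, and the restriction $q>n$ appears transparently as the condition making both exponents integrable. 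The paper instead iterates in space-time Lebesgue norms $L^{r}_{t}L^{q}_{x}$ with $\frac2r+\frac nq=1$ (cf.\ \eqref{KatSolBound}), using the linear estimate of Proposition \ref{IDecayCor} (which rests on real interpolation of weak-type bounds) and the Duhamel estimate of Proposition \ref{DDecayCor} (which replaces your Beta integral by the weak Young inequality in the Haar measure $dt/t$); pointwise-in-time decay is not part of that norm and would have to be recovered afterwards. Both approaches are sound; yours is the more elementary and yields \eqref{KatSolDecay}--\eqref{KatDerSolDecay} directly, while the paper's is the one that generalizes to the mixed radial-angular weighted setting. Two small points to tighten in your write-up: the Oseen kernel bound $|K_{1}(y)|\lesssim(1+|y|)^{-(n+1)}$ from Proposition \ref{OseenDecayFinale} is what justifies $K_{1}\in L^{r}$ for every $r\in[1,\infty]$, and the ``uniqueness'' you obtain is uniqueness in the ball of $X_{T}$ (extendable to all of $X_{T}$ by the standard continuity argument), which is the sense intended in the statement; unconditional uniqueness in $C([0,T);L^{n})$ is a separate, harder theorem.
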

for all $q\in[n,+\infty]$. Solutions $u$ also obey to the bound:
\begin{equation}\label{KatSolBound}
\|u\|_{\Lrq} < +\infty, \quad \frac{2}{r} + \frac{n}{q} = 1, \quad n \leq q \leq \frac{n^{2}}{n-2}.
\end{equation}
This is of course the simplest small data result achievable. As we mentioned the optimal case seems to be covered by Koch and Tataru in \cite{Tat}. To state their result we need some more definitions:
\begin{definition}[$BMO(\mathbb{R}^{n})$]
A tempered distribution $u$ belongs to the space $BMO(\mathbb{R}^{n})$ if
\begin{equation}\label{BMONorm}
\Big(\sup_{x,R>0} \frac{2}{|B(x,R)|}\int_{0}^{R}\int_{B(x,R)} t |\nabla(e^{t\Delta} u)|^{2}(t,y) \ dydt\Big)^{\frac{1}{2}} < \infty.
\end{equation}
\end{definition}
\ni This is the Carleson charaterization of $BMO(\Rn)$. Other equivalent definitions are in \cite{Stein93-a}. The square root in (\ref{BMONorm}) is taken because in such a way th quantity is a seminorm\footnote{Of course (\ref{BMONorm}) of a constant function is zero.}.
\begin{remark}
Let $w$ the solution of the heat equation
$$
\left \{
\begin{array}{rcl}
\partial_{t}w - \Delta w & = & 0 \quad \mbox{in} \quad \Rpiu \times \mathbb{R}^{n}  \\
w & = & v \quad \mbox{in} \quad \{0\} \times \mathbb{R}^{n},
\end{array}\right.
$$
of course 
$$
\|v\|_{BMO(\Rn)} = \Big(\sup_{x,R>0} \frac{1}{|B(x,R)|}\int_{0}^{R^{2}}\int_{B(x,R)} | \nabla w|^{2}(t,y) \ dydt\Big)^{\frac{1}{2}},
$$
where we used the symbol $\|\cdot\|_{BMO(\Rn)}$ even if the quantity is a seminorm.
\end{remark}
\ni Then the space $BMO^{-1}(\Rn)$ is defined by:
\begin{definition}[$BMO^{-1}(\Rn)$]\label{BMO^-1Def}
A tempered disrtibution $v$ belongs to $BMO^{-1}(\Rn)$ if:
\begin{equation}\label{BMO^-1Norm}
\| v \|_{BMO^{-1}(\Rn)} = \Big(\sup_{x,R>0} \frac{1}{|B(x,R)|}\int_{0}^{R^{2}}\int_{B(x,R)} |e^{t\Delta} v|^{2}(t,y) \ dydt\Big)^{\frac{1}{2}} < \infty.
\end{equation}
In such a case $\|\cdot\|_{BMO^{-1}(\Rn)}$ is actually a norm.
\end{definition} 
\ni It is easy to show that if $v$ is a vector field such that $v_{i} \in BMO(\Rn)$ for each $i$, then $\nabla \cdot v \in BMO^{-1}(\Rn)$. 
The converse is also true, as stated by the following theorem that establishes the precise relationship between the two spaces:
\begin{theorem}[\cite{Lem}]
Let $u$ a tempred distribution. Then $u \in BMO^{-1}(\Rn)$ if and only if there exist $v_{i} \in BMO(\Rn)$ such that:
$$
u= \sum_{i=1}^{n} \partial_{i} v_{i}.
$$
\end{theorem}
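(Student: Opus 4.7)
The plan is to prove the two implications separately. For the easy direction $(\Leftarrow)$, assume $u=\sum_{i=1}^{n}\partial_{i}v_{i}$ with $v_{i}\in BMO(\Rn)$. Since the heat semigroup commutes with $\partial_{i}$ we have $e^{t\Delta}u=\sum_{i}\partial_{i}e^{t\Delta}v_{i}$, and hence the pointwise bound $|e^{t\Delta}u(y)|^{2}\le n\sum_{i=1}^{n}|\nabla e^{t\Delta}v_{i}(y)|^{2}$. Integrating over the Carleson box $[0,R^{2}]\times B(x,R)$, dividing by $|B(x,R)|$ and taking the supremum over $(x,R)$ yields $\|u\|_{BMO^{-1}}^{2}\le n\sum_{i}\|v_{i}\|_{BMO}^{2}$ directly from the Carleson characterisations in the two definitions, so $u\in BMO^{-1}(\Rn)$.

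For the nontrivial direction $(\Rightarrow)$, the natural candidate is given by the Riesz-type potentials
$$v_{i}:=-\partial_{i}(-\Delta)^{-1}u,\qquad \text{i.e.}\qquad \widehat{v_{i}}(\xi)=-\frac{i\xi_{i}}{|\xi|^{2}}\widehat{u}(\xi),\qquad i=1,\ldots,n,$$
which are well-defined modulo additive constants (irrelevant for the $BMO$ seminorm) and satisfy $\sum_{i}\partial_{i}v_{i}=u$. The task reduces to showing each $v_{i}\in BMO(\Rn)$, i.e.\ bounding
$$\int_{0}^{R^{2}}\int_{B}|\nabla e^{t\Delta}v_{i}|^{2}\,dy\,dt\lesssim |B|\,\|u\|_{BMO^{-1}}^{2}$$
uniformly over balls $B=B(x_{0},R)$. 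The crucial identity is $\nabla e^{t\Delta}v_{i}=T_{i}(e^{t\Delta}u)$ with $T_{i}:=-\nabla\partial_{i}(-\Delta)^{-1}$ a vector-valued Calder\'on--Zygmund convolution operator, $L^{2}$-bounded and with kernel $K_{i}$ satisfying $|K_{i}(y)|\lesssim |y|^{-n}$ together with the standard cancellation.

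I would then split $e^{t\Delta}u=g_{t}+h_{t}$ with $g_{t}=\chi_{2B}e^{t\Delta}u$; the near piece is dispatched immediately by $L^{2}$-boundedness of $T_{i}$:
$$\int_{0}^{R^{2}}\int_{B}|T_{i}g_{t}|^{2}\,dy\,dt\le\int_{0}^{R^{2}}\int_{2B}|e^{t\Delta}u|^{2}\lesssim |B|\,\|u\|_{BMO^{-1}}^{2}.$$
The far piece uses a dyadic decomposition $\Rn\setminus 2B=\bigcup_{k\ge 1}A_{k}$ with $A_{k}=2^{k+1}B\setminus 2^{k}B$, so that for $y\in B$ the kernel bound gives $|T_{i}h_{t}(y)|\lesssim \sum_{k\ge 1}(2^{k}R)^{-n}\int_{A_{k}}|e^{t\Delta}u|\,dz$. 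Here lies the main obstacle: a naive combination of Cauchy--Schwarz in $z$ and the $BMO^{-1}$ Carleson mass on the enlarged Carleson box $[0,(2^{k+1}R)^{2}]\times 2^{k+1}B$ (which is $\lesssim (2^{k}R)^{n}\|u\|_{BMO^{-1}}^{2}$) fails to produce a summable series in $k$, because the $(2^{k}R)^{-n}$ decay of $K_{i}$ is \emph{exactly} balanced by the $(2^{k}R)^{n}$ growth of both annular volumes and Carleson masses. The needed extra decay is extracted from the restriction $t\le R^{2}\ll (2^{k}R)^{2}$ by using the semigroup identity $T_{i}e^{t\Delta}=e^{(t/2)\Delta}\circ(T_{i}e^{(t/2)\Delta})$: the composed kernel $T_{i}p_{t/2}$ enjoys off-diagonal decay at scale $\sqrt{t}$, which on $A_{k}$ produces a genuine decaying factor in $(2^{k}R)/\sqrt{t}$ that renders the sum in $k$ convergent. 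Combined with the near-part estimate this gives the desired uniform bound, hence $v_{i}\in BMO(\Rn)$.
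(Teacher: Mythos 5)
The paper does not actually prove this statement --- it is imported verbatim from \cite{Lem} --- so there is no internal argument to compare against and your proposal must be judged on its own terms. Your forward implication is correct: the pointwise bound $|e^{t\Delta}u|^{2}\le n\sum_{i}|\nabla e^{t\Delta}v_{i}|^{2}$ combined with the Carleson characterisation of $BMO$ recalled in the Remark gives $\|u\|_{BMO^{-1}}^{2}\le n\sum_{i}\|v_{i}\|_{BMO}^{2}$. The choice of candidate $v_{i}=-\partial_{i}(-\Delta)^{-1}u$ for the converse is also the right one.

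The gap is in the mechanism you invoke to make the far-field dyadic sum converge. You correctly diagnose that the naive estimate produces a non-summable series in $k$, but the proposed fix fails: composing with $e^{(t/2)\Delta}$ does \emph{not} improve the spatial decay of the kernel. Indeed $\partial_{j}e^{t\Delta}v_{i}=-R_{i}R_{j}e^{t\Delta}u$ has kernel $G_{t}=-\partial_{i}\partial_{j}E_{t}$ with $E_{t}(y)=\int_{t}^{\infty}p_{\sigma}(y)\,d\sigma=c_{n}|y|^{2-n}-\int_{0}^{t}p_{\sigma}(y)\,d\sigma$, and for $|y|\gg\sqrt{t}$ the second term contributes only $O\bigl(|y|^{-n}(\sqrt{t}/|y|)^{N}\bigr)$ for every $N$; hence $G_{t}(y)$ coincides, up to negligible corrections, with the homogeneous second-order Riesz kernel $K(y)\simeq |y|^{-n}$. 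No factor $\sqrt{t}/(2^{k}R)$ appears, and the contribution of each annulus $A_{k}$ is exactly as large as in your ``naive'' computation. The summability must come from the \emph{cancellation} $\int_{\mathbb{S}^{n-1}}K\,d\sigma=0$, which you mention but never exploit. Two standard ways to use it: (a) write $v_{i}=-\int_{0}^{\infty}\partial_{i}e^{(s/2)\Delta}\bigl(e^{(s/2)\Delta}u\bigr)\,ds$, note that $\partial_{i}p_{s/2}$ is an $L^{1}$-normalised mean-zero bump at scale $\sqrt{s}$, and invoke the lemma that $\int_{0}^{\infty}\psi_{\tau}*a_{\tau}\,\tau^{-1}d\tau\in BMO$ whenever $|a_{\tau}|^{2}\,dx\,\tau^{-1}d\tau$ is a Carleson measure and the $\psi_{\tau}$ are cancellative bumps; or (b) identify $BMO=\dot F^{0}_{\infty,2}$ and $BMO^{-1}=\dot F^{-1}_{\infty,2}$ and apply the multiplier/lifting theorem to the degree $-1$ homogeneous symbol $\xi_{i}/|\xi|^{2}$. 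Either route supplies the missing geometric gain in $k$; your argument as written does not.
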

\ni We also define the parabolic cylinder\footnote{This is a central object in the study of regulariry properties of (\ref{CauchyNS}) because it obeys to the scaling: $(t,x)\rightarrow (\lambda^{2}t,\lambda x)$; we will came back later on this topics.} centred in $x$ and with radius $R$:
\begin{equation}\label{ParCyl}
Q(x,R)=B(x,R)\times (0,R^{2}),
\end{equation} 
and introduce the adapted space $X$ by:
\begin{definition}[\cite{Tat}]
\begin{equation}
\|u\|_{X}= \sup_{t>0}t^{\frac{1}{2}}\|u(t)\|_{L^{\infty}(\Rn)} + \Big(\sup_{x,R>0} \frac{1}{|B(x,R)|}\int_{Q(x,R)} |u|^{2}(t,y) \ dydt\Big)^{\frac{1}{2}}.
\end{equation}
\end{definition}
\ni We are ready to state the theorem:
\begin{theorem}[\cite{Tat}]\label{TatTheo}
Let $u_{0}$ a divergence free vector field on $\mathbb{R}^{n}$. Exists $\varepsilon >0$ such that if 
$$
\|u_{0}\|_{BMO^{-1}(\Rn)} < \varepsilon,
$$
then there is a unique solution $u : \mathbb{R}^{+}\times \mathbb{R}^{n} \rightarrow \Rn$ of the integral problem (\ref{CauchyNSInt}). Furthermore $u \in X$.
\end{theorem}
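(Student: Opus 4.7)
The plan is to apply the abstract Picard framework of Theorem~\ref{Picard} with adapted space $X_{0}=BMO^{-1}(\mathbb{R}^{n})$ and path space $X_{T}=X$ (taking $T=+\infty$ because we aim at a global solution). By Theorem~\ref{Picard} it suffices to prove two estimates:
\begin{equation*}
  \text{(L)}\quad \|e^{t\Delta}u_{0}\|_{X}\lesssim \|u_{0}\|_{BMO^{-1}},
  \qquad
  \text{(B)}\quad \|B(u,v)\|_{X}\lesssim \|u\|_{X}\|v\|_{X},
\end{equation*}
where $B(u,v)=\int_{0}^{t}e^{(t-s)\Delta}\mathbb{P}\nabla\cdot(u\otimes v)(s)\,ds$. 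Once (L) and (B) are established, smallness of $\|u_{0}\|_{BMO^{-1}}$ produces a unique fixed point $u\in X$, and Theorem~\ref{DiffvsInt} turns this integral solution into a weak solution of \eqref{CauchyNS}.

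For (L), the second (Carleson-type) summand in the $X$--norm is literally the $BMO^{-1}$ seminorm in Definition~\ref{BMO^-1Def}. The first summand, $\sup_{t>0}t^{1/2}\|e^{t\Delta}u_{0}\|_{L^{\infty}}$, is a standard equivalent characterization of $BMO^{-1}$: one way to see it is to write $u_{0}=\sum_{i}\partial_{i}v_{i}$ with $v_{i}\in BMO$ (the representation quoted in the text right after Definition~\ref{BMO^-1Def}) and use the classical bound $\sqrt{t}\,\partial_{i}e^{t\Delta}:BMO\to L^{\infty}$ that follows from the heat-kernel reproducing identity and the John--Nirenberg inequality; alternatively one tests \eqref{BMO^-1Norm} against a carefully localized mollifier.

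The real work is (B), and I would use the Oseen-kernel representation of $e^{t\Delta}\mathbb{P}\nabla\cdot$ from Proposition~\ref{OseenDecayFinale}: there is a kernel $K(t-s,x-y)$ with the homogeneous bound
\begin{equation*}
  |K(\tau,z)|\lesssim \tau^{-(n+1)/2}\bigl(1+|z|/\sqrt{\tau}\bigr)^{-n-1}.
\end{equation*}
For the pointwise component, I would split the $s$--integration at $s=t/2$. On $s\in(t/2,t)$ the factor $|u(s,y)v(s,y)|\le s^{-1}\|u\|_{X}\|v\|_{X}$ combined with $\int|K(t-s,z)|\,dz\lesssim (t-s)^{-1/2}$ gives a contribution $\lesssim t^{-1/2}\|u\|_{X}\|v\|_{X}$. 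On $s\in(0,t/2)$ the pointwise bound would diverge, so instead I use the $L^{2}$-Carleson component of $\|u\|_{X},\|v\|_{X}$ on the cylinder $Q(x,\sqrt{t})$ together with Cauchy--Schwarz and the bound $|K(t-s,x-y)|\lesssim t^{-(n+1)/2}$ for $|x-y|\lesssim\sqrt{t}$, the far-field $|x-y|\gtrsim \sqrt{t}$ being summed dyadically using the $\langle z\rangle^{-n-1}$ decay.

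The genuine obstacle is the Carleson component of $\|B(u,v)\|_{X}$: bounding, uniformly in $(x_{0},R)$,
\begin{equation*}
  \frac{1}{|B(x_{0},R)|}\int_{Q(x_{0},R)}|B(u,v)(t,y)|^{2}\,dy\,dt
  \lesssim \|u\|_{X}^{2}\|v\|_{X}^{2}.
\end{equation*}
I would split $u\otimes v=F_{\mathrm{in}}+F_{\mathrm{out}}$ where $F_{\mathrm{in}}$ is the restriction of $u\otimes v$ to the enlarged parabolic neighbourhood $B(x_{0},2R)\times(0,4R^{2})$. For the inner piece I would invoke the $L^{2}$--boundedness of the parabolic singular integral operator $F\mapsto\int_{0}^{t}e^{(t-s)\Delta}\mathbb{P}\nabla\cdot F(s)\,ds$ (Calder\'on--Zygmund type, justified via the kernel bounds of Proposition~\ref{OseenDecayFinale}), turning the estimate into the Carleson control of $\iint_{Q(x_{0},2R)}|u\otimes v|^{2}\lesssim R^{n}\|u\|_{X}^{2}\|v\|_{X}^{2}$, which is built into the definition of $X$ via Cauchy--Schwarz and the pointwise bound $|u(s,y)|\le s^{-1/2}\|u\|_{X}$. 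For the outer piece $F_{\mathrm{out}}$ the Oseen kernel is evaluated at $|x-y|\gtrsim R$ and $|y-x_{0}|\simeq 2^{k}R$; the decay $(1+|z|/\sqrt{\tau})^{-n-1}$ lets me sum geometrically in $k$ and in the time scales, ultimately controlling the far contribution by $\|u\|_{X}\|v\|_{X}$ times a convergent geometric series. Getting the two pieces to fit together cleanly, with the scale-invariant Calder\'on--Zygmund bound on $F_{\mathrm{in}}$, is the technical heart of the argument and the main place where I expect to spend effort.
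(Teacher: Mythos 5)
The paper itself gives no proof of this statement: Theorem \ref{TatTheo} is quoted from \cite{Tat}, so there is no in-paper argument to compare against, and your sketch must be judged against the known Koch--Tataru proof. Its overall architecture is the right one (Picard iteration via Theorem \ref{Picard}, the linear estimate from the Carleson characterization of $BMO^{-1}$, a near/far and early/late decomposition of the bilinear term), and both the linear estimate and the $L^{\infty}$-component of the bilinear estimate are handled correctly.

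The gap is in the inner piece of the Carleson component, exactly where you locate the technical heart. First, the operator $F\mapsto\int_{0}^{t}e^{(t-s)\Delta}\mathbb{P}\nabla\cdot F(s)\,ds$ is \emph{not} an $L^{2}_{t,x}$-bounded Calder\'on--Zygmund operator: its space--time symbol is comparable to $\xi/(i\tau+|\xi|^{2})$, which is of order $-1$ and behaves like $|\xi|^{-1}$ along $\tau=0$. The usable substitute is the energy estimate $\|w\|_{L^{\infty}_{t}L^{2}_{x}}\lesssim\|F\|_{L^{2}_{t,x}}$, and converting that to the Carleson average over $Q(x_{0},R)$ costs a factor $R^{2}$, so closing the argument would require $\|u\otimes v\|_{L^{2}(Q(x_{0},2R))}^{2}\lesssim R^{n-2}\|u\|_{X}^{2}\|v\|_{X}^{2}$ (your stated power $R^{n}$ is dimensionally inconsistent with the target). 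Second, and more seriously, that product bound is false for general elements of $X$: a field of size $s^{-1/2}\varepsilon$ supported on a slab $s\simeq 4^{-j}R^{2}$, $|y-x_{0}|\le 2R$ has $\|u\|_{X}\simeq\varepsilon$ but $\iint_{Q(x_{0},2R)}|u|^{4}\simeq 4^{j}R^{n-2}\varepsilon^{4}$, unbounded in $j$; equivalently, the quantity $\int_{0}^{R^{2}}\int_{B}s^{-1}|u|\,|v|$ produced by your ``Cauchy--Schwarz plus $|u|\le s^{-1/2}\|u\|_{X}$'' step is not controlled by the Carleson part of the norm. What \cite{Tat} actually control is the $L^{1}$-based quantity $\sup_{x_{0},R}R^{-n}\iint_{Q(x_{0},R)}|u\otimes v|$ --- this \emph{does} follow from Cauchy--Schwarz and the definition of $X$ --- together with $\sup_{s}s\|u\otimes v\|_{L^{\infty}}$, and the real work is proving that the Duhamel operator maps this product space into $X$, a mapping property that rests on the detailed Oseen-kernel estimates (as in Proposition \ref{OseenDecayFinale}) and cannot be replaced by an $L^{2}$ singular-integral bound. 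Until the inner piece is rebuilt along these lines, the bilinear estimate, and hence the fixed point, does not close.
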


\ni This result works with a translation invariant adapted spaces, and, as mentioned, there is a wide literature on the topics. On the other hand even if $X_{0}$ is not translation invariant local regularity results are still available, but this cases have not been so deeply investigated. In the following we focus in particular on weighted spaces with power weights of the kind $|x|^{\alpha}$. So the translation invariance is broken but we still have to require invariance with respect to the scaling centered in the origin
$$
\lambda \rightarrow \lambda u_{0}(\lambda x);
$$   
In this way the most simple spaces available are endowed with the norms
$$
\| |x|^{\alpha}  \cdot \|_{L^{p}(\Rn)}, \qquad \mbox{with} \quad \alpha = 1- \frac{n}{p}.
$$
In the case $p=2$ a very interesting result has been obtained in \cite{CKN}
\begin{theorem}[Caffarelli-Kohn-Nirenberg]\label{CKNSmallData}
Let $u_{0} \in L^{2}_{\sigma}(\Rn)$ and $u$ a suitable weak solution of (\ref{CauchyNS}). There exists an absolute constant $\varepsilon_{0} > 0$ such that if
$$
\| |x|^{1- n/2} u_{0} \|_{L^{2}(\Rn)} = \varepsilon < \varepsilon_{0},
$$ 
then $u$ is regular ($C^{\infty}$ in space variables) in the interior of the parabola
$$
\Pi = \left\{ (t,x) \quad \mbox{t.c.} \quad t > \frac{|x|^{2}}{\varepsilon_{0}- \varepsilon} \right\}.
$$
\end{theorem}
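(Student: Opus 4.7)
The proof combines a global scale-invariant weighted energy bound with a parabolic $\varepsilon$-regularity criterion. The plan consists of establishing a global a priori estimate in the critical weighted space $L^{\infty}_{t}L^{2}(|x|^{2-n}dx)\cap L^{2}_{t}\dot H^{1}(|x|^{2-n}dx)$ (which is scale invariant under $u(t,x)\mapsto \lambda u(\lambda^{2}t,\lambda x)$), and then using this bound to verify the CKN $\varepsilon$-regularity criterion (Lemma \ref{CKNLemma}, or more directly its weighted form Lemma \ref{WeightedCKNLemma}) at every interior point of $\Pi$.

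The main step is the derivation of the weighted energy inequality. Testing formally the Navier--Stokes equation against $|x|^{2-n}u$ and integrating by parts yields an identity of the form
\[
\tfrac12\tfrac{d}{dt}\||x|^{1-n/2}u\|_{L^{2}}^{2}+\||x|^{1-n/2}\nabla u\|_{L^{2}}^{2}=\mathcal N(u)+\mathcal P(u),
\]
where $\mathcal N$ collects the contribution of $(u\cdot\nabla)u$ and $\mathcal P$ that of the pressure (expressed through the Riesz representation $p=-\sum_{i,j}R_{i}R_{j}(u_{i}u_{j})$). Both are controlled using the critical Hardy inequality $\||x|^{-n/2}v\|_{L^{2}}\lesssim\||x|^{1-n/2}\nabla v\|_{L^{2}}$, the weighted Hardy--Sobolev embeddings (a special case of Corollary \ref{cor:integers}), and the boundedness of Calder\'on--Zygmund operators in $L^{2}(|x|^{2-n}dx)$, valid because $|x|^{2-n}$ is an $A_{2}$ weight when $n\ge 3$. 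The combined estimate takes the form
\[
|\mathcal N(u)|+|\mathcal P(u)|\le C_{*}\,\||x|^{1-n/2}u\|_{L^{2}}\cdot\||x|^{1-n/2}\nabla u\|_{L^{2}}^{2},
\]
so that by a continuation argument on Leray's mollified approximations, under the choice $C_{*}\varepsilon_{0}<1/2$, one obtains the global bound
\[
\sup_{t>0}\||x|^{1-n/2}u(t)\|_{L^{2}}^{2}+\int_{0}^{\infty}\||x|^{1-n/2}\nabla u(s)\|_{L^{2}}^{2}\,ds\le 4\varepsilon^{2}.
\]

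Lemma \ref{WeightedCKNLemma} applied to this global bound delivers immediately regularity along the time axis $(0,\infty)\times\{0\}$. For a generic off-axis point $(t_{0},x_{0})\in\operatorname{int}\Pi$, the key observation is that the hypothesis $t_{0}>|x_{0}|^{2}/(\varepsilon_{0}-\varepsilon)$ is precisely $|x_{0}|/\sqrt{t_{0}}<\sqrt{\varepsilon_{0}-\varepsilon}$, so the scaling $\lambda=1/\sqrt{t_{0}}$ (which preserves both the weighted data norm and the parabola) reduces the problem to regularity of a solution of the same size at $(1,y)$ with $|y|<\sqrt{\varepsilon_{0}-\varepsilon}$ small. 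On the parabolic cylinder $Q(|y|/2,1,y)$ one has $|x|^{2-n}\simeq|y|^{2-n}$, so the bound of the previous step delivers the single-scale smallness
\[
\frac{1}{|y|^{n-2}}\int\!\!\int_{Q(|y|/2,1,y)}|\nabla u|^{2}\,dx\,dt\lesssim \int\!\!\int_{Q(|y|/2,1,y)}|x|^{2-n}|\nabla u|^{2}\,dx\,dt\le C\varepsilon^{2},
\]
together with the analogous one-scale controls on $\|u\|_{L^{\infty}_{t}L^{2}}$ and on the pressure; for $\varepsilon_{0}$ small this matches the single-scale version of the CKN partial regularity criterion in \cite{CKN} (of which Lemma \ref{CKNLemma} is the sharper limsup formulation), yielding regularity at $(1,y)$ and hence at $(t_{0},x_{0})$.

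The principal difficulty is the weighted energy estimate itself, and within it the control of the pressure term in the weighted space $L^{2}(|x|^{2-n}dx)$: one must combine the $A_{2}$-weighted Calder\'on--Zygmund bound with the weighted Hardy--Sobolev inequalities so that every nonlinear contribution factors a full power of $\||x|^{1-n/2}u\|_{L^{2}}$ with an absolute constant $C_{*}$, thereby allowing a genuine smallness regime independent of the particular solution. A secondary technical obstacle is rigour: the formal testing procedure must be carried out on Leray's mollified approximations of the previous section and the weighted bound passed to the limit, so one needs stability of all the estimates under this approximation-limit scheme.
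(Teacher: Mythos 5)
You should first note that the thesis does not prove Theorem \ref{CKNSmallData} at all: it is quoted from \cite{CKN} without proof, so the only meaningful comparison is with the original Caffarelli--Kohn--Nirenberg argument. Your outline is essentially a reconstruction of that argument (a scale-invariant weighted energy bound for $\int |x|^{2-n}|u|^2$ closed by absorption, followed by an $\varepsilon$-regularity criterion applied after parabolic rescaling at each point of the paraboloid), so the architecture is the right one. Two steps, however, are genuine gaps as written.

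First, the theorem concerns an \emph{arbitrary} suitable weak solution, so you may neither ``test the equation against $|x|^{2-n}u$'' nor argue through Leray's mollified approximations --- the latter would only prove the statement for solutions produced by that particular construction. The weighted energy inequality must instead be extracted from the generalized energy inequality \eqref{GenEnergy} of Definition \ref{SuitSolDef}, taking $\phi$ to be a smooth, compactly supported truncation of $|x|^{2-n}$ (the sign $\Delta |x|^{2-n}\le 0$ for $n\ge 3$ is what renders the $|u|^2\Delta\phi$ term harmless), and the initial datum enters only through a separate right-continuity argument at $t=0$: weak $L^2$ continuity gives lower semicontinuity of the weighted norm, i.e.\ the wrong direction of the inequality you need. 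Second, Lemma \ref{CKNLemma} is a $\limsup_{r\to 0}$ criterion, so the single-scale bound $r^{2-n}\int\!\!\int_{Q_r}|\nabla u|^2\lesssim \varepsilon^2$ at the one scale $r=|y|/2$ cannot be fed into it directly; off the axis you must use the one-scale CKN criterion on $r^{-2}\int\!\!\int_{Q_r}\bigl(|u|^3+|p|^{3/2}\bigr)$, and the pressure contribution --- which you mention but do not derive --- is the genuinely delicate part both here and in closing the absorption of step one: it requires $\int\!\!\int |x|^{-2}|p|^{3/2}\lesssim \varepsilon^3$, hence boundedness of $R_iR_j$ on $L^{3/2}(|x|^{-2}dx)$ (an $A_{3/2}$ weight for $n=3$) together with the identification $p=-\sum R_iR_j(u_iu_j)$ for the particular suitable solution at hand. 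Finally, the exponents $|u|^3$, $|p|^{3/2}$ and the local regularity machinery are tailored to $n=3$, whereas you state the estimates for general $n$; the interpolation inequalities (instances of Theorem \ref{the:CKN} or Corollary \ref{cor:integers}) would have to be re-derived with dimension-dependent powers.
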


\begin{remark}
Of course by translations the analogous result holds if one consider small data in the norm $\| |x-\bar{x}|^{\alpha} \cdot\|_{L^{p}(\Rn)}$, for a fixed $\bar{x} \in \Rn$. 
\end{remark}

\ni We will come back on this result in the next chapter and we will show how it depends on the amount of angular integrability of $u_{0}$. By applying the technology developed in the first chapter we suggest how to quantify precisely the gain of regularity that angular integrability provides.

\chapter{Results in weighted setting with angular integrability}

\ni We apply the technology developed in Chapter \ref{SectInequality} to the study of the well posedness and regularity of (\ref{CauchyNS}). We closely look at these problems in the setting of weighted $L^{p}$ spaces with different integrability in radial and angular directions. More precisely we consider well posedness of (\ref{CauchyNS}) with $u_{0} \in L^{p}_{|x|^{\alpha p}d|x|}L^{\p}_{\theta}$ that's defined by
\begin{equation}\label{MixedLpSect4}
\begin{array}{lcl} 
 L^{p}_{|x|^{\alpha p}d|x|}L^{\p}_{\theta}   
 &=& \Big\{ f \in L^{1}_{loc}(\Rn) \quad \mbox{s.t.} \\
 && 
  \left(
    \int_{0}^{+\infty}
    \|f(\rho\ \cdot\ )\|^{p}_{L^{\p}(\mathbb{S}^{n-1})}
    \rho^{\alpha p + n-1} d \rho
  \right)^{\frac1p} < +\infty \Big\};
\end{array}
\end{equation} 
it is immediate to show that the above quantity is a norm and we use the notation 
$$ \|\cdot\|_{L^{p}_{|x|^{\alpha p}d|x|}L^{\p}_{\theta}}, \quad \mbox{or} \quad \||x|^{\alpha}\cdot\|_{\Lpptilde}.
$$
This space is not translation invariant, as the classical adapted spaces are, but as shown in theorem \ref{CKNSmallData} local regularity results are still achieveble in this setting.
\ni Then we are interested in regularity criteria in the context of spaces:
\begin{equation}\label{MixedLpSect4bis}
\begin{array}{lcl} 
L^{s}_{T}L^{p}_{|x|^{\alpha p}d|x|}L^{\p}_{\theta}  
 &=& \Big\{ u \in L^{1}_{loc}( (0,T) \times \Rn) \quad \mbox{s.t.}   \\
 & &  \left(
    \int_{0}^{T} \left( \int_{0}^{+\infty}
    \|u(t, \rho\ \cdot\ )\|^{p}_{L^{\p}(\mathbb{S}^{n-1})}
    \rho^{\alpha p + n-1} d \rho \right)^{\frac{s}{p}} dt
  \right)^{\frac1s} < +\infty \Big\},
\end{array}
\end{equation}
and again we denote this norms with  
$$
\|\cdot\|_{L^{s}_{T}L^{p}_{|x|^{\alpha p}d|x|}L^{\p}_{\theta}}, \quad  \mbox{or} \quad \||x|^{\alpha}\cdot\|_{\Lpptilde}.
$$
We use also $\|\cdot\|_{L^{s}_{t}L^{p}_{|x|^{\alpha p}d|x|}L^{\p}_{\theta}}$ and $\||x|^{\alpha}\cdot\|_{L^{s}_{t}\Lpptilde}$ if $T=+\infty$. Of course if we restrict to radially symmetric functions $u_{0}=u_{0}(|x|)$ (or $u=u(t,|x|)$) the norms reduce to the classical ones. Now it is well known that the problem (\ref{CauchyNS}) is very much simpler in a symmetric setting, and stronger results are achievable. The idea is try to recover some of this improvements in the case of initial data or solutions with merely higher angular integrability. The functional spaces well suited to this purpose are indeed (\ref{MixedLpSect4}), (\ref{MixedLpSect4bis}). We find encouraging results by considering $L^{p}_{|x|^{\alpha p}d|x|}L^{\p}_{\theta}$ with $\alpha <0$ and large value of $\p$. The idea is that while the weight $|x|^{\alpha}$ localizes the norm near to the origin, the radial $L^{\p}$ integrability, with large $\p$, provides a bound for large $|x|$. This heuristic will be more precisely formulated later, and will be useful to interpret all the following results.   
Of course by translations can be considered power weights centred in some $\bar{x} \neq 0$. In  this case all the norms have to be translated in $\bar{x}$. We give some precise definitions about the spaces we are going to use.
\begin{notation}
Let $\alpha \in \mathbb{R}$, $p,s \in [1,+\infty)$. Let then $f:\Rn \rightarrow \mathbb{R}$, $F: \Rpiu \times \Rn \rightarrow \mathbb{R}$. We will say:
\begin{itemize}
\item $f \in L^{p}_{|x|^{\alpha p}d|x|}L^{\p}_{\theta}$ if 
$$\Big( \int_{\Rpiu} \|f( \rho \cdot \ )\|^{p}_{L^{\p}_{(\mathbb{S}^{n-1})}}\rho^{\alpha p +n-1}  \ d \rho \Big)^{\frac{1}{p}} < +\infty,$$ 
and we denote this norm with $\|\cdot\|_{L^{p}_{|x|^{\alpha p}d|x|}L^{\p}_{\theta}}$, or $\||x|^{\alpha}\cdot\|_{L^{p}_{|x|}L^{\p}_{\theta}}$;
\item $f \in L^{\infty}_{|x|^{\alpha}d|x|}L^{\p}_{\theta}$ if 
$$\sup_{ \rho > 0 }  \rho^{\alpha} \|f( \rho \ \cdot \ )\|_{L^{\p}(\mathbb{S}^{n-1})} < +\infty,$$ 
and we denote this norm with $\|\cdot\|_{L^{\infty}_{|x|^{\alpha}d|x|}L^{\p}_{\theta}}$, or $\||x|^{\alpha}\cdot\|_{L^{\infty}_{|x|}L^{\p}_{\theta}}$;
\item $F \in L^{s}_{t}L^{p}_{|x|^{\alpha p}d|x|}L^{\p}_{\theta}$ if 
$$\Big( \int_{\Rpiu} \Big| \int_{\Rpiu} \|F(t, \rho \ \cdot \ )\|^{p}_{L^{\p}(\mathbb{S}^{n-1})} \rho^{\alpha p +n-1}  \ d \rho \Big|^{\frac{s}{p}} \ dt  \Big)^{\frac{1}{s}} < +\infty,$$ 
and we denote this norm with 
$\|\cdot\|_{L^{s}_{t}L^{p}_{|x|^{\alpha p}d|x|}L^{\p}_{\theta}}$, or $\||x|^{\alpha}\cdot\|_{L^{s}_{t}L^{p}_{|x|}L^{\p}_{\theta}}$;
\item $F \in L^{s}_{T}L^{p}_{|x|^{\alpha p}d|x|}L^{\p}_{\theta}$ if 
$$\Big( \int_{0}^{T}\Big| \int_{\Rpiu} \|F(t,\rho \ \cdot \ )\| \rho^{\alpha p + n-1}  \ d \rho \Big|^{\frac{s}{p}} \ dt  \Big)^{\frac{1}{s}} < +\infty,$$ 
and we denote this norm with $\|\cdot\|_{L^{s}_{T}L^{p}_{|x|^{\alpha p}d|x|}L^{\p}_{\theta}}$, or $\||x|^{\alpha}\cdot\|_{L^{s}_{T}L^{p}_{|x|}L^{\p}_{\theta}}$.
\end{itemize}
\end{notation}
We give similar definitions for $$L^{\infty}_{t}L^{p}_{|x|^{\alpha p}d|x|}L^{\p}_{\theta}, \quad L^{\infty}_{T}L^{p}_{|x|^{\alpha p}d|x|}L^{\p}_{\theta}, \quad L^{s}_{t}L^{\infty}_{|x|^{\alpha}d|x|}L^{\p}_{\theta}, \quad L^{s}_{T}L^{\infty}_{|x|^{\alpha}d|x|}L^{\p}_{\theta}.$$

\section{Decay estimates for convolutions with heat and Oseen kernels}

\ni The most important technical tools we need are weighted decay estimates for convolutions with the heat and Oseen kernels. Even if results of this kind already exist in literature we cover a larger set of weights by considering $L^{p}_{|x|}L^{\p}_{\theta}$ spaces. In particular we show that infact the higher angular integrability allows to consider a larger set of weights. Corollary (\ref{cor:nonhom}) is the main ingredient in the proofs.

\noindent To give a more compact notation it's convenient to define the quantities:
\begin{equation}\label{LambdaDef}
\Lambda (\alpha, p, \p) = \alpha + \frac{n-1}{p} - \frac{n-1}{\p}
\end{equation}
\begin{equation}\label{OmegaDef}
\Omega (\alpha,p,s) = \alpha + \frac np + \frac 2s.
\end{equation}
We will use also the notation $\Lambda_{a}$ instead of $\Lambda(\alpha,p,\p)$, when the values of $p,\p$ will be clear by the context. 
Let's start by the punctual decay: 

\begin{proposition}\label{PDecayCor}
Let $n\geq 2$, $1 < p \leq q < +\infty$ and $1 \leq \p \leq \q \leq +\infty$. Assume further that $\alpha, \beta$ satisfy the set of conditions 
 \begin{equation}\label{eq:condDL(Heat)}
    \beta > -\frac nq,\qquad \alpha<\frac{n}{p'}, \qquad
    \Lambda (\alpha,p,\p) \geq \Lambda (\beta,q,\q).
  \end{equation}
Then the following estimates hold:
\begin{enumerate}
         \item \begin{equation}\label{PHeatDer}
     \||x|^{\beta} \partial^{\eta} e^{t\Delta}u_{0}\|_{L^{q}_{|x|}L^{\q}_{\theta}} 
     \leq \frac{c_{\eta}}{t^{(|\eta| + \frac{n}{p}-\frac{n}{q} + \alpha-\beta)/2}} 
     \| |x|^{\alpha} u_{0}\|_{L^{p}_{|x|}L^{\p}_{\theta}}, \qquad t>0,
     \end{equation}
     provided that $|\eta| + \frac{n}{p}-\frac{n}{q} + \alpha-\beta \geq0$,
      \item \begin{equation}\label{PGHeatDer}
       \||x|^{\beta} \partial^{\eta} e^{t\Delta} \mathbb{P} \nabla \cdot F\|_{L^{q}_{x}L^{\q}_{\theta}} 
       \leq \frac{d_{\eta}}{t^{(1 + |\eta| + \frac{n}{p} -\frac{n}{q} +\alpha -\beta)/2}} \| |x|^{\alpha} F\|_{L^{p}_{|x|}L^{\p}_{\theta}}, \qquad t>0,
         \end{equation} 
provided that $1+ |\eta| + \frac{n}{p}-\frac{n}{q} + \alpha-\beta > 0$.
\end{enumerate}
for each multi index $\eta$, so in particular:
\begin{enumerate}
\item \begin{equation}\label{PHeat}
     \||x|^{\beta} e^{t\Delta}u_{0}\|_{L^{q}_{|x|}L^{\q}_{\theta}} 
     \leq \frac{c_{0}}{t^{(\frac{n}{p}-\frac{n}{q} + \alpha-\beta)/2}} 
     \| |x|^{\alpha} u_{0}\|_{L^{p}_{|x|}L^{\p}_{\theta}}, \qquad t>0,
     \end{equation}
provided that $\frac{n}{p}-\frac{n}{q} + \alpha-\beta \geq0$,
    \item \begin{equation}\label{PGHeat}
       \||x|^{\beta} e^{t\Delta} \mathbb{P} \nabla \cdot F\|_{L^{q}_{x}L^{\q}_{\theta}} 
       \leq \frac{d_{0}}{t^{(1 + \frac{n}{p} -\frac{n}{q} +\alpha -\beta)/2}} \| |x|^{\alpha} F\|_{L^{p}_{|x|}L^{\p}_{\theta}}, \qquad t>0,
         \end{equation} 
provided that $1+ \frac{n}{p}-\frac{n}{q} + \alpha-\beta > 0$. The range of admissible $p,q$ indices can be relaxed to $1\leq p \leq q \leq +\infty$ provided that $\Lambda (\alpha,p,\p) > \Lambda (\beta,q,\q)$. 
\end{enumerate}
\end{proposition}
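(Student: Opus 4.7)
The plan is to reduce every estimate to the fixed-time case $t=1$ via parabolic scaling, to pointwise-dominate the convolution kernel by the nonhomogeneous fractional kernel $\langle x\rangle^{-\gamma}$ introduced in Remark \ref{rem:nonhomogeneous}, and to apply the weighted mixed-norm inequality of Corollary \ref{cor:nonhom} to the resulting operator $S_\gamma$. The powers of $t$ in \eqref{PHeatDer}--\eqref{PGHeatDer} will come entirely from scaling exponents, while Corollary \ref{cor:nonhom} is what produces the actual mapping between $\Lpptildealpha$ and $\Lqqtildebeta$.

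Concretely, setting $v(x)=u_0(\sqrt t\,x)$ (or $v(x)=F(\sqrt t\,x)$ in the Oseen case), a change of variables in the convolution integrals yields
\begin{equation*}
  \partial^\eta e^{t\Delta}u_0(y)=t^{-|\eta|/2}\bigl((\partial^\eta G_1)*v\bigr)(y/\sqrt t),\qquad
  \partial^\eta e^{t\Delta}\mathbb{P}\nabla\!\cdot\! F(y)=t^{-(1+|\eta|)/2}\bigl((\partial^\eta K)*v\bigr)(y/\sqrt t),
\end{equation*}
where $K$ is a component of the Oseen kernel from Proposition \ref{OseenDecayFinale}. The elementary dilation identity
\begin{equation*}
  \|\,|x|^{\beta} w(\cdot/\sqrt t)\|_{L^{q}_{|x|}L^{\q}_\theta}
  = t^{(\beta+n/q)/2}\,\|\,|x|^{\beta} w\|_{L^{q}_{|x|}L^{\q}_\theta}
\end{equation*}
and its source-side analogue in $(\alpha,p,\p)$ then reduce \eqref{PHeatDer}--\eqref{PGHeatDer} to their $t=1$ versions, with the aggregated power of $t$ that comes out matching exactly the one stated in the proposition.

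At $t=1$ I would exploit the Gaussian decay $|\partial^\eta G_1(x)|\lesssim \langle x\rangle^{-\gamma}$, valid for every $\gamma>0$, and for the Oseen kernel the bound $|\partial^\eta K(x)|\lesssim \langle x\rangle^{-(n+1+|\eta|)}$ from Proposition \ref{OseenDecayFinale}, to obtain the pointwise dominations $|(\partial^\eta G_1)\!*\!v|\lesssim S_\gamma(|v|)$ and $|(\partial^\eta K)\!*\!v|\lesssim S_{n+1+|\eta|}(|v|)$. Corollary \ref{cor:nonhom}, applied with the pair $(\alpha,-\beta,\gamma)$, then translates its hypotheses exactly into \eqref{eq:condDL(Heat)}: the localization condition $-\beta<n/q$ becomes $\beta>-n/q$, the condition $\alpha<n/p'$ is unchanged, and the third inequality in \eqref{eq:condDL} coincides with $\Lambda(\alpha,p,\p)\ge\Lambda(\beta,q,\q)$. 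The remaining residual condition $\alpha-\beta+\gamma>n(1+\tfrac1q-\tfrac1p)$ is vacuous for the heat kernel, where one may send $\gamma\to\infty$, while in the Oseen case, with $\gamma=n+1+|\eta|$ forced, it is precisely the strict inequality $1+|\eta|+\tfrac np-\tfrac nq+\alpha-\beta>0$ required in \eqref{PGHeatDer}.

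The only subtle point I foresee is the endpoint range $1\le p\le q\le\infty$: Corollary \ref{cor:nonhom} still applies there, but only under a strict $\Lambda$-inequality, which is exactly the relaxation announced at the end of the proposition. No further obstacles are expected, since the plain non-derivative estimates \eqref{PHeat}--\eqref{PGHeat} are just the $|\eta|=0$ specialization and the whole argument is, in essence, the parabolic scaling wrapper around the master inequality of Chapter~\ref{SectInequality}.
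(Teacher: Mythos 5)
Your proposal is correct and follows essentially the same route as the paper's own proof: parabolic rescaling to reduce to $t=1$, pointwise domination of the (rescaled) heat and Oseen kernels by $\bra{x}^{-\gamma}$, and an application of Corollary \ref{cor:nonhom} with $\beta$ replaced by $-\beta$, which turns \eqref{eq:condDL} into \eqref{eq:condDL(Heat)} and turns \eqref{eq:condabg} (with $\gamma=n+1+|\eta|$ forced by Proposition \ref{OseenDecayFinale}) into the strict condition of \eqref{PGHeatDer}, while for the Gaussian one sends $\gamma\to\infty$ so that condition is vacuous. The scaling bookkeeping and the endpoint relaxation under a strict $\Lambda$-inequality also match the paper exactly.
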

\begin{proof}
The proof follows by proposition (\ref{cor:nonhom}) and scaling considerations. At first notice:
\begin{equation}\label{HeatScaling}
e^{t\Delta} \phi = S_{\sqrt{t}}e^{\Delta}S_{1/\sqrt{t}} \phi,
\end{equation}
where $S_{\lambda}$ is defined by:
\begin{equation}
(S_{\lambda}\phi)(x)= \phi(\frac{x}{\lambda}).
\end{equation}
Then also notice:
\begin{equation}
\|\partial^{\eta} |x|^{\beta} S_{\lambda} \phi \|_{\Lqqtilde} = \lambda^{\frac{n}{q} +\beta -|\eta|}\||x|^{\beta}\phi\|_{\Lqqtilde}.
\end{equation}
For each $u_{0} \in L^{p}_{|x|^{\alpha p}d|x|}L^{\p}_{\theta}$ and $t\in \Rpiu$ the function $e^{\Delta} S_{1/ \sqrt{t}}$ is in the Schwartz class, so we can apply proposition (\ref{cor:nonhom}) with arbitrarily high values of $\lambda$ and condition (\ref{eq:condabg}) is trivially satisfied.
We get:
\begin{eqnarray}
\||x|^{\beta} \partial^{\eta} e^{t\Delta} u_{0}\|_{\Lqqtilde} &=& 
\| |x|^{\beta}\partial^{\eta} S_{\sqrt{t}}e^{\Delta}S_{1/ \sqrt{t}} u_{0}\|_{\Lqqtilde} \nonumber \\
&=& t^{(\frac{n}{q} + \beta - |\eta|)/2} \||x|^{\beta} (\partial^{\eta} e^{\Delta})S_{1/\sqrt{t}} u_{0}\|_{\Lqqtilde} \nonumber \\
&\leq & \frac{c_{\eta}}{t^{(-\frac{n}{q} -\beta + |\eta |)/2}}\||x|^{\alpha}S_{1/ \sqrt{t}}u_{0}\|_{\Lpptilde} \nonumber \\
&=& \frac{c_{\eta}}{t^{(|\eta| + \frac{n}{p}-\frac{n}{q}+\alpha -\beta)/2}}\||x|^{\alpha} u_{0}\|_{\Lpptilde}, \nonumber
\end{eqnarray}
where we used the condition (\ref{eq:condDL}), i.e.
$$ 
\Lambda(\alpha,p,\p) \geq \Lambda(\beta,q,\q).
$$
The proof of (\ref{PGHeatDer}) is similar, but we have to work with the operator $e^{t\Delta} \mathbb{P} \nabla \cdot \ $. As seen in proposition \ref{OseenDecayFinale} it is a convolution with a kernel $K$ such that
\begin{equation}\label{OseenInProofScaling}
K_{j,k,m}(t) = K_{j,k,m} \left( \frac{x}{\sqrt{t}} \right),
\end{equation}
and 
\begin{equation}\label{OseenInProof}
(1+|x|)^{1 + n + |\mu |}\partial^{ \mu} K_{j,k,m} \in L^{\infty}(\Rn),
\end{equation}
for each multi index $\mu$. By the scaling (\ref{OseenInProofScaling}) follows:
\begin{equation}\label{OssenScaling}
K(t) * \phi =\frac{1}{\sqrt{t}} S_{\sqrt{t}} K * S_{1/\sqrt{t}} \phi.
\end{equation}
So we get
\begin{eqnarray}
\||x|^{\beta} \partial^{\eta} e^{t\Delta} \mathbb{P} \nabla \cdot F\|_{\Lqqtilde} &=&  
\| |x|^{\beta}  \partial^{\eta}K(t) * F\|_{\Lqqtilde}
\nonumber \\
&=& \frac{1}{\sqrt{t}}
\| |x|^{\beta}\partial^{\eta} S_{\sqrt{t}} K * S_{1/ \sqrt{t}} F\|_{\Lqqtilde}                     \nonumber \\
&=& \frac{1}{\sqrt{t}}t^{(\frac{n}{q} + \beta  - |\eta|)/2} \||x|^{\beta} (\partial^{\eta} K) * S_{1/ \sqrt{t}} F\|_{\Lqqtilde} \nonumber \\
&\leq & \frac{d_{\eta}}{t^{(-\frac{n}{q} -\beta +1 +  |\eta |)/2}}\||x|^{\alpha}S_{1/ \sqrt{t}} F\|_{\Lpptilde} \nonumber \\
&=& \frac{d_{\eta}}{t^{(1+ |\eta| + \frac{n}{p}-\frac{n}{q}+\alpha -\beta)/2}}\||x|^{\alpha} F\|_{\Lpptilde}, \nonumber
\end{eqnarray}
provided that $\Lambda_{\alpha} \geq \Lambda_{\beta}$. Notice that the optimal choice of $\gamma$ allowed by (\ref{OseenInProof}) is $\gamma = 1 + n + |\eta|$ and leads to
$$\alpha + \beta + 1 + n + |\eta| > n \Big( 1+\frac{1}{q}- \frac{1}{p} \Big) \Rightarrow 
1+ |\eta| + \frac{n}{p}-\frac{n}{q}+\alpha -\beta > 0.$$
\end{proof}
\noindent It's remarkable that the restriction $\Lambda_{\alpha} \geq \Lambda_{\beta}$ can be removed if we localize the estimate in a space-time parabola above the origin. The size of the parabola will depend by the values of the difference $\Lambda_{\alpha} - \Lambda_{\beta}$ and increases as $\Lambda_{\alpha}- \Lambda_{\beta} \rightarrow 0^{-}$. In the limit case $\Lambda_{\alpha}=\Lambda_{\beta}$ we infact recover proposition \ref{PDecayCor}.
\begin{proposition}\label{LocalPDecay}
Let $n\geq 2$, $1 < p \leq q < +\infty$ and $1 \leq \p \leq \q \leq +\infty$. Assume further that $\alpha, \beta$ satisfy the set of conditions 
 \begin{equation}\label{eq:condDL(Heat)Loc}
    \beta > -\frac nq,\qquad \alpha<\frac{n}{p'}, \qquad
    \Lambda (\alpha,p,\p) < \Lambda (\beta,q,\q),
    \end{equation}
and define:
$$
\Lambda_{\alpha,\beta}= \Lambda (\alpha,p,\p) - \Lambda (\beta,q,\q).
$$    
Let then $\Pi(R)$ the space-time parabola:
$$
\Pi(R) = \Big\{ \frac{|x|}{\sqrt{t}} < R \Big\};
$$   
then the following estimates hold:
\begin{enumerate}
         \item \begin{equation}\label{PHeatDerLoc}
     \| \mathbbm{1}_{\Pi(R)} |x|^{\beta} \partial^{\eta} e^{t\Delta}u_{0}\|_{L^{q}_{|x|}L^{\q}_{\theta}} 
     \leq \frac{c_{\eta}R^{-\Lambda_{\alpha,\beta}}}{t^{(|\eta| + \frac{n}{p}-\frac{n}{q} + \alpha-\beta)/2}} 
     \| |x|^{\alpha} u_{0}\|_{L^{p}_{|x|}L^{\p}_{\theta}}, \qquad t>0,
     \end{equation}
     provided that $|\eta| + \frac{n}{p}-\frac{n}{q} + \alpha-\beta \geq0$,
      \item \begin{equation}\label{PGHeatDerLoc}
       \| \mathbbm{1}_{\Pi(R)} |x|^{\beta} \partial^{\eta} e^{t\Delta} \mathbb{P} \nabla \cdot F\|_{L^{q}_{x}L^{\q}_{\theta}} 
       \leq \frac{d_{\eta}R^{-\Lambda_{\alpha,\beta}}}{t^{(1 + |\eta| + \frac{n}{p} -\frac{n}{q} +\alpha -\beta)/2}} \| |x|^{\alpha} F\|_{L^{p}_{|x|}L^{\p}_{\theta}}, \qquad t>0,
         \end{equation} 
provided that $1+ |\eta| + \frac{n}{p}-\frac{n}{q} + \alpha-\beta > 0$.
\end{enumerate}
for each muti index $\eta$, so in particular:
\begin{enumerate}
\item \begin{equation}\label{PHeatLoc}
     \| \mathbbm{1}_{\Pi(R)} |x|^{\beta} e^{t\Delta}u_{0}\|_{L^{q}_{|x|}L^{\q}_{\theta}} 
     \leq \frac{c_{0}R^{-\Lambda_{\alpha,\beta}}}{t^{(\frac{n}{p}-\frac{n}{q} + \alpha-\beta)/2}} 
     \|  |x|^{\alpha} u_{0}\|_{L^{p}_{|x|}L^{\p}_{\theta}}, \qquad t>0,
     \end{equation}
provided that $\frac{n}{p}-\frac{n}{q} + \alpha-\beta \geq0$,
    \item \begin{equation}\label{PGHeatLoc}
       \| \mathbbm{1}_{\Pi(R)} |x|^{\beta} e^{t\Delta} \mathbb{P} \nabla \cdot F\|_{L^{q}_{x}L^{\q}_{\theta}} 
       \leq \frac{d_{0}R^{-\Lambda_{\alpha,\beta}}}{t^{(1 + \frac{n}{p} -\frac{n}{q} +\alpha -\beta)/2}} \| |x|^{\alpha} F\|_{L^{p}_{|x|}L^{\p}_{\theta}}, \qquad t>0,
         \end{equation} 
provided that $1+ \frac{n}{p}-\frac{n}{q} + \alpha-\beta > 0$.
\end{enumerate}
These estimates can also be differently formulated by setting $R^{-\Lambda_{\alpha,\beta}}= K$. For instance (\ref{PHeatDerLoc}) becomes:
\begin{equation}\label{PHeatDerLoc2}
     \| \mathbbm{1}_{\Pi(K^{-1/\Lambda_{\alpha,\beta}})} |x|^{\beta} \partial^{\eta} e^{t\Delta}u_{0}\|_{L^{q}_{|x|}L^{\q}_{\theta}} 
     \leq \frac{c_{\eta}K}{t^{(|\eta| + \frac{n}{p}-\frac{n}{q} + \alpha-\beta)/2}} 
     \| |x|^{\alpha} u_{0}\|_{L^{p}_{|x|}L^{\p}_{\theta}}, \qquad t>0,
     \end{equation}
and similarly for the other estimates.
\end{proposition}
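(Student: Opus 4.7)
My plan is to deduce Proposition \ref{LocalPDecay} from the already-established global Proposition \ref{PDecayCor} by inserting an auxiliary ``critical'' exponent that exactly saturates the admissibility gap, and then paying for the difference with the parabolic localization.

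First, I would define
\begin{equation*}
\beta' := \beta + \Lambda_{\alpha,\beta},
\end{equation*}
so that $\Lambda(\beta',q,\q) = \Lambda(\alpha,p,\p)$ as an equality; this places $\beta'$ precisely on the boundary of the admissibility region of Proposition \ref{PDecayCor}, so that proposition applies with $\beta'$ in place of $\beta$. Since by hypothesis $\Lambda_{\alpha,\beta} < 0$, one also has $\beta - \beta' = -\Lambda_{\alpha,\beta} > 0$, the positive sign that the next step needs.

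Next I would exploit the parabolic cutoff: inside $\Pi(R)$ one has $|x| < R\sqrt{t}$, and combined with $\beta-\beta'>0$ this yields the pointwise bound
\begin{equation*}
\mathbbm{1}_{\Pi(R)}(t,x)\, |x|^{\beta-\beta'} \le R^{\beta-\beta'}\, t^{(\beta-\beta')/2} = R^{-\Lambda_{\alpha,\beta}}\, t^{-\Lambda_{\alpha,\beta}/2}.
\end{equation*}
Factoring $|x|^{\beta} = |x|^{\beta-\beta'}\cdot |x|^{\beta'}$ inside the mixed norm and pulling the first factor out reduces the quantity to be estimated to the purely global object $\||x|^{\beta'}\partial^{\eta}e^{t\Delta}u_{0}\|_{\Lqqtilde}$, which is controlled directly by Proposition \ref{PDecayCor}. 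A short check of the $t$-exponents, using $\beta'-\beta = \Lambda_{\alpha,\beta}$, shows that the sum of the two powers of $t$ collapses to $-(|\eta|+n/p-n/q+\alpha-\beta)/2$, which is exactly the decay announced in \eqref{PHeatDerLoc}. Estimate \eqref{PGHeatDerLoc} will follow verbatim by invoking the Oseen-kernel half of Proposition \ref{PDecayCor} instead of the heat-kernel half, and \eqref{PHeatDerLoc2} is just the same bound under the reparametrization $K = R^{-\Lambda_{\alpha,\beta}}$.

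The delicate point I expect to be the main obstacle is checking that $\beta'$ really lies in the admissibility range of Proposition \ref{PDecayCor}. The saturated scaling identity $\Lambda(\alpha,p,\p)=\Lambda(\beta',q,\q)$ is built in, the bound $\alpha < n/p'$ is inherited unchanged, and $|\eta|+n/p-n/q+\alpha-\beta' \ge 0$ is automatic because it is strictly stronger than the corresponding condition for $\beta$; however the integrability constraint $\beta' > -n/q$ is not a consequence of $\beta > -n/q$ since $\beta' < \beta$, and effectively demands the slightly stronger assumption $\beta > -n/q - \Lambda_{\alpha,\beta}$. Within this implicit strengthened range the three steps above are essentially a one-line computation; to reach the full range stated one would presumably need to split $\Pi(R)$ dyadically in the radial direction and reassemble the estimate by a geometric series in the scale parameter.
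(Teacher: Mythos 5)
Your argument is the same as the paper's: there too one bounds $\mathbbm{1}_{\Pi(R)}|x|^{\beta}\le R^{-\Lambda_{\alpha,\beta}}\,t^{-\Lambda_{\alpha,\beta}/2}\,|x|^{\beta+\Lambda_{\alpha,\beta}}$ on the parabola and then applies the global Proposition \ref{PDecayCor} at the shifted exponent $\beta'=\beta+\Lambda_{\alpha,\beta}$, which saturates the $\Lambda$-condition. The integrability caveat you raise --- that the reduction needs $\beta'>-n/q$, which is strictly stronger than the stated hypothesis $\beta>-n/q$ --- is genuine, but the paper passes over it silently (its proof checks only the $\Lambda$-relation for the shifted exponent), so on that point your write-up is the more careful one.
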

\begin{proof}
We use simply $\Lambda$ instead of $\Lambda_{\alpha,\beta}$. Of course:
$$
\Lambda < 0 \quad \Rightarrow \quad  R^{-\Lambda} \Big| \frac{x}{\sqrt{t}} \Big|^{\Lambda} \geq 1, \quad
\mbox{if} \quad (t,x)\in \Pi(R).
$$
Then we get, as in (\ref{PDecayCor}):
\begin{eqnarray}
\|\mathbbm{1}_{\Pi(R)} |x|^{\beta} \partial^{\eta} e^{t\Delta} u_{0}\|_{\Lqqtilde} &=& 
\|\mathbbm{1}_{\Pi(R)} |x|^{\beta}\partial^{\eta} S_{\sqrt{t}}e^{\Delta}S_{1/ \sqrt{t}} u_{0}\|_{\Lqqtilde} \nonumber \\
&\leq & \frac{R^{-\Lambda}}{t^{\Lambda/2}}\| |x|^{\beta +\Lambda}\partial^{\eta} S_{\sqrt{t}}e^{\Delta}S_{1/ \sqrt{t}} u_{0}\|_{\Lqqtilde} \nonumber \\
&=& \frac{R^{-\Lambda}}{t^{\Lambda /2}} t^{(\frac{n}{q} + \beta + \Lambda - |\eta|)/2} \||x|^{\beta +\Lambda} e^{\Delta}S_{1/\sqrt{t}} u_{0}\|_{\Lqqtilde} \nonumber \\
&\leq & \frac{c_{\eta}}{t^{(-\frac{n}{q} -\beta + |\eta |)/2}}\||x|^{\alpha}S_{1/ \sqrt{t}}u_{0}\|_{\Lpptilde} \nonumber \\
&=& \frac{c_{\eta}}{t^{(|\eta| + \frac{n}{p}-\frac{n}{q}+\alpha -\beta)/2}}\||x|^{\alpha} u_{0}\|_{\Lpptilde}, \nonumber
\end{eqnarray}
where the indexes relationships are consistent because:
$$
\Lambda_{\alpha} \geq \Lambda(\Lambda_{\alpha,\beta} +\beta,p,\p) = \Lambda(\Lambda_{\alpha}-\Lambda_{\beta} + \beta,p,\p)= \Lambda_{\alpha}.
$$
The other inequalities can be proved in the same way. 
\end{proof}
\begin{remark}
We precisely observed that the inequalities hold with an additional factor $R^{-\Lambda}$ after localization in a space-time parabola. Notice that this factor goes to $1$ as $\Lambda \rightarrow 0^{-}$. To get a uniformly in $\Lambda$ constant, it is instead necessary to restrict the size of the parabola: if we chose the constant equal to $K$, we need to restrict to:
$$
\left| \frac{x}{\sqrt{t}} \right| \leq K^{-\frac{1}{\Lambda}}.  
$$ 
Here as $\Lambda \rightarrow 0^{-}$ the parabola fills the whole space-time.
\end{remark}

\noindent By the time decay also integral estimates can be easily obtained:

\begin{proposition}\label{IDecayCor}
Let $n\geq 2$, $1 < p \leq q < \frac{np}{(|\eta| + \alpha-\beta)p + n-2}$, $r \in (1, +\infty)$ and $ 1 \leq \p \leq \q \leq +\infty$. Assume further that $\alpha, \beta$ satisfy the set of conditions 
 \begin{equation}\label{eq:condDL(IHeat)}
    \beta > -\frac nq,\qquad \alpha<\frac{n}{p'}.
   \end{equation}
The following estimates hold:
\begin{equation}\label{IHeat}
     \| |x|^{\beta} \partial^{\eta} e^{t\Delta}u_{0}\|_{L^{r}_{t}L^{q}_{|x|}L^{\q}_{\theta}} 
     \leq c_{\eta} 
     \| |x|^{\alpha} u_{0}\|_{L^{p}_{|x|}L^{\p}_{\theta}}, \qquad t>0,
     \end{equation}
for each multi index $\eta$, provided that:
     \begin{equation}\label{OmegaScaling}
  |\eta| + \Omega(\alpha, p, \infty) = \Omega (\beta,q,r), 
  \quad \Lambda (\alpha,p,\p) \geq \Lambda (\beta,q,\q).
  \end{equation}
  And the localized version
\begin{equation}\label{IHeatLoc}
     \| \mathbbm{1}_{\Pi(R)} |x|^{\beta} \partial^{\eta} e^{t\Delta}u_{0}\|_{L^{r}_{t}L^{q}_{|x|}L^{\q}_{\theta}} 
     \leq c_{\eta} R^{-\Lambda_{\alpha,\beta}} 
     \| |x|^{\alpha} u_{0}\|_{L^{p}_{|x|}L^{\p}_{\theta}}, \qquad t>0,
     \end{equation}
for each multi index $\eta$, provided that:
     \begin{equation}\label{OmegaScalingLoc}
  |\eta| + \Omega(\alpha, p, \infty) = \Omega (\beta,q,r), 
  \quad \Lambda_{\alpha,\beta} = \Lambda (\alpha,p,\p) - \Lambda (\beta,q,\q) < 0,  
  \end{equation}
  where we remember the definition of $\Pi(R)$: 
$$
\Pi(R) = \Big\{ \frac{|x|}{\sqrt{t}} < R \Big\}.
$$   
   \end{proposition}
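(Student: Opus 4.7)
The plan is to combine the pointwise-in-time bound from Proposition \ref{PDecayCor} with Marcinkiewicz real interpolation in the time variable. A direct computation shows that the scaling hypothesis $|\eta|+\Omega(\alpha,p,\infty)=\Omega(\beta,q,r)$ is equivalent to $(|\eta|+n/p-n/q+\alpha-\beta)/2=1/r$, so Proposition \ref{PDecayCor} reads
$$
\||x|^{\beta}\partial^{\eta} e^{t\Delta} u_0\|_{\Lqqtilde}\le C\,t^{-1/r}\,\||x|^{\alpha} u_0\|_{\Lpptilde}.
$$
Since $t\mapsto t^{-1/r}\in L^{r,\infty}(\Rpiu,dt)$ but not in strong $L^r$, this means that the linear map $T\colon u_0\mapsto\partial^{\eta} e^{t\Delta} u_0$ sends $\Lpptildealpha$ into the weak space $L^{r,\infty}_t\,\Lqqtildebeta$.

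To promote this weak-$L^r_t$ bound to the strong $L^r_t$ bound (\ref{IHeat}), I produce a \emph{second} weak-type endpoint by slightly perturbing the source parameters while keeping the spatial target $(\beta,q,\q)$ fixed. Choose nearby triples $(p_0,\alpha_0,\p_0)$ and $(p_1,\alpha_1,\p_1)$ so that they both satisfy the hypotheses of Proposition \ref{PDecayCor} with the same target $(\beta,q,\q)$, and so that the two induced exponents $2/r_i=|\eta|+n/p_i-n/q+\alpha_i-\beta$ are distinct. (An explicit way is to move $(p_i,\alpha_i)$ along the line $\alpha_i+(n-1)/p_i=\mathrm{const.}$, which preserves the $\Lambda$ condition, keeping $\p_i=\p$.) This gives
$$
T:\Lpptildealphazero\to L^{r_0,\infty}_t\,\Lqqtildebeta,\qquad T:\Lpptildealphauno\to L^{r_1,\infty}_t\,\Lqqtildebeta,
$$
and real interpolation of the source, with parameters matched so that $1/p=(1-\theta)/p_0+\theta/p_1$ and $\alpha=(1-\theta)\alpha_0+\theta\alpha_1$, reproduces $\Lpptildealpha$ on the left; the Lorentz-to-Lebesgue upgrade in $t$ (with fixed spatial target) yields $L^r_t\,\Lqqtildebeta$ on the right. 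The fact that the intermediate $\theta$ lies strictly in $(0,1)$ corresponds, after short algebra, to the condition $r>p$, which in turn is equivalent to the upper bound $q<np/((|\eta|+\alpha-\beta)p+n-2)$ assumed in the statement.

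For the localized estimate (\ref{IHeatLoc}), the same argument runs \emph{verbatim} with Proposition \ref{LocalPDecay} in place of Proposition \ref{PDecayCor}. Each pointwise endpoint then carries an extra prefactor $R^{-\Lambda_{\alpha,\beta}}$, which is independent of both $t$ and $u_0$; it is unaffected by the Marcinkiewicz interpolation and reappears as the corresponding factor in (\ref{IHeatLoc}).

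The main obstacle is the interpolation bookkeeping: one must verify that the two auxiliary triples $(p_i,\alpha_i,\p_i)$ can be chosen to satisfy all hypotheses of Proposition \ref{PDecayCor} with the prescribed spatial target, and that the real-interpolation identification $(\Lpptildealphazero,\Lpptildealphauno)_{\theta,p}=\Lpptildealpha$ holds for the chosen perturbation. Both are standard --- the admissible range is open in the parameters, and the identification of real-interpolation spaces between weighted mixed-norm $L^p$ scales is classical --- but they make the formal write-up somewhat longer than this sketch.
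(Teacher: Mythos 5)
Your argument is essentially the paper's own proof: the pointwise bound of Proposition \ref{PDecayCor} gives a weak $L^{r,\infty}_{t}$ estimate because $t^{-1/r}\in L^{r,\infty}(dt)$, and real (Marcinkiewicz) interpolation between two nearby weak-type endpoints upgrades this to strong $L^{r}_{t}$, with the localized case handled identically via Proposition \ref{LocalPDecay}; the paper interpolates a full family of source \emph{and} target tuples while you freeze the target, but that is only a bookkeeping difference. One small correction: the condition $r>p$ does not come from requiring $\theta\in(0,1)$ (which can always be arranged), but from the fact that applying the functor $(\cdot,\cdot)_{\theta,r}$ turns the source scale into a Lorentz space $L^{p,r}$, and $L^{p}=L^{p,p}\subset L^{p,r}$ only when $r\ge p$ --- the resulting restriction is nevertheless exactly the upper bound on $q$ you state.
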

\begin{proof}
By the punctual decay:
 \item \begin{equation}
     \||x|^{\beta} \partial^{\eta} e^{t\Delta}u_{0}\|_{L^{q}_{|x|}L^{\q}_{\theta}} 
     \leq \frac{c_{\eta}}{t^{(|\eta| + \frac{n}{p}-\frac{n}{q} + \alpha-\beta)/2}} 
     \| |x|^{\alpha} u_{0}\|_{L^{p}_{|x|}L^{\p}_{\theta}}, \qquad t>0,
     \end{equation}
follows that $\partial^{\eta}e^{t\Delta}u_{0}$ is bounded in the Lorentz space $L^{r,\infty}(\mathbb{R}^{+}; L^{q}_{|x|^{\beta q}d|x|}L^{\q}_{\theta})$ if $|\eta| + \Omega(\alpha, p, \infty) = \Omega (\beta,q,r)$. Infact:
\begin{eqnarray} 
\|\||x|^{\beta}\partial^{\eta}e^{t\Delta}u_{0}\|_{\Lqqtilde}\|_{L^{r,\infty}_{t}} &\leq &
   c_{\eta}  \left\| \frac{1}{t^{(|\eta| + \frac{n}{p}-\frac{n}{q} + \alpha-\beta)/2}} 
     \| |x|^{\alpha} u_{0}\|_{L^{p}_{|x|}L^{\p}_{\theta}}\right\|_{L^{r,\infty}_{t}} \nonumber \\  
&\leq & c_{\eta} \left\|  \frac{1}{t^{(|\eta| + \frac{n}{p}-\frac{n}{q} + \alpha-\beta)/2}} \right\|_{L^{r,\infty}}
        \|u_{0}\|_{\Lpptilde}  \nonumber \\
        &\leq & c_{\eta} \|u_{0}\|_{\Lpptilde}, \nonumber
\end{eqnarray}     
provided that:
 $$
 (|\eta| + \frac{n}{p} -\frac{n}{q} +\alpha -\beta)/2 = \frac{1}{r} \Rightarrow |\eta| + \Omega(\alpha, p, \infty) = \Omega (\beta,q,r).
 $$
Let now consider a couple $(\alpha_{0},\beta_{0}, p_{0},\p_{0},q_{0},\q_{0},r_{0})$, $(\alpha_{1},\beta_{1}, p_{1},\p_{1},q_{1},\q_{1},r_{1})$ such that the assumptions of theorem are satisfied, and in particular (\ref{eq:condDL(IHeat)}, \ref{OmegaScaling}) holds. Then we have the couples of linear operators:
\begin{equation}
    \partial^{\eta} e^{t\Delta} : 
    \begin{array}{ccc}
     \Lpptildealphazero & \longrightarrow & L^{r_{0},\infty}_{t} \Lqqtildebetazero \\
     & & \\
     & & \\
     \Lpptildealphauno & \longrightarrow & L^{r_{1},\infty}_{t} \Lqqtildebetauno .  
      \end{array}
\end{equation}
if we fix $\xi \in (0,1)$ we can perform real interpolation of operators with parameters $(\xi, r)$, provided that:
\begin{equation}\label{rConstr}
p_{\xi} < r_{\xi},
\end{equation}
where
$$
\frac{1}{p_{\xi}} = (1- \xi)\frac{1}{p_{0}} + \frac{1}{\xi p_{1}},
$$  
in the same way are defined $q_{\xi},r_{\xi},\q_{\xi},\p_{\xi}$, while
$$
\alpha_{\xi}= (1-\xi)\alpha_{0} +\xi \alpha_{1},
$$
and the same for $\beta_{\xi}$. So we get the bounded operators:
$$
\partial^{\eta}e^{t\Delta} u_{0} :
$$
$$
 \Lpptildealphaxi \rightarrow \Big(L^{r_{0},\infty}_{t} \Lqqtildebetazero, L^{r_{1},\infty}_{t} \Lqqtildebetauno \Big)_{\xi,r} = L^{r}_{t} \Lqqtildebetaxi. 
$$
It is now straightforward to check that indeces $(\alpha_{\xi},\beta_{\xi}, \mbox{etc})$ satisfy the relations (\ref{eq:condDL(IHeat)}, \ref{OmegaScaling}) and other assumptions. Furthetmore constrains (\ref{OmegaScaling}) and (\ref{rConstr}) are equivalent to $q_{\xi} < \frac{np_{\xi}}{(|\eta| + \alpha_{\xi} - \beta_{\xi}) p_{\xi} + n-2}$. Of course this method misses the endpoints $p,q=1$.
The estimates (\ref{IHeatLoc}) are  proved in the same way by using the localized punctual decay. 

\end{proof}

\noindent The estimates of the Duhamel term needs no interpolation notions:
\begin{proposition}\label{DDecayCor}
Let $n\geq 2$, $1 < p \leq 2q < +\infty$, $1 < s \leq 2r < +\infty $ and $1 \leq p \leq 2q \leq +\infty$. Assume further that $\alpha, \beta$ satisfy the set of conditions 
 \begin{equation}\label{eq:condDL(DHeat)}
    \beta > -\frac nq,\qquad \alpha<\frac{n}{p'},\qquad
   2 \Lambda (\alpha,p,\p) \geq \Lambda (\beta,q,\q)
  \end{equation}
Then the following estimates holds:
   \begin{equation}\label{DGHeat}
       \left\||x|^{\beta} \partial^{\eta} \int_{0}^{t} e^{(t-s)\Delta} \mathbbm{P} \nabla \cdot 
       (u \otimes u) \ ds \right\|_{L^{r}_{t}L^{q}_{x}L^{\q}_{\theta}} 
       \leq  d_{\eta} \| |x|^{\alpha} 
       u \|^{2}_{L^{s}_{t}L^{p}_{|x|}L^{\p}_{\theta}}, 
         \end{equation} 
for each multi index $\eta$, provided that:
       \begin{equation}\label{DGOmegaScaling}
  2 \Omega(\alpha, p, s) = \Omega (\beta,q,r) + 1 - |\eta |.
       \end{equation}
In particular holds:
   \begin{equation}\label{DGHeatq=p}
       \left\||x|^{\beta} \partial^{\eta} \int_{0}^{t} e^{(t-s)\Delta} \mathbbm{P} \nabla 
       \cdot (u \otimes u) \  ds\right\|_{L^{r}_{t}L^{q}_{x}L^{\q}_{\theta}} 
       \leq  d_{\eta} \| |x|^{\beta} 
       u \|^{2}_{L^{r}_{t}L^{q}_{|x|}L^{\q}_{\theta}}, 
         \end{equation} 
for each multi index $\eta$, provided that:
         \begin{equation}\label{DGOmegaScalingq=p}
          \frac 2r + \frac nq   = 1 - \beta - |\eta |, 
         \qquad \Lambda(\beta,q,\q) \geq 0.
         \end{equation}
The range of admissible $p,q$ indices can be relaxed to $1\leq p \leq q \leq +\infty$ provided that $2\Lambda (\alpha,p,\p) > \Lambda (\beta,q,\q)$.         
\end{proposition}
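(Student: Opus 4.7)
The plan is to reduce the Duhamel estimate to the pointwise-in-time operator bound \eqref{PGHeatDer}, combined with Minkowski's integral inequality in time and a weak Young (Hardy--Littlewood--Sobolev) inequality in the time variable. First, by Minkowski,
$$
\Bigl\|\,|x|^{\beta}\partial^{\eta}\int_0^t e^{(t-s)\Delta}\mathbb{P}\nabla\cdot(u\otimes u)(s)\,ds\,\Bigr\|_{\Lqqtilde} \le \int_0^t \bigl\|\,|x|^{\beta}\partial^{\eta}e^{(t-s)\Delta}\mathbb{P}\nabla\cdot(u\otimes u)(s)\bigr\|_{\Lqqtilde}\,ds.
$$
I then apply \eqref{PGHeatDer} at each fixed $s$ with input indices $(p/2,\p/2,2\alpha)$ and output indices $(q,\q,\beta)$: the hypothesis $\beta>-n/q$ is already present, $\alpha<n/p'$ gives the required upper bound on the input weight after rescaling, and $2\Lambda(\alpha,p,\p)\ge\Lambda(\beta,q,\q)$ is precisely the $\Lambda$-condition in the new indices. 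Coupled with the pointwise inequality $|u\otimes u|\le|u|^2$, which gives
$$
\||x|^{2\alpha}(u\otimes u)(s)\|_{L^{p/2}_{|x|}L^{\p/2}_{\theta}}\le \||x|^{\alpha}u(s)\|^2_{\Lpptilde},
$$
this produces a pointwise-in-time bound of the form $C(t-s)^{-a}\||x|^{\alpha}u(s)\|^2_{\Lpptilde}$ with $a=\tfrac12(1+|\eta|+\tfrac{2n}{p}-\tfrac{n}{q}+2\alpha-\beta)$.

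Next, taking the $L^r_t$ norm and applying the weak Young inequality (Theorem 1.4.24 in \cite{Grafakos08-a}) to the convolution in $s$ with the exponents $s/2,r,k\in(1,\infty)$ related by $1+1/r=2/s+1/k$ and $1/k=a$, I obtain the desired bound by $\||x|^{\alpha}u\|^2_{L^s_t\Lpptilde}$. A direct rearrangement shows that the identity $a=1-2/s+1/r$ is exactly the scaling condition \eqref{DGOmegaScaling}. The hypotheses $1<p\le 2q$ and $1<s\le 2r$ are what ensures $s/2,r,k\in(1,\infty)$, so that weak Young applies; when the inequality in $2\Lambda(\alpha,p,\p)>\Lambda(\beta,q,\q)$ is strict, one invokes the strong Young variant of \eqref{PGHeatDer} to reach the extended range $1\le p\le 2q\le\infty$, and the special estimate \eqref{DGHeatq=p} is then the diagonal case $p=q$, $\p=\q$, $\alpha=\beta$, $s=r$ of the general bound.

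The main technical obstacle I expect is the case $\p<2$, in which $\p/2<1$ and so cannot serve as an intermediate angular exponent in \eqref{PGHeatDer}. The fix is to replace $\p/2$ by an admissible $\p'\ge 1$ and estimate $\|u^2(\rho,\cdot)\|_{L^{\p'}_{\theta}}$ via H\"older on the compact sphere $\mathbb{S}^{n-1}$, choosing $\p'$ so that the resulting $\Lambda$-condition is still respected. Since the scaling relation \eqref{DGOmegaScaling} does not involve $\p$, this enlargement of the intermediate angular index costs nothing in the radial exponents, though in the borderline range it may force the use of the strict-inequality version of \eqref{PGHeatDer}; the same remark renders the hypothesis block \eqref{eq:condDL(DHeat)} on $(\alpha,\beta,p,q,\p,\q)$ the only binding constraint on admissible weights.
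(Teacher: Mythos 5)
Your argument is correct and coincides with the paper's own proof: Minkowski in time, the pointwise Oseen decay \eqref{PGHeatDer} applied with halved indices $(p/2,\p/2,s/2)$ and weight $2\alpha$, the weak Young inequality in $L^{k,\infty}(dt)$ with $1+1/r=2/s+1/k$, and the Cauchy--Schwarz step $\||x|^{2\alpha}u\otimes u\|_{L^{p/2}_{|x|}L^{\p/2}_{\theta}}\le\||x|^{\alpha}u\|^{2}_{\Lpptilde}$, with \eqref{DGHeatq=p} obtained by the diagonal choice of indices. The only flaw is your proposed repair for $\p<2$: H\"older on the compact sphere bounds $\|u^{2}\|_{L^{\p'}_{\theta}}$ by $\|u\|^{2}_{L^{\p}_{\theta}}$ only when $\p'\le\p/2$, so no admissible $\p'\ge1$ exists in that range --- but the paper's proof silently assumes $\p\ge2$ (and likewise $p,s\ge2$) as well, so this does not separate your argument from it.
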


\begin{proof}
By Minkowsky inequality and estimates (\ref{PHeatDer}):
\begin{eqnarray}
  & &\left\||x|^{\beta} \partial^{\eta} \int_{0}^{t} e^{(t-s)\Delta}\mathbb{P} 
   \nabla \cdot F(x,s) \ ds \right\|_{\Lrqqtilde}   \nonumber \\ 
  & \leq &\left\| \int_{\Rpiu} \| |x|^{\beta} \partial^{\eta} e^{(t-s)\Delta}\mathbb{P} 
   \nabla \cdot F \|_{\Lqqtilde} \ ds\right\|_{L^{r}_{t}}  \nonumber \\
  &\leq & d_{\eta} \left\| \int_{\mathbb{\Rpiu}} 
   \frac{1}{(t-s)^{(1 + |\eta | 
   +\frac{n}{p_{0}} -\frac{n}{q} +\alpha_{0} -\beta)/2}}\||x|^{\alpha_0}F \|_{\L^{q_0}_{|x|}L^{\q_0}_{\theta}}   
   \ ds  \right\|_{L^{r}_{t}},   \nonumber 
\end{eqnarray}
provided that:
\begin{equation}\label{DuamhCondRemark}
\p_0 \leq \q, \quad p_0 \leq q \quad 1 + |\eta | + \frac{n}{p_{0}} - \frac{n}{q_{0}} + \alpha_{0} - \beta > 0, \quad \Lambda_{\alpha_0} \geq \Lambda_{\beta}.
\end{equation}
Let then
\begin{equation}\label{WYScalingInCor}
1+ \frac{1}{r} = \frac{1}{s_{0}} + \frac{1}{k},
\end{equation}
and use the Young inequality in the following Lorentz Spaces:
$$
\| \cdot \|_{L^{r}} \leq \| \cdot \|_{L^{s_{0}}} \| \cdot \|_{L^{k,\infty}},
$$
that's allowed if $1<r,s_{0},k <+\infty$. It is assured by assumptions $1< r,s < +\infty$. We get
\begin{eqnarray}
  & &\left\||x|^{\beta} \partial^{\eta} \int_{0}^{t} e^{(t-s)\Delta}\mathbb{P} 
   \nabla \cdot F(x,s) \ ds \right\|_{\Lrqqtilde}   \nonumber \\ 
   &\leq & d_{\eta} \||x|^{\alpha_0} F\|_{L^{s_{0}}_{t}L^{p_{0}}_{|x|}L^{\p_{0}}_{\theta}} 
   \left\|\int_{\mathbb{\Rpiu}} \frac{dt}{t^{(1+|\eta | +\frac{n}{p_{0}} 
   -\frac{n}{q_{0}} +\alpha_{0}-\beta)/2}}\right\|_{L^{k,\infty}_{t}}   \nonumber \\
   & \leq &\ d_{\eta} \||x|^{\alpha_0} 
 F\|_{L^{s_{0}}_{t}L^{p_{0}}_{|x|}L^{\p_{0}}_{\theta}}, \nonumber 
\end{eqnarray}
provided that:
\begin{equation}\label{DuhamProvvScal}
p_0 \leq q \quad (1 + |\eta | + \frac{n}{p_{0}} - \frac{n}{q_{0}} + \alpha_{0} - \beta)/2=\frac{1}{k}, \quad \Lambda_{\alpha_0} \geq \Lambda_{\beta},
\end{equation}
because of
$$
\left\| \int_{\Rpiu} \frac{dt}{t^{1/k}}\right\|_{L^{k,\infty}_{t}} =1.
$$
Conditions (\ref{DuhamProvvScal}), (\ref{WYScalingInCor}) lead to:
\begin{equation}\label{DuhamProvvScal2}
\Omega(\alpha_{0}, p_{0}, s_{0}) = 1-|\eta | +\Omega(\beta,q,r).
\end{equation}
We now specify $F=u \otimes u$:
\begin{eqnarray}
  & &\left\||x|^{\beta} \partial^{\eta} \int_{0}^{t} e^{(t-s)\Delta}\mathbb{P} 
   \nabla \cdot (u\otimes u)(x,s)  \ ds \right\|_{\Lrqqtilde}   \nonumber \\ 
  &\leq &  c_{\eta} \||x|^{\alpha_{0}} 
   |u|^{2}\|_{L^{s_{0}}_{t}L^{p_{0}}_{|x|}L^{\p_{0}}_{\theta}}  \nonumber \\
 &\leq &  c_{\eta} \||x|^{\alpha_{0}/2} 
   |u|\|^{2}_{L^{2s_{0}}_{t}L^{2p_{0}}_{|x|}L^{2\p_{0}}_{\theta}} \\
 & \leq  &c_{\eta} \||x|^{\alpha}u\|^{2}_{\Lspptilde}, \nonumber
  \end{eqnarray}
where we have set 
\begin{equation}\label{DuhamPivot}
(\alpha_{0}/2, 2s_{0},2p_{0},2\p_{0})=(\alpha,s,p,\p).
\end{equation}
Notice that $2\Omega(\alpha,s,p)= \Omega(\alpha_{0},s_{0},p_{0})$, so (\ref{DuhamProvvScal2}) and (\ref{DuhamPivot}) lead to (\ref{DGOmegaScaling})
$$
2\Omega(\alpha,p,s) = \Omega(\beta,q,r) +1 - |\eta |;
$$
while condition (\ref{DuhamProvvScal}) leads to
$$
2\Lambda_{\alpha} \geq \Lambda_{\beta}.
$$
Finally notice that (\ref{WYScalingInCor}) and (\ref{DuhamProvvScal}) imply the relationships:
$$
r \geq s_{0}=s/2, \qquad q \geq p_{0}=p/2, \qquad \q \geq \p_{0}=\p/2.
$$
These conditions are furthermore consistent with the choice $(\alpha,s,p,\p)=(\beta,r,q,\q)$; in such a way we recover inequality (\ref{DGHeatq=p}):
$$
\left\||x|^{\beta} \partial^{\eta} \int_{0}^{t} e^{(t-s)\Delta} \mathbbm{P} \nabla 
       \cdot (u \otimes u) \  ds \right\|_{L^{r}_{t}L^{q}_{x}L^{\q}_{\theta}} 
       \leq  d_{\eta} \| |x|^{\beta} 
       u \|^{2}_{L^{r}_{t}L^{q}_{|x|}L^{\q}_{\theta}},
$$
provided that
$$
\Omega(\beta,q,r) = 1-|\eta|, \qquad \Lambda(\beta,q,\q) \geq 0.
$$

\end{proof}

\section{Regularity criteria in weighted spaces with angular integrability}



As told the technology developed until now is well suited to study the regularity property of weak solutions of (\ref{CauchyNS}) in weighted spaces. In particular we focus on theorem (\ref{YZTheorem}) in which is shown the regularity of a weak solution $u$ in the segment $(0,T)\times \{ 0 \}$ provided that is satisfied the a priori bound:

\begin{equation}\label{moreOrLess}
\||x|^{\alpha} u \|_{L^{s}_{T}L^{p}_{x}} \leq +\infty, 
\quad \frac{2}{s} + \frac{n}{p} = 1- \alpha.
\end{equation} 
Of course such norms are invariant with respect the natural scaling of (\ref{CauchyNS}) centred in the origin:

\begin{equation*}
\lambda \rightarrow \lambda u (\lambda^{2} t, \lambda x).
\end{equation*}

\ni Now we are interested in what happens if we consider more or less angular integrability in (\ref{moreOrLess}).
We have results in two different directions:

\ni If we consider weights $|x|^{\alpha}$ with $\alpha < 0$:
\begin{itemize}
\item  we get regularity in the segment $(0,T)\times \{ 0 \}$ as in (\ref{YZTheorem}), but by requiring boundedness in $L^{s}_{t}L^{p}_{|x|^{\alpha p}d|x|}L^{\p}_{\theta}$ with $\p < p$; 
\item we actually get global regularity (in $(0,T)\times \Rn$) by requiring boundedness in $L^{s}_{t}L^{p}_{|x|^{\alpha p}d|x|}L^{\p}_{\theta}$ with $\p > p$ and $\p$ large enough.  
\end{itemize}
In the case $|x|^{\alpha}$, $\alpha \geq 0$ we show that:
\begin{itemize}
\item assumptions in (\ref{YZTheorem}) actually lead to global regularity (in $(0,T)\times \Rn$);
\item global regularity is achieved also if $\p < p$ with $\p$ large enough; precisely $\p_{G} \leq \p < p$, with $\p_{G}$ depending on the other parameters;
\item regularity in $(0,T)\times \{ 0 \}$ is achieved in the range $\p_{L} \leq \p < \p_{G}$, with $\p_{L}$ depending on the other parameters. 
\end{itemize}
The notation $\p_{L},\p_{G}$ would remind to $\p_{Local}, \p_{Global}$.
Such a scheme is explained by the following heuristic: if $\alpha < 0$ the weight 
$|x|^{\alpha}$ localizes in some sense the norm near to the origin, so local regularity is expectable, also for $\p < p$, but some boundedness condition at infinity are necessary in order to get global regularity. Such a condition is actually high $L^{\p}$ integrability in the angular direction. If we consider instead $|x|^{\alpha}$ with $\alpha \geq 0$, the weight suffices to guarantee also boundedness for large $|x|$, furthermore the integrability in the angular direction can be relaxed to small values of $\p$. In this spirit we prove two extensions of theorem (\ref{YZTheorem}). In the first under the hypothesis of higher angular integrability we get global regularity. In the second we get regularity in the segment $(0,T)\times \{ 0 \}$, as in (\ref{YZTheorem}), but with weaker angular integrability. It is convenient to introduce the quantity:
\begin{equation}\label{PtildeDef}
\p_{L} = \left \{
\begin{array}{lcr}
 \frac{2(n-1)p}{(2 \alpha +1)p +2(n-1)}  &  \mbox{if} & -\frac{1}{2} \leq \alpha < 0  \\
 && \\
 \frac{2(n-1)p}{p +2(n-1)}  &  \mbox{if} & 0 \leq \alpha < 1, 
\end{array}\right.
\end{equation}
\begin{equation}\label{PtildeGDef}
\p_{G} = \left \{
\begin{array}{lcr}
 \max \left( 4, \frac{(n-1)p}{\alpha p + n-1} \right)  &  \mbox{if} & \frac{1-n}{2} \leq \alpha < 0  \\
 && \\
  \frac{(n-1)p}{\alpha p + n-1}    &  \mbox{if} & 0 \leq \alpha \leq \frac{1}{2}, 
\end{array}\right.
\end{equation}

\ni as told $\p_{L}, \p_{G}$ would remind to $\p_{Local}, \p_{Global}$. This quantities are infact sufficiently high angular integrability in order to to get respectively local (in $(0,t)\times \{ 0 \}$) and global regularity. Notice that
$$
\p_{L} < \p_{G}, \quad \mbox{if} \quad \alpha <  1/2, \qquad
\p_{L} = \p_{G} \quad \mbox{if} \quad \alpha = 1/2;
$$
and
\begin{equation}\label{pLpGalphaPositivo}
\p_{L} < p < \p_{G}, \qquad \mbox{if} \quad \alpha <0, 
\end{equation}
\begin{equation}\label{pLpGalphaNegativo}
\p_{L} < \p_{G} < p, \qquad \mbox{if} \quad \alpha > 0;
\end{equation}
The range of $\alpha$ in which $\p_{L}, \p_{G}$ have been defined is justified in the following proofs.

\

\ni The following global regularity criterion holds:
\begin{theorem}\label{OurYZTheorem}
Let $n \geq 3$, $u_{0} \in L^{2}_{\sigma}(\Rn)$ and $u$ a weak Leray solution of (\ref{CauchyNS}).

\

\ni If ${\bf \alpha \in [(1-n)/2,0)}$, $\alpha_{0} \in [(2-n)/2, 2/(2+n))$, $\frac{n}{1- \alpha} < p \leq \frac{1-n}{\alpha}$, and 
   $$  
   \| |x|^{\alpha_{0}} u_{0}\|_{L^{p_{0}}_{|x|}L^{\p_{0}}_{\theta}} < +\infty
   $$ 
with:
\begin{equation}\label{OurYZCond0}
\alpha_{0} = 1- \frac{n}{p_{0}}, \quad 
\p_{0} \leq \frac{\p_{G}}{2},
\end{equation}
\begin{equation}\label{OurYZCond0Complicata}
\left \{
\begin{array}{lcr}
2 \leq p_{0} \leq \p_{G}/2   &  \mbox{if} & \p_{G} \leq 2n   \\
2 \leq p_{0} \leq \p_{G}/2, \quad p_{0} < \frac{2\p_{G}}{\p_{G} - 2n}    &  \mbox{if} & \p_{G} > 2n; 
\end{array}\right.
\end{equation}
and
\begin{equation}\label{OurYZuBound1}
\| |x|^{\alpha} u \|_{L^{s}_{T}L^{p}_{|x|}L^{\p}_{\theta}} < +\infty,
\end{equation}
with:
\begin{equation}\label{OurYZCondition1}
\frac{2}{s}+ \frac{n}{p} = 1-\alpha,
\end{equation} 
\begin{equation}\label{OurYZCondition2} 
\frac{2}{1-\alpha} < s < +\infty,
\end{equation}
\begin{equation}\label{OurYZCondition3}
\p \geq \p_{G} = \max \left(4, \frac{(n-1)p}{\alpha p +n -1}\right);
\end{equation}
then actually $u$ is regular ($C^{\infty}$ in space variables) in $(0,T) \times \Rn$.

\

\ni The same holds if ${\bf \alpha \in [0,1/2]}$, $\alpha_{0} \in [(2-n)/2, 2/(2+n))$, $\max \left( 4, \frac{n}{1 - \alpha} \right) < p < +\infty$, or $p=4$, and  
$$  
\| |x|^{\alpha_{0}} u_{0}\|_{L^{p_{0}}_{|x|}L^{\p_{0}}_{\theta}} < +\infty
$$ 
with:
\begin{equation}\label{OurYZCond0bis}
\alpha_{0} = 1- \frac{n}{p_{0}}, \quad 
\p_{0} \leq \frac{p}{2},
\end{equation}
\begin{equation}\label{OurYZCond0ComplicataBis}
\left \{
\begin{array}{lcr}
2 \leq p_{0} \leq p/2   &  \mbox{if} & p \leq 2n   \\
2 \leq p_{0} \leq p/2, \quad p_{0} < \frac{2 p}{p - 2n}    &  \mbox{if} & p > 2n; 
\end{array}\right.
\end{equation}
and 
\begin{equation}\label{OurYZuBound1bis}
\| |x|^{\alpha} u \|_{L^{s}_{T}L^{p}_{|x|}L^{\p}_{\theta}} < +\infty,
\end{equation}
with:
\begin{equation}\label{OurYZCondition1bis}
\frac{2}{s}+ \frac{n}{p} = 1-\alpha,
\end{equation} 
\begin{equation}\label{OurYZCondition2bis} 
\frac{2}{1-\alpha} < s < +\infty,
\end{equation}
\begin{equation}\label{OurYZCondition3bis}
\p \geq \p_{G} = \frac{(n-1)p}{\alpha p +n -1}.
\end{equation}
\end{theorem}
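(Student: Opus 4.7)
The plan is to exploit the Duhamel formulation
\[ u = e^{t\Delta}u_0 + B(u,u), \qquad B(u,u)(t) = \int_0^t e^{(t-s)\Delta}\mathbb{P}\nabla\cdot(u\otimes u)(s)\, ds, \]
which applies to the Leray solution $u$ by Theorem \ref{DiffvsInt}, and to show that both summands lie in a Serrin-admissible space $L^r_T L^q_x$ (i.e.\ with $(q,r)$ on the scaling curve $n/q+2/r=1$). Once this is done, Theorem \ref{SerrinRegularity} upgrades $u$ to $C^\infty$ in the space variables on all of $(0,T) \times \mathbb{R}^n$, which is the claim.

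First, for the linear piece $e^{t\Delta}u_0$ I will invoke Proposition \ref{IDecayCor} with source indices $(\alpha_0, p_0, \p_0)$ and a target $(\beta, q, \q, r)$ on the Serrin curve $\beta + n/q + 2/r = 1$. The critical scaling $\alpha_0 = 1 - n/p_0$ gives $\Omega(\alpha_0, p_0, \infty) = 1$, exactly matching the homogeneity required. The numerical assumptions \eqref{OurYZCond0}--\eqref{OurYZCond0Complicata} are precisely what is needed to verify simultaneously the angular inequality $\Lambda(\alpha_0, p_0, \p_0) \geq \Lambda(\beta, q, \q)$ and the range restriction $q < np_0/((\alpha_0 - \beta)p_0 + n - 2)$ demanded by that proposition.

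For the bilinear piece $B(u,u)$, I will apply Proposition \ref{DDecayCor} with source indices $(\alpha, p, \p, s)$ from the a priori bound and the same Serrin-admissible target. The scaling \eqref{OurYZCondition1} reads $\Omega(\alpha, p, s) = 1$, so $2\Omega(\alpha, p, s) = 2 = \Omega(\beta, q, r) + 1$ for any such target, matching the bilinear homogeneity. Pushing to $(\beta, q, \q) = (0, \infty, \infty)$, the angular condition $2\Lambda(\alpha, p, \p) \geq 0$ becomes
\[ 2\left(\alpha + \frac{n-1}{p} - \frac{n-1}{\p}\right) \geq 0 \iff \p \geq \frac{(n-1)p}{\alpha p + n - 1}, \]
which is the main branch of $\p_G$. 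The auxiliary threshold $\p \geq 4$ in $\p_G$ when $\alpha < 0$ compensates for the range constraints $p \leq 2q$, $s \leq 2r$ of Proposition \ref{DDecayCor}: for such $\alpha$ one cannot always take $q = \infty$ and must settle for an intermediate target, for which $\p/2 \geq 2$ becomes necessary.

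The hard part will be the joint bookkeeping of the homogeneity identities, of the angular $\Lambda$-inequalities for both the linear and the bilinear estimates, and of the admissible index ranges ($p_0 \leq q$, $p \leq 2q$, $s \leq 2r$, etc.) of Propositions \ref{IDecayCor}--\ref{DDecayCor}. It is this coupled system that forces the case split $\alpha \in [(1-n)/2, 0)$ versus $\alpha \in [0, 1/2]$, the dichotomies \eqref{OurYZCond0Complicata}/\eqref{OurYZCond0ComplicataBis}, the bounds $p \leq (1-n)/\alpha$ and $p > 4$, and the $\max(4, \cdot)$ in the definition of $\p_G$. Concretely I would treat the two $\alpha$-ranges separately, in each case first fixing the target $(\beta,q,\q,r)$ as close as possible to the $L^2_T L^\infty_x$ endpoint, then reading off the required source bounds for the linear and bilinear estimates, and finally matching these against the stated hypotheses.
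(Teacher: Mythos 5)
Your overall strategy coincides with the paper's: write $u=e^{t\Delta}u_0+B(u,u)$, estimate the linear term by Proposition \ref{IDecayCor} and the bilinear term by Proposition \ref{DDecayCor} into an unweighted space $L^{r}_{T}L^{q}_{x}$ with $\frac2r+\frac nq=1$, and conclude by Serrin's criterion (Theorem \ref{SerrinRegularity}); your derivation of the main branch of $\p_{G}$ from $2\Lambda(\alpha,p,\p)\ge \Lambda(0,q,q)=0$ is also exactly the paper's. But one of your claims would fail as stated. For the linear term you assert that \eqref{OurYZCond0}--\eqref{OurYZCond0Complicata} are ``precisely what is needed'' to verify the angular inequality $\Lambda(\alpha_{0},p_{0},\p_{0})\ge\Lambda(\beta,q,\q)$. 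They are not: with $\beta=0$ and $\q=q$ the required inequality is $\Lambda(\alpha_{0},p_{0},\p_{0})=1-\frac{1}{p_{0}}-\frac{n-1}{\p_{0}}\ge 0$, whereas the hypotheses impose only an \emph{upper} bound $\p_{0}\le\p_{G}/2$; for instance $\p_{0}=1$ is admissible and gives $\Lambda(\alpha_{0},p_{0},\p_{0})=2-n-\frac{1}{p_{0}}<0$ for $n\ge3$. The paper first runs your argument under the extra assumption $\Lambda_{\alpha_{0},p_{0},\p_{0}}\ge0$ and then removes it by an idea your plan is missing: the localized estimate \eqref{IHeatLoc} (resting on Proposition \ref{LocalPDecay}) bounds the linear term on each parabola $\Pi(R)$ at the price of a factor $R^{-\Lambda}$, which yields regularity in $\Pi(R)$ for every $R$ and hence in all of $(0,T)\times\Rn$ by letting $R\to\infty$.

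A second signpost points the wrong way. You propose to fix the target ``as close as possible to the $L^{2}_{T}L^{\infty}_{x}$ endpoint,'' i.e.\ $q$ large. On the Serrin line, enlarging $q$ shrinks $r$, while Proposition \ref{DDecayCor} demands $r\ge s/2$ with $s$ already pinned down by the scaling \eqref{OurYZCondition1}; so the binding constraint pushes $q$ \emph{down}, toward the $L^{\infty}_{t}L^{n}_{x}$ end, subject to the lower bounds $q\ge p/2$ and $\q=q\ge\p/2$ (the latter handled by reducing to $\p=\p_{G}$ via monotonicity of the norms in $\p$). The paper's choices are $q=\q=\p_{G}$ for $\alpha<0$ and $q=\q=p/2$ for $\alpha\ge0$, and it is exactly the inequality $s/2\le r$ evaluated at these choices that produces the restrictions $\alpha\ge(1-n)/2$, the threshold $4$ inside $\p_{G}$, and $\alpha\le1/2$. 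If you instead maximize $q$, the condition $s/2\le r$ fails on most of the stated $\alpha$-range and you will not recover the theorem. Apart from these two points the plan is the paper's and the remaining bookkeeping goes through as you describe.
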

\begin{remark} 
By relations (\ref{pLpGalphaPositivo}, \ref{pLpGalphaNegativo}) it turns out that in the case of localizing weights ($|x|^{\alpha}, \alpha <0$) additional angular integrability is requested in order to get global regularity. We will come back on this point also in the next section. On the other hand the weights $|x|^{\alpha}, \alpha > 0$, by providing an additional boundedness at infinity, allows to get global regularity with less angular integrability.  
\end{remark}

\ni Of course by translations $u$ is regular provided that the weights and the norms are centered in any $\bar{x}$.

\begin{remark}
We give some remarks about the indexes:

\begin{itemize}
\item The first conditions in (\ref{OurYZCond0}, \ref{OurYZCond0bis}) and the conditions (\ref{OurYZCondition1}, \ref{OurYZCondition1bis}) follow by requiring invariance with respect to the natural scalings for respectively $u_{0}$ and $u$, i.e. 
$$
\lambda \rightarrow \lambda u_{0}(\lambda x)
$$ 
$$
\lambda \rightarrow \lambda u(\lambda^{2}, \lambda x);
$$
\item Our method misses the endpoints $s=+\infty$, $p=+\infty$. The last one is recovered just in case $\alpha \geq 0$, if we assume $\p > \p_{G}$. This is because of the use of proposition \ref{DDecayCor};
\item Of course we can set $T=+\infty$ to get regularity for all times;
\item If $n > 3$ we get a gain in the negative values of $\alpha$ with respect to the theorem \ref{YZTheorem}. We have infact $\frac{1-n}{2} \leq \alpha$ instead of $-1 \leq \alpha$. This is also more satisfactory because exbits a dependence on the dimension. We have, on the other hand, a loss on the positive values of $\alpha$, that's $\alpha \leq \frac{1}{2}$ instead of $\alpha < 1$;
\item It is interesting to notice that no direct correlation between the radial and angular integrability of the initial datum have to be required. 
\end{itemize}
\end{remark}

\

\ni The assumptions on the initial datum are a bit complicated, and have to be considered as merely technical. For instance can be considered $u_{0}$ in the Schwartz class without a real loss in the main contents of the theorem. In this way the formulation becomes simpler. The real information is about the angular integrability of the solution requested in order to get regularity, so it is the hypotesis $p \geq \p_{G}$. It comes out by requiring $\Lambda (\alpha,p,\p) \geq 0$ in order to apply inequality (\ref{DGHeat}) with simply $L^{r}_{t}L^{q}_{x}$ spaces on the left side.
 
\

\begin{proof}
We want to use the regularity criterion (\ref{SerrinRegularity}), so we need to show:
\begin{equation}\label{OurYZCondition1Proof}
\|u\|_{L^{r}_{T}L^{q}_{x}} < +\infty, \quad \mbox{with} \quad \frac{2}{r} +\frac{n}{q} =1.
\end{equation}
Let's start by the integral representation
\begin{equation}\nonumber
u  =  e^{t \Delta}u_{0} + \int_{0}^{t}e^{(t-s)\Delta}\mathbb{P}\nabla \cdot (u \otimes u)(s)ds. 
\end{equation}

\ni In order to get (\ref{OurYZCondition1Proof}) we distinguish the cases $\alpha \in [(1-n)/2,0)$ and $\alpha \in [0,1/2]$.
\ni \subsection*{Case ${\bf \alpha \in [(1-n)/2,0)}$} 
\begin{eqnarray}\nonumber
\|u\|_{L^{r}_{T}L^{q}_{x}} &\leq & \|e^{t\Delta}u_{0}\|_{L^{r}_{T}L^{q}_{x}}+
\left\|\int_{0}^{t}e^{(t-s)\Delta}\mathbb{P}\nabla \cdot (u \otimes u)(s) \ ds \right\|_{L^{r}_{T}L^{q}_{x}} \\ \nonumber
&=& I + II.
\end{eqnarray}
By the scaling assumption and the proposition (\ref{IDecayCor}) we get:
\begin{equation}\label{I}
I \leq c_{0} \||x|^{\alpha_{0}}u_{0}\|_{L^{p_{0}}_{|x|}L^{\p_{0}}_{\theta}}, 
\end{equation}
provided that 
\begin{equation}\label{I2}
  p_{0} \leq q < \frac{np_{0}}{p_{0} - 2}, \quad \p_{0} \leq q,
   \quad \Lambda_{\alpha_{0}, p_{0}, \p_{0}} \geq 0.
\end{equation}
Actually the condition $\Lambda_{\alpha_{0}, p_{0}, \p_{0}} \geq 0$ is not necessary in order to prove the theorem. We assume it for now to avoid some technicalities in the proof. We will show at the end how it can be removed.
To bound $II$ we use proposition (\ref{DDecayCor}) and scaling, so
\begin{equation}\nonumber
II \leq  d_{0} \||x|^{\alpha}u\|^{2}_{L^{s}_{T}\Lpptilde}, 
\end{equation}
provided that
\begin{equation}\label{OurYZCond2Proof}
\Lambda(\alpha,p, \p) \geq 0,
\end{equation}
\begin{equation}\label{OurYZCond3Proof}
p/2, \ \p /2 \leq q, \quad s/2 \leq r.
\end{equation}
Condition (\ref{OurYZCond2Proof}) is ensured by
$$
\p \geq \frac{(n-1)p}{\alpha p +n -1}.
$$
Notice also that this condition, the scaling relation and $\alpha <0$ imply $\frac{n}{1-\alpha} < p \leq \frac{1-n}{\alpha}$. So the widest range for $p$ is atteined as $\alpha \rightarrow 0^{-}$.
The last point is to show that a couple $(r,q)$ such that (\ref{OurYZCond3Proof}) is consistent with $\frac{2}{r} + \frac{n}{q}=1$ actually exists. We choose $q = \p_{G} = \max \left( 4, \frac{(n-1)p}{\alpha p + n-1} \right)$. 
This is allowed by $(1-n)/2 \leq \alpha$; infact if $\frac{(n-1)p}{\alpha p + n-1} \geq 4$ we get
$$
\frac{2}{r} = 1- \frac{n}{q} = 1- \frac{2n \alpha}{n-1} + \frac{2n}{p} \Rightarrow 
\frac{2}{r} - \frac{4}{s} = \frac{1-n -2 \alpha}{n-1},
$$
so
$$
(1-n)/2 \leq \alpha \Rightarrow s/2 \leq r; 
$$
while if $\frac{(n-1)p}{\alpha p + n-1} < 4$ we get also $p < 4$ and
$$
\frac{2}{r} = 1- \frac{n}{4}, \ \frac{4}{s} > 2- \frac{2n}{p} \Rightarrow
\frac{2}{r} - \frac{4}{s} < -1 +\frac{2n}{p} - \frac{n}{4} < 0.   
$$

\ni Finally the condition (\ref{I2}) becomes 
$$
p_{0} \leq \frac{\p_{G}}{2} < \frac{np_{0}}{p_{0}-2},
$$
that by a straightforward calculation leads to (\ref{OurYZCond0Complicata}), $\alpha_{0} \in [(2-n)/2, 2/(2+n))$.

\subsection*{Case ${\bf \alpha \in [0,1/2]}$} 
The only difference is in the choice of $(r,q)$. Here we set $q = p/2$. In such a way the condition (\ref{OurYZCond3Proof}) is ensured by $\alpha \leq 1/2$, infact:
$$
\frac{2}{r} = 1- \frac{2n}{p} \Rightarrow \frac{2}{r} - \frac{4}{s} = -1 + 2 \alpha,
$$  
so
$$
\alpha \leq 1/2 \Rightarrow s/2 \leq r.
$$
Notice that now we haven't the restriction $p\leq \frac{1-n}{\alpha}$.
Then the condition (\ref{I2}) becomes
$$
p_{0} \leq \frac{q}{2} < \frac{np_{0}}{p_{0}},
$$
that by a straightforward calculation leads to (\ref{OurYZCond0ComplicataBis}), $\alpha_{0} \in [(2-n)/2, 2/(2+n))$ and $\max \left( 4, \frac{n}{1 - \alpha} \right) < p$ or $p=4$.

\ni We show how the condition $\Lambda_{\alpha_{0}, p_{0}, \p_{0}} \geq 0$ can be removed. Let call it simply $\Lambda$ and suppose $\Lambda < 0 $.  We still can use the localized estimate (\ref{IHeatLoc}) to get the bound
$$
\|\mathbbm{1}_{\Pi(R)} u \|_{L^{r}_{T}L^{q}_{x}} \leq 
R^{-\Lambda} c_{0} \||x|^{\alpha_{0}} u_{0} \|_{L^{p_{0}}_{|x|}L^{\p_{0}}_{\theta}} +
d_{0} \||x|^{\alpha} u \|_{L^{s}_{T}L^{p}_{|x|}L^{\p}_{\theta}}
$$
where $\Pi(R)$ is the parabola
$$
\Pi(R) = \left\{ (t,x) \in \Rpiu \times \Rn \quad \mbox{s.t.} \quad
\left| \frac{x}{\sqrt{t}}\right| \leq R \right\}.
$$
So regularity is achieved in $\Pi(R)$ and, as $R \rightarrow +\infty$, in the whole space-time.

\end{proof}

\ni Then the following local regularity criterion holds:

\begin{theorem}\label{OurYZTheoremLoc}
Let $n \geq 3$, $u_{0} \in L^{2}_{\sigma}(\Rn) \cap H^{1}(\Rn) \cap L^{2}_{|x|^{2-n}dx}$. Let $u$ a weak Leray's solution of (\ref{CauchyNS}).

\

\ni If ${\bf \alpha \in [-1/2,0)}$, $\alpha_{0} \in \left[ 1-n, \frac{2-n}{2+n}\right)$, $\max \left(2, \frac{n}{1- \alpha} \right) < p < +\infty$ or $p=2$, and  
   $$  
   \| |x|^{\alpha_{0}} u_{0}\|_{L^{p_{0}}_{|x|}L^{\p_{0}}_{\theta}} < +\infty
   $$ 
with:
\begin{equation}\label{OurYZCond0Loc}
\alpha_{0} = 1- \frac{n}{p_{0}}, \quad 
\p_{0} \leq \frac{p}{2},
\quad \Lambda_{\alpha_{0}, p_{0}, \p_{0}} \geq 0;
\end{equation}
\begin{equation}\label{OurYZCond0ComplicataLoc}
\left \{
\begin{array}{lcr}
1 \leq p_{0} \leq p/2   &  \mbox{if} & p \leq n   \\
1 \leq p_{0} \leq p/2, \quad p_{0} < \frac{p}{p - n}    &  \mbox{if} & p > n; 
\end{array}\right.
\end{equation}
and
\begin{equation}\label{OurYZuBound1Loc}
\| |x|^{\alpha} u \|_{L^{s}_{T}L^{p}_{|x|}L^{\p}_{\theta}} < +\infty,
\end{equation}
with:
\begin{equation}\label{OurYZCondition1Loc}
\frac{2}{s}+ \frac{n}{p} = 1-\alpha,
\end{equation} 
\begin{equation}\label{OurYZCondition2Loc} 
\frac{2}{1-\alpha} < s < +\infty, 
\end{equation}
\begin{equation}\label{OurYZCondition3Loc}
\p \geq \p_{L} = \frac{2(n-1)p}{(2\alpha +1)p + 2(n-1)};
\end{equation}
then actually $u$ is and regular ($C^{\infty}$ in space variables) in the segment $(0,T) \times \{ 0 \}$.

\

\ni The same holds if ${\bf \alpha \in [0,1)}$, $\alpha_{0} \in \left[ 1-(1-\alpha)n, 1- (1-\alpha)\frac{2n}{2+n} \right)$, $\max \left( 2, \frac{n}{1 - \alpha} \right) < p < +\infty$ or $p=2$, and  
$$  
\| |x|^{\alpha_{0}} u_{0}\|_{L^{p_{0}}_{|x|}L^{\p_{0}}_{\theta}} < +\infty
$$ 
with:
\begin{equation}\label{OurYZCond0bisLoc}
\alpha_{0} = 1- \frac{n}{p_{0}}, \quad 
\p_{0} \leq \frac{p}{2}, \quad
\Lambda_{\alpha_{0},p_{0},\p_{0}} \geq 0,
\end{equation}
\begin{equation}\label{OurYZCond0ComplicataBisLoc}
\frac{1}{1-\alpha} \leq p_{0} \leq \frac{p}{2}, \quad p_{0} < \frac{p}{(1-\alpha)p -n};
\end{equation}
and 
\begin{equation}\label{OurYZuBound1bisLoc}
\| |x|^{\alpha} u \|_{L^{s}_{T}L^{p}_{|x|}L^{\p}_{\theta}} < +\infty,
\end{equation}
with:
\begin{equation}\label{OurYZCondition1bisLoc}
\frac{2}{s}+ \frac{n}{p} = 1-\alpha,
\end{equation} 
\begin{equation}\label{OurYZCondition2bisLoc} 
\frac{2}{1-\alpha} < s < +\infty, 
\end{equation}
\begin{equation}\label{OurYZCondition3bisLoc}
\p \geq \p_{L} = \frac{2(n-1)p}{p + 2(n-1)}.
\end{equation}
\end{theorem}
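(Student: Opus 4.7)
The plan is to invoke the local regularity criterion of Caffarelli--Kohn--Nirenberg as formulated in Lemma \ref{WeightedCKNLemma}: it suffices to show that
\begin{equation*}
\int_{0}^{T}\int_{\mathbb{R}^{n}} |x|^{2-n}\,|\nabla u|^{2}\,dx\,dt<+\infty,
\end{equation*}
that is, $\||x|^{(2-n)/2}\nabla u\|_{L^{2}_{T}L^{2}_{x}}<+\infty$. Using Theorem \ref{DiffvsInt} to write $u=e^{t\Delta}u_{0}+B(u,u)$, I will estimate the linear and bilinear contributions separately and conclude.

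For the linear part, the assumption $u_{0}\in L^{2}_{|x|^{2-n}dx}$ is tailored to produce the bound through a direct weighted energy identity: differentiating $e(t)=\int |x|^{2-n}|e^{t\Delta}u_{0}|^{2}\,dx$ and integrating by parts once yields the main dissipation term plus a cross term of the form $\int \nabla(|x|^{2-n})\cdot f\,\nabla f\,dx$ with $f=e^{t\Delta}u_{0}$. Rewriting this as $\tfrac12 \int \nabla(|x|^{2-n})\cdot\nabla(f^{2})\,dx$ and integrating by parts once more, the key algebraic fact $\nabla\!\cdot(|x|^{-n}x)\equiv 0$ on $\mathbb{R}^{n}\setminus\{0\}$ makes this term vanish, so
\begin{equation*}
\int_{0}^{T}\!\!\int_{\mathbb{R}^{n}}|x|^{2-n}|\nabla e^{t\Delta}u_{0}|^{2}\,dxdt\le \tfrac{1}{2}\||x|^{(2-n)/2}u_{0}\|_{L^{2}_{x}}^{2}.
\end{equation*}
The additional assumption $\||x|^{\alpha_{0}}u_{0}\|_{L^{p_{0}}_{|x|}L^{\widetilde{p}_{0}}_{\theta}}<\infty$ (with the given scaling and angular conditions) provides the compatibility needed to apply Proposition \ref{IDecayCor} as an alternative / complementary bound on the linear term in case the pure energy estimate is not enough to absorb auxiliary norms appearing below.

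For the bilinear term $\nabla B(u,u)$ the core of the argument is Proposition \ref{DDecayCor} taken with $|\eta|=1$ and with left-hand-side parameters $\beta=(2-n)/2$ and $(r,q,\widetilde{q})$ chosen so that (i) the scaling identity $2\Omega(\alpha,p,s)=\Omega(\beta,q,r)+1-|\eta|$ coincides exactly with the critical relation $2/s+n/p=1-\alpha$ of the hypothesis, and (ii) the target angular condition $2\Lambda(\alpha,p,\widetilde{p})\ge \Lambda(\beta,q,\widetilde{q})$ reduces to $\widetilde{p}\ge\widetilde{p}_{L}$. A direct computation shows that $\Lambda(\alpha,p,\widetilde{p}_{L})=-1/2$ when $\alpha\in[-1/2,0)$ and $\Lambda(\alpha,p,\widetilde{p}_{L})=\alpha-1/2$ when $\alpha\in[0,1)$; this dictates the choice $\Lambda(\beta,q,\widetilde{q})=-1$ (resp.\ $2\alpha-1$). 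One can then solve the linear system in $(q,r,\widetilde{q})$ obtained from the scaling identity and from this constraint, and finally pass from $L^{2}_{|x|}L^{\widetilde{q}}_{\theta}$ to $L^{2}_{x}$ by the embedding $L^{\widetilde{q}}(\mathbb{S}^{n-1})\hookrightarrow L^{2}(\mathbb{S}^{n-1})$ when $\widetilde{q}\ge 2$.

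The main obstacle, and the origin of the two different expressions for $\widetilde{p}_{L}$ in the negative and non-negative $\alpha$ regimes, is precisely this last point: matching the angular identity $\Lambda(\beta,q,\widetilde{q})=\Lambda(\alpha,p,\widetilde{p}_{L})\cdot 2$ while simultaneously keeping $\widetilde{q}\ge 2$ and the scaling $\beta+n/q+2/r=2$ compatible with $r,q\ge p/2,s/2$ in Proposition \ref{DDecayCor}. For $\alpha\ge 0$ the additional decay furnished by the weight tightens this constraint, which is what raises the threshold from $\Lambda\ge -1/2$ to $\Lambda\ge \alpha-1/2$ and explains the second branch of $\widetilde{p}_{L}$; the index bounds (\ref{OurYZCond0ComplicataLoc})--(\ref{OurYZCond0ComplicataBisLoc}) on $(p_{0},\widetilde{p}_{0})$ are the concrete translation of the admissibility conditions of Propositions \ref{IDecayCor}--\ref{DDecayCor} into a constraint involving only the parameters of the hypothesis. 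Combining the two estimates gives $\||x|^{(2-n)/2}\nabla u\|_{L^{2}_{T}L^{2}_{x}}<\infty$ and Lemma \ref{WeightedCKNLemma} concludes the proof.
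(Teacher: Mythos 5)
Your plan reduces the theorem to Lemma \ref{WeightedCKNLemma}, i.e.\ to proving $\||x|^{(2-n)/2}\nabla u\|_{L^{2}_{T}L^{2}_{x}}<\infty$. This is not the route the paper takes, and I do not think it can be made to work for the statement as claimed, for two concrete reasons. First, Proposition \ref{DDecayCor} only maps $L^{s}_{t}L^{p}_{|x|}L^{\p}_{\theta}\times L^{s}_{t}L^{p}_{|x|}L^{\p}_{\theta}$ into $L^{r}_{t}L^{q}_{|x|}L^{\q}_{\theta}$ under the structural constraints $q\ge p/2$, $r\ge s/2$, $\q\ge\p/2$ (these come from the H\"older step $\||x|^{\alpha_{0}}|u|^{2}\|_{L^{s_0}L^{p_0}L^{\p_0}}\le\||x|^{\alpha}u\|^{2}_{L^{2s_0}L^{2p_0}L^{2\p_0}}$ and the Young inequality in time). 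Your target forces $q=r=2$, hence $p\le 4$ and $s\le 4$, whereas the hypothesis of the theorem allows arbitrarily large $p$ (and, near the lower end of the $p$-range, $s>4$). So the bilinear estimate is simply not applicable on a large part of the claimed index set. Second, even where it applies, the angular condition you would need is $2\Lambda(\alpha,p,\p)\ge\Lambda\bigl(\tfrac{2-n}{2},2,\q\bigr)=\tfrac12-\tfrac{n-1}{\q}$ with $\q\ge 2$ (you need $\q\ge2$ for the sphere embedding $L^{\q}(\mathbb{S}^{n-1})\hookrightarrow L^{2}(\mathbb{S}^{n-1})$ at the end). The best case $\q=2$ gives $\Lambda(\alpha,p,\p)\ge\tfrac{2-n}{4}$, which for $n=3$ reads $\Lambda\ge-\tfrac14$ and is strictly stronger than the condition $\Lambda\ge-\tfrac12$ encoded by $\p\ge\p_{L}$. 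Your own computation shows $\Lambda(\alpha,p,\p_{L})=-\tfrac12$, and to realize $\Lambda(\beta,q,\q)=-1$ with $\beta=\tfrac{2-n}{2}$, $q=2$ you would need $\q=\tfrac{2(n-1)}{3}$, which is $4/3<2$ when $n=3$: the embedding then goes the wrong way. So the stated threshold $\p_{L}$ is out of reach by this method in the most relevant dimension.

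The paper avoids both problems by reducing instead to Theorem \ref{YZTheorem}: one proves $\||x|^{\beta}u\|_{L^{r}_{T}L^{q}_{x}}<\infty$ (no gradient, $\eta=0$) on the scaling line $\tfrac2r+\tfrac nq=1-\beta$, with the free choices $\beta=-1$, $q=\q=p/2$ when $\alpha\in[-1/2,0)$ and $\beta=2\alpha-1$, $(r,q)=(s/2,p/2)$ when $\alpha\in[0,1)$. Taking $q=p/2$ makes the constraints $q\ge p/2$, $r\ge s/2$ satisfiable for all admissible $(p,s)$, and taking $\q=q$ gives $\Lambda(\beta,q,\q)=\beta=-1$ (resp.\ $2\alpha-1$), which is exactly what turns $2\Lambda(\alpha,p,\p)\ge\beta$ into $\p\ge\p_{L}$. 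Separately, a small remark on your linear-part energy identity: the cross term does not vanish; since $\Delta(|x|^{2-n})=-(n-2)\omega_{n-1}\delta_{0}$ it contributes a nonpositive boundary term $-(n-2)\omega_{n-1}|f(0)|^{2}$, so the inequality survives, but the reasoning "$\nabla\cdot(|x|^{-n}x)\equiv0$ makes the term vanish" is not correct as stated. In any case that estimate is not what the proof needs: the linear term is handled by Proposition \ref{IDecayCor} in the weighted mixed norms dictated by the choice of $(\beta,r,q)$ above, which is where the conditions \eqref{OurYZCond0Loc}--\eqref{OurYZCond0ComplicataLoc} on $(\alpha_{0},p_{0},\p_{0})$ come from.
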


\ni Of course by translations $u$ is regular in the segment $(0,T) \times \{ \bar{x} \}$, provided that all the norms and the weights are centered in $\bar{x}$.

\begin{remark}
We give again some remarks about the indexes:

\begin{itemize}
\item The first conditions in (\ref{OurYZCond0Loc}, \ref{OurYZCond0bisLoc}) and the conditions (\ref{OurYZCondition1Loc}, \ref{OurYZCondition1bisLoc}) follow by requiring invariance with respect to the natural scalings for respectively $u_{0}$ and $u$, i.e. 
$$
\lambda \rightarrow \lambda u_{0}(\lambda x)
$$ 
$$
\lambda \rightarrow \lambda u(\lambda^{2}, \lambda x);
$$
\item Of course we can set $T=+\infty$ to get regularity for all times;
\item We get a loss in the negative values of $\alpha$ with respect to the theorem \ref{YZTheorem}. We have infact $-\frac{1}{2} \leq \alpha$ instead of $-1 \leq \alpha$.
\end{itemize}
\end{remark}

\

\ni We make again complicated assumptions on the initial datum, but the main point is the angular integrability of the solution requested in order to get regularity in $(0,T)\times \{ 0 \}$, i.e. the hypotesis $p \geq \p_{L}$. It comes out by requiring $\Lambda (\alpha,p,\p) \geq \beta$ with $\beta \in [-1, 1)$. This is because in such a way (\ref{DGHeat}) allows to apply directly the theorem \ref{YZTheorem}. 

\

\begin{proof}
Here we want to use directly the theorem (\ref{YZTheorem}), so we need to show:
\begin{equation}\label{OurYZCondition1ProofLoc}
\| |x|^{\beta} u \|_{L^{r}_{T}L^{q}_{x}} < +\infty, \quad \mbox{with} \quad \frac{2}{r} +\frac{n}{q} =1 - \beta.
\end{equation}
Let's start by the integral representation
\begin{equation}\nonumber
u  =  e^{t \Delta}u_{0} + \int_{0}^{t}e^{(t-s)\Delta}\mathbb{P}\nabla \cdot (u \otimes u)(s)ds.  
\end{equation}

\ni In order to get (\ref{OurYZCondition1ProofLoc}) we distinguish the case $\alpha \in [-1/2,0)$ and $\alpha \in [0,1)$.
\ni \subsection*{Case ${\bf \alpha \in [-1/2,0)}$} 
\begin{eqnarray}\nonumber
\| |x|^{\beta} u \|_{L^{r}_{T}L^{q}_{x}} &\leq & \| |x|^{\beta} e^{t\Delta}u_{0}\|_{L^{r}_{T}L^{q}_{x}} \\ \nonumber
&+& \left\| |x|^{\beta} \int_{0}^{t}e^{(t-s)\Delta}\mathbb{P}\nabla \cdot (u
\otimes u)(s) \ ds \right\|_{L^{r}_{T}L^{q}_{x}} \\ \nonumber
&=& I + II.
\end{eqnarray}
By the scaling assumption and corollary (\ref{IDecayCor}) we get:
\begin{equation}\label{ILoc}
I \leq c_{0} \||x|^{\alpha_{0}}u_{0}\|_{L^{p_{0}}_{|x|}L^{\p_{0}}_{\theta}}
\end{equation}
provided that 
\begin{equation}\label{I2Loc}
\quad p_{0} \leq q < \frac{np_{0}}{(\alpha_{0} - \beta)p +n-2}, \quad \p_{0} \leq q, \quad \Lambda_{\alpha_{0}, p_{0}, \p_{0}} \geq 0.
\end{equation}
To bound $II$ we use proposition (\ref{DDecayCor}) and scaling, so 
\begin{equation}\nonumber
II \leq   d_{0} \||x|^{\alpha} u \|^{2}_{L^{s}_{T}\Lpptilde}, 
\end{equation}
provided that
\begin{equation}\label{OurYZCond2ProofLoc}
2 \Lambda(\alpha,p, \p) \geq \beta,
\end{equation}
\begin{equation}\label{OurYZCond3ProofLoc}
p/2, \ \p /2 \leq q, \quad s/2 \leq r.
\end{equation}
Condition (\ref{OurYZCond2ProofLoc}) is ensured by
\begin{equation}\label{pLocInProofLoc}
\p \geq \frac{2(n-1)}{(2\alpha -\beta)p + 2(n-1)}.
\end{equation}
The last point is to show tha a couple $(\beta,r,q)$ such that (\ref{OurYZCond3ProofLoc}) is consistent with the scaling relation actually exists. 

\ni Because we are using theorem (\ref{YZTheorem}), we need to restrict to $-1\leq \beta$ and, in order to get the lower possible value for $\p$, we actually choose $\beta =-1$. In such a way the condition (\ref{pLocInProofLoc}) becomes the (\ref{OurYZCondition3Loc}). With this choice of $\beta$ we have 
$$
\p \leq p \quad \mbox{if} \quad -1/2 \leq \alpha,
$$
that's infact the range of $\alpha$ we have restricted on.
Then we choose $q=p/2$, so by the scaling relations we get
$$
\frac{2}{r} - \frac{4}{s} = 2 \alpha -2 \leq 0,
$$
that's consistent with $s/2 \leq r$.
Of course because of the choice $q= p/2$ and the scaling we have to require
$$
\max \left( 2, \frac{n}{1 - \alpha} \right) < p, \qquad \mbox{or} \quad p=2.
$$
Then the condition (\ref{I2Loc}) becomes  
$$
\quad p_{0} \leq q < \frac{np_{0}}{2p_{0} - 2},
$$ 
that by a straightforward calculation leads to (\ref{OurYZCond0ComplicataLoc}), $\alpha_{0} \in \left[ 1-n, \frac{2-n}{2+n}  \right)$.

\subsection*{Case ${\bf \alpha \in [0,1)}$} 
The only difference is again in the choice of $(\beta, r, q)$. 
Because of $\alpha \geq 0$ we can reach smaller values for $\p$, and we do it by requiring $2\alpha - \beta =1$ in (\ref{pLocInProofLoc}), in such a way
$$
\p \geq \frac{2(n-1)p}{p + 2(n-1)}.
$$ 
More precisely we choose 
$$
(\beta, r, q) = (2\alpha -1, s/2, p/2).
$$
A simple calculation shows that this choice is consistent with the scaling relation. Now by the condition (\ref{I2Loc}) and scaling we have
$$
p_{0} \leq q < \frac{np_{0}}{(2 - 2\alpha)p_{0} -2},
$$
that by a straightforward calculation leads to (\ref{OurYZCond0ComplicataBisLoc}), $\alpha_{0} \in \left[ 1-(1-\alpha)n, 1-(1-\alpha)\frac{2n}{2+n} \right)$.   
\end{proof}

\section{Well posedness with small data in weighted $L^p$ spaces}\label{smallDataMixed}

In chapter \ref{chap2} we have introduced the fundamental results and ideas of the small data theory for the solutions of the Navier Stokes equation. In this section we came back on this topic in order to get results in weighted Lebesgue spaces with angular integrability. In particular we will focus on theorem \ref{CKNSmallData}. As observed the small data theory is very well understood in the case of translation invariant adapted spaces, but not so much is known without this hypothesis. We will consider basically small data $u_{0}$ in the weighted space $L^{p}_{|x|^{\alpha p}d|x|}L^{\p}_{\theta}$ that's endowed with the norm
$$
\left( \int_{0}^{+\infty}
    \|f( \rho \ \cdot\ )\|^{p}_{L^{\p}(\mathbb{S}^{n-1})}
    \rho^{\alpha p + n-1} d \rho
  \right)^{\frac1p}.
$$
Of course by translations all the results we are going to prove still hold if the norms and the weights are centred in some point $\bar{x}$ different from the origin . In order to develop a small data theory we have then to restrict to critical spaces, so we need invariance under the scaling
$ \lambda \rightarrow \lambda u_{0}(\lambda x) $,
that, as observed before, is the right one for the initial datum of system (\ref{CauchyNS}).
This leads to the scaling relation
$$
\alpha = 1-\frac{n}{p}
$$
Notice that in this family there are the critical spaces $L^{3}$ and $L^{2}_{|x|^{2-n} dx}$ considered in theorems \ref{CKNSmallData} and \ref{CKNSmallData}. Regular strong solutions are available for small data in $L^{3}$, while smallness in $L^{2}_{|x|^{2-n} dx}$ gives only regularity localized in the interior of a space-time parabola centered on the origin. We conjecture that such behaviour is typical of the power weights $|x|^{\alpha}$ with $\alpha <0$. This is not surprising because in such case, even if the norms are defined on the whole space $\Rn$, the weights give a kind of localization near to the origin and a loss of informations at infinity occurs. Then we show that such informations can be recovered by a certain amount of angular integrability, and in this case, global regularity in space and time is availble. This is the case in the limit $\p \rightarrow \left( \frac{(n-1)p}{p-1} \right)^{-}$, as shown by the following

\begin{theorem}\label{WeighKato}
Let $n\geq 3$, $p \in [2,2+n]$,  $ \p \in [1, +\infty]$ and $\alpha, \p$ such that: 
 \begin{equation}\label{eq:condDL(Heat)1InThm}
    \alpha=1 - \frac{n}{p}, \qquad
    \p \geq \frac{(n-1)p}{p-1}.  
\end{equation}  
Let then $u_{0} \in L^{2}_{\sigma}(\Rn)$. There exists $\varepsilon > 0$, depending on all the parameter, such that if
$$
\||x|^{\alpha}u_{0}\|_{L^{p}_{|x|}L^{\p}_{\theta}} < \varepsilon
$$
then exists a unique global solution $u$ to the integral problem (\ref{IntegralCauchyNS}). Furthermore for all $p \leq q < \frac{np}{(\alpha - \beta)p + n-2}$, $\p \leq \q$, $r \in (1, +\infty)$ it holds
\begin{equation}\label{NotInfty}
\||x|^{\beta} u\|_{L^{q}_{|x|}L^{\q}_{\theta}} < 2c_{0}\varepsilon.
\end{equation}
provided that
\begin{equation}\label{eq:condDL(Heat)2InThm}
    \Lambda(\alpha, p, \p) > \Lambda (\beta,q,\q)   \qquad
    \frac{2}{r} + \frac{n}{p} = 1- \beta,
\end{equation}
and for all $p \leq q < \frac{np}{( 1 + \alpha - \beta)p + n-2}$, $\p \leq \q$, $r \in (1, +\infty)$ it holds
\begin{equation}\label{NotInftyDer}
\||x|^{\beta} \nabla u\|_{L^{q}_{|x|}L^{\q}_{\theta}} < 2c_{1}\varepsilon.
\end{equation}
provided that
\begin{equation}\label{eq:condDL(Heat)2InThmDer}
    \Lambda(\alpha, p, \p) > \Lambda (\beta,q,\q)   \qquad
    \frac{2}{r} + \frac{n}{p} = 2 - \beta;
\end{equation}
So $u$ is a regular ($C^{\infty}$ in the space variables) classical solution of (\ref{CauchyNS}). 
\end{theorem}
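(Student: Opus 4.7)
The plan is to apply the Kato--Picard fixed point scheme (Theorem \ref{Picard}) to the integral formulation of the Navier--Stokes equation, by choosing a path space $X_T$ in which (a) the linear propagator sends $u_0$ into $X_T$ with norm controlled by $\||x|^{\alpha} u_0\|_{\Lpptilde}$, and (b) the bilinear form $B(u,v) = \int_0^t e^{(t-s)\Delta} \mathbb{P}\nabla\cdot(u \otimes v)\,ds$ is continuous $X_T \times X_T \to X_T$. The convenient observation is that, under the scaling $\alpha = 1 - n/p$, the angular integrability hypothesis $\p \geq (n-1)p/(p-1)$ is \emph{exactly} equivalent to $\Lambda(\alpha, p, \p) \geq 0$. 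This allows one to pick a pure Serrin-type space $X_T = L^r_T L^q_x$ (no weight, no angular integrability, i.e.\ $\beta = 0$ and $\q = q$) with the critical relation $2/r + n/q = 1$ and $q$ in the range $\max(p, \p) \leq q < np/(p-2)$; the upper bound is finite precisely because $p \leq n+2$, and the interval is non-empty since $p \geq 2$.

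With this choice, Proposition \ref{IDecayCor} yields the linear bound
\[
  \|e^{t\Delta} u_0\|_{X_T} \leq c_0 \||x|^{\alpha} u_0\|_{\Lpptilde},
\]
since the scaling identity $\Omega(\alpha, p, \infty) = \Omega(0, q, r)$ reduces to $1 = 2/r + n/q$ and the condition $\Lambda(\alpha, p, \p) \geq \Lambda(0, q, q) = 0$ holds by hypothesis. For the nonlinearity, the self-referential form (\ref{DGHeatq=p}) of Proposition \ref{DDecayCor} with the same indices gives $\|B(u,u)\|_{X_T} \leq d_0 \|u\|_{X_T}^2$, and polarization provides the required bilinear continuity. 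Theorem \ref{Picard} then produces, whenever $c_0 \varepsilon$ is small enough relative to $d_0$, a unique fixed point $u \in X_T$ of the integral equation; scale invariance of the path space norm lets one take $T = +\infty$.

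Having constructed $u$, the family of estimates (\ref{NotInfty}) and (\ref{NotInftyDer}) is obtained by applying the decay propositions to the integral representation for any admissible target $(\beta, q, \q, r)$: Proposition \ref{IDecayCor} bounds the linear part by $c_0 \varepsilon$ (or $c_1 \varepsilon$ for one derivative, using the $|\eta|=1$ version), while Proposition \ref{DDecayCor} bounds the Duhamel term by $d_0 \|u\|_{X_T}^2 = O(\varepsilon^2)$; summing and absorbing the $O(\varepsilon^2)$ correction into $c_0\varepsilon$ produces the factor $2c_0$ (resp.\ $2c_1$). The strict version of $\Lambda(\alpha, p, \p) > \Lambda(\beta, q, \q)$ in (\ref{eq:condDL(Heat)2InThm}) is exactly what admits the full $1 \leq p \leq q \leq \infty$ range of these propositions. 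Finally, since $u \in L^r_T L^q_x$ with $2/r + n/q = 1$, Serrin's criterion (Theorem \ref{SerrinRegularity}) yields $C^\infty$ smoothness in the space variables.

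The main obstacle is the bookkeeping of index constraints, in particular checking that the auxiliary indices $(\beta, q, \q, r)$ arising in (\ref{NotInfty})--(\ref{NotInftyDer}) always fit simultaneously into the admissibility regions of Propositions \ref{IDecayCor} and \ref{DDecayCor}, and that the range $p \in [2, n+2]$ is tight for the existence of the path-space interval above. A secondary subtlety is reconciling the weighted integral solution with the $L^2_\sigma$ framework required to quote Theorem \ref{SerrinRegularity}: under $u_0 \in L^2_\sigma$ and the Picard construction, weak--strong uniqueness identifies the strong solution with the Leray--Hopf solution, so the Serrin regularity criterion applies directly to~$u$.
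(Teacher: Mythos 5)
Your overall strategy coincides with the paper's: Picard iteration on the integral formulation, closed by the linear decay estimate of Proposition \ref{IDecayCor} and the bilinear estimate of Proposition \ref{DDecayCor}, with the observation that $\alpha=1-n/p$ and $\p\ge (n-1)p/(p-1)$ are equivalent to $\Lambda(\alpha,p,\p)\ge 0$, and with Serrin's criterion invoked at the end (your remark on weak--strong uniqueness is in fact more careful than the paper, which simply quotes the criterion). The organizational difference is that the paper does not fix a single path space: it runs the same contraction argument \emph{simultaneously} in every admissible weighted space $L^{r}_{t}L^{q}_{|x|^{\beta q}d|x|}L^{\q}_{\theta}$ satisfying \eqref{eq:condDL(Heat)2InThm}, using the self-referential form \eqref{DGHeatq=p} so that each such norm of $u_n$ is bounded by $c_{0}\varepsilon+d_{0}(\text{same norm of }u_{n-1})^{2}$, whence the uniform bound $2c_{0}\varepsilon$ by induction; the derivative bounds \eqref{NotInftyDer} are obtained the same way with $|\eta|=1$.

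The step in your write-up that does not go through as stated is the a posteriori transfer of the bound from the unweighted path space $X_{T}=L^{r}_{T}L^{q}_{x}$ to a general target $(\beta,q,\q,r)$. You claim that Proposition \ref{DDecayCor} bounds the Duhamel term in $\||x|^{\beta}\cdot\|_{L^{r}_{t}L^{q}_{|x|}L^{\q}_{\theta}}$ by $d_{0}\|u\|_{X_{T}}^{2}$. Feeding the unweighted norm into \eqref{DGHeat} means taking the input indices $(\alpha',p',\p')=(0,q_{X},q_{X})$, for which $2\Lambda(0,q_{X},q_{X})=0$; the admissibility condition of Proposition \ref{DDecayCor} then forces $\Lambda(\beta,q,\q)\le 0$. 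But the hypothesis \eqref{eq:condDL(Heat)2InThm} only requires $\Lambda(\beta,q,\q)<\Lambda(\alpha,p,\p)$, and since $\Lambda(\alpha,p,\p)$ may be strictly positive, targets with $\Lambda(\beta,q,\q)>0$ are allowed by the theorem and are not reached by your argument. The repair is exactly the paper's device: for each admissible target, bound the Duhamel term by the \emph{same} weighted norm via \eqref{DGHeatq=p} (whose hypothesis $\Lambda(\beta,q,\q)\ge 0$ is then the relevant one in that regime) and close the estimate by induction along the Picard sequence with $4c_{0}d_{0}\varepsilon\le 1$, rather than by substituting the $X_{T}$ bound. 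A smaller point: non-emptiness of your interval $\max(p,\p)\le q<np/(p-2)$ requires $\p<np/(p-2)$, which holds at the endpoint $\p=(n-1)p/(p-1)$ but fails for $\p$ large (e.g.\ $\p=\infty$), so the set of admissible targets genuinely depends on $\p$ and should be tracked rather than asserted from $p\in[2,n+2]$ alone.
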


\begin{proof}
Let's start by the integral representation:
$$
u  =  e^{t \Delta}u_{0} + \int_{0}^{t}e^{(t-s)\Delta}\mathbb{P}\nabla \cdot (u \otimes u)(s)ds
$$
and consider the Picard sequence
$$
\begin{array}{rcl}
u_{1} & = &e^{t\Delta}u_{0} \\
u_{2} & = & e^{t \Delta}u_{0} + \int_{0}^{t}e^{(t-s)\Delta}\mathbb{P}\nabla \cdot (u_{1} \otimes u_{1})(s)ds  \\
u_{n} &=& e^{t\Delta}u_{0} + \int_{0}^{t}e^{(t-s)\Delta}\mathbb{P}\nabla \cdot (u_{n-1} \otimes u_{n-1})(s)ds.
 \end{array}
$$
We will show that $u_{n}$ is a Cauchy sequence in all the Banach spaces $L^{r}_{t}L^{q}_{|x|^{\beta q}d|x|}L^{\q}_{\theta}$ such that (\ref{eq:condDL(Heat)2InThm}) holds. Let's start noticing that it is a straightforward calculation to show that $p \leq q < \frac{np}{(\alpha- \beta)p + n-2}$, $\p \leq \q$, $r \in (1,+\infty)$, (\ref{eq:condDL(Heat)2InThm}) and (\ref{eq:condDL(Heat)1InThm}) are satisfied by a non null set of indeces. We use the weighted estimates (\ref{IHeat}), (\ref{DGHeatq=p}) with $\eta =0$ to bound by induction:
\begin{eqnarray}
\||x|^{\beta} u_{1}\|_{L^{r}_{t}L^{q}_{|x|}L^{\q}_{\theta}} & \leq & c_{0}\||x|^{\alpha}u_{0}\|_{L^{p}_{|x|}L^{\p}_{\theta}} = c_{0}\varepsilon \nonumber \\
\||x|^{\beta} u_{2}\|_{L^{r}_{t}L^{q}_{|x|}L^{\q}_{\theta}}&
\leq & c_{0}\varepsilon + d_{0} \||x|^{\beta}u_{1}\|^{2}_{L^{r}_{t}L^{q}_{|x|}L^{\q}_{\theta}} \nonumber  \\ 
&\leq & 2c_{0}\varepsilon \nonumber
\end{eqnarray}
if we take a small  $\varepsilon$ such that\footnote{At this step it could suffices to take $d_{0}c_{0}\varepsilon \leq 1$. The stronger condition $4d_{0}c_{0}\varepsilon \leq 1$ is used starting by the tirhd step.} $4d_{0}c_{0}\varepsilon \leq 1$. Then by induction
\begin{eqnarray}\label{moment}
\||x|^{\beta} u_{n}\|_{L^{r}_{t}L^{q}_{|x|}L^{\q}_{\theta}}&
\leq & c_{0}\varepsilon + d_{0} \||x|^{\beta}u_{n-1}\|^{2}_{L^{r}_{t}L^{q}_{|x|}L^{\q}_{\theta}}  \\ 
&\leq & 2c_{0}\varepsilon \nonumber
\end{eqnarray}
again because $4d_{0}c_{0}\varepsilon \leq 1$. So the sequence is well defined in $L^{r}_{t}L^{q}_{|x|^{\beta q}d|x|}L^{\q}_{\theta}$. 
Now we have to show that $u_{n}$ is a Cauchy sequence. Agin using (\ref{IHeat}), (\ref{DGHeatq=p}) we bound the differences:
\begin{eqnarray}
& & \||x|^{\beta} (u_{n} - u_{n-1})\|_{L^{r}_{t}L^{q}_{|x|}L^{\q}_{\theta}}\nonumber \\
& = & 
 \left\||x|^{\beta}\int_{0}^{t}e^{(t-s)\Delta}\mathbb{P} \nabla \cdot (u_{n-1}\otimes u_{n-1} -
 u_{n-2}\otimes u_{n-2})(s)ds\right\|_{L^{r}_{t}L^{q}_{|x|}L^{\q}_{\theta}}   \nonumber  \\  
 &\leq & \left\||x|^{\beta}\int_{0}^{t}e^{(t-s)\Delta}\mathbb{P} \nabla \cdot (u_{n-1}\otimes (u_{n-1}-u_{n-2}))(s)ds\right\|_{L^{r}_{t}L^{q}_{|x|}L^{\q}_{\theta}}   \nonumber \\
 &+& \left\||x|^{\beta}\int_{0}^{t}e^{(t-s)\Delta}\mathbb{P} \nabla \cdot (u_{n-2}\otimes(u_{n-1} \otimes u_{n-2}))(s)ds\right\|_{L^{r}_{t}L^{q}_{|x|}L^{\q}_{\theta}}    \nonumber \\
 &\leq & d_{0}(\||x|^{\beta}u_{n-1}\|_{L^{r}_{t}L^{q}_{|x|}L^{\q}_{\theta}}\||x|^{\beta}(u_{n-1} - u_{n-2})\|_{L^{r}_{t}L^{q}_{|x|}L^{\q}_{\theta}}    \nonumber \\
 & + & \||x|^{\beta}u_{n-2}\|_{L^{r}_{t}L^{q}_{|x|}L^{\q}_{\theta}}\||x|^{\beta} (u_{n-1}-u_{n-2})\|_{L^{r}_{t}L^{q}_{|x|}L^{\q}_{\theta}})  \nonumber \\
 & \leq & d_{0}c_{0}\varepsilon\||x|^{\beta} (u_{n-1} - u_{n-2})\|_{L^{r}_{t}L^{q}_{|x|}L^{\q}_{\theta}} \nonumber 
\end{eqnarray}
where we used the uniform bound (\ref{moment}). In $n-2$ steps we get:
\begin{eqnarray}
& & \||x|^{\beta} (u_{n}-u_{n-1})\|_{L^{r}_{t}L^{q}_{|x|}L^{\q}_{\theta}} \nonumber \\
 &\leq & (2d_{0}c_{0}\varepsilon)^{n-1} \||x|^{\beta}(u_{2}-u_{1})\|_{L^{r}_{t}L^{q}_{|x|}L^{\q}_{\theta}}  \nonumber \\
 &=& (2c_{0}\varepsilon)^{n-1} \left\||x|^{\beta}\int_{0}^{t}e^{(t-s)\Delta}\mathbb{P} \nabla \cdot (u_{1}\otimes u_{1})(s)ds\right\|_{L^{r}_{t}L^{q}_{|x|}L^{\q}_{\theta}}    \nonumber  \\ 
&\leq & (2d_{0}c_{0}\varepsilon)^{n}. \nonumber
\end{eqnarray}
So the differences $u_{n}-u_{n-1}$ are bounded by a gemoetric series, and easily follows that $u_{n}$ is a Cauchy sequences.

\ni The regularity of $u$ is now a direct consequence of theorem (\ref{OurYZTheorem}). In particular by setting\footnote{It is again straightforward to show that conditions $(\ref{eq:condDL(Heat)2InThm})$ are satisfited also under the restriction $\beta=0, q=\q$.} $\beta=0, q=\q$ is possible to refer to the original Serrin's result (\ref{SerrinRegularity}).  
Notice that 
$$
\Lambda(\alpha,p,\p) \geq 0 \Rightarrow \p \geq \frac{(n-1)p}{p-1},
$$
$$
p \leq q < \frac{np}{p-2} \Rightarrow p \in [2,2+n].
$$
In order to prove (\ref{NotInftyDer}) we use (\ref{IHeat}), (\ref{DGHeatq=p}) with $\eta = 1$, so

\begin{eqnarray}
\||x|^{\beta} \nabla u_{1}\|_{L^{r}_{t}L^{q}_{|x|}L^{\q}_{\theta}} & \leq & c_{1}\||x|^{\alpha}u_{0}\|_{L^{p}_{|x|}L^{\p}_{\theta}} = c_{1}\varepsilon \nonumber \\
\||x|^{\beta} \nabla u_{2}\|_{L^{r}_{t}L^{q}_{|x|}L^{\q}_{\theta}}&
\leq & c_{1}\varepsilon + d_{1} \||x|^{\beta} \nabla u_{1}\|^{2}_{L^{r}_{t}L^{q}_{|x|}L^{\q}_{\theta}} \nonumber  \\ 
&\leq & 2c_{1}\varepsilon \nonumber
\end{eqnarray}
if we take a small  $\varepsilon$ such that $4d_{1}c_{1}\varepsilon \leq 1$. Then by induction
\begin{eqnarray}\label{momentDer}
\||x|^{\beta} \nabla u_{n}\|_{L^{r}_{t}L^{q}_{|x|}L^{\q}_{\theta}}&
\leq & c_{1}\varepsilon + d_{1} \||x|^{\beta}u_{n-1}\|^{2}_{L^{r}_{t}L^{q}_{|x|}L^{\q}_{\theta}}  \\ 
&\leq & 2c_{1}\varepsilon. \nonumber
\end{eqnarray}

\end{proof}

\ni Actually this theorem is a particular case of the Koch-Tataru theorem and can be proved directly by the methods in \cite{Tat} and the estimates in proposition \ref{IDecayCor}. Anyway  we prefer a more direct proof, in particular of the bounds (\ref{NotInfty}, \ref{NotInftyDer}). Then also the regularity of $u$ is a really difficult problems for the Koch-Tataru solutionts (see \cite{Banica}), so we prefer to prove it directly in our case. 

\ni

\ni The theorem show that if we work in $L^{p}_{|x|^{\alpha p}d|x|}L^{\p}_{\theta}$ a sufficiently high angular integrability in order to recover the loss of informations for large $|x|$ is
$$
\p_{G} = \frac{(n-1)p}{p-1}.
$$
Now it is really interesting to understand what happens in the range
$$
p < \p < \p_{G}. 
$$
We have no definitive results in this direction but a clear way to pursue. We will show how this problem is connected to the behaviour of the Leray solutions close to the null solutions. In order to simplify as more as possible the notation let consider just the spaces
$$
L^{2}_{|x|^{-1}d|x|}L^{\p}_{\theta}, \quad 2 < \p < \p_{G}=4. 
$$
So we set $n=3, p=2$. This is also the most interesting case because the quantity involved are related to real physical quantities. Let now recall the theorem \ref{CKNSmallData}
\begin{theorem}[Caffarelli-Kohn-Nirenberg]\label{CKNSmallDataBis}
Let $u_{0} \in L^{2}_{\sigma}(\Rn)$ and $u$ a suitbale weak solution of (\ref{CauchyNS}). There exists an absolute constant $\varepsilon_{0} > 0$ such that if
$$
\| |x|^{-1/2} u_{0} \|_{L^{2}(\Rn)} = \varepsilon < \varepsilon_{0},
$$ 
then $u$ is regular ($C^{\infty}$ in space variables) in the interior of the parabola
\begin{equation}\label{PiLimit}
\Pi_{2} = \left\{ (t,x) \quad \mbox{t.c.} \quad t > \frac{|x|^{2}}{\varepsilon_{0}- \varepsilon} \right\}.
\end{equation}
\end{theorem}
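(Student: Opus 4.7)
The plan is to combine a weighted energy-type estimate for $|x|^{-1/2}u$ with the local partial regularity criterion (Lemma \ref{WeightedCKNLemma}) used in a translated form. Note that since $n=3$, the weight $|x|^{-1}$ is exactly $|x|^{2-n}$, so the weighted kinetic energy $\int|x|^{-1}|u|^2$ and the weighted dissipation $\int\int|x|^{-1}|\nabla u|^2$ are the natural critical quantities. The key invariance to keep in mind is that the norm $\||x|^{-1/2}u_0\|_{L^2}$ is scaling-invariant under $u_0(x)\mapsto \lambda u_0(\lambda x)$, and translation invariance is broken: this is what forces the conclusion to be localized to a space-time region rather than global.

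First, I would derive a \emph{weighted energy inequality}: multiply the momentum equation formally by $|x|^{-1}u$, integrate over $(0,t)\times\mathbb{R}^3$ and integrate by parts carefully. The heat part produces $\frac12\int|x|^{-1}|u(t)|^2+\int_0^t\!\!\int|x|^{-1}|\nabla u|^2$ plus a remainder involving $\Delta(|x|^{-1})=4\pi\delta_0$ (distributional) and $\nabla|x|^{-1}$; these are controlled via Hardy's inequality $\int|x|^{-2}|v|^2\lesssim\int|\nabla v|^2$ applied to $v=|x|^{-1/2}u$. The nonlinear term $\int|x|^{-1}(u\cdot\nabla)u\cdot u$ and the pressure term, after integration by parts, become cubic in $u$ with a critical $|x|^{-2}$ scaling, and by a further Hardy/Sobolev step are bounded by $\big(\int|x|^{-1}|\nabla u|^2\big)^{1/2}\cdot\int|x|^{-1}|\nabla u|^2\cdot\|\,|x|^{-1/2}u\|_{L^\infty_t L^2}^{1/2}$ or an analogous trilinear form. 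Under the smallness hypothesis $\||x|^{-1/2}u_0\|_{L^2}=\varepsilon<\varepsilon_0$, one absorbs the trilinear term in the dissipation to obtain an a priori estimate
\begin{equation*}
  \sup_{t>0}\int|x|^{-1}|u(t)|^2+\int_0^{+\infty}\!\!\int|x|^{-1}|\nabla u|^2\,dxdt\lesssim\varepsilon^2.
\end{equation*}
Lemma \ref{WeightedCKNLemma} then immediately gives $C^\infty$ regularity in the space variables on the ray $(0,+\infty)\times\{0\}$.

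To extend regularity to the full parabolic region $\Pi_2=\{t>|x|^2/(\varepsilon_0-\varepsilon)\}$, I would at any fixed $(t_0,x_0)\in\Pi_2$ rerun the same weighted estimate with the translated weight $|x-x_0|^{-1}$ in place of $|x|^{-1}$. The weighted initial quantity $\int|x-x_0|^{-1}|u_0|^2$ is no longer globally controlled, but the translation-invariant $L^2$ energy of $u_0$ together with the bound $\int|x|^{-1}|u_0|^2\le\varepsilon^2$ allows, via splitting $\{|x|<|x_0|/2\}$ vs.\ its complement and using the parabolic condition $t_0(\varepsilon_0-\varepsilon)>|x_0|^2$, to bound the local Morrey-type invariant
\begin{equation*}
  \limsup_{r\to 0}\frac{1}{r^{n-2}}\int\!\!\int_{Q^*(r,t_0,x_0)}|\nabla u|^2
\end{equation*}
by the absolute constant of Lemma \ref{CKNLemma}. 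Regularity of $u$ in a neighbourhood of $(t_0,x_0)$ follows, and since $(t_0,x_0)$ was arbitrary in $\Pi_2$ we conclude.

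The main obstacle is the second step: the trilinear nonlinear and pressure terms in the weighted energy identity, since the weight $|x|^{-1}$ is singular and breaks translation invariance, must be estimated without losing the critical scaling; this requires a delicate use of Hardy's inequality together with the Calder\'on–Zygmund bound $\|p\|_{L^{3/2}}\lesssim\|u\|_{L^3}^2$ obtained from the Riesz-transform representation \eqref{PressureRie} of $p$, localized through the weight. The secondary difficulty is the translation step: one must show that the parabolic threshold $t_0>|x_0|^2/(\varepsilon_0-\varepsilon)$ is exactly what is needed to transfer the global weighted smallness centered at the origin into a \emph{local} Morrey smallness centered at $x_0$, and this is where the sharp value of $\varepsilon_0$ and the geometry of $\Pi_2$ emerge.
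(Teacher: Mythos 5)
This theorem is not proved in the thesis at all: it is Theorem \ref{CKNSmallData} restated, quoted directly from \cite{CKN}, so there is no internal proof to compare against. Your sketch does reproduce the correct high-level skeleton of the original Caffarelli--Kohn--Nirenberg argument (a weighted/generalized energy inequality with weight comparable to $|x|^{-1}$, the $\varepsilon$-regularity criterion of Lemma \ref{CKNLemma}, and a geometric step producing the paraboloid), but as written it has two genuine gaps.

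First, the claimed a priori bound $\sup_{t}\int|x|^{-1}|u|^{2}+\int_{0}^{\infty}\!\!\int|x|^{-1}|\nabla u|^{2}\lesssim\varepsilon^{2}$ does not close by the absorption you describe. The cubic term is $\tfrac12\int|x|^{-2}(u\cdot\hat x)|u|^{2}$ (plus the analogous pressure term), and scaling under $u\mapsto\lambda u(\lambda^{2}t,\lambda x)$ forces any bound of the form $\big(\int|x|^{-1}|u|^{2}\big)^{a}\big(\int|x|^{-1}|\nabla u|^{2}\big)^{b}$ to have $a=1$, $b=1/2$; after integrating in time this produces an extra factor $t^{1/2}$, so the trilinear contribution cannot be absorbed into the dissipation uniformly in $t$. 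Moreover any route through the critical Hardy inequality $\int|x|^{-3}|u|^{2}\lesssim\int|x|^{-1}|\nabla u|^{2}$ is blocked: that inequality is false in $\mathbb{R}^{3}$ (the left side diverges whenever $u(0)\neq0$), as one sees by checking that the substitution $v=|x|^{-1/2}u$ lands exactly on the equality case of Hardy. This is precisely why the conclusion of the theorem is confined to a paraboloid rather than being a global statement.

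Second, the translation step fails as described. For $(t_{0},x_{0})$ with $x_{0}\neq0$ and $r\ll|x_{0}|$, the Morrey quantity $r^{-1}\int\!\!\int_{Q^{*}(r,t_{0},x_{0})}|\nabla u|^{2}$ is controlled by the weighted dissipation only up to a factor $|x_{0}|/r$, which blows up as $r\to0$, so no splitting of $\mathbb{R}^{3}$ into $\{|x|<|x_{0}|/2\}$ and its complement can convert the global weighted smallness centered at the origin into the required local smallness at $x_{0}$. What the original proof actually does at this point is run the full partial-regularity iteration (decay of the scale-invariant quantities $G,J,K$ across dyadic parabolic scales, with the paraboloid condition $t_{0}(\varepsilon_{0}-\varepsilon)>|x_{0}|^{2}$ entering as the hypothesis that makes the first scale of the iteration small). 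That iteration is the substantive content of the theorem and is missing from your outline. If you want to include this result in the thesis you should simply cite \cite{CKN} (or the simplified proof in \cite{Lin}) as the text already does.
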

\ni We used the notation $\Pi_{2}$ to remind the fact that th authors work with $L^{2}$ integrability in the angular variables.
What we expect is a growth in the size of $\Pi_{\p}$ for bigger values of $\p$, and the filling of the whole space-time in the limit $\p \rightarrow 4^{-}$. 
To be more precise we expect to show regularity in the set
\begin{equation}\label{Conj1}
\Pi_{\p} = \left\{ (t,x) \in \Rpiu \times \Rn \quad \mbox{s.t} \quad
t > \frac{c(\p) |x|^{2}}{\varepsilon_{0} - \varepsilon} \right\},
\end{equation}
for small data in $L^{2}_{|x|^{-1}d|x|}L^{\p}_{\theta}$, and furthermore
\begin{equation}\label{Conj2}
c(\p) \rightarrow 0, \quad \mbox{as} \quad \p \rightarrow 4^{-}. 
\end{equation}
In such a way the gap between theorem \ref{CKNSmallDataBis} and \ref{WeighKato} would be completely covered.  
It turns out that this behaviour is strictly connected with a possible improvements of theorem \ref{CKNSmallData} in the limit $\varepsilon \rightarrow 0$. This is a non trivial point and has of course an intrinsic interest. If we take the limit $\varepsilon \rightarrow 0$ in $(\ref{PiLimit})$ we get the maximal regularity set
$$
\Pi = \left\{ t > \frac{|x|^{2}}{\varepsilon_{0}} \right\}.
$$
On the other hand seems reasonable to conjecture improvements to the size of $\Pi$. A possibility is
\begin{equation}\label{PossibleImprov}
\Pi_{\varepsilon} = \left\{ t > \frac{c(\varepsilon)|x|^{2}}{\varepsilon_{0} - \varepsilon}  \right\},
\end{equation}  
with
\begin{equation}\label{ConjVare}
c(\varepsilon) \rightarrow 0, \quad \mbox{as} \quad \varepsilon \rightarrow 0.
\end{equation}
Let now show how (\ref{ConjVare}) implies (\ref{Conj1}, \ref{Conj2}). The idea is to perform a decomposition of the initial datum inspired by a similar argument in \cite{Calderon}. We split $u_{0}$ in
$$
u_{0}= v_{0} + w_{0}
$$
with
$$
\nabla \cdot v_{0}= \nabla \cdot w_{0} =0,
$$
and
$$
v_{0} \in L^{2}_{|x|^{-1}dx}, \quad w_{0} \in L^{2}_{|x|^{-1}L^{4}_{\theta}}.
$$
Moreover we require that
$$
\| |x|^{-1/2} w_{0}\|_{L^{2}_{|x|}L^{4}_{\theta}} \rightarrow 0 \quad \mbox{as}
\quad \p \rightarrow 2^{+}
$$
and
$$
\| |x|^{-1/2} v_{0}\|_{L^{2}_{x}} \rightarrow 0 \quad \mbox{as}
\quad \p \rightarrow 4^{-}
$$
In order to achieve such a decomposition we slightly modify the argument in \cite{Calderon}. Let $s >0$ and define
$$
u_{0} = u_{0,<s} + u_{0,>s},
$$
where $u_{0,<s}$ is equal to $u_{0}$ if $|u_{0}| <s$ and is zero otherwise . Then $v_{0}, w_{0}$ are defined by 
$$
v_{0} = \lim_{t \rightarrow 0} e^{t\Delta} \mathbb{P} u_{0,>s},
$$
$$
w_{0} = \lim_{t \rightarrow 0} e^{t\Delta} \mathbb{P} u_{0,<s}.
$$
It follows easily by the representation of $e^{t\Delta} \mathbb{P}$ as a convolution operator that the limits are attained respectively in $L^{2}_{|x|^{-1}}dx$ and $L^{2}_{|x|^{-1}}L^{4}_{\theta}$ and the properties $\nabla \cdot v_{0}= \nabla \cdot u_{0}$. Furthermore by a simple interpolation argument\footnote{see \cite{Calderon}.} 
\begin{equation}\label{Decomposition}
\begin{array}{lcl}
\| |x|^{-1/2} w_{0}\|_{L^{2}_{|x|}L^{4}_{\theta}} 
&\leq & C s^{1-\frac{\p}{4}} \| |x|^{-1/2} u_{0} \|^{\p/ 4}_{L^{2}_{|x|}L^{\p}_{\theta}} \\
&& \\
\| |x|^{-1/2} v_{0}\|_{L^{2}_{x}}
&\leq & C s^{1-\frac{\p}{2}} \| |x|^{-1/2} u_{0} \|^{\p /2}_{L^{2}_{|x|}L^{\p}_{\theta}},
\end{array}
\end{equation}
for each $s > 0$. Then we choose $s= \frac{\theta}{1-\theta}$, where $\theta$ is defined by
$$
\frac{1}{\p} = \frac{1-\theta}{2} + \frac{\theta}{4}.
$$
Notice also that
$$
1 - \frac{\p}{4} = \frac{1 - \theta}{2 - \theta}, \qquad 1- \frac{\p}{2} = - \frac{\theta}{1-\theta}.
$$
To simplify the notation we set 
$$
A_{\theta} = C \left( \frac{\theta}{1-\theta} \right)^{\frac{1- \theta}{2 - \theta}}, \qquad 
B_{\theta} = C \left( \frac{\theta}{1-\theta} \right)^{- \frac{\theta}{2-\theta}},
$$
in such a way (\ref{Decomposition}) becomes
\begin{equation}\label{DecompositionBis}
\begin{array}{lcl}
\| |x|^{-1/2} w_{0}\|_{L^{2}_{|x|}L^{4}_{\theta}} 
&\leq & A_{\theta} \| |x|^{-1/2} u_{0} \|^{\p/4}_{L^{2}_{|x|}L^{\p}_{\theta}} \\
&& \\
\| |x|^{-1/2} v_{0}\|_{L^{2}_{x}}
&\leq & B_{\theta} \| |x|^{-1/2} u_{0} \|^{\p /2}_{L^{2}_{|x|}L^{\p}_{\theta}}.
\end{array}
\end{equation}
A straightforward calculation shows that $A_{\theta}$ is an increasing function of $\theta$ and
\begin{equation}
\lim_{\theta \rightarrow 0} A_{0}=0, \quad \lim_{\theta \rightarrow 1} A_{\theta}=1;
\end{equation}
while $B_{\theta}$ is a decreasing function of $\theta$ such that
\begin{equation}\label{ultima}
\lim_{\theta \rightarrow 0} B_{0}=1, \quad \lim_{\theta \rightarrow 1} B_{\theta}=0.
\end{equation}

\ni Then we consider the Cauchy problems
\begin{equation}\label{CauchyNSForW}
\left \{
\begin{array}{rcl}
\partial_{t}w + (w \cdot \nabla) w +\nabla p_{w}  & = & \Delta w   \\
\nabla \cdot w & = & 0   \\
w & = & w_{0}, 
\end{array}\right.
\end{equation}
and
\begin{equation}\label{CauchyNSForv}
\left \{
\begin{array}{rcl}
\partial_{t}v + (v \cdot \nabla) v + 
(v \cdot \nabla) w + (w \cdot \nabla) v
 +\nabla p_{v}  & = & \Delta v   \\
\nabla \cdot v & = & 0  \\
v & = & v_{0}. 
\end{array}\right.
\end{equation}
Of course $u=v+w$ and the pressure $p_{v}, p_{w}$ can be still recovered by $v,w$ through
$$
p_{w} = C \left( \sum_{i,j =1}^{n} R_{i}R_{j}(w_{i}w_{j}) \right),
$$
and
$$
p_{v} = C \left( \sum_{i,j =1}^{n} R_{i}R_{j}(v_{i}v_{j})
+2 \sum_{i,j =1}^{n} R_{i}R_{j}(v_{i}w_{j})  \right).
$$
At first we notice that the global regularity of $w$ is ensured by theorem \ref{WeighKato} provided that 
$$
A_{\theta} \| u_{0} \|^{\p/4}_{L^{2}_{|x|}L^{\p}_{\theta}} < \varepsilon_{4}.
$$
We used the notation $\varepsilon_{4}$ to emphasize the fact that the smallness condition is about the $L^{2}_{|x|^{-1}d|x|}L^{4}_{\theta}$ norm of $w_{0}$. 
Then by (\ref{NotInfty}, \ref{NotInftyDer}) we also get the bound
\begin{equation}\label{BoundOnW}
\begin{array}{lcl}
\| |x|^{\beta} w \|_{L^{r}_{t}L^{q}_{x}} &\leq & c_{0} \varepsilon_{4}, 
\quad \frac{2}{r} + \frac{3}{q} = 1-\beta \\
&& \\
\| |x|^{\beta} \nabla w \|_{L^{r}_{t}L^{q}_{x}} &\leq & c_{0} \varepsilon_{4}, 
\quad \frac{2}{r} + \frac{3}{q} = 2-\beta. 
\end{array}
\end{equation}
These bounds are necessary in order to handle the terms $(v \cdot \nabla) w, (w \cdot \nabla) v$ in (\ref{CauchyNSForv}). Then if we are able to prove\footnote{Actually we conjecture (\ref{PossibleImprov}, \ref{ConjVare}) for the perturbed system (\ref{CauchyNSForv}). Anyway the additional terms are easily handled by using (\ref{BoundOnW}).} (\ref{PossibleImprov}, \ref{ConjVare}) we get the following regularity set for $v$:
$$
\Pi = \left\{ (t,x) \quad \mbox{t.c.} \quad t > \frac{c(\varepsilon)|x|^{2}}{\varepsilon_{0}- \varepsilon} \right\},
$$
with $\varepsilon = \| |x|^{-1/2}v_{0}\|_{L^{2}_{x}} \leq B_{\theta} \| |x|^{-1/2} u_{0} \|^{\p /2}_{L^{2}_{|x|}L^{\p}_{\theta}}$ sufficiently small. Now by using (\ref{ultima}) we get $c(\varepsilon) \rightarrow 0$ as $ \p \rightarrow 4^{-}$ (or $\theta \rightarrow 1$).

\chapter*{Outlooks and remarks}

The consequences of angular integrability in Sobolev embeddings and PDEs have been considered by many authors. We have focused basically on the applications to the Navier-Stokes equation, but, as mentioned in section \ref{SobEmb} (Chapter \ref{SectInequality}), this point of view is natural and useful in the context of Srichartz estimates for the wave and Schr\"odinger equations on $\Rn$. A comprehensive reference about its application to the wave equation is \cite{JiangWangYu10-a}. 

\ni The consequences of higer angular integrability have been explored also in te context of the Dirac equation, see \cite{DanconaCacciafesta11-a}, \cite{MachiharaNakamuraNakanishi05-a}.

\ni We have in mind to conclue by suggesting some additional consequences of propositions \ref{IDecayCor}, \ref{DDecayCor}. As we have seen until now the key point in global regularity results is the request
\begin{equation}\label{LambdaOtlooks}
\Lambda (\alpha, p, \p) \geq 0
\end{equation}   
for a solution $u$ bounded in
\begin{equation}\label{BoundOutlooks}
\| |x|^{\alpha} u \|_{L^{s}_{t}L^{p}_{|x|}L^{\p}_{\theta}}, \qquad 
\frac{2}{s} + \frac{n}{p} = 1 - \alpha. 
\end{equation}
Then the regularity and uniqeness problems are strictly connected. We have given as example the theorem \ref{SerrinUniqness} in which the uniqueness of weak solutions bounded in 
\begin{equation}\nonumber
\| u \|_{L^{s}_{t}L^{p}_{x}} < +\infty
\end{equation}
is proved. In the same spirit by applying Sobolev embeddings in $L^{p}_{|x|^{\alpha}d|x|^{\alpha}}L^{\p}_{\theta}$ spaces under condition \ref{LambdaOtlooks}, we get uniqueness of weak solutions bounded in the norms \ref{BoundOutlooks}.

\ni Anyway we have basically focused on theorems \ref{OurYZTheorem}, \ref{OurYZTheoremLoc} to show the difference between local and global regularity results. These theorems can be further  extended in some directions. In particular a larger set of indeces can be covered about the $L^{p}$ integrability. The restriction on $p$ from below, i.e.
$$
\frac{n}{1-\alpha} < p, \qquad 4 \leq p \quad \mbox{or} \quad 2 \leq p,
$$    
can be relaxed by using respectively the local regularity criteria in \cite{CKN}, \cite{Kukavica}. We omit the details.

\chapter*{Acknowledgments}

The author would like to thank prof. Piero D'Ancona for the work done together and constant suggestions and encouragement.

\end{document}